\documentclass[12pt]{amsart}
\usepackage[utf8]{inputenc}
\usepackage[T1]{fontenc}
\usepackage{amsfonts}
\usepackage[leqno]{amsmath}
\usepackage{accents}
\usepackage{amssymb, comment, float}
\usepackage[dvipsnames]{xcolor}
\usepackage[foot]{amsaddr}
\usepackage[shortlabels]{enumitem}
\setlist[itemize]{leftmargin=11mm}
\setlist[enumerate]{leftmargin=11mm}
\usepackage{mathdots}
\usepackage[a4paper, footskip=0.5in,  headheight = 0.5in, top=1.25in, bottom=1.25in,  right=1in,  left=1in]{geometry}
\usepackage{hyperref}
\hypersetup{
colorlinks,
linkcolor=black,
urlcolor=Blue,
citecolor=black,}
\usepackage{cleveref}
\usepackage[center]{caption}
\usepackage{eufrak}
\usepackage{marvosym}     
\usepackage{latexsym}
\usepackage[nodayofweek]{datetime}
\usepackage{tikz}
\usepackage{subfig}
\usepackage{thm-restate}
\usepackage{verbatim}
\usepackage[none]{hyphenat} 

\newtheorem{statement}{statement}[section]
\newtheorem{theorem}[statement]{Theorem}
\newtheorem{lemma}[statement]{Lemma}
\newtheorem{conjecture}[statement]{Conjecture}
\newtheorem{corollary}[statement]{Corollary}
\newtheorem{question}[statement]{Question}

  \DeclareMathOperator{\lng}{{\text{\sf{L}}}}

\def\dd{\hbox{-}}   

\newcommand{\ol}{\overline}

\newcommand{\mca}{\mathcal}

\newcommand{\poi}{\mathbb{N}} 
\newcommand{\lp}{$^+$}
\newcommand{\lm}{$^-$}

\newcounter{tbox}
\newcommand{\sta}[1]{%
  \par\addvspace{0.5\baselineskip}
  \refstepcounter{tbox}%
  \noindent(\thetbox)\,{\em #1}\par%
  \addvspace{0.3\baselineskip}%
}

\newcommand{\squeezeline}[1]{%
  \noindent\hbox to \linewidth{#1\hfil}%
}

\title[Asymmetric induced saturation]{Asymmetric induced saturation}

\author{Xinyue Fan$^{\dagger}$}

\author{Sahab Hajebi$^{\dagger}$}

\author{Sepehr Hajebi$^{\dagger}$}

\author{Sophie Spirkl$^{\dagger\ast}$}

\thanks{$^{\dagger}$ Department of Combinatorics and Optimization, University of Waterloo, Waterloo, Ontario, Canada.}
\thanks{$^{\ast}$ We acknowledge the support of the Natural Sciences and Engineering Research Council of Canada (NSERC), [funding reference number RGPIN-2020-03912].
Cette recherche a \'et\'e financ\'ee par le Conseil de recherches en sciences naturelles et en g\'enie du Canada (CRSNG), [num\'ero de r\'ef\'erence RGPIN-2020-03912]. This project was funded in part by the Government of Ontario. This research was conducted while Spirkl was an Alfred P. Sloan Fellow. This research was undertaken, in part, thanks to funding from the Canada Research Chairs Program.}

\date{\today}

\begin{document}

\maketitle



\begin{abstract}
For which graphs $H$ does there exist a graph $G$ with at least one edge and no induced subgraph isomorphic to $H$, such that deleting any edge of $G$ creates an induced copy of $H$? We call such a graph \emph{$H$-deletion-saturated}. This version of the well-studied notion of \emph{$H$-induced-saturated} graphs -- where both adding and deleting any edge creates an induced copy of $H$ -- appears more tractable. For example, while it remains wide open whether $H$-induced-saturated graphs exist for every even cycle $H$, we proved recently that deletion-saturated graphs exist for all even cycles. In fact, apart from complete graphs, no graph $H$ is known for which $H$-deletion-saturated graphs do not exist. We conjecture that $H$-deletion-saturated graphs exist for every non-complete graph $H$, and prove this conjecture for several types of graphs, including: complete bipartite graphs with parts of unequal size, triangle-free graphs with one cycle, graphs with two leaves at distance at most three, and line graphs of trees. In fact, in all cases, we prove the conjecture for substantially more general families. We also verify our conjecture for every graph $H$ on at most six vertices.
\end{abstract}

\section{Introduction}

\subsection{Background and Motivation}

We explore the ``asymmetric'' relaxation of the induced saturation problem on graphs. Given a graph $H$, a graph $G$ is \textit{$H$-free} if $G$ has no induced subgraph isomorphic to $H$. A graph $G$ is \emph{$H$-induced-saturated} if $G$ is $H$-free, for every $e\in E(G)$, $G-e$ is not $H$-free; and for every $e\in E(\ol{G})$, $G+e$ is not $H$-free. Induced saturation originated in a 2012 work by Martin and Smith \cite{martin} (see also \cite{smiththesis}), where they defined this notion as a property of ``trigraphs'' and studied its extremal aspects. (This in turn was motivated by the heavily investigated topic of ``non-induced'' saturation in extremal combinatorics; see \cite{survey} for a survey.) Since then, the literature on this topic has evolved around the study of graphs $H$ for which $H$-induced-saturated graphs exist in the first place. It is readily seen that if a graph $H$ is complete or empty (except when $|V(H)|=2$), then there is no $H$-induced-saturated graph. Bonamy et al.\ recently proved \cite{carlaetal} that for all other graphs $H$, an ``infinite $H$-induced-saturated graph'' always exists.

\begin{theorem}[Bonamy, Groenland, Johnston, Morrison, Scott \cite{carlaetal}]\label{thm:carlaetal}
   For every finite graph $H$ that is neither complete nor empty (except when $|V(H)|=2$), there exists a countably infinite $H$-induced-saturated graph.
\end{theorem}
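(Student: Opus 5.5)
Since the statement is invariant under complementation, I will construct the infinite graph directly. Complementation swaps edges and non-edges. It also maps $H$-free graphs to $\overline{H}$-free graphs, and $H$ is neither complete nor empty if and only if $\overline{H}$ is neither complete nor empty. So $G$ is $H$-induced-saturated exactly when $\overline{G}$ is $\overline{H}$-induced-saturated. My plan is to build $G$ on vertex set $\mathbb{N}$ as a limit $G=\bigcup_n G_n$ of finite $H$-free graphs, one pair of vertices at a time. The requirement splits into two kinds of conditions:
\begin{enumerate}[(i)]
\item $G$ is $H$-free, a condition on finite vertex sets that is preserved if we never change pairs already decided;
\item for each pair $uv$, toggling $uv$ creates an induced $H$ using the pair $\{u,v\}$ and $|V(H)|-2$ further vertices.
\end{enumerate}
Condition (ii) is witnessed by finitely many pairs. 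So I will process the pairs in a countable enumeration, and each pair, once it is ``served'', stays served, provided all adjacencies inside its witness are frozen.

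The key step is a local extension lemma. Let $F$ be a finite $H$-free graph, possibly with some pairs left undecided (a trigraph), and let $u,v$ be two vertices. We want to add new vertices $W$, with all adjacencies between $W$ and $V(F)$ set to non-edges, or chosen carefully, and adjacencies inside $W$ chosen so that three things hold. First, $F$ together with $W$ remains $H$-free. Second, $G[\{u,v\}\cup W]$ toggled at $uv$ contains an induced copy of $H$. Third, all newly decided pairs are frozen. To get the second property, I choose an edge $xy$ of $H$ when $uv$ is an edge, or a non-edge when $uv$ is a non-edge; both exist because $H$ is neither complete nor empty. I then let $W$ be a copy of $H-\{x,y\}$, with adjacencies to $u,v$ copied from $x,y$ and everything else to the rest of $F$ non-adjacent. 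The danger is that this creates an induced $H$ before the toggle, either inside $\{u,v\}\cup W$ or mixing $W$ with old vertices.

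I expect controlling these unwanted copies of $H$ to be the main obstacle. Here I would mimic the Bonamy et al.\ idea of adding many ``padding'' vertices to break copies. I would choose $xy$ cleverly, say an edge in a component of extremal size or a vertex pair extremal in degree or in some lexicographic order, so that a counting argument (on the number of components, or the number of edges among maximal-size components) shows that any induced $H$ in the new graph must use the pair $uv$ in the toggled role. I would also let new vertices attach to the old graph only through $u,v$, so that any induced copy of $H$ meeting $W$ and $F\setminus\{u,v\}$ decomposes along the separator $\{u,v\}$, and the extremal choice then rules it out. If a single extremal choice fails for some $H$, I would first pass to $\overline{H}$, using the complementation invariance, and do a case split on whether $H$ or $\overline{H}$ is disconnected.

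Finally, I interleave the requirements. At stage $n$ I take the least unserved pair among the vertices present, apply the extension lemma, and add fresh vertices as needed. I undecide nothing, and I declare every pair never explicitly set to be a non-edge. Doing so must not create $H$, and I must check this. I would arrange it by treating all undecided pairs as non-edges throughout, so that each $G_n$ is literally an induced subgraph of the final $G$. Every pair of $\mathbb{N}$ is eventually served, and every finite subset of $G$ lies in some $G_n$, so $G$ is $H$-free. This gives the countably infinite $H$-induced-saturated graph.
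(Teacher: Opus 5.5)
The paper gives no proof of this theorem; it only cites it from Bonamy et al. So your proposal has to stand on its own, and it has a genuine gap.

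The bookkeeping is fine. Enumerating pairs, freezing each witness, and treating undecided pairs as non-edges (so every $G_n$ is an induced subgraph of $G$) does reduce the theorem to your local extension lemma. But that lemma is the whole content, and you do not prove it: the ``extremal choice of $xy$'' and the ``counting argument'' are never specified. The gadget as written also has the parity backwards. If $uv$ is an edge and $xy$ is an edge of $H$, then $\{u,v\}\cup W$ already induces $H$ \emph{before} the toggle, and only $H-xy$ after it. You need a non-edge $xy$ when $uv$ is an edge, and an edge $xy$ when $uv$ is a non-edge. Separately, the two-vertex cases allowed by the statement admit no such $xy$; they are trivial, via an infinite edgeless or infinite complete graph.

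More seriously, even with the parity fixed, the lemma in the form you intend is false, so no extremal rule can rescue it. That form is: $F$ an arbitrary finite $H$-free graph, and new vertices attached to $F$ only through $\{u,v\}$. Take $H=P_4=a\dd b\dd c\dd d$ and $F=u\dd w\dd v$, and serve the non-edge $uv$; it is unserved, since $F+uv$ is a triangle. The only options are $xy\in\{ab,bc,cd\}$ in either orientation, and each one makes $\{u,w,v\}\cup W$ an induced $P_5$. For example, you get $a'\dd u\dd w\dd v\dd d'$ for $xy=bc$, and $u\dd w\dd v\dd c'\dd d'$ for $(x,y)=(a,b)$. This uses only the common neighbour $w$, so in your process a non-edge whose ends have a common neighbour can never be served by its own gadget. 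Such non-edges appear as soon as any edge is served, because every edge-gadget for $P_4$ induces a $C_4$ or a paw. Since $P_4$ is self-complementary, falling back to $\overline{H}$ gives nothing.

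For $P_4$ the step can be done, but only with gadgets that see more of $F$: add a true twin of $u$ and a true twin of $v$ (false twins if $uv$ is an edge). This keeps $F$ a cograph and serves $uv$. So the extension has to depend on the structure of $F$. For general $H$ you would need either such an $F$-dependent extension lemma, together with an invariant on the intermediate graphs guaranteeing it always applies, or a genuinely global construction. It is not clear that every finite $H$-free graph admits even a one-pair extension. That is the missing idea; as it stands, the proposal is a framework rather than a proof.
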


(In fact, they prove a stronger result, namely that there is a countably infinite $H$-free graph $G$ where any ``bounded-degree perturbation'' of $G$ creates an induced copy of $H$.)

\medskip

Our focus is on graphs $H$ for which there exists a finite $H$-induced-saturated graph. We call such graphs \textit{normal}. (From here on, all graphs in this paper have finite vertex sets, no loops, and no parallel edges.) It follows that complete graphs and edgeless graphs (unless on two vertices) are not normal. Martin and Smith \cite{martin} also proved that:

\begin{theorem}[Martin and Smith \cite{martin}]\label{thm:p4}
    The four-vertex path $P_4$ is not normal.
\end{theorem}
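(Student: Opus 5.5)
We argue by contradiction, using the structure of $P_4$-free graphs. (We assume here, as the definition intends, that an $H$-induced-saturated graph has at least two vertices; a one-vertex graph satisfies the conditions vacuously.) So suppose $G$ is a finite $P_4$-induced-saturated graph with $|V(G)|\ge 2$. The plan is to find two vertices $u,v$ of $G$ that are \emph{twins}, meaning $N(u)\setminus\{v\}=N(v)\setminus\{u\}$. We then show that toggling the pair $uv$ cannot create an induced $P_4$. This contradicts saturation: if $uv\in E(G)$ it is the deletion condition that fails, and if $uv\notin E(G)$ it is the addition condition.

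\textbf{Step 1: twins exist.} We use the classical fact that every $P_4$-free graph $F$ with at least two vertices has either $F$ or $\ol{F}$ disconnected. From it we prove by induction on $|V(F)|$ that such an $F$ has a pair of twins. Since $P_4$ is self-complementary, $\ol{F}$ is also $P_4$-free, and twins in $\ol{F}$ are twins in $F$. So we may assume $F$ is disconnected.
\begin{itemize}
\item If some component $C$ has $|V(C)|\ge 2$, induction gives twins $u,v$ in $C$. They remain twins in $F$, because no vertex outside $C$ is adjacent to either of them.
\item Otherwise $F$ is edgeless with at least two vertices, and any two vertices are twins.
\end{itemize}
If one prefers not to reprove the classical fact, Step 1 can be read off from the cotree: two leaves with a common parent are twins.

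\textbf{Step 2: false twins.} Suppose $u,v$ are twins with $uv\notin E(G)$. By saturation, $G+uv$ has an induced $P_4$, call it $Q$. Since $G$ is $P_4$-free, $Q$ must use the edge $uv$. The path $Q$ has four vertices, so at least one of $u,v$ is an interior vertex of $Q$. That vertex has a neighbour $a\in V(Q)\setminus\{u,v\}$. Because $u,v$ are twins, $a$ is adjacent to both $u$ and $v$. Then $\{u,v,a\}$ is a triangle inside the induced path $Q$, which is impossible.

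\textbf{Step 3: true twins.} Suppose $u,v$ are twins with $uv\in E(G)$. Then $G-uv$ has an induced $P_4$, call it $Q$, and $Q$ must contain both $u$ and $v$ as a nonadjacent pair. They are still twins in $G-uv$, so they have the same neighbourhood inside $Q$. Label $Q=p_1p_2p_3p_4$ and check the three nonadjacent pairs:
\begin{itemize}
\item $\{p_1,p_3\}$ has neighbourhoods $\{p_2\}$ and $\{p_2,p_4\}$;
\item $\{p_1,p_4\}$ has neighbourhoods $\{p_2\}$ and $\{p_3\}$;
\item $\{p_2,p_4\}$ has neighbourhoods $\{p_1,p_3\}$ and $\{p_3\}$.
\end{itemize}
In each case the two neighbourhoods differ, which is a contradiction. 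Alternatively, Step 3 follows from Step 2 applied to $\ol{G}$: $\ol{G}$ is also $P_4$-induced-saturated, since $P_4$ is self-complementary and complementation swaps the deletion and addition conditions.

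The only step requiring real input is Step 1, namely the structural fact that $P_4$-free graphs decompose under disjoint union and complementation. This is standard (Seinsche's theorem on cographs), so I expect no genuine obstacle there. The remaining work is the short local check in Steps 2 and 3.
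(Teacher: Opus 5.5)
Your proof is correct. The paper itself gives no proof of this statement; it is quoted from Martin and Smith, where it follows from their extremal results on induced saturation in the trigraph setting. So there is no in-paper argument to match against, and your route is a genuinely different, self-contained one.

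The core of your argument is a single observation:
\begin{itemize}
\item every $P_4$-free graph on at least two vertices has a pair of twins $u,v$ (Seinsche's theorem, or the cotree);
\item toggling $uv$ leaves $u,v$ twins;
\item so any induced $P_4$ created by the toggle would contain two twins, which is impossible because $P_4$ has no pair of twins, adjacent or not.
\end{itemize}
Your Steps~2 and~3 are the adjacent-pair and nonadjacent-pair halves of that last check, and you could merge them this way. The citation carries the full quantitative information from Martin and Smith. Your argument gives the purely qualitative statement that the paper actually uses, in a few lines, with nothing beyond the cograph decomposition.

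Your caveat about degenerate graphs is genuinely needed and not pedantry. Read literally, the definition makes the null graph and $K_1$ vacuously $P_4$-induced-saturated. Indeed, the paper itself uses the null graph to declare $P_1$ normal. So some convention such as $|V(G)|\ge 2$ must be imposed for the statement to be true. Under that convention your argument covers every case, including graphs on two or three vertices.
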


Complete graphs, edgeless graphs, and the four-vertex path remain the only graphs known not to be normal, and even for structurally rather simple graphs $H$, deciding whether $H$ is normal appears to be quite difficult. For example, it remains wide open whether all even cycles are normal.

\begin{question}\label{q:evennormal}
    Let $t\geq 2$ be an integer. Is it true that the $2t$-vertex cycle is normal?
\end{question}

\begin{figure}
    \centering
    \includegraphics[scale=0.7]{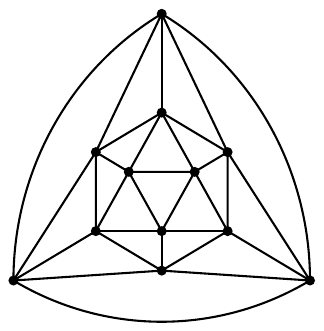}
    \caption{The icosahedron.}
    \label{fig:icosa}
\end{figure}

The answer to \Cref{q:evennormal} is only known for finitely many choices of $t$, and it is affirmative in those cases. (We are aware of constructions for $t\in \{2,3,4,5\}$ \cite{behrens,evencycle}; notably, the icosahedron from Figure~\ref{fig:icosa} is $C_4$-induced-saturated \cite{behrens}. We have also been told by Carla Groenland \cite{carlaprivate} that constructions for some larger values of $t$ have been found.) This is particularly curious because all odd cycles (except for the $3$-cycle, which is a complete graph) are known to be normal, and the proof is surprisingly simple.

\begin{theorem}[Behrens, Erbes, Santana, Yager, Yeager \cite{behrens}]\label{thm:odd}
  All odd cycles except for the $3$-cycle are normal: for every $t\geq 2$, the line graph of $K_{t+1,t+1}$ is $C_{2t+1}$-induced-saturated.
\end{theorem}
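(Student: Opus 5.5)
The plan is to use the standard correspondence between induced cycles in a line graph and cycles in the underlying graph. Write $G=L(K_{t+1,t+1})$. Its vertices are the cells $(r,c)$ of a $(t+1)\times(t+1)$ grid, with rows $r$ and columns $c$ playing the role of the two sides of $K_{t+1,t+1}$. Two cells are adjacent if and only if they share a row or a column. The proof has three steps: $G$ is $C_{2t+1}$-free, deleting any edge creates an induced $C_{2t+1}$, and adding any non-edge creates an induced $C_{2t+1}$.

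\emph{$G$ is $C_{2t+1}$-free.} Suppose $C$ is an induced cycle of length $k\ge 4$ in $G$, and view its vertices as edges of $K_{t+1,t+1}$.
\begin{itemize}
\item No vertex of $K_{t+1,t+1}$ lies in three of these edges, since three cells in one row or one column would form a triangle inside $C$.
\item Consecutive cells of $C$ share an endpoint, and non-consecutive cells share none.
\end{itemize}
Hence the $k$ edges form a cycle of length $k$ in $K_{t+1,t+1}$. Since $K_{t+1,t+1}$ is bipartite, $k$ is even. As $2t+1\ge 5$ is odd, $G$ has no induced $C_{2t+1}$. Conversely, every cycle of length $2m\ge 4$ in $K_{t+1,t+1}$ yields an induced $C_{2m}$ in $G$. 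I will use this to build the required copies.

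\emph{Deleting an edge.} By symmetry the deleted edge is $e=xy$ with $x=(i,j)$ and $y=(i,k)$ in the same row $i$. Since $t\ge2$, choose a $2t$-cycle in $K_{t+1,t+1}$ that uses $t$ rows including $i$ and $t$ columns including $j,k$, and that passes through column $j$, row $i$, column $k$ consecutively. This gives an induced $C_{2t}$ in $G$ in which $x,y$ are consecutive. Exactly one column $l$ is unused; let $z=(i,l)$. In $G-e$, the cell $z$ is adjacent to $x$ and $y$. It has no other neighbours on the cycle: the only cycle cells in row $i$ are $x$ and $y$, because row $i$ has degree $2$ in the $2t$-cycle, and column $l$ contains no cycle cells. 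Replacing the edge $xy$ by the path $x\,z\,y$ therefore gives an induced $C_{2t+1}$ in $G-e$.

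\emph{Adding an edge.} A non-edge is $xz$ with $x=(i,j)$, $z=(k,l)$, $i\ne k$ and $j\ne l$. Let $w=(i,l)$. Choose a Hamiltonian cycle of $K_{t+1,t+1}$ containing the path $j\,i\,l\,k$ (column, row, column, row), which is possible since complete bipartite graphs with equal sides are Hamiltonian-connected in the appropriate sense. This gives an induced $C_{2t+2}$ in $G$ containing the consecutive cells $x,w,z$. Adding $xz$ creates a chord between vertices at distance $2$ on this cycle. The remaining arc from $z$ back to $x$ avoiding $w$ has $2t+1$ vertices and is an induced path in $G$. Together with the new edge $xz$ it forms an induced $C_{2t+1}$ in $G+xz$.

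The only step requiring care is the first, namely the argument that induced cycles of length at least $4$ correspond exactly to cycles of $K_{t+1,t+1}$. The two constructions are routine once the appropriate cycles in $K_{t+1,t+1}$ are chosen.
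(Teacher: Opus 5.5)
Your proof is correct. The paper does not prove this statement itself. It quotes the result from Behrens et al.\ and records only the witness $\lng(K_{t+1,t+1})$, so there is no in-paper argument to compare against. Your three steps supply exactly the verification the citation leaves implicit:
\begin{itemize}
\item induced cycles of length at least $4$ in the rook's graph come from cycles of the bipartite graph $K_{t+1,t+1}$, and are therefore even;
\item inserting the cell $(i,l)$ of the unused column into an induced $C_{2t}$ through $x,y$;
\item shortcutting an induced $C_{2t+2}$ through $x,w,z$.
\end{itemize}

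Two spots could be stated more crisply.

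In the first step, add one line. The two cycle-neighbours of each edge must meet it at different endpoints, since otherwise some vertex lies in three of the edges. So the edge set is $2$-regular, and since its line graph is connected, it is a single $k$-cycle. Alternatively, argue directly: consecutive steps of an induced cycle of length at least $4$ in the rook's graph must alternate between ``same row'' and ``same column'', which forces even length.

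In the third step, no appeal to Hamiltonian-connectedness is needed. Let $c_1,\ldots,c_{t-1}$ be the columns other than $j,l$, and $r_1,\ldots,r_{t-1}$ the rows other than $i,k$. Then the closed walk $j,i,l,k,c_1,r_1,\ldots,c_{t-1},r_{t-1},j$ is the required Hamiltonian cycle of $K_{t+1,t+1}$. This uses $t\geq 2$ so that $t-1\geq 1$.
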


Another family of structurally simple graphs $H$ for which a proof of normality is surprisingly difficult is that of paths. For small paths, the solution is immediate: $P_1$ is normal (because the null graph is $P_1$-induced-saturated), $P_2$ is normal (because edgeless graphs on two or more vertices are $P_2$-induced-saturated), $P_3$ is normal (because complete graphs on three or more vertices are $P_3$-induced-saturated), and by \Cref{thm:p4}, $P_4$ is not normal. Moreover, Bonamy et al.\ \cite{bonamy,carlaetal} showed that the complement of the icosahedron is $P_5$-induced-saturated. Recently, Dvo\v{r}\'{a}k \cite{ptdvorak} found, for every integer $t\geq 6$, an elegant construction of a $P_t$-induced-saturated graph, hence completely resolving the case of paths: all paths except the one on four vertices are normal.

In addition, some other types of graphs $H$ were shown to be normal in subsequent work by Behrens et al.\ \cite{behrens} and by Axenovich and Csik\'{o}s \cite{axenovich}. Beyond that, a full characterization of all normal graphs remains wide open.

\subsection{Our take}

We focus on graphs in which $H$-freeness is only required to be sensitive to one of the two operations of deleting or adding edges. Given a graph $H$, we say that a graph $G$ is \textit{$H$-deletion-saturated} if $G$ is $H$-free with $E(G)\neq \varnothing$, and for every $e\in E(G)$, $G-e$ is not $H$-free. Similarly, we say that a graph $G$ is \textit{$H$-addition-saturated} if $G$ is $H$-free with $E(\ol{G})\neq \varnothing$, and for every $e\in E(\ol{G})$, $G+e$ is not $H$-free. We say that $H$ is \textit{deletion-normal} if there exists a graph $G$ that is $H$-deletion-saturated, and that $H$ is \textit{addition-normal} if there exists a graph $G$ that is $H$-addition-saturated.

These notions appear to be more tractable than symmetric normality. For example, it is known that all even cycles are both addition-normal \cite{Tennenhouse} and deletion-normal \cite{evencycle} (although the proof of deletion-normality is rather involved). The case of addition was proved in 2016 by Tennenhouse \cite{Tennenhouse}:

\begin{theorem}[Tennenhouse \cite{Tennenhouse}]\label{thm:Tennenhouse}
   All even cycles are addition-normal.
\end{theorem}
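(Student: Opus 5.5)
The plan is to reduce the statement to a purely structural property and then realize that property with a symmetric construction. For $t\geq 2$ and $G$ a graph with $E(\ol G)\neq\varnothing$, $G+uv$ contains an induced $C_{2t}$ if and only if $uv$ lies on it. Since $G$ is $C_{2t}$-free, the copy must use $uv$, and so it is equivalent to the following: $u$ and $v$ are the ends of an induced path of $G$ with exactly $2t$ vertices. So I must construct a graph $G$ with three properties:
\begin{enumerate}[(i)]
\item $G$ is not complete;
\item $G$ has no induced $C_{2t}$;
\item every pair of nonadjacent vertices is joined by an induced path on $2t$ vertices.
\end{enumerate}
For $t=2$ this is immediate: $C_5$ works. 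Every nonadjacent pair is at distance two, and the complementary arc is an induced path with four vertices. The general plan is to find an analogous vertex-transitive example for each $t$.

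First I will show that the naive extension fails, to see what is needed. In $C_{2t+1}$ a pair at distance $d$ is joined by the complementary arc, which has $2t+2-d$ vertices. This equals $2t$ only when $d=2$, so for $t\geq 3$ pairs at distance $3,\dots,t$ fail (iii). I would therefore take $G$ to be a Cayley graph $\mathrm{Cay}(\mathbb Z_m,S)$ with $S=-S$, so that (iii) only has to be checked for the pairs $(0,x)$ with $x\notin S\cup\{0\}$. A natural first attempt is $m=2t+1$ or a small multiple of it, with $S$ chosen among $\{\pm1,\pm 2\}$ or $\{\pm 1,\pm k\}$. For each $x$ I would exhibit an explicit induced path $0=a_0,a_1,\dots,a_{2t-1}=x$ whose successive differences lie in $S$. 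To verify inducedness, I would check that no difference $a_j-a_i$ with $j-i\geq 2$ lies in $S$.

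Property (ii) is the step I expect to be the main obstacle, since dense Cayley graphs tend to contain induced even cycles of every length. I would handle it by an arithmetic invariant. Give each generator in $S$ a weight and consider closed walks. An induced cycle in a circulant corresponds to a zero-sum sequence of elements of $S$ with no zero-sum or $S$-valued proper consecutive subsums. I would then show that any such sequence has odd length or length less than $2t$, for instance by reducing modulo $2$ or modulo $m$.

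If no small circulant works, the fallback is an inductive gadget construction. Start from $C_{2t+1}$. For every bad nonadjacent pair, attach a new path on $2t$ vertices between them, and make the new internal vertices adjacent to everything "far" from the path so that no new nonadjacent pairs arise. Then iterate and argue finiteness by taking a quotient or a closure. Throughout, the invariant to maintain is that every induced cycle has odd length or length less than $2t$. The delicate point there is again (ii), namely controlling induced cycles that use several attached paths.
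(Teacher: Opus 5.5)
Your reduction is correct. Since $G$ is $C_{2t}$-free, an induced $C_{2t}$ in $G+uv$ must use the edge $uv$, so $C_{2t}$-addition-saturation is exactly (i)--(iii), and $C_5$ settles $t=2$. For $t\geq 3$, however, you never exhibit a graph, so nothing is proved there. The paper only cites Tennenhouse for this statement, so your argument has to be complete on its own.

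\textbf{Your candidates for $t\geq 3$ provably fail.}
\begin{itemize}
\item Take $m=2t+1$. An induced path on $2t$ vertices misses only one vertex, so each internal vertex of it has degree at most $3$. Since $m$ is odd, $|S|$ is even, so $|S|=2$. Then $G$ is either disconnected or isomorphic to $C_{2t+1}$, and both violate (iii).
\item Take $S=\{\pm1,\pm2\}$, with $m\geq 9$ (forced once $t\geq 4$). Two consecutive steps of an induced path must have the same sign and cannot both be $\pm1$. So an induced path on $2t$ vertices is monotone, with integer displacement $D$ satisfying $3t-2\leq D\leq \min\{4t-2,m-3\}$. The pair $\{0,3\}$ forces $D=m-3$, hence $m\leq 4t+1$. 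The pair $\{0,\lfloor m/2\rfloor\}$ forces $\lceil m/2\rceil\geq 3t-2$, hence $m\geq 6t-5$. These are incompatible for $t\geq 4$.
\end{itemize}

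\textbf{Your invariant for (ii) is empty.} Reduction mod $2$ is defined only for even $m$. If every element of $S$ is odd, $G$ is bipartite, and two vertices on the same side are never the ends of a path with $2t-1$ edges, which contradicts (iii). If $S$ contains an even element, parity constrains only the number of odd steps, not the length of a cycle.

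\textbf{The gadget fallback is not an argument.} The internal vertices of an attached induced path are pairwise nonadjacent except for consecutive ones, so new nonadjacent pairs are unavoidable. Neither ``adjacent to everything far'' nor ``quotient or closure'' is specified well enough to control induced cycles or to guarantee termination.

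\textbf{What is missing is a wider connection set and a winding-number argument in place of parity.} With these, your circulant plan goes through. Let $k=t-1$, $m=2tk+1$, and $G=\mathrm{Cay}(\mathbb{Z}_m,\{\pm1,\ldots,\pm k\})$; this is $C_5$ when $t=2$.
\begin{itemize}
\item For (ii), lift an induced cycle of length $L\geq 4$ to $\mathbb{Z}$ using steps in $\{\pm1,\ldots,\pm k\}$. If its winding number were $0$, it would be an induced cycle of length at least $4$ in the $k$-th power of a path, which is an interval graph and hence chordal. So the cycle winds around, giving $Lk\geq m$ and $L\geq 2t+1$.
\item For (iii), take step sequences $s_1,\ldots,s_{2t-1}$ with $s_{2i}=k$ and $s_{2i-1}\in\{1,\ldots,k\}$. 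Every sum of at least two consecutive steps lies in $[k+1,m-k-1]$. So these give induced paths on $2t$ vertices from $0$ to every $D\in[t+(t-1)k,(2t-1)k]$. The reflection $x\mapsto -x$ gives paths to every $m-D\in[k+1,tk+k-t+1]$.
\item These two intervals cover all non-neighbours $[k+1,(2t-1)k]$ of $0$ precisely because $k\geq t-1$. Vertex-transitivity then gives (iii) for every nonadjacent pair.
\end{itemize}
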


Recently, we also proved the deletion case \cite{evencycle}:

\begin{theorem}[Fan, Hajebi, Hajebi, Spirkl \cite{evencycle}]\label{thm:maineven}
    All even cycles are deletion-normal.
\end{theorem}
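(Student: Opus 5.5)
The plan is to reformulate the condition locally and then build, for each $t\geq 2$, an explicit graph satisfying it. Deleting an edge $uv$ from an $H$-free graph $G$ can only create an induced copy of $C_{2t}$ whose vertex set contains both $u$ and $v$. So $G$ is $C_{2t}$-deletion-saturated if and only if it satisfies the following two conditions.
\begin{enumerate}
\item[(a)] $G$ has at least one edge and no induced $C_{2t}$.
\item[(b)] Every edge $uv$ is a \emph{unique chord}: there is a set $S\ni u,v$ with $|S|=2t$ such that $G[S]$ is a $2t$-cycle together with the single chord $uv$.
\end{enumerate}
If the chord $uv$ cuts the $2t$-cycle into two cycles of lengths $a+1$ and $b+1$, then $a+b=2t$ and $a,b\geq 2$. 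Hence every edge must lie on two induced cycles of lengths $a+1$ and $b+1$ that share exactly the edge $uv$ and have no edges between their remaining vertices. For $t=2$ this says that every edge is the middle edge of an induced diamond ($K_4$ minus an edge). The icosahedron from \Cref{fig:icosa} already does this, since it is $C_4$-induced-saturated \cite{behrens}.

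For general $t$ I would aim for a graph whose induced cycles have tightly controlled lengths. The natural choice is to certify each edge with two odd induced cycles, i.e.\ $a=b=t$ when $t$ is even, or $a=t-1$, $b=t+1$ when $t$ is odd. At the same time we need to rule out any induced cycle of length exactly $2t$.

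\textbf{Construction.} I would start from a small seed graph in which one edge is a unique chord of a $2t$-cycle. I would then close it up so that every edge gets a certificate: for each edge not yet certified, glue on two new induced paths, which creates a pair of cycles of the required lengths through that edge, with the new paths attached only at the endpoints of the edge. Each glued path introduces new edges that themselves need certificates. To make the process terminate, I would instead use a highly symmetric host, such as a Cayley graph on $\mathbb{Z}_n$ or $\mathbb{Z}_n^k$, or a line graph in the spirit of \Cref{thm:odd}. In such a host a single certificate, translated by the automorphism group, certifies every edge in an edge-transitive way. Concretely, I would look for an edge-transitive graph of large girth-type structure in which every edge is the unique chord of a $2t$-cycle. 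Odd-cycle-rich hosts such as line graphs of suitable bipartite graphs are the first candidates, since their induced cycles correspond to cycles in the underlying graph.

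\textbf{Freeness.} I expect checking (a), the absence of any induced $C_{2t}$, to be the main obstacle. The gadgets and symmetric hosts inevitably create many cycles, and an induced $2t$-cycle could wander through several certificates. My first approach would be to encode the graph via a homomorphism or labelling, e.g.\ a map to $\mathbb{Z}_m$ or to a small ``pattern'' graph. Under such an encoding, every induced cycle projects to a closed walk whose length is forced to avoid $2t$, except in configurations containing a chord. A case analysis on how an induced cycle can pass through the attachment points of the certificates would then complete the argument. If this fails, I would fall back to a layered construction: sparse gluing in a tree-like fashion along the edges, where a cycle must either stay within one gadget, in which case its length is checked directly, or traverse a cutset of size two, which forces a chord. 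A final cleanup then identifies the leaves of the tree-like structure to obtain a finite graph.
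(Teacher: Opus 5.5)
The paper does not prove this statement. It imports it from \cite{evencycle} and only remarks that the proof there is rather involved, so there is no in-paper argument to compare against. Judged on its own, your proposal has a genuine gap: it is a list of strategies, not a proof. Your reduction to (a) and (b) is correct: an induced $C_{2t}$ in $G-uv$ must use both $u$ and $v$, since $G$ is $C_{2t}$-free. For $t=2$ the icosahedron does finish the job. For $t\geq 3$, however, you never specify a graph, and you never verify condition (a) for anything. The existence of a \emph{finite} graph satisfying (a) and (b) at the same time is the whole content of the theorem. Edge-transitivity of a host would reduce (b) to a single edge, but it gives no help with (a), which you correctly call the main obstacle and then leave open.

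Several of your concrete suggestions also fail as stated.

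\emph{Line graphs.} Take a claw-free graph and suppose $G[S]$ is the cycle $c_0\dd c_1\dd\cdots\dd c_{2t-1}\dd c_0$ plus the chord $c_0c_a$. Looking at $c_0$ and its neighbours $c_1,c_{2t-1},c_a$ forces $a\in\{2,2t-2\}$. So in a line graph every certificate is a diamond ($t=2$) or a triangle sharing an edge with an odd hole $C_{2t-1}$ ($t\geq 3$). This already rules out your proposed split ($a=b=t$, or $\{a,b\}=\{t-1,t+1\}$) in any line graph for $t\geq 4$. Since line graphs of bipartite graphs are diamond-free and odd-hole-free (\Cref{thm:bilineobst}), no line graph of a bipartite graph is $C_{2t}$-deletion-saturated for any $t\geq 2$.

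\emph{Homomorphisms and labellings.} A map that controls lengths of closed walks cannot by itself exclude induced $2t$-cycles. Every certificate contains a non-induced $2t$-cycle, which maps to a closed walk of length $2t$. The argument therefore has to detect chords, and you give no mechanism for that.

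\emph{Tree-like gluing.} Gluing copies of $C_{2t}$-plus-a-chord along edges does give a $C_{2t}$-free graph in which every edge is certified, because a hole never crosses a clique cutset. But each gluing creates $2t$ new uncertified edges, so this only produces an infinite graph, in the spirit of \Cref{thm:carlaetal}. Identifying leaves to make it finite destroys exactly the edge cutsets that guaranteed freeness. It also creates holes running through many gadgets, whose lengths and chords you do not control.

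This finite closing-up step is the missing idea. The hypergraph trick of \Cref{lem:maincentered} cannot substitute for it, since that trick presupposes a finite $C_{2t}$-deletion-saturated seed $W$.
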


Furthermore, complete graphs on two or more vertices are the only graphs known not to be deletion-normal, and edgeless graphs on two or more vertices are the only graphs known not to be addition-normal. (For instance, in contrast to \Cref{thm:p4}, $P_4$ is readily seen to be both deletion- and addition-normal: the five-cycle is $P_4$-deletion-saturated, and the two-edge matching is $P_4$-addition-saturated.)
\medskip

We think that the converse should also be true:

\begin{conjecture}\label{conj:maindeletion}
Every non-complete graph $H$ is deletion-normal. Equivalently, every non-edgeless graph $H$ is addition-normal.
\end{conjecture}

We prove \Cref{conj:maindeletion} for a variety of graphs $H$, which we believe provides considerable evidence for the conjecture:

\begin{theorem}\label{thm:firstmain}
    The following graphs, if not complete, are deletion-normal.
    \begin{enumerate}[{\rm (a)}]
        \item\label{thm:firstmain_a} Every complete multipartite graph with exactly one largest part.
        \item\label{thm:firstmain_b} Every complete multipartite graph with all parts of cardinality $t$, where we have $t\in \left\{\binom{2r-1}{r-1}+1, \binom{2r}{r}+1\right\}$ for some $r\in \poi$.
        \item\label{thm:firstmain_c} Every graph obtained by iteratively adding vertices of degree at most one to a $2$-connected deletion-normal graph.
        \item\label{thm:firstmain_d} Every graph with two degree-one vertices at distance at most three from each other.
        \item\label{thm:firstmain_e} Line graphs of all trees.
    \end{enumerate}
\end{theorem}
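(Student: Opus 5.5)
The overall plan is to pass to complements where convenient. $G$ is $H$-deletion-saturated exactly when $\ol{G}$ is $\ol{H}$-addition-saturated, so the parts can be attacked in whichever form is cleaner. Each part is handled by an explicit construction followed by two checks: (i) $H$-freeness, and (ii) that every edge deletion creates an induced $H$. I expect (ii) to be routine in each case, since the constructions will be built around it, and (i) to carry the real work.

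For (a) and (b), $\ol{H}$ is a disjoint union of cliques $K_{a_1}\cup\dots\cup K_{a_k}$, and we want an $\ol{H}$-free graph $G'$ in which adding any non-edge creates an induced $\ol{H}$. For (a) with $a_1>a_2\ge\dots$, the obvious candidate is a union of many disjoint copies of $K_{a_1}$ minus an edge. This has clique number $a_1-1$, so it is $\ol{H}$-free. Adding a missing edge inside a copy creates $K_{a_1}$, and the other copies supply anticomplete cliques of size $a_1-1\ge a_i$ for $i\ge 2$. The obstacle is non-edges \emph{between} copies, since adding those must also create an induced $\ol{H}$. I would therefore arrange the copies along an auxiliary structure, for example by making certain pairs of copies complete to each other. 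The aim is that adding a cross edge $uv$ completes some $(a_1-1)$-clique through $u$ into an $a_1$-clique that is anticomplete to enough other pieces, while no $a_1$-clique exists beforehand. Uniqueness of the largest part is what makes this work: only one piece has to grow. For (b), all parts have equal size $t$, so this asymmetry is gone. Instead I would use a Kneser-type or Sperner-type design on $2r-1$ or $2r$ points with $r$-subsets. The binomial values of $t$ suggest cliques indexed by $(r-1)$- or $r$-subsets, with adjacency given by intersection. I expect part (b) to be the main obstacle: choosing the design so that no $k$ anticomplete $t$-cliques exist, while every added edge produces one, is a delicate counting argument.

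For (c), let $H_0$ be $2$-connected with an $H_0$-deletion-saturated graph $G_0$, and let $H$ be obtained by iteratively adding vertices of degree at most one. I would attach to every vertex of $G_0$ a large rooted tree $T$ containing every pendant tree of $H$, and also add many isolated vertices. Deleting an edge $e$ of $G_0$ creates an induced $H_0$ in $G_0-e$. That copy extends to $H$ by using the pendant trees at its vertices, which are pairwise anticomplete, together with isolated vertices for the degree-zero additions. Deleting an edge inside an attached tree must also create $H$, so the trees should be chosen minimal in a suitable sense: for instance, every pendant edge of $T$ is needed, and deleting a tree edge turns a branch into isolated material that still yields $H$. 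For $H$-freeness, any induced $H$ in $G$ maps its $2$-connected subgraph $H_0$ into a single block, and every block of $G$ is either $G_0$'s block or a single edge. The copy of $H_0$ would therefore lie in $G_0$, and a counting argument on the remaining vertices should give a contradiction. Some care is needed when $G_0$ itself contains $H$ with the pendant part embedded inside $G_0$; there I would invoke $H$-freeness of $G_0$ and, if necessary, replace $G_0$ by a disjoint union of copies.

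For (d), let $x,y$ be leaves at distance at most $3$, with neighbours $x',y'$. I would take $G$ to be $H$ with $x$ and $y$ identified, or joined by an edge. The number of vertices or degree-one vertices then drops, which gives $H$-freeness by a count. Deleting an edge should re-split the structure into an induced $H$, and the distance bound is exactly what lets a short cycle through $x,y$ break into the two pendant edges. I would make this robust by gluing many copies along this local gadget. For (e), line graphs of trees are claw-free block graphs whose blocks are cliques. I would reduce them to (c) and (d): a tree with two leaves sharing a neighbour or close together yields a line graph whose leaf-cliques are close. Otherwise I would give a direct construction, blowing up the leaf-cliques of $L(T)$ by one vertex and checking claw-freeness to control induced copies.
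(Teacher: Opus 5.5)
\paragraph{Part (c) fails, and the problem is structural.} Since $H_0$ is $2$-connected and non-complete, it has at least four vertices. In your $G$, every block not contained in $G_0$ has at most two vertices, so any induced copy of $H_0$ in $G-e$ must lie inside $G_0$. Now let $e$ be an edge of an attached tree. Then $G_0$ is untouched in $G-e$ and is $H_0$-free, so $G-e$ contains no induced $H$, however the trees are chosen. The pendant structure therefore cannot come from extra tree edges.

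The missing idea is to build the host \emph{entirely} from copies of an $H_0$-deletion-saturated graph $W$. The paper puts a copy $W_f$ on each hyperedge $f$ of a $|V(W)|$-uniform hypergraph whose girth and minimum degree both exceed $|V(H)|$ (\Cref{thm:hypergirthchi}, \Cref{lem:hypgirthdeg}). Deleting $e$ creates $H_0$ inside $W_f-e$, where $f$ is the unique hyperedge containing both ends of $e$. The pendant blocks are then found in further copies through the attachment vertices, which is why large minimum degree is needed. Large girth, via rings (\Cref{lem:ring}), makes these attachments induced and pairwise anticomplete, and it also rules out an induced $H_0$ straddling several copies (\Cref{lem:maincentered}).

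\paragraph{The other parts are too vague to be proofs, and their concrete suggestions fail.}

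In (a), making two copies of $K_{a_1}$ minus an edge complete to each other creates a clique of size $2a_1-2\ge a_1$, so your clique-number argument is lost. What is really needed, writing $a_1=t+1$, is a maximal $K_{t+1}$-free graph with the following property: for every non-edge, some completing $(t+1)$-clique is anticomplete to pairwise anticomplete cliques of the remaining sizes. The paper gets this by taking a maximal $K_{t+1}$-free supergraph of the Bria\'{n}ski--Davies--Walczak graph (\Cref{thm:AlexcF}). There every neighbourhood has bounded chromatic number in the original graph, so deleting the neighbourhoods of boundedly many vertices still leaves a $K_t$.

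In (b), the Erd\H{o}s--Ko--Rado direction is right, but a proof needs three explicit constructions:
\begin{itemize}
\item the host $\lng(K^r_n)$ with $n=(2r+1)(s+1)-1$, for $t=\binom{2r-1}{r-1}+1$ and $r\ge 2$;
\item the product $\lng(K^r_n)\Join\lng(K^1_{s+2})$ with Frankl's theorem, for $t=\binom{2r}{r}+1$;
\item a separate construction for $t=2$, namely disjoint copies of the complement of the icosahedron plus isolated vertices, since intersecting families of $1$-sets give nothing.
\end{itemize}

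In (d), a host made from $H$ by merging or joining the two leaves is not edge-symmetric. For $H=K_{1,3}$, joining two leaves gives the paw, and deleting any of its other three edges yields $P_4$ or $K_3+K_1$, neither of which contains a claw. You give no mechanism that makes the remaining edges critical. The paper instead uses the symmetric host $\lng(K^d_{n-1})$. Here $d$ is the maximum degree of $H$, and $n$ is the least order of a $d$-graph $\Gamma$ with $\lng(\Gamma)\cong H$. The leaf condition is what lets $\Gamma$ be modified so that, for each $\ell\in\{1,\ldots,d-1\}$, the two leaves become disjoint $\ell$-pendent hyperedges on at most $n+\ell-1$ vertices. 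Identifying their private $\ell$-sets then embeds $H$ into $\lng(K^d_{n-1})-e_1e_2$ whenever $|e_1\cap e_2|=\ell$ (\Cref{thm:ktn-1}). Your identification idea is a shadow of this step, but applied to $H$ rather than to a hypergraph preimage.

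Finally, (e) does not reduce to (c) or (d). Take the tree formed by two adjacent vertices each carrying two leaves; its line graph is the bowtie, which has no degree-one vertex and only complete blocks. Claw-freeness cannot control induced copies of a target that is itself claw-free. The paper uses $\lng(K_{n-1})$ with $n=|V(T)|$, with Whitney's theorem (\Cref{thm:whitneyline}) for freeness and the same leaf identification for saturation.
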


In fact, we prove stronger results than outcomes \ref{thm:firstmain_c}, \ref{thm:firstmain_d}, and \ref{thm:firstmain_e} of \Cref{thm:firstmain}. Moreover, we verify our conjecture for all graph on at most six vertices:

\begin{theorem}\label{thm:six}
    Every non-complete graph on at most six vertices is deletion-normal.
\end{theorem}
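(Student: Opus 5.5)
The plan is a case analysis over all non-complete graphs $H$ with $|V(H)|\le 6$. We discharge as many as possible by the general results of \Cref{thm:firstmain} and earlier results, and handle the few survivors with explicit constructions.

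\textbf{Step 1: trivial and disconnected cases.} First dispose of small or degenerate $H$. If $H$ has an isolated vertex and $H$ is not edgeless, take $G$ to be a large clique joined suitably, or use the convention that a deletion-saturated graph for $H$ minus the isolated vertex, taken disjointly with enough isolated vertices, still works; we would check this carefully. If $H$ is edgeless on at least two vertices, a single edge $K_2$ (plus isolated vertices when $|V(H)|\ge 3$) is $H$-free, and deleting the edge creates an edgeless induced subgraph of the right size. More generally, deletion-normality of $H$ is equivalent to addition-normality of $\overline{H}$, via complementation. So we may pass to complements whenever convenient, and the enumeration halves.

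\textbf{Step 2: apply the general families.} Sort the remaining graphs, up to complementation, by structure.
\begin{itemize}
\item Graphs with two leaves at distance at most three are covered by \Cref{thm:firstmain}\ref{thm:firstmain_d}.
\item Graphs obtained from a $2$-connected deletion-normal graph by adding pendant vertices are covered by \ref{thm:firstmain_c}. This applies with the base being a cycle (\Cref{thm:maineven}, and odd cycles via \Cref{thm:odd}, since normal implies deletion-normal) or any other $2$-connected graph settled in this enumeration. We therefore process $2$-connected graphs first, in increasing order.
\item Complete multipartite graphs with a unique largest part are covered by \ref{thm:firstmain_a}. Those with all parts of equal size $t\in\{2,3,4\}$ are covered by \ref{thm:firstmain_b}: $t=2$ from $r=1$, and $t=3$ from $\binom{2}{1}+1$.
\item Line graphs of trees, such as claw-like and paw-like graphs, are covered by \ref{thm:firstmain_e}.
\item Normal graphs already listed in the literature, such as paths other than $P_4$, $C_5$, and the graphs from \cite{behrens,axenovich}, are covered automatically.
\end{itemize}
Trees on at most six vertices either have two leaves at distance at most three or are paths. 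Paths are normal except $P_4$, and $P_4$ is handled by $C_5$.

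\textbf{Step 3: leftovers.} This step is the main obstacle: $2$-connected (or near-$2$-connected) graphs on five and six vertices that fall outside every family. Examples include $K_{2,2,2}$ minus an edge and certain wheels and their relatives, as well as any complete multipartite graph with two equal largest parts and other parts smaller, such as $K_{1,2,2}$. For these we would search for explicit $H$-deletion-saturated graphs. First try highly symmetric candidates: line graphs of $K_{n,n}$, Kneser and Paley graphs, complements of cycles, the icosahedron and its complement, and blow-ups of $H$ minus an edge. By edge-transitivity it then suffices to check a single edge, namely that the graph is $H$-free and that deleting one edge creates a copy of $H$. Where no symmetric candidate works, we would fall back on a computer search and record the certificates.

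The bulk of the proof is bookkeeping that every graph is covered. The real difficulty is finding constructions for the handful of dense $2$-connected leftovers on six vertices.
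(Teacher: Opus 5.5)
Your proposal has a genuine gap, and it sits where the content of the theorem lies. \Cref{thm:six} is a finite verification. A proof must account for each of the $202$ non-complete graphs on at most six vertices. For every graph not covered by a general theorem, it must exhibit an actual $H$-deletion-saturated graph. Your Step~3 neither identifies the leftover graphs nor supplies a single certificate. ``Search among symmetric candidates, else run a computer search'' is a research plan; nothing in it guarantees that a suitable graph exists or will be found. In the paper, once all general tools are exhausted, $25$ graphs are treated individually, several by quite specific constructions:
\begin{itemize}
\item $\lng$ of the Petersen graph for $2K_3$;
\item the Schl\"afli graph, using its $4$-ultrahomogeneity, for entry 12H;
\item an edge-maximal triangle-free graph of chromatic number larger than $5$ for 10K;
\item a Petersen graph and a complemented Petersen graph joined by a perfect matching for 10E;
\item $C_6$ and $\ol{C_6}$ joined by a perfect matching for $\ol{2P_3}$;
\item a special $16$-vertex graph for $K_3+K_2$.
\end{itemize}

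Your claim that only ``a handful of dense $2$-connected'' graphs remain is also wrong, because Step~2 omits several general tools the paper relies on. With only \Cref{thm:firstmain}(a)--(e), you cannot handle, for instance:
\begin{itemize}
\item $K_{2,2,1}$ and $K_{2,2,1,1}$. These have two largest parts of unequal size relative to the rest; the paper proves the stronger \Cref{thm:Ktts0} using disjoint copies of the complement of the icosahedron plus isolated vertices.
\item $K_3+K_1$, where stripping the isolated vertex leaves a complete graph.
\item Disconnected leafless graphs such as $2K_3$ and $K_3+K_2$, which are far from $2$-connected.
\item Line graphs of non-trees with a severed pair. You invoke only (e), not \Cref{thm:mainsevered}.
\item The large batch of line$^-$ and biline$^-$ graphs, and complements of line$^+$ and biline$^+$ graphs. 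The paper dispatches these wholesale with \Cref{lem:linegraphs}, via $\lng(K_{2h})$, $\ol{\lng(K_{2h})}$, $\lng(K_{h,h})$ and $\ol{\lng(K_{h,h})}$.
\end{itemize}
For example, $K_3+K_1$ is handled by \Cref{lem:linegraphs}: its complement, the claw, becomes the line graph $P_3+K_1$ after deleting one edge.

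Two reductions in Step~1 are also incorrect as stated.

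First, complementation does not halve the enumeration. $H$ is deletion-normal if and only if $\ol{H}$ is \emph{addition}-normal, which is a different question from whether $\ol{H}$ is deletion-normal. Compare $K_n$ with $\ol{K_n}$; or note that $K_{1,3}$ follows from \Cref{thm:compmulti}, while its complement $K_3+K_1$ needs a separate argument.

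Second, for isolated vertices you must add back \emph{exactly one} isolated vertex per removed one, as in \Cref{lem:isolated}. Adding ``enough'' isolated vertices fails: $C_5$ is $(P_3+K_1)$-deletion-saturated, yet $C_5+2K_1$ contains $P_3+2K_1$. The ``large clique joined suitably'' alternative is not an argument at all.
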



The remainder of the paper is organized as follows. In \Cref{sec:prelim}, we introduce some notation and terminology used throughout. In \Cref{sec:multi}, we prove \ref{thm:firstmain}~\ref{thm:firstmain_a} and \ref{thm:firstmain_b}. In \Cref{sec:block}, we prove \ref{thm:firstmain}~\ref{thm:firstmain_c}; in fact, the main result therein (see \Cref{thm:maincentered}) is fairly stronger than \ref{thm:firstmain}~\ref{thm:firstmain_c}. In \Cref{sec:severed}, we prove \ref{thm:firstmain}~\ref{thm:firstmain_d} and \ref{thm:firstmain_e}; again, the main result of \Cref{sec:severed} (see \Cref{thm:mainsevered}) substantially strengthens both. The proof \Cref{thm:six} is long and technical, so we include it in Appendix~\ref{appendix}. (The proof also appears in the second author's thesis \cite{sahabthesis}.)

\section{Preliminaries}\label{sec:prelim}

\subsection{Graphs}
For standard graph-theoretic terminology, the reader is referred to \cite{diestel}. Graphs in this paper have finite vertex sets, no loops, and no parallel edges (unless specified otherwise). Let $G$ be a graph with vertex set $V(G)$ and edge set $E(G)$. For $X\subseteq V(G)$, we use $X$ to denote both the set $X$ of vertices and the induced subgraph $G[X]$ of $G$ with vertex set $X$, also called \textit{the subgraph of $G$ induced by $X$}. For $v\in V(G)$, we write $N_G(v)$ for the set of all neighbors of $v$ in $G$. A \textit{leaf} of $G$ is a degree-one vertex. We write $\ol{G}$ for the complement of $G$. For $e\in E(G)$, we denote by $G-e$ the graph obtained from $G$ by removing the edge $e$. For $e\in E(\ol{G})$, we denote by $G+e$ the graph obtained from $G$ by adding the edge $e$.

We say that two subsets $X,Y$ of $V(G)$ are \textit{complete in $G$} (or that \textit{$X$ is complete to $Y$ in $G$}) if $X\cap Y=\varnothing$ and every vertex in $X$ is adjacent to every vertex in $Y$. We also say that $X,Y$ are \textit{anticomplete in $G$} (or \textit{$X$ is anticomplete to $Y$ in $G$}) if $X\cap Y=\varnothing$ and every vertex in $X$ is nonadjacent to every vertex in $Y$. We say that a vertex \textit{$x\in V(G)$ is complete} (\textit{anticomplete}) \textit{to $Y$ in $G$} if ${x}$ is complete (anticomplete) to $Y$ in $G$.

A \textit{clique} in $G$ is a set of pairwise adjacent vertices, and a \textit{stable set} is a set of pairwise nonadjacent vertices. The \textit{chromatic number} of $G$, denoted $\chi(G)$, is the minimum number of stable sets into which $V(G)$ can be partitioned. 

The set of positive integers is denoted by $\poi$, and for integers $k,k'$, we write $\{k,\ldots, k'\}$ for the set of all integers at least $k$ and at most $k'$ (so $\{k,\ldots, k'\}\neq\varnothing$ if and only if $k\leq k'$). For $k\in \poi$ and a graph $H$, we denote by $kH$ the graph obtained from the disjoint union of $k$ isomorphic copies of $H$. For graphs $H_1,\ldots, H_k$, we denote by $H_1+\cdots+H_k$ the graph obtained from the disjoint union of isomorphic copies of $H_1,\ldots, H_k$.

\subsection{Special graphs}

We say that a graph $G$ is \textit{complete} if $E(G)$ is the set of all $2$-subsets of $V(G)$, and that $G$ is \textit{empty} if $E(G)=\varnothing$. For every $n\in \poi$, we denote the $n$-vertex complete graph by $K_n$.

For $m,n\in \poi\cup \{0\}$, we denote by $K_{m,n}$ the complete bipartite graph with parts of cardinality $m$ and $n$. The complete bipartite graph $K_{1,n}$, for $n\in \poi\cup \{0\}$, is also called a \textit{star}. In general, for $k\in \poi$ with $k\geq 2$ and $n_1,\ldots, n_k\in \poi\cup \{0\}$, we denote by $K_{n_1,\ldots, n_k}$ the complete $k$-partite graph with parts of cardinality $n_1,\ldots, n_k$.

Let $P$ be a path. We write $P=p_1\dd \cdots \dd p_k$ to indicate that $V(P)=\{p_1,\ldots, p_k\}$, for some $k\in \poi$, and that $E(P)=\{p_ip_{i+1}:i\in {1,\ldots, k-1}\}$. We call $p_1$ and $p_k$ the \textit{ends} of $P$. The \textit{length} of $P$ is the number of edges in $P$. For every $k\in \poi$, we denote the unique $k$-vertex path (that is, the path of length $k-1$) up to isomorphism by $P_k$. Given a graph $G$, a \textit{path in $G$} is an induced subgraph of $G$ that is a path.

Let $C$ be a cycle. We write $C=c_1\dd \cdots \dd c_k\dd c_1$ to mean that $V(C)=\{c_1,\ldots, c_k\}$, for some $k\in \poi$, and that $E(C)=\{c_ic_{i+1}:i\in {1,\ldots, k-1}\}\cup \{c_kc_1\}$. The \textit{length} of $C$ is the number of edges (or vertices) in $C$, and a cycle of length $k$ is also called a \textit{$k$-cycle}. For  $k\in \poi$, we denote the $k$-cycle up to isomorphism by $C_k$. The \textit{girth} of a graph $G$ is the minimum length of a cycle that is an (induced) subgraph of $G$.

\subsection{Line graphs}

Given a graph $H$, the \textit{line graph of $H$}, denoted $\lng(H)$, is the graph with vertex set $E(H)$ such that distinct edges $e,e'\in E(H)$ of $H$ are adjacent as vertices in $\lng(H)$ if and only if $e$ and $e'$ share an end in $H$. We say that a graph $G$ is a \textit{line graph} if there exists a graph $H$ such that $G=\lng(H)$.

\subsection{$k$-graphs}

Let $k\in \poi$. A \textit{$k$-graph} (also called a \textit{$k$-uniform hypergraph}) is a pair $\Gamma=(V(\Gamma),E(\Gamma))$ where $V(\Gamma)$ is a finite set of \textit{vertices} and $E(\Gamma)$ is a set of $k$-subsets of $V(\Gamma)$, called \textit{hyperedges}. For $n\in \poi$, we denote by $K^k_n$ the $n$-vertex \textit{complete} $k$-graph; that is, the unique $k$-graph (up to isomorphism) in which every $k$-subset of the vertex set is a hyperedge. Note that, with this terminology, graphs are exactly the $2$-graphs, and $K_n=K^2_n$ for every $n\in \poi$. Let $\Gamma$ be a $k$-graph for some $k\geq 2$. The \textit{chromatic number} of $\Gamma$ is the smallest $c\in \poi$ for which there is a partition $(S_1,\ldots, S_c)$ of $V(\Gamma)$ such that no hyperedge of $\Gamma$ is contained entirely in one of the sets $S_1,\ldots, S_c$. The \textit{girth} of $\Gamma$ is the smallest $g\in \poi$ for which there are $g$ pairwise distinct hyperedges of $\Gamma$ whose union has cardinality at most $g(k-1)$. For $v\in V(\Gamma)$, the \textit{degree of $v$ in $\Gamma$} is the number of hyperedges of $\Gamma$ containing $v$. The \textit{line graph} of a $k$-graph $\Gamma$, denoted $\lng(\Gamma)$, is the graph (or $2$-graph) with vertex set $E(\Gamma)$ such that distinct hyperedges $f_1,f_2\in E(\Gamma)$ are adjacent as vertices in $\lng(\Gamma)$ if and only if $f_1\cap f_2\neq\varnothing$. Note that $\ol{\lng(K_n^k)}$ for $n\geq 2k$ are exactly the \textit{Kneser graphs} \cite{kneser}. Observe that in the case $k=2$, the above notions match the analogous ones for graphs. Also, if the girth of $\Gamma$ is larger than $2$, then $\Gamma$ is \textit{linear}; that is, every two hyperedges in $\Gamma$ share at most one vertex.

\section{Complete multipartite graphs}\label{sec:multi}
Here, we prove outcomes \ref{thm:firstmain_a} and \ref{thm:firstmain_b} of \Cref{thm:firstmain}. We need the following from \cite{polychi}:

\begin{theorem}[Bria\'{n}ski, Davies, Walczak \cite{polychi}]\label{thm:AlexcF}
For all integers $c\geq t\geq 2$, there is a $K_{t+1}$-free graph $G$ with $\chi(G)=c$ such that $\chi(G')<27t^3$ for every $K_t$-free induced subgraph $G'$ of $G$.
\end{theorem}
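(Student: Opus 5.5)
Since this result is quoted from \cite{polychi}, I only sketch how I would rebuild it. The plan has three parts: a reduction, an induction on $t$, and a gadget placed on the hyperedges of a sparse hypergraph.

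\emph{Reduction.} It suffices to find, for every $c\ge t$, a $K_{t+1}$-free graph $G$ with $\chi(G)\ge c$ whose $K_t$-free induced subgraphs all have chromatic number less than $27t^3$. Both properties pass to induced subgraphs, and deleting one vertex lowers $\chi$ by at most one. So we may delete vertices one at a time until $\chi(G)=c$ exactly.

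\emph{Induction on $t$.} For $t=2$, every $K_2$-free graph is edgeless and has chromatic number $1$, so any triangle-free graph of chromatic number at least $c$ works (for example, a Mycielski graph). For the step, I would use the standard high-girth hypergraph amplification. Fix a gadget graph $F$ on $m$ vertices with at least one edge. Take an $m$-graph $\Gamma$ of girth larger than $3$ and chromatic number larger than $c$ (Erd\H{o}s--Hajnal, or Ne\v{s}et\v{r}il--R\"odl). On each hyperedge $f$, fix a bijection to $V(F)$ and put a copy of $F$ on $f$.

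\emph{Checks on the amplified graph.}
\begin{itemize}
\item Since $\Gamma$ is linear, two copies of $F$ share at most one vertex. Girth at least $4$ then puts every triangle, and hence every clique on at least three vertices, inside a single hyperedge. So $G$ is $K_{t+2}$-free whenever $F$ is.
\item Any colouring of $G$ with at most $c$ colours is a colouring of $\Gamma$, so it has a monochromatic hyperedge. That hyperedge spans an edge of $F$, which is a contradiction. Hence $\chi(G)>c$.
\item For a $K_{t+1}$-free induced subgraph $G'$, the high girth makes the pieces $G'\cap f$ fit together along a forest-like incidence structure. One then colours the pieces one by one along this structure, permuting colours on each new piece to agree with its unique vertex shared with the part already coloured. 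This gives $\chi(G')\le\max_f\chi(G'\cap f)$, up to an additive constant. So it suffices that every $K_{t+1}$-free induced subgraph of $F$ has chromatic number polynomially bounded in $t$.
\end{itemize}

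\emph{The main obstacle: designing the gadget $F$.} $F$ must be $K_{t+2}$-free, while all of its $K_{t+1}$-free induced subgraphs have bounded chromatic number. The naive choice, the cone over the previous graph $G_t$, fails. If the apex is absent, the remaining piece is an arbitrary $K_{t+1}$-free subgraph of $G_t$, whose chromatic number is unbounded. My first attempt would therefore make the gadget itself a union of many cones, attached to a second high-chromatic hypergraph in a nested (Burling- or Zykov-type) way. The aim is that every $K_{t+1}$ of $F$ is forced through an apex-type vertex, so that deleting the apexes, or avoiding $K_{t+1}$, leaves only $K_t$-free pieces of copies of $G_t$. The difficulty is tracking how the bound grows through this nesting. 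I expect this to be the genuinely hard part, with the cubic bound $27t^3$ coming from losing roughly a factor of order $t^2$ per level, summed over the levels.
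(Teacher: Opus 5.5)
The paper does not prove this statement; it quotes it from Bria\'nski, Davies and Walczak, so your sketch has to stand on its own. Your reduction to $\chi(G)\geq c$, the base case $t=2$, and your first two bullets are fine. The third bullet, however, is false, and the whole argument rests on it.

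Girth larger than $3$, or larger than any fixed $g$, only excludes \emph{short} Berge cycles in $\Gamma$. It cannot exclude long ones. A hypergraph with no Berge cycle at all is $2$-colourable: add hyperedges in tree order, each meeting the earlier ones in at most one vertex, and colour the new vertices so that both colours appear. Since you chose $\chi(\Gamma)>c\geq 2$, $\Gamma$ must contain long cycles. So the pieces $V(G')\cap f$ do not fit together along a forest. Your colour-permuting procedure breaks the first time a new piece meets the already-coloured part in two vertices.

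In fact the step refutes itself. Nothing in it uses that $G'$ is $K_{t+1}$-free; that hypothesis enters only when you bound each $\chi(G'\cap f)$. Applied to $G'=G$, the same argument would give $\chi(G)\le\chi(F)+O(1)$, contradicting your second bullet. High girth gives tree-likeness only for subgraphs of bounded size. That is exactly how the paper uses it in \Cref{lem:maincentered}, where only graphs on at most $h$ vertices are embedded. Chromatic number is a global invariant, and high-girth amplification is a device for pushing $\chi$ up, not for bounding it.

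Within your own framework, the ``main obstacle'' you identify is also illusory. If the third bullet held, the gadget $F=K_{t+1}$ would already suffice: it is $K_{t+2}$-free, and its $K_{t+1}$-free induced subgraphs are cliques on at most $t$ vertices. No induction, cones or nesting would be needed, and the theorem would be an immediate corollary of Erd\H{o}s--Hajnal.

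The real content of the theorem is a global reason why every $K_t$-free induced subgraph of the construction has bounded chromatic number while the whole graph does not. Already the case $t=3$ is essentially the Carbonero--Hompe--Moore--Spirkl theorem: a $K_4$-free graph of huge chromatic number all of whose triangle-free induced subgraphs have bounded chromatic number. That result disproved a well-known conjecture. Your proposal supplies no such mechanism, and the gadget and the source of the bound $27t^3$ appear only as expectations. As it stands, there is no proof.
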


We now give a proof of \Cref{thm:firstmain}\ref{thm:firstmain_a}, which we restate as follows:

\begin{theorem}\label{thm:compmulti}
Let $H$ be a complete multipartite graph with $|V(H)|\geq 2$ such that $H$ has a unique largest part. Then $H$ is deletion-normal.
\end{theorem}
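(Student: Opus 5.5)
The plan is to reduce deletion-normality to a monotonicity statement and then to use \Cref{thm:AlexcF} to supply a graph in which that statement can be forced by chromatic number. Write $H=K_{n_1,\ldots,n_k}$ with $n_1>n_2\geq\cdots\geq n_k$, and set $t=n_1$ and $s=n_2+\cdots+n_k$. If $k=1$ then $H$ is edgeless and non-complete, so $t\geq 2$; in this case any graph with at least one edge and stability number $t-1$ (for instance a disjoint union of $t-1$ copies of $K_2$) is $H$-deletion-saturated. So assume $k\geq 2$.

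\textbf{Reduction.} Start from an $H$-free graph $G_0$ and repeatedly delete edges as long as the graph stays $H$-free. The process stops at an $H$-deletion-saturated graph unless it reaches the edgeless graph on $V(G_0)$. It therefore suffices to build an $H$-free graph $G_0$ such that every $H$-free spanning subgraph of $G_0$ has at least one edge.

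\textbf{Construction.} Since an induced copy of $H$ consists of a stable set of size $t$ completely joined to a copy of $K_{n_2,\ldots,n_k}$, I would let $G_0$ be the join of a graph $F$ with a clique $Q$ on $s$ vertices. Here $F$ is given by \Cref{thm:AlexcF}: it is $K_{t+1}$-free, has $\chi(F)=c$ with $c$ huge, and every $K_t$-free induced subgraph of $F$ has chromatic number below $27t^3$. I would take $\ol{F}$ or $F$ itself, depending on which turns cliques into the stable sets relevant to $H$. The intent is as follows. In $G_0$, stable sets of size $t$ live inside the $F$-side, and because $Q$ is a clique it cannot host the other parts of size at least $2$, so $G_0$ is $H$-free. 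After deleting edges, however, a copy of $H$ can be assembled from a large part found in the $F$-side together with the smaller parts.

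\textbf{Key claim and main obstacle.} The crux is to show that any $H$-free spanning subgraph $G'$ of $G_0$ still has an edge. I would argue by contradiction: if $G'$ is edgeless, then $V(G_0)$ contains a stable set of size $t$ together with $s$ further vertices arranged as required. More usefully, I would define the set of vertices of $F$ that lie in some size-$t$ ``part'' in $G'$. If this set is $K_t$-free in the relevant sense, its chromatic number is below $27t^3$, so the remaining part of $F$ has chromatic number at least $c-27t^3$. That remaining part must then contain a copy of $K_t$, which is a clique of $F$ and gives a stable set of size $t$ in the complement picture, and which must be attached to the $Q$-side in $G'$, yielding an induced $H$. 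I expect this step to be the main obstacle. It is exactly where the hypothesis of a \emph{unique} largest part enters: only the single part of size $t$ can use a $K_t$ coming from $F$, and all other parts have size below $t$, so the $K_t$-freeness dichotomy of \Cref{thm:AlexcF} separates the roles cleanly. If the join with a clique proves too rigid, I would instead replace $Q$ by a suitable blow-up and repeat the same chromatic-number bookkeeping.
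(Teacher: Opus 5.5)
\textbf{There is a genuine gap: the reduction you rely on can never be satisfied.} Since $k\geq 2$, $H$ has an edge. So the edgeless graph on $V(G_0)$ is an $H$-free spanning subgraph of \emph{every} $G_0$, and your sufficient condition can never hold. Your contradiction step would have to find an induced copy of a graph with edges inside an edgeless graph. Deleting edges while maintaining only $H$-freeness gives no control over where the process stops; the whole difficulty has just been relocated.

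The construction has separate problems. Since $Q$ is complete to $F$, it can only host singleton parts, so every part of size at least two must come from $F$ anyway. When some smaller part is a singleton, $Q$ actually hurts: for $H=K_{2,1}=P_3$, any non-edge of $F$ plus a vertex of $Q$ is an induced $H$. Nothing prevents $F$ or $\ol{F}$ from containing $H$ outright. The ``key claim'' also never pins down which set is supposed to be $K_t$-free, or why.

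\textbf{The missing idea is to saturate with respect to a weaker property that only sees the largest part, and let chromatic number supply the other parts.} This is the paper's proof.

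In the complement, $\ol{H}=K_{n_1}+K_{n_2}+\cdots+K_{n_k}$ is an induced subgraph of $K_{n_1}+(k-1)K_{n_1-1}$, precisely because the largest part is unique. The case $n_1=2$ is handled by $K_{k+1}$; assume $n_1\geq 3$.

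\begin{enumerate}
\item Apply \Cref{thm:AlexcF} with $n_1-1$ (not $n_1$) in the role of $t$ and with $c$ large, obtaining $G^-$.
\item Let $G\supseteq G^-$ be a maximal $K_{n_1}$-free graph on $V(G^-)$. Then $\ol{G}$ has stability number less than $n_1$, so it is $H$-free, and it has an edge.
\item For each edge $xy$ of $\ol{G}$, maximality of $G$ gives a set $X\ni x,y$ of size $n_1$ that is stable in $\ol{G}-xy$.
\item Now find $k-1$ cliques of $G$ of size $n_1-1$, pairwise anticomplete in $G$ and anticomplete to $X$, one at a time. Let $N$ be the union of the $G$-neighbourhoods of the vertices chosen so far. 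Because $n_1\geq 3$, no chosen vertex is isolated in its piece, so $N$ contains all chosen vertices. Each of these neighbourhoods is $K_{n_1-1}$-free because $G$ is $K_{n_1}$-free, so $\chi(G^-[N])$ is bounded linearly in the number of chosen vertices.
\item Hence $\chi(G^-[V(G)\setminus N])>27(n_1-1)^3$ once $c$ is large. This gives a clique of size $n_1-1$ in $G^-\subseteq G$ lying outside $N$, which is therefore anticomplete to everything chosen so far.
\item Together with $X$, these cliques yield in $\ol{G}-xy$ an induced copy of $K_{n_1,n_1-1,\ldots,n_1-1}$, which contains $H$ as an induced subgraph.
\end{enumerate}

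Note that the chromatic bound is applied in $G^-$, while cliques and anticompleteness are read in $G$. That is what makes the edge-maximal saturation compatible with \Cref{thm:AlexcF}.
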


\begin{proof}
We show that $\ol{H}$ is addition-normal. Let $s\in \poi\cup \{0\}$ be such that $H$ has $s+1$ parts. Then $\ol{H}$ has $s+1$ components, each of which is a clique. In particular, if $\ol{H}$ is connected, then $\ol{H}$ is a complete graph on two or more vertices, which is addition-normal. Therefore, since $H$ has exactly one largest part, we may assume that for some $s,t\in \poi$, $\ol{H}$ has exactly $s+1$ components, one of which is isomorphic to $K_{t+1}$, and each of the remaining $s$ components is a complete graph on $t$ or fewer vertices. In other words,

\sta{\label{st:strongercompbip}$K_{t+1}$ is isomorphic to an induced subgraph of $\ol{H}$, and $\ol{H}$ is isomorphic to an induced subgraph of $\ol{K_{t+1, \underbrace{t,\ldots, t}_{s}}}$.}

By \eqref{st:strongercompbip}, it suffices to show that there exists a $K_{t+1}$-free graph $G$ such that for every $xy\in E(\ol{G})$, $G+xy$ has an induced subgraph isomorphic to $\ol{K_{t+1, \underbrace{t,\ldots, t}_{s}}}$.

For $t=1$, the edgeless graph on $s+2$ vertices has this property. Assume that $t\geq 2$. Let
$c=27(st+2)t^3$. Then $c\geq t\geq 2$, and we can apply Theorem~\ref{thm:AlexcF} to $c$ and $t$. It follows that there is a $K_{t+1}$-free graph $G^-$ with $\chi(G^-)=27t^3(st+2)$ such that for every $K_t$-free induced subgraph $G'$ of $G^-$, we have $\chi(G')<27t^3$. Let $G$ be a graph containing $G^-$ as a spanning subgraph such that $G$ is $K_{t+1}$-free, and subject to this property, $E(G)$ is maximal with respect to inclusion. Our goal is to show that $G$ is the desired graph.
\medskip

Let $\gamma\in \poi\cup \{0\}$ and let $X\subseteq V(G)$. By an \textit{$X$-chain of length $\gamma$ in $G$} we mean a $(\gamma+1)$-tuple $(X_0,\ldots,X_{\gamma})$ of subsets of $V(G)$ such that:
\begin{itemize}
\item $X_0=X$;
\item $X_i$ is a $t$-vertex clique in $G$ for every $i\in \{1,\ldots, \gamma\}$; and
\item $X_0,\ldots, X_{\gamma}$ are pairwise anticomplete in $G$.
\end{itemize}
It follows that there is always an $X$-chain of length zero in $G$. For every $X\subseteq V(G)$, let $\gamma_X\in \poi \cup \{0\}$ be the maximum length of an $X$-chain in $G$. We claim that:

\sta{\label{st:Xsmallchi} Let $X\subseteq V(G)$ be such that $G[X]$ has no isolated vertex. Then we have $$\chi(G^-)<27t^3(|X|+t\gamma_X+1).$$}

Suppose not. Let $\gamma=\gamma_X$. Let $(X_0,\ldots,X_{\gamma})$ be an $X$-chain of length $\gamma$ in $G$. Let $U=X_0\cup \cdots \cup X_{\gamma}$, let $N=\bigcup_{v\in U}N_G(v)$, and let $M=V(G)\setminus N$. Note that for each $i\in \{0,1,\ldots, \gamma\}$, the graph $G[X_i]$ has no isolated vertex (for $i=0$, this follows from the assumption of \eqref{st:Xsmallchi}, and for $i\in \{1,\ldots, \gamma\}$, this follows from the fact that $X_i$ is a $t$-vertex clique and $t\geq 2$). We deduce that in the graph $G$, every vertex in $U$ has a neighbor in $U$; in particular, $U\subseteq N$, and $U$ and $M$ are anticomplete in $G$. On the other hand, since $G$ is $K_{t+1}$-free, it follows that for every $v\in U$, the graph $G[N_G(v)]$ is $K_t$-free, and so $G^-[N_G(v)]$ is $K_t$-free as well (because $G^-$ is a subgraph of $G$). Thus, by the choice of $G^-$, we have $\chi(G^-[N_G(v)])<27t^3$ for every $v\in U$. Since $|U|=|X|+t\gamma$, it follows that $\chi(G^-[N])<27t^3(|X|+t\gamma)$. Therefore, since $\chi(G^-)\geq 27t^3(|X|+t\gamma_X+1)$ and $(M,N)$ is a partition of $V(G)$, it follows that $\chi(G^-[M])>27t^3$, and so by the choice of $G^-$, there is a $t$-vertex clique $Y$ in $G^-$ such that $Y\subseteq M$. In particular, $Y$ is a $t$-vertex clique in $G$ because $G^-$ is a subgraph of $G$. But now, since $U$ and $M$ are anticomplete in $G$, it follows that $(X_0,\ldots, X_{\gamma},Y)$ is an $X$-chain of length $\gamma+1$, a contradiction. This proves \eqref{st:Xsmallchi}.
\medskip

Let $xy\in E(\ol{G})$ and let $G^+=G+xy$. By the choice of $G$, there is a $(t+1)$-vertex clique $X$ in $G^+$ such that $x,y\in X$ are the only two vertices in $X$ that are non-adjacent in $G$. Since $t\geq 2$, it follows that $G[X]$ has no isolated vertex. Recall also that
$$\chi(G^-)=27t^3(st+2)=27t^3(t+1+(s-1)t+1)=27t^3(|X|+(s-1)t+1).$$
By \eqref{st:Xsmallchi}, there is an $X$-chain $(X_0,\ldots, X_s)$ of length $s$ in $G$, and so $G^+[X_0\cup \cdots \cup X_s]$ is isomorphic to $\ol{K_{t+1, \underbrace{t,\ldots, t}_{s}}}$. This completes the proof of Theorem~\ref{thm:compmulti}.
\end{proof}

The proof of \Cref{thm:firstmain}\ref{thm:firstmain_b} is in three steps. First, we handle the case
$r=1$ and $t=\binom{2r-1}{r-1}+1=2$. Indeed, in this case, we prove something stronger:

\begin{theorem}\label{thm:Ktts0}
For all $s,s'\in \poi\cup \{0\}$, the graph $K_{\underbrace{2,\ldots, 2}_{s+1},\underbrace{1,\ldots, 1}_{s'}}$ is normal.
\end{theorem}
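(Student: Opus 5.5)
Since complementation swaps edge deletion and edge addition and preserves induced subgraphs, a graph is normal if and only if its complement is. I will therefore work with $\ol{H}=(s+1)K_2+s'K_1$. The goal is a finite graph $G$ with three properties: $G$ has no induced $(s+1)K_2+s'K_1$; deleting any edge of $G$ creates one; and adding any non-edge creates one. The cases $s=0$ or $s'=0$ should be quick special cases (for instance, an edgeless graph handles $\ol{H}=K_2+s'K_1$ after minor adjustment), so the main case is $s\geq 1$.

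I would take $G=sF$, the disjoint union of $s$ copies of a fixed graph $F$ that is $2K_2$-free, i.e.\ has induced matching number one. Then an induced matching in $G$ uses at most one edge per component, so it has at most $s$ edges. Hence $G$ has no induced $(s+1)K_2$, and in particular no induced $\ol{H}$, whatever $s'$ is. Everything then reduces to choosing $F$ with the following properties.
\begin{enumerate}[(i)]
\item $F$ has an induced $K_2+s'K_1$.
\item For every $e\in E(F)$, the graph $F-e$ has an induced $2K_2+s'K_1$. The same holds for $F+e$ for every $e\in E(\ol{F})$.
\item For every $u\in V(F)$, the graph $F-N_F[u]$ has an induced $K_2+s'K_1$.
\end{enumerate}
Given these, every modification of $G$ produces $\ol{H}$. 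If we toggle a pair inside one component $A$, property (ii) gives $2K_2+s'K_1$ inside $A$, and property (i) gives one induced edge in each of the other $s-1$ components, for $s+1$ edges in total. If we add an edge $uv$ with $u\in A$, $v\in B$ and $A\neq B$, then $uv$ is one induced edge. Property (iii) gives an induced edge in $A-N[u]$, and the $s'$ isolated vertices also come from there. Property (iii) applied in $B-N[v]$ gives another induced edge, and each of the remaining $s-2$ components contributes one more. None of these pieces is adjacent to $u$ or $v$, so this totals $1+2+(s-2)=s+1$ induced edges together with $s'$ isolated vertices. For $s=1$ there is only one component, so the cross-component case does not arise.

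Dually, $\ol{F}$ must be $C_4$-free, and toggling any pair of $\ol{F}$ must create an induced $C_4$ that additionally sees an independent set of size $s'$ in $F$ outside its neighbourhood. This suggests starting from a known $C_4$-induced-saturated graph, such as the icosahedron of \cite{behrens}, and enlarging it so that the $s'$ extra vertices exist. My first attempt would be to let $F$ be the complement of a suitable blow-up or product of the icosahedron with a large graph. Alternatively, I would build $\ol{F}$ greedily as a maximal $C_4$-free supergraph of a sparse, high-girth graph with many vertices, in the spirit of the proof of \Cref{thm:compmulti}. In that approach, maximality gives the addition half of (ii), and sparsity gives the large independent sets in $F$ needed for (i) and (iii).

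The main obstacle is the deletion half of (ii). Removing an edge of $F$ (adding an edge to $\ol{F}$) must create an induced $2K_2$ in $F$ that is simultaneously anticomplete to $s'$ further pairwise nonadjacent vertices. Maximality arguments give the $2K_2$ but say nothing about the extra isolated vertices, and (iii) requires every closed neighbourhood in $F$ to miss an edge. I would first try to verify these conditions directly for an explicit algebraic choice, namely the complement of a polarity or incidence-type $C_4$-free graph, where neighbourhoods are small and structured. If that fails, I would fall back on a probabilistic or greedy construction with a counting argument.
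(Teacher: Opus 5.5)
Your reduction is sound. Complementation preserves normality, $sF$ has induced matching number at most $s$ whenever $F$ is $2K_2$-free, and your case analysis correctly shows that properties (i)--(iii) of $F$ suffice.

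\textbf{The gap.} You never produce $F$. The proposal stops at exactly the step you call the main obstacle, so the theorem is not proved as written. The routes you suggest (complements of polarity graphs, greedy or random constructions) are not carried out, and they are not clearly workable. For instance, maximality of an induced-$C_4$-free $\ol{F}$ says nothing about deleting edges of $\ol{F}$, which is the $F+e$ half of (ii).

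\textbf{The missing idea.} The $s'$ isolated vertices do not have to be found inside a dense, cleverly structured $F$; you can simply add them as isolated vertices. Let $W$ be the complement of the icosahedron. Two facts do all the work:
\begin{itemize}
\item The icosahedron is $C_4$-induced-saturated, so $W$ is $2K_2$-induced-saturated.
\item The neighbourhood of every vertex of the icosahedron induces $C_5$. Hence the non-neighbours of any $w$ in $W$ induce $\ol{C_5}\cong C_5$, and so contain an edge.
\end{itemize}
Now take $F=W+(s'+2)K_1$ and check (i)--(iii):
\begin{itemize}
\item Toggling a pair inside $W$ gives an induced $2K_2$, plus the $s'+2$ isolated vertices.
\item Adding $xz$, with $x\in V(W)$ and $z$ isolated, gives the edge $xz$, an edge of $W$ among the non-neighbours of $x$, and $s'+1$ isolated vertices.
\item Adding $zz'$ between two isolated vertices gives $zz'$, any edge of $W$, and $s'$ isolated vertices. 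This case is why $s'+2$ isolated vertices are needed rather than $s'$.
\item Property (iii) follows from the second fact above when $u\in V(W)$. It is immediate when $u$ is isolated.
\end{itemize}

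This is essentially the paper's construction. The paper takes $G=sW+(s'+2)K_1$, with a single reservoir of $s'+2$ isolated vertices shared by all $s$ copies of $W$, and checks the same cases directly. For each $w$ it fixes an edge $u_wv_w$ of $W$ anticomplete to $w$, using the $C_5$ observation, for the cross-component additions.
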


\begin{proof}
It suffices to show that
$$H=\ol{K_{\underbrace{2,\ldots, 2}_{s+1},\underbrace{1,\ldots, 1}_{s'}}}$$
is normal. For $s=0$, the edgeless graph on $s'+2$ vertices is $H$-induced-saturated. So we may assume that $s\geq 1$. Let $G$ be a graph with exactly $s+s'+2$ components such that $G$ has $s$ components $W_1,\ldots, W_s$, each of which is isomorphic to the complement of the icosahedron (see \Cref{fig:icosa}), and the remaining $s'+2$ components of $G$ are isolated vertices; say $z_1,\ldots, z_{s'+2}$. Our goal is to prove that $G$ is $H$-induced-saturated.
\begin{figure}[t!]
    \centering   \includegraphics[scale=0.5]{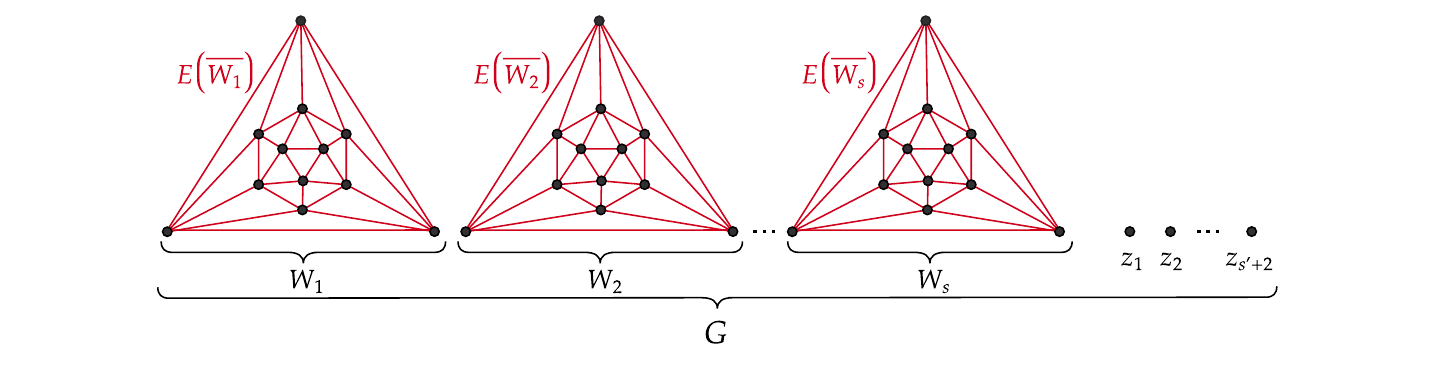}
    \caption{Proof of \Cref{thm:Ktts0}.}
    \label{fig:K_(222111)}
\end{figure}

Recall that the icosahedron is $C_4$-induced-saturated \cite{behrens}. Since $K_{2,2}$ is the $4$-cycle, each of $W_1,\ldots, W_s$ is $\ol{K_{2,2}}$-induced-saturated, and in particular $\ol{K_{2,2}}$-free. Now,
\begin{itemize}
\item For each $i\in \{1,\ldots, s\}$, fix an edge $u_iv_i\in E(W_i)$.
\item For every $w\in V(G)\setminus \{z_1,\ldots, z_{s'+2}\}$, say $w\in V(W_i)$ for some $i\in \{1,\ldots, s\}$, fix an edge $u_wv_w$ of $W_i$ such that $u_ww,v_ww\notin E(G)$.
\end{itemize}
Note that the choice of $u_wv_w$ in the second bullet is possible because the non-neighbors of $w$ in $W_i$ induce a $5$-cycle (which in turn holds because the neighbors of every vertex in the icosahedron induce a $5$-cycle).

First, we show that $G$ is $H$-free. Suppose not. Then, in particular, there are $s+1$ pairwise anticomplete two-vertex cliques $\{a_1,b_1\}, \ldots, \{a_{s+1},b_{s+1}\}$ in $G$. In particular, since $W_1,\ldots, W_s$ are the components of $G$ on two or more vertices, it follows that there exist $i\in \{1,\ldots, s\}$ and $j,j'\in \{1,\ldots, s+1\}$ such that $\{a_j,b_j\},\{a_{j'},b_{j'}\}\subseteq V(W_i)$. But now $W_i[\{a_j,b_j,a_{j'},b_{j'}\}]$ is isomorphic to $\ol{K_{2,2}}$, contrary to the fact that $W_i$ is $\ol{K_{2,2}}$-free.

Next, we show that for every $xy\in E(G)$, the graph $G-xy$ has an induced subgraph isomorphic to $H$. Since $W_1,\ldots, W_s$ are pairwise distinct components of $G$, and the only ones on two or more vertices, it follows that there exists exactly one $i\in \{1,\ldots, s\}$ for which $x,y\in V(W_i)$, and thus $xy\in E(W_i)$. Since $W_i$ is $\ol{K_{2,2}}$-induced-saturated, it follows that $W_i-xy$ has an induced subgraph $M$ isomorphic to $\ol{K_{2,2}}$. Therefore,
$$\left(\bigcup_{j\in \{1,\ldots, s\}\setminus \{i\}}\{u_j,v_j\}\right)\cup V(M)\cup \{z_1,\ldots, z_{s'}\}$$
induces a subgraph of $G-xy$ that is isomorphic to $H$, as desired. 

It remains to prove that for every $xy\in E(\ol{G})$, the graph $G+xy$ has an induced subgraph isomorphic to $H$. First, consider the case where there are $i,j\in \{1,\ldots, s\}$ for which $x\in V(W_i)$ and $y\in V(W_j)$. Assume that $i=j$. Then $xy\in E(\ol{W_i})$. Since $W_i$ is $\ol{K_{2,2}}$-induced-saturated, it follows that $W_i+xy$ has an induced subgraph $M$ isomorphic to $\ol{K_{2,2}}$. Thus, the subgraph of $G+xy$ induced by
$$\left(\bigcup_{k\in \{1,\ldots, s\}\setminus \{i\}}\{u_k,v_k\}\right)\cup V(M)\cup \{z_1,\ldots, z_{s'}\}$$
is isomorphic to $H$, as required. Consequently, we may assume that $i\neq j$. But then the subgraph of $G+xy$ induced by
$$\left(\bigcup_{k\in \{1,\ldots, s\}\setminus \{i,j\}}\{u_k,v_k\}\right)\cup \{u_x,v_x\}\cup \{x,y\}\cup \{u_y,v_y\}\cup \{z_1,\ldots, z_{s'}\}$$
is isomorphic to $H$. Next, consider the case where there are $i\in \{1,\ldots, s\}$ and $j\in \{1,\ldots, s'+2\}$ such that $x\in V(W_i)$ and $y=z_j$. Choose an $s'$-subset $S$ of $\{z_1,\ldots, z_{s'+1}\}\setminus \{y\}$. Then the subgraph of $G+xy$ induced by
$$\left(\bigcup_{k\in \{1,\ldots, s\}\setminus \{i\}}\{u_k,v_k\}\right)\cup \{u_x,v_x\}\cup \{x,y\}\cup S$$
is isomorphic to $H$. Finally, consider the case where there are $i,j\in \{1,\ldots, s'+2\}$ for which $x=z_i$ and $y=z_j$. Then the subgraph of $G+xy$ induced by
$$\left(\bigcup_{k\in \{1,\ldots, s\}}\{u_k,v_k\}\right)\cup \{z_1,\ldots, z_{s'+2}\}$$
is isomorphic to $H$. This completes the proof of \Cref{thm:Ktts0}.
\end{proof}

Second, we handle the case
$r\geq 2$ and $t=\binom{2r-1}{r-1}+1$. The proof will rely on the Erd\H os--Ko--Rado Theorem \cite{EKR}.

\begin{theorem}[Erd\H os, Ko, Rado \cite{EKR}]\label{thm:EKR}
    For all $n,r\in \poi$ with $n\geq 2r$, the maximum cardinality of a clique in $\lng(K_{n}^r)$ is $\binom{n-1}{r-1}$.
\end{theorem}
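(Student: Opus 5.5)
The plan is to prove the two inequalities separately. The lower bound is immediate. Fix a vertex $v$ of $K^r_n$. The $\binom{n-1}{r-1}$ hyperedges containing $v$ pairwise intersect, so they form a clique in $\lng(K^r_n)$.

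For the upper bound, I would use Katona's cyclic-order double counting. Let $\mathcal F$ be a clique in $\lng(K^r_n)$, that is, a pairwise intersecting family of $r$-subsets of an $n$-set $V$. I would count pairs $(\sigma,F)$ where $\sigma$ is a cyclic ordering of $V$ and $F\in\mathcal F$ is an \emph{interval} of $\sigma$, meaning $r$ consecutive elements in the cyclic order.
\begin{itemize}
\item There are $(n-1)!$ cyclic orderings in total.
\item A fixed $r$-set is an interval of exactly $r!\,(n-r)!$ of them: order the set internally, order the complement, and glue the two blocks on the circle.
\end{itemize}
Hence the number of such pairs equals $|\mathcal F|\cdot r!\,(n-r)!$.

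The key step, and the only place where $n\geq 2r$ is used, is the following claim: for every fixed cyclic ordering $\sigma$, at most $r$ members of $\mathcal F$ are intervals of $\sigma$. Given the claim, we get
\[
|\mathcal F|\, r!\,(n-r)!\leq r\,(n-1)!,
\]
which rearranges to $|\mathcal F|\leq \binom{n-1}{r-1}$, as required.

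To prove the claim, suppose some interval $A=\{a_1,\ldots,a_r\}$ (in cyclic order) lies in $\mathcal F$; otherwise there is nothing to show. Every other interval of $\sigma$ in $\mathcal F$ meets $A$ but is distinct from it. Such an interval therefore either ends at some $a_i$ or starts at some $a_{i+1}$, for a unique $i\in\{1,\ldots,r-1\}$. For each such $i$, let $L_i$ be the interval ending at $a_i$ and $R_i$ the interval starting at $a_{i+1}$. Together, $L_i$ and $R_i$ occupy $2r$ consecutive positions, so they are disjoint because $n\geq 2r$. Since $\mathcal F$ is intersecting, at most one of $L_i,R_i$ lies in $\mathcal F$. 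Summing over $i$ gives at most $1+(r-1)=r$ intervals.

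The main point needing care is this pairing argument. One must check that every interval meeting $A$ other than $A$ itself is exactly one of the $L_i$ or $R_i$. One must also check that disjointness of $L_i$ and $R_i$ genuinely requires the $2r$ positions to be distinct on the circle, including the boundary case $n=2r$. Both are routine position bookkeeping on the cycle.
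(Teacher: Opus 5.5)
Your argument is correct. It is Katona's cyclic-order proof, and the position bookkeeping works out. Put $A$ on positions $0,\ldots,r-1$. The intervals starting at positions $1,\ldots,r-1$ are your $R_i$, and those starting at $n-r+1,\ldots,n-1$ are your $L_i$. These two ranges are disjoint because $n>2r-2$. Every remaining interval misses $A$, and $L_i\cup R_i$ covers $2r\le n$ distinct positions. The count $r!\,(n-r)!$ uses $r<n$ and oriented cyclic orders, which matches your total of $(n-1)!$.

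The paper gives no proof of this statement. It quotes Erd\H{o}s--Ko--Rado as a black box; the original proof goes through shifting and induction. So the comparison is simple: your route gives a complete, self-contained verification in a few lines, while the citation keeps the paper short.

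The paper also needs very little of the theorem. In the proof of \Cref{thm:Ktts1}, it is used only to show that more than $\binom{2r-1}{r-1}$ pairwise intersecting $r$-sets cannot lie inside a ground set of size at most $2r$. After padding the ground set, that is just the case $n=2r$. There an even shorter argument suffices: the $\binom{2r}{r}$ $r$-subsets split into $\binom{2r-1}{r-1}$ complementary (hence disjoint) pairs, and an intersecting family contains at most one set from each pair.
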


\begin{theorem}\label{thm:Ktts1}
   Let $r,s\in \poi$ with $r\geq 2$. Let
   $$t=\binom{2r-1}{r-1}+1$$
   and let
   $n=(2r+1)(s+1)-1$.
   Then
   $\ol{\lng(K_{n}^r)}$
   is $K_{\underbrace{t,\ldots,t}_{s+1}}$-deletion-saturated.
\end{theorem}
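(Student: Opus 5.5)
We pass to complements. Write $G=\lng(K_n^r)$ and $H=K_{t,\ldots,t}$ with $s+1$ parts. Then $\ol{H}=(s+1)K_t$, and $\ol{G}$ is the Kneser graph. An edge of $\ol{G}$ is a pair $\{A,B\}$ of disjoint $r$-sets, and deleting it from $\ol{G}$ amounts to adding the edge $AB$ to $G$. So it suffices to prove three things.
\begin{itemize}
\item $E(\ol{G})\neq\varnothing$. This holds because $n\geq 2(2r+1)-1\geq 2r$.
\item $G$ has no induced subgraph isomorphic to $(s+1)K_t$.
\item For all disjoint $r$-subsets $A,B$ of $V(K_n^r)$, the graph $G+AB$ has an induced subgraph isomorphic to $(s+1)K_t$.
\end{itemize}

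\emph{Freeness.} Suppose $\mathcal F_1,\ldots,\mathcal F_{s+1}$ are pairwise anticomplete $t$-cliques in $G$. Each $\mathcal F_i$ is then an intersecting family of $r$-sets. Anticompleteness means every member of $\mathcal F_i$ is disjoint from every member of $\mathcal F_j$ for $i\neq j$, so the unions $U_i=\bigcup\mathcal F_i$ are pairwise disjoint. Suppose $|U_i|\leq 2r$ for some $i$. Then $\mathcal F_i$ lives inside some $2r$-set, and \Cref{thm:EKR} (applied with ground set of size $2r$) gives $|\mathcal F_i|\leq\binom{2r-1}{r-1}<t$, a contradiction. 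Hence $|U_i|\geq 2r+1$ for every $i$. This yields $\sum_i|U_i|\geq (2r+1)(s+1)>n$, a contradiction. The choice $t=\binom{2r-1}{r-1}+1$ is exactly what makes this counting work.

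\emph{Saturation.} Fix disjoint $r$-sets $A,B$ and a vertex $a\in A$. Consider the $2r$-set $S=A\cup B$, and let $\mathcal F_0$ consist of $A$, $B$, and all $r$-subsets of $S$ that contain $a$ and meet $B$.
\begin{itemize}
\item \emph{Size.} An $r$-subset of $S$ containing $a$ either meets $B$ or is contained in $S\setminus B=A$, in which case it equals $A$. So the number of sets containing $a$ and meeting $B$ is $\binom{2r-1}{r-1}-1$, and $|\mathcal F_0|=\binom{2r-1}{r-1}+1=t$.
\item \emph{Clique in $G+AB$.} Every pair in $\mathcal F_0$ other than $\{A,B\}$ intersects. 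Indeed, sets containing $a$ pairwise meet; each meets $A$ at $a$; and each meets $B$ by definition. Hence $\mathcal F_0$ is a $t$-clique in $G+AB$.
\end{itemize}
The remaining ground set has size $n-2r=(2r+1)s$. Partition it into $s$ blocks $T_1,\ldots,T_s$ of size $2r+1$. In each $T_j$, take $\mathcal F_j$ to be a star: $t$ of the $\binom{2r}{r-1}\geq t$ many $r$-subsets of $T_j$ containing a fixed point. Each $\mathcal F_j$ is a $t$-clique, and families with disjoint grounds are anticomplete. Moreover, adding the single edge $AB$ only affects pairs inside $\mathcal F_0$. So $\mathcal F_0\cup\cdots\cup\mathcal F_s$ induces $(s+1)K_t$ in $G+AB$.

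\emph{Main obstacle.} The only delicate step is building $\mathcal F_0$ using just $2r$ ground elements. This is what leaves exactly $(2r+1)s$ elements for the other $s$ cliques. One must check that the count $\binom{2r-1}{r-1}-1$ of sets through $a$ meeting $B$ is exactly $t-2$; it is, as computed above.
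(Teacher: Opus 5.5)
Your proposal is correct and follows the paper's proof essentially step for step. For freeness you use the pairwise disjoint unions $U_i$ together with \Cref{thm:EKR}; for saturation you take all $r$-subsets of $A\cup B$ through a fixed $a\in A$ together with $B$, plus $t$-element stars inside $s$ disjoint blocks of size $2r+1$. The one step you leave unjustified, $\binom{2r}{r-1}\geq t$, holds because $\binom{2r}{r-1}-\binom{2r-1}{r-1}=\binom{2r-1}{r-2}\geq 1$ when $r\geq 2$, and this is exactly where the hypothesis $r\geq 2$ is used.
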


\begin{proof}
Let
$L=\lng(K_{n}^r)$ and let $$H=\ol{K_{\underbrace{t,\ldots,t}_{s+1}}}.$$
Our goal is to prove that $L$ is $H$-addition-saturated. First, we show that $L$ is $H$-free. Suppose not. Then there are $s+1$ pairwise anticomplete cliques $W_1,\ldots,W_{s+1}$ in $L$, each of cardinality $t$; in particular, $W_1,\ldots,W_{s+1}\subseteq E(K_{n}^r)$. For each $i\in \{1,\ldots,s+1\}$, let $U_i=\bigcup_{f\in W_i}f\subseteq V(K_{n}^r)$. Since $W_1,\ldots,W_{s+1}$ are pairwise anticomplete in $L$, it follows that $U_1,\ldots,U_{s+1}$ are pairwise disjoint. Also, since $|W_1|=\cdots=|W_{s+1}|=t>\binom{2r-1}{r-1}$, it follows from \Cref{thm:EKR} that $|U_i|\geq 2r+1$ for every $i\in \{1,\ldots,s+1\}$. But then
$$|V(K_{n}^r)|\geq |U_1\cup \cdots \cup U_{s+1}|\geq (2r+1)(s+1)=n+1,$$
a contradiction.

Next, we prove that for every $f_1f_2\in E(\ol{L})$, the graph $L+f_1f_2$ is not $H$-free. By choice, $f_1$ and $f_2$ are disjoint hyperedges of $K_{n}^r$. Let $U=f_1\cup f_2$ and let $U'=V(K_{n}^r)\setminus U$. Then $|U|=2r$ and $|U'|=n-2r=(2r+1)s$. Fix a partition $(U'_1,\ldots,U'_s)$ of $U'$ into $s$ subsets with $|U'_1|=\cdots=|U'_s|=2r+1$. Let $u\in f_1\subseteq U$ be fixed, let $C$ be the set of all $r$-subsets of $U$ containing $u$, and let $D=C\cup \{f_2\}$. Then $D\setminus \{f_2\}=C$ is a clique of cardinality $\binom{2r-1}{r-1}$ in $L$, and $f_1\in C$. Since $U$ is the disjoint union of $f_1$ and $f_2$, and since $|f_1|=|f_2|=r$, it follows that, other than $f_1$, every $r$-subset of $U$ containing $u$ intersects $f_2$. In particular, $D\setminus \{f_1\}=(C\setminus \{f_1\})\cup \{f_2\}$ is also a clique of cardinality $\binom{2r-1}{r-1}$ in $L$. Therefore, $D$ is a clique of cardinality $\binom{2r-1}{r-1}+1=t$ in $L+f_1f_2$.

For each $i\in \{1,\ldots,s\}$, fix a vertex $u'_i\in U'_i$. Since $|U'_i|=2r+1$ and $r\geq 2$, the number of all $r$-subsets of $U'_i$ containing $u'_i$ is
$$\binom{2r}{r-1}=\frac{2r}{r+1}\binom{2r-1}{r-1}>\binom{2r-1}{r-1}.$$
It follows that there is a set $D'_i$ of $r$-subsets of $U'_i$ containing $u'_i$ such that $|D'_i|=\binom{2r-1}{r-1}+1=t$. In particular, $D'_i$ is a clique of cardinality $t$ in $L$, and hence in $L+f_1f_2$.

Moreover, for all $(f,f'_1,\ldots,f'_s)\in D\times D'_1\times \cdots \times D'_s$, the hyperedges $f,f'_1,\ldots,f'_s$ of $K_n^r$ are pairwise disjoint because $f\subseteq U$, $f'_i\subseteq U'_i$ for each $i\in \{1,\ldots,s\}$, and $U,U'_1,\ldots,U'_s$ are pairwise disjoint. Consequently, $D,D'_1,\ldots,D'_s$ are pairwise anticomplete in $L$; in particular, $D,D'_1,\ldots,D'_s$ are pairwise disjoint. Since $f_1,f_2\in D$, we deduce that $D,D'_1,\ldots,D'_s$ are pairwise anticomplete in $L+f_1f_2$.

In conclusion, $D,D'_1,\ldots,D'_s$ are $s+1$ pairwise anticomplete cliques in $L+f_1f_2$, each of cardinality $t$. Hence, the subgraph of $L+f_1f_2$ induced by $D\cup D'_1\cup \cdots\cup D'_s$ is isomorphic to $H$. This completes the proof of \Cref{thm:Ktts1}.
\end{proof}

Finally, we handle the case 
$t=\binom{2r}{r}+1$. The proof here will use a ``product'' extension of the Erd\H os-Ko-Rado Theorem due to Frankl \cite{frankel}. In order to state that result, we need a definition. 
For $k\in \poi$ and graphs $G_1,\ldots, G_k$, let $G_1\Join \cdots \Join G_k$ be the graph with vertex set $V(G_1)\times \cdots \times V(G_k)$ such that \textit{distinct} $k$-tuples $(u_1,\ldots, u_k)$ and $(v_1,\ldots, v_k)$ are adjacent in $G_1\Join \cdots \Join G_k$ if and only if for some $i\in \{1,\ldots, k\}$, either $u_i=v_i$ or $u_iv_i\in E(G_i)$.

The ``product'' extension of the Erd\H os-Ko-Rado Theorem is as follows. (For $k=1$, this is identical to the original version of Erd\H os-Ko-Rado, namely \Cref{thm:EKR}.)

\begin{theorem}[Frankl \cite{frankel}]\label{thm:EKRproduct}
    Let $k\in \poi$ and let $n_1,r_1,\ldots, n_k,r_k\in \poi$ such that $n_i\geq r_i$ for all $i\in \{1,\ldots, k\}$. Then the maximum cardinality of a clique in $\lng(K_{n_1}^{r_1})\Join \cdots \Join \lng(K_{n_k}^{r_k})$ is 
$$\max\left\{\dfrac{r_1}{n_1},\ldots,\dfrac{r_k}{n_k}\right\}\times \binom{n_1}{r_1}\times \cdots \times \binom{n_k}{r_k}.$$
\end{theorem}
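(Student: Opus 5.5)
The lower bound comes from a construction, and the upper bound from the Hoffman ratio bound applied to a tensor product of Kneser graphs. Throughout I assume $n_i\geq 2r_i$ for every $i$, as in \Cref{thm:EKR}. Without it the statement fails already for $k=1$: if $n<2r$, then $\lng(K_n^r)$ is complete with clique number $\binom{n}{r}>\frac{r}{n}\binom{n}{r}$. In the application below the hypothesis $n_i\geq 2r_i$ will hold.

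\emph{Lower bound.} Write $N=\prod_i\binom{n_i}{r_i}$, and choose $i$ maximizing $r_i/n_i$. Fix an element $v$ of the ground set of the $i$-th factor. Let $\mathcal{F}$ be the set of all $k$-tuples $(f_1,\ldots,f_k)$ with $v\in f_i$. Any two distinct members of $\mathcal{F}$ share $v$ in coordinate $i$, so $\mathcal{F}$ is a clique. Its cardinality is $\binom{n_i-1}{r_i-1}\prod_{j\neq i}\binom{n_j}{r_j}=\frac{r_i}{n_i}N$.

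\emph{Upper bound.} Two distinct tuples are non-adjacent in $\lng(K_{n_1}^{r_1})\Join\cdots\Join\lng(K_{n_k}^{r_k})$ exactly when they are disjoint in every coordinate. Hence the complement of this graph is the tensor (categorical) product $T=K(n_1,r_1)\times\cdots\times K(n_k,r_k)$ of Kneser graphs, and cliques of our graph are exactly the stable sets of $T$. The adjacency matrix of $T$ is the Kronecker product of the Kneser adjacency matrices, so:
\begin{itemize}
\item $T$ is $D$-regular with $D=\prod_i d_i$, where $d_i=\binom{n_i-r_i}{r_i}$;
\item its eigenvalues are all products $\prod_i\theta_i$, with $\theta_i$ an eigenvalue of $K(n_i,r_i)$.
\end{itemize}
I will use the standard facts about Kneser spectra. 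Every eigenvalue satisfies $|\theta_i|\leq d_i$. The most negative eigenvalue is $-d_i\frac{r_i}{n_i-r_i}$, and because $n_i\geq 2r_i$ we have $\frac{r_i}{n_i-r_i}\leq 1$.

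\emph{Main step: the smallest eigenvalue of $T$.} Let $\rho=\max_i\frac{r_i}{n_i-r_i}$. I claim $\lambda_{\min}(T)=-\rho D$.
\begin{itemize}
\item Attainment: take the most negative eigenvalue in a maximizing coordinate and $\theta_j=d_j$ in all other coordinates. The product is $-\rho D$.
\item Lower bound: a negative product has at least one negative factor, say in coordinate $i$. That factor has magnitude at most $\frac{r_i}{n_i-r_i}d_i\leq\rho d_i$, and every other factor has magnitude at most $d_j$. So the product is at least $-\rho D$.
\end{itemize}
This is the step needing care: products with several negative factors must not beat the single-negative choice. The inequality $\frac{r_i}{n_i-r_i}\leq 1$ guarantees this, and it is exactly where $n_i\geq 2r_i$ enters.

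\emph{Conclusion.} Hoffman's ratio bound for regular graphs then gives
$$\alpha(T)\leq N\cdot\frac{-\lambda_{\min}}{D-\lambda_{\min}}=N\cdot\frac{\rho}{1+\rho}.$$
Since $x\mapsto \frac{x}{1+x}$ is increasing and $\frac{r/(n-r)}{1+r/(n-r)}=\frac{r}{n}$, this equals $\max_i\frac{r_i}{n_i}\cdot N$. This matches the lower bound, which proves the theorem.

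A fallback, if one prefers to avoid spectra, is Katona's cyclic-order averaging. Fix a cyclic order on each ground set and restrict to $k$-tuples of arcs. One shows a clique of arc-tuples has at most $\max_i r_i\prod_{j\neq i}n_j$ members, then averages over all choices of cyclic orders. The arc-counting lemma in the product setting is messier than the spectral computation, so I would use the Hoffman route.
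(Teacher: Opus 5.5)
You are right to add the hypothesis $n_i\geq 2r_i$, and with it your proof is correct. It differs from the paper only because the paper gives no proof at all: the statement is quoted from Frankl and used as a black box in \Cref{thm:Ktts2}.

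As printed, with only $n_i\geq r_i$, the statement is false. For $k=1$, $n=3$, $r=2$, the graph $\lng(K_3^2)$ is a triangle, but the formula gives $2$. The intended hypothesis is $n_i\geq 2r_i$. This costs the paper nothing, since its only application uses the factors $\lng(K_{2r}^r)$ and $\lng(K_2^1)$, where $n_i=2r_i$.

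Compared with the citation, your route gives a short self-contained proof from two standard facts:
\begin{itemize}
\item the Kneser spectrum $(-1)^j\binom{n-r-j}{r-j}$;
\item Hoffman's ratio bound, which holds for every regular graph, connected or not. This matters here, since $K(2r,r)$ is a perfect matching and tensor products of such graphs are disconnected.
\end{itemize}
It also explains why the answer is a maximum over coordinates. Each Kneser factor meets the Hoffman bound with equality, by Erd\H{o}s--Ko--Rado. The ratio $-\lambda_{\min}/D$ of a tensor product of regular graphs is the largest of the factors' ratios.

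Two small remarks on the main step. First, Hoffman's bound only needs $\lambda_{\min}(T)\geq-\rho D$, so the attainment half is unnecessary. Second, the concern about several negative factors is already settled by $|\theta_j|\leq d_j$, which holds in any $d_j$-regular graph. So $n_i\geq 2r_i$ does not enter where you say. It enters in two other places:
\begin{itemize}
\item identifying $-\binom{n_i-r_i-1}{r_i-1}$ as the most negative Kneser eigenvalue, since the magnitudes $\binom{n-r-j}{r-j}$ are non-increasing in $j$ exactly when $n\geq 2r$;
\item guaranteeing $d_i\geq 1$, so that $T$ has edges and the ratio bound applies.
\end{itemize}
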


\begin{theorem}\label{thm:Ktts2}
Let $r,s\in \poi$. Let
$$t=\binom{2r}{r}+1$$
and let
$n=(2r+1)(s+1)-1$.
Then $\ol{\lng\left(K_{n}^r\right)\Join\lng\left(K^1_{s+2}\right)}$ is $K_{\underbrace{t,\ldots, t}_{s+1}}$-deletion-saturated. 
\end{theorem}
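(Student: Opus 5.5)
The plan is to pass to complements, exactly as in the proof of \Cref{thm:Ktts1}. Let $P=\lng(K_{n}^r)\Join\lng(K^1_{s+2})$ and $H=\ol{K_{t,\ldots,t}}$, so $H$ is $s+1$ pairwise anticomplete $t$-cliques. It suffices to show that $P$ is $H$-addition-saturated.

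First I make $P$ concrete. The hyperedges of $K^1_{s+2}$ are singletons, so $\lng(K^1_{s+2})$ is edgeless on $\{1,\ldots,s+2\}$. Thus the vertices of $P$ are pairs $(f,j)$ with $f$ an $r$-subset of $[n]$ and $j\in\{1,\ldots,s+2\}$. Two distinct pairs $(f,j),(f',j')$ are adjacent if and only if $f\cap f'\neq\varnothing$ or $j=j'$. In particular, any set of pairs sharing a second coordinate is a clique. Two such pairs are nonadjacent exactly when $f\cap f'=\varnothing$ and $j\neq j'$.

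For $H$-freeness, suppose $W_1,\ldots,W_{s+1}$ are pairwise anticomplete $t$-cliques in $P$. Let $U_i$ be the union of the first coordinates in $W_i$, and $J_i$ the set of second coordinates in $W_i$. Anticompleteness forces the $U_i$ to be pairwise disjoint and the $J_i$ to be pairwise disjoint. Each $W_i$ is a clique in an induced copy of $\lng(K^r_{|U_i|})\Join\lng(K^1_{|J_i|})$. By \Cref{thm:EKRproduct}, $|W_i|\le \max\{r/|U_i|,1/|J_i|\}\binom{|U_i|}{r}|J_i|$, and a set with $|U_i|<r$ contains no $r$-subset at all.

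\begin{itemize}
\item Since $\sum_i|J_i|\le s+2$ and each $|J_i|\ge1$, at most one $i$ has $|J_i|\geq 2$, and that one has $|J_i|=2$.
\item If $|U_i|\le 2r$ and $|J_i|=1$, the bound gives $|W_i|\le\binom{2r}{r}<t$.
\item If $|U_i|\le 2r$ and $|J_i|=2$, the bound gives $|W_i|\le 2\cdot\frac12\binom{2r}{r}<t$. This uses that the bound is monotone in $|U_i|$, so I may pad $U_i$ up to $2r$ vertices.
\end{itemize}

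Hence $|U_i|\ge 2r+1$ for every $i$. Then $n\ge(2r+1)(s+1)$, contradicting $n=(2r+1)(s+1)-1$. The one point to check carefully is the application of Frankl's bound with $n_2=|J_i|\in\{1,2\}$.

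For saturation, take a nonadjacent pair $(f_1,j_1),(f_2,j_2)$, so $f_1\cap f_2=\varnothing$ and $j_1\neq j_2$. Let $U=f_1\cup f_2$, which has $|U|=2r$. Let $D$ consist of all $(g,j_1)$ with $g$ an $r$-subset of $U$, together with $(f_2,j_2)$. The pairs $(g,j_1)$ form a clique because they share a coordinate. The pair $(f_2,j_2)$ is adjacent to $(g,j_1)$ unless $g\cap f_2=\varnothing$, that is, unless $g=f_1$. So after adding the edge, $D$ is a clique of size $\binom{2r}{r}+1=t$.

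Next, partition $[n]\setminus U$, which has $(2r+1)s$ elements, into blocks $U'_1,\ldots,U'_s$ of size $2r+1$. Assign distinct coordinates $k_1,\ldots,k_s\in\{1,\ldots,s+2\}\setminus\{j_1,j_2\}$. Let $D'_i$ be any $t$ pairs $(g,k_i)$ with $g\subseteq U'_i$; this is possible since $\binom{2r+1}{r}>\binom{2r}{r}$. Each $D'_i$ is a clique because its pairs share the coordinate $k_i$. The sets $D,D'_1,\ldots,D'_s$ are pairwise anticomplete, since their first coordinates lie in disjoint ground sets and their second coordinates are distinct. This holds also after adding the edge, because both of its ends lie in $D$. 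So these $s+1$ cliques induce $H$ in the new graph, which completes the argument.
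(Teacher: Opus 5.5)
Your proof is correct and follows the paper's argument essentially step for step. It uses the same complement reduction and the same disjointness-plus-Frankl count for $H$-freeness. For saturation it uses the same layout: one clique on $f_1\cup f_2$ using coordinates $\{j_1,j_2\}$, plus $s$ cliques on the blocks $U'_i$ using the remaining coordinates.

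The only difference is a harmless one, in the $t$-clique through the added edge. You take all $r$-subsets of $U$ in layer $j_1$ together with $(f_2,j_2)$. The paper instead takes the $r$-subsets of $U$ through a fixed $u\in f_1$, in both layers $j_1$ and $j_2$, together with $(f_2,j_2)$. Both cliques have size $\binom{2r}{r}+1=t$.
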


\begin{proof}
Recall that $\lng(K^1_{s+2})$ is just an edgeless graph on $s+2$ vertices (that is, isomorphic to $(s+2)K_1$). Let
$L=\lng\left(K_{n}^r\right)\Join\lng\left(K^1_{s+2}\right)$ and let $$H=\ol{K_{\underbrace{t,\ldots, t}_{s+1}}}.$$
Then our goal is to prove that $L$ is $H$-addition-saturated.

\sta{\label{st:productHfree} $L$ is $H$-free.}

Suppose not. Then there are $s+1$ pairwise anticomplete cliques $W_1, \ldots, W_{s+1}$ in $L$, each of cardinality $t$; in particular, 
$W_1, \ldots, W_{s+1}\subseteq E\left(K^r_{n}\right)\times E\left(K^1_{s+2}\right)$.
For each $i\in \{1,\ldots, s+1\}$ and every $j\in \{0,1\}$, let 
$$U_{i,j}=\bigcup_{(f_0,f_1)\in W_i}f_j$$
and let $n_{i,j}=|U_{i,j}|\geq 1$.
In particular, we have $U_{i,0}\subseteq V(K_n^r)$ and $U_{i,1}\subseteq V(K_{s+2}^1)$. Since $W_1, \ldots, W_{s+1}$ are pairwise anticomplete in $L$, it follows that for every $j\in \{0,1\}$, $U_{1,j}, \ldots, U_{s+1,j}$ are pairwise disjoint. In particular, we have:

\begin{itemize}
\item $\displaystyle\sum_{i=1}^{s+1}n_{i,0}\leq n$; and
\item $\displaystyle\sum_{i=1}^{s+1}n_{i,1}\leq s+2$. 
\end{itemize}
 
Since $n <(2r+1)(s+1)$, by the first bullet, there exists $i\in \{1,\ldots, s+1\}$ for which $n_{i,0}\leq 2r$. Also, by the second bullet, we have $n_{i,1}\leq 2$. Since $|W_i|=t$, it follows that
$\lng\left(K_{2r}^r\right)\Join\lng\left(K^1_{2}\right)$ has a clique of cardinality $t$. Thus, by \Cref{thm:EKRproduct},
$$t\leq \max\left\{\dfrac{r}{2r},\dfrac{1}{2}\right\}\times \binom{2r}{r}\times \binom{2}{1}=\binom{2r}{r},$$
which is a contradiction. This proves \eqref{st:productHfree}.

\sta{\label{st:saturatedproduct} For every $xy\in E(\ol{L})$, the graph $L+xy$ is not $H$-free.}

We may write $x=(e_0,e_1)$ and $y=(f_0,f_1)$ such that $e_0$ and $f_0$ are disjoint hyperedges of $K_{n}^r$, and $e_1$ and $f_1$ are disjoint hyperedges of $K_{s+2}^1$. Let $U=e_0\cup f_0$. Then $|U|=2r$. Let $u\in e_0\subseteq U$ be fixed, and let $C$ be the set of all $r$-subsets of $U$ containing $u$; in particular, $e_0\in C$ and $f_0\notin C$. Let
$$\mca{C}=C\times \{e_1,f_1\}$$
and let
$$\mca{D}=\mca{C}\cup \{y\}.$$
Note that $\mca{D}\setminus \{y\}=\mca{C}$ is a clique in $L$ with $x\in \mca{C}$. Since $U$ is the disjoint union of $e_0$ and $f_0$, and since $|e_0|=|f_0|=r$, it follows that, other than $e_0$, every $r$-subset of $U$ intersects $f_0$. Consequently, for every $z=(g_0,g_1)\in \mca{C}\setminus \{x\}$, we have $zy\in E(L)$, because either $g_0\neq e_0$, in which case $g_0\cap f_0\neq \varnothing$, or $g_0=e_0$ and $g_1=f_1$ (since $z\neq x$). In other words, $\mca{D}\setminus \{x\}=(\mca{C}\setminus \{x\})\cup \{y\}$ is also a clique in $L$. We deduce that $\mca{D}$ is a clique in $L+xy$. Moreover,
$$|\mca{D}|=|\mca{C}|+1=2\binom{2r-1}{r-1}+1=\binom{2r}{r}+1=t.$$

Let $U'=V(K_{n}^r)\setminus U$. Then $|U'|=n-2r=(2r+1)s$. Fix a partition $(U'_1,\ldots, U'_s)$ of $U'$ into $s$ subsets with $|U'_1|=\cdots =|U'_s|=2r+1$. Also, let $E(K_{s+2}^1)\setminus \{e_1,f_1\}=\{h_{1},\ldots, h_{s}\}$. For each $i\in \{1,\ldots, s\}$, since $|U'_i|=2r+1$, the number of all $r$-subsets of $U'_i$ is 
$$\binom{2r+1}{r}=\dfrac{2r+1}{r+1}\binom{2r}{r}>\binom{2r}{r}.$$
So we can choose a set $D'_i$ of $r$-subsets of $U'_i$ such that $|D'_i|=\binom{2r}{r}+1=t$. In particular, 
$$\mca{D}'_i=D'_i\times \{h_i\}$$
is a clique of cardinality $t$ in $L$, and so in $L+xy$.

Moreover, for all $((g,h),(g_1,h_1),\ldots, (g_s,h_s))\in \mca{D}\times \mca{D}'_1\times \cdots \times \mca{D}'_s$, 
\begin{itemize}
    \item the hyperedges $g,g_1,\ldots, g_s$ of $K^r_n$ are pairwise disjoint because $g\subseteq U$, $g_i\subseteq U'_i$ for each $i\in \{1,\ldots, s\}$, and $U,U'_1,\ldots, U'_s$ are pairwise disjoint; and
    \item $h,h_1,\ldots, h_s$ are pairwise distinct hyperedges of $K^1_{s+2}$ because $h\in \{e_1,f_1\}=E(K^1_{s+2})\setminus \{h_1,\ldots, h_s\}$, and thus $h,h_1,\ldots, h_s$ are pairwise disjoint.
\end{itemize}
Consequently, $\mca{D},\mca{D}'_1,\ldots, \mca{D}'_s$ are pairwise anticomplete in $L$; in particular, $\mca{D},\mca{D}'_1,\ldots, \mca{D}'_s$ are pairwise disjoint. Since $x,y\in \mca{D}$, we deduce that $\mca{D},\mca{D}'_1,\ldots, \mca{D}'_s$ are pairwise anticomplete in $L+xy$. 

In conclusion, we have shown that $$\mca{D},\mca{D}'_1,\ldots, \mca{D}'_s$$ are $s+1$ pairwise anticomplete cliques in $L+xy$, each of cardinality $t$. Hence, the subgraph of $L+xy$ induced by $\mca{D}\cup \mca{D}'_1\cup \cdots\cup \mca{D}'_s$ is isomorphic to $H$. This proves \eqref{st:saturatedproduct}.
\medskip

The result now follows from \eqref{st:productHfree} and \eqref{st:saturatedproduct}. This completes the proof of \Cref{thm:Ktts2}.
\end{proof}

\section{Governing blocks}\label{sec:block}

Here, we prove outcome \ref{thm:firstmain_c} of \Cref{thm:firstmain}. In fact, we prove a substantially stronger statement, which requires several definitions. Let $G$ be a graph. Recall that a \textit{block} in $G$ is a maximal $2$-connected induced subgraph of $G$ (we consider complete graphs on one or two vertices to be $2$-connected). We denote by $\mca{B}(G)$ the set of all blocks of $G$. We say that a graph $H$ is \textit{omnipresent in $G$} if for every $u\in V(H)$ and every $v\in V(G)$, there is an injection $\psi:V(H)\rightarrow V(G)$ such that $\psi(u)=v$ and $\psi$ is an isomorphism between $H$ and $G[\psi(V(H))]$. For instance, $K_1$ is omnipresent in $G$ if and only if $G$ is non-null, and $K_2$ is omnipresent in $G$ if and only if $G$ has no isolated vertices. Given a graph $H$, we say that a block $B$ of $H$ is \textit{governing} if there is a $B$-deletion-saturated graph $W$ such that every block $B'\in \mca{B}(H)\setminus \{B\}$ is omnipresent in $W$.  

We will prove the following strengthening of \Cref{thm:firstmain}\ref{thm:firstmain_c}:

\begin{theorem}\label{thm:maincentered}
Every graph with a governing block is deletion-normal.
\end{theorem}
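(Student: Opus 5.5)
The plan is to glue many copies of $W$ together along a high-girth hypergraph. Let $H$ have a governing block $B$, and let $W$ be a $B$-deletion-saturated graph in which every block $B'\in\mca{B}(H)\setminus\{B\}$ is omnipresent. Let $h=|V(H)|$ and $w=|V(W)|$; if $w\le 1$, then $E(W)=\varnothing$, which is impossible, so $w\ge 2$. I would take a $w$-uniform hypergraph $\Gamma$ with girth larger than $h$ and every vertex of degree at least $h+1$. Such hypergraphs exist by the standard Erd\H{o}s--Hajnal type constructions of high-girth, high-minimum-degree uniform hypergraphs; this is the one external input I would cite.

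\emph{Construction.} Let $G$ have vertex set $V(\Gamma)$. For each hyperedge $f\in E(\Gamma)$, fix a bijection $\phi_f:V(W)\to f$ and put a copy $W_f$ of $W$ on $f$ via $\phi_f$. Let $E(G)$ be the union of the edge sets of all the copies $W_f$. Since $\Gamma$ has girth at least $3$, it is linear, so two copies share at most one vertex and no pair of vertices is an edge of two copies. In particular $G[f]=W_f$ for every $f$, because an edge of $G$ inside $f$ coming from some $W_{f'}$ with $f'\ne f$ would force $|f\cap f'|\ge 2$. Also $E(G)\neq\varnothing$.

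\emph{$G$ is $H$-free.} Suppose $X\subseteq V(G)$ with $|X|\le h$ and $G[X]$ $2$-connected. I claim $X$ lies inside a single hyperedge. Indeed, if $X$ used edges from two different copies, then by $2$-connectivity one finds a cycle in $G[X]$ passing through at least two copies. Contracting its maximal subpaths inside single copies yields a Berge cycle in $\Gamma$ of length at most $|X|\le h$. This contradicts the girth assumption, and I would check the length-$2$ case using linearity. Consequently, every $2$-connected induced subgraph of $G$ on at most $h$ vertices is an induced subgraph of some $W_f\cong W$. Since $B$ is $2$-connected (this includes the degenerate case $B\cong K_2$, where $W$ has an edge but is $K_2$-free, which is impossible, so the case cannot occur), and $W$ is $B$-free, it follows that $G$ is $B$-free and hence $H$-free.

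\emph{Deleting an edge $e\in E(W_{f_0})$ creates $H$.} Since $W$ is $B$-deletion-saturated, $W_{f_0}-e$ contains an induced copy of $B$. Now root the block tree of $H$ at $B$ and process the blocks in BFS order. When a block $B'$ is attached at a cut vertex $u$ already embedded at some vertex $v$, choose a hyperedge $f\ni v$ not used so far; this is possible because the degree of $v$ is at least $h+1$ and fewer than $h$ copies have been used. By omnipresence, embed $B'$ into $W_f$ as an induced subgraph with $u\mapsto v$. The main obstacle is verifying that the final union is induced in $G-e$. Three things must be ruled out: extra edges between images of different blocks, coincidences of vertices, and the deleted edge $e$ mattering. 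The hyperedges used, together with the embedding tree, form a subhypergraph whose incidence structure is a tree on at most $h$ hyperedges, because new hyperedges meet old ones only at the attachment vertex. Any unwanted coincidence or extra edge would close a Berge cycle of length at most $h$ in $\Gamma$, again contradicting the girth. Finally, $e$ lies only in $W_{f_0}$ (by linearity), and only the copy of $B$ uses $W_{f_0}$, so the induced copy of $H$ lives in $G-e$. I expect this inducedness/girth bookkeeping to be the delicate step, together with getting the exact girth bound right.
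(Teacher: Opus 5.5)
Your construction and both verifications are the paper's. You place copies of $W$ on the hyperedges of a $|V(W)|$-uniform hypergraph whose girth and minimum degree exceed $h$. You get $B$-freeness because a $2$-connected set of at most $h$ vertices meeting two copies would close a short ring. You then extend block by block through a fresh hyperedge at the attachment vertex, with girth excluding extra adjacencies and coincidences; the paper phrases this as induction removing a leaf block, which is the same argument.

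\textbf{The gap.} The extension step silently assumes $H$ is connected. You root ``the block tree of $H$'' at $B$ and only ever attach a block at a cut vertex that is already embedded. So no block of a component of $H$ other than the one containing $B$ is ever placed. The theorem covers disconnected $H$, and the paper relies on that, e.g.\ for $B$ plus a forest. For such $H$ you have only shown that $G-e$ contains the component of $H$ containing $B$. Nothing in your argument produces the remaining components, anticomplete to that copy, so $G$ is not shown to be $H$-deletion-saturated.

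\textbf{How the paper closes it.} It uses a separate reduction in two steps.
\begin{enumerate}
\item Delete the isolated vertices of $W$. This keeps $W$ $B$-deletion-saturated, since $B$ has no isolated vertices. It keeps every other block omnipresent, since an omnipresent block on at least two vertices is connected and never uses an isolated vertex. Now $K_2$ is omnipresent in $W$.
\item Let $\hat H$ be $H$ plus one new vertex with exactly one neighbor in each component of $H$. The new blocks are copies of $K_2$, so $B$ is still governing in $\hat H$. Also $\hat H$ is connected and contains $H$ as an induced subgraph.
\end{enumerate}
Running your argument for $\hat H$, with $h=|V(\hat H)|$, gives a $B$-free, hence $H$-free, graph $G$ such that for every $e\in E(G)$, $G-e$ contains $\hat H$ and therefore $H$. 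Alternatively, you could take the disjoint union of one copy of your $G$ per component of $H$. Each remaining component is then embedded greedily, starting from any of its blocks via omnipresence, in a copy not containing $e$.

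\textbf{A minor point.} Contracting a short cycle into hyperedges can produce non-consecutive repetitions of the same hyperedge. So besides the length-$2$ case you need a minimality argument, as in the paper's lemma on rings, to extract a genuine Berge cycle.
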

\begin{figure}[t!]
    \centering   \includegraphics[scale=0.55]{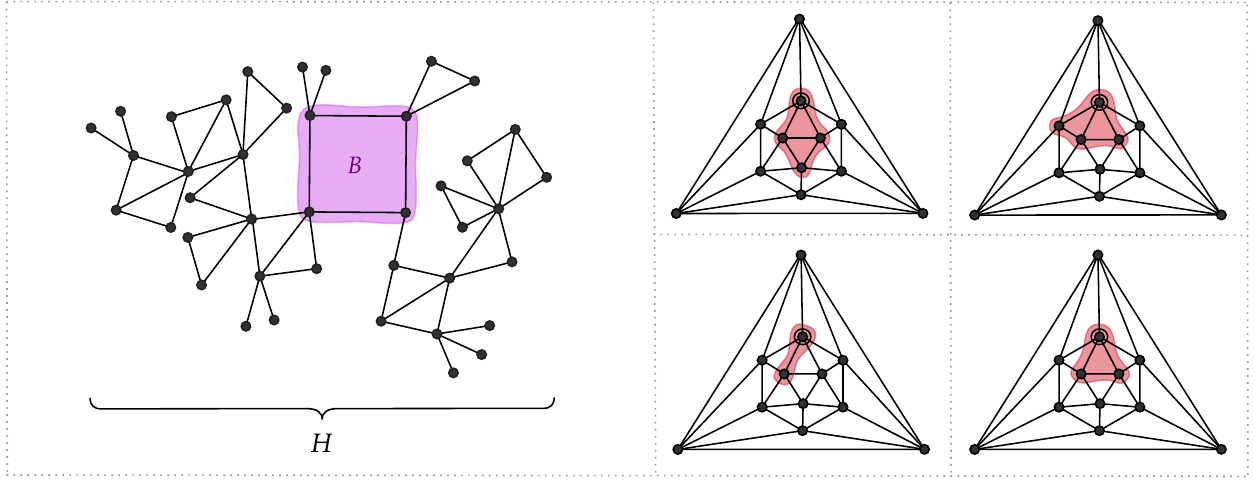}
    \caption{Left: A graph $H$ with a governing block $B$. Right: All blocks of $H$ except $B$ are omnipresent in the icosahedron.}
    \label{fig:governingblock}
\end{figure}

For example, the graph $H$ in \Cref{fig:governingblock} has a block $B$ which is a $4$-cycle; the other blocks of $H$ are all isomorphic to $K_2$, $K_3$, or the \textit{diamond} (the unique four-vertex graph with five edges). Therefore, since the icosahedron is $C_4$-induced-saturated (and thus $C_4$-deletion-saturated), and since $K_2$, $K_3$, and the diamond are all omnipresent in the icosahedron, it follows that the graph $H$ in \Cref{fig:governingblock} is deletion-normal.

Note that \Cref{thm:maincentered} implies \Cref{thm:firstmain}\ref{thm:firstmain_d}:

\begin{corollary}\label{cor:blockplusleaves}
Every graph obtained by iteratively adding vertices of degree at most one to a $2$-connected deletion-normal graph is deletion-normal.
\end{corollary}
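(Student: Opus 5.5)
We derive the corollary from \Cref{thm:maincentered} by showing that the $2$-connected ``core'' is a governing block. Let $H_0$ be a $2$-connected deletion-normal graph, and let $H$ be obtained from $H_0$ by repeatedly adding a vertex of degree at most one. If $H_0$ is complete on two or more vertices it is not deletion-normal, so we may assume $H_0$ is non-complete; in particular $|V(H_0)|\geq 3$. Then $H_0$ is $2$-connected and contains a cycle, so it has no isolated vertices.

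First we identify the blocks of $H$. Adding a vertex of degree one makes a new block isomorphic to $K_2$ (a bridge). Adding a vertex of degree zero makes a new block isomorphic to $K_1$. Neither operation merges or changes existing blocks, since the new vertex lies on no cycle. By induction, $H_0$ is a block of $H$, and every other block of $H$ is isomorphic to $K_1$ or $K_2$.

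Next we show that $B=H_0$ is governing. Since $H_0$ is deletion-normal, there is an $H_0$-deletion-saturated graph $W$. We must check that $K_1$ and $K_2$ are omnipresent in $W$. As the paper notes, $K_1$ is omnipresent exactly when $W$ is non-null, and $K_2$ is omnipresent exactly when $W$ has no isolated vertices. The first holds because $E(W)\neq\varnothing$. For the second, we may delete the isolated vertices of $W$. The resulting graph $W'$ is still $H_0$-free, as an induced subgraph of $W$, and still has an edge. For each $e\in E(W')$, the graph $W-e$ contains an induced copy of $H_0$. That copy has no isolated vertices, and deleting $e$ creates no new isolated vertices in $W'$ relative to vertices of $W'$. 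So the copy avoids the isolated vertices of $W$, and hence lies in $W'-e$. Thus $W'$ is $H_0$-deletion-saturated with no isolated vertices, and both $K_1$ and $K_2$ are omnipresent in $W'$.

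Finally, $H$ has a governing block, so \Cref{thm:maincentered} shows that $H$ is deletion-normal. The only point needing care is removing isolated vertices from $W$, and this works because $H_0$ itself has no isolated vertex.
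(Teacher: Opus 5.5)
Your proof is correct and follows essentially the same route as the paper: you exhibit an $H_0$-deletion-saturated graph with no isolated vertices (the paper takes a connected one, you delete isolated vertices), so that $K_1$ and $K_2$ are omnipresent and $H_0$ is a governing block, and then you apply \Cref{thm:maincentered}. One justification should be tightened, namely why the copy of $H_0$ lies in $W'-e$: every vertex of that copy has a neighbor in $W-e\subseteq W$ (since $H_0$ has no isolated vertex), so none of its vertices is isolated in $W$.
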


\begin{proof}
    Let $H$ be a graph obtained by iteratively adding vertices of degree at most one to a $2$-connected deletion-normal graph $B$. Then $B$ is deletion-normal and every block in $\mca{B}(H)\setminus \{B\}$ is isomorphic to $K_1$ or $K_2$. In particular, since $B$ is connected, we may assume that there is a $B$-deletion-saturated graph $W$ which is also connected, and so both $K_1$ and $K_2$ are omnipresent in $W$. It follows that $B$ is a governing block of $H$, which along with \Cref{thm:maincentered} implies that $H$ is deletion-normal, as desired.
\end{proof}

In particular, every graph with exactly one cycle is obtained from that cycle by iteratively adding vertices of degree at most one. Recall also that, by \Cref{thm:odd,thm:maineven}, all cycles on four or more vertices are deletion-normal. So, from \Cref{cor:blockplusleaves}, it follows that every graph with exactly one cycle, where that cycle is not a triangle, is deletion-normal.

\begin{corollary}\label{cor:unicyclic}
Every triangle-free graph with exactly one cycle is deletion-normal.
\end{corollary}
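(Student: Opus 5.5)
The plan is to obtain \Cref{cor:unicyclic} directly from \Cref{cor:blockplusleaves}, using \Cref{thm:odd} and \Cref{thm:maineven} to supply a $2$-connected deletion-normal graph to start from.

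Let $H$ be a triangle-free graph with exactly one cycle $C$. Since $H$ is triangle-free, $C$ has length $k\geq 4$.

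First, $C\cong C_k$ is deletion-normal. If $k$ is odd, then $k=2t+1$ with $t\geq 2$, and \Cref{thm:odd} gives a $C_k$-induced-saturated graph. Such a graph is also $C_k$-deletion-saturated, provided it has an edge. The line graph of $K_{t+1,t+1}$ certainly has edges, so this holds. If $k$ is even, deletion-normality is exactly \Cref{thm:maineven}. Note also that $C_k$ is $2$-connected.

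Next, I claim that $H$ is obtained from $C$ by iteratively adding vertices of degree at most one. Every edge of $H$ outside $E(C)$ is a bridge, since an edge lying on a cycle would force a second cycle. So contracting $C$ turns each component of $H$ into a forest. I would order the vertices of $V(H)\setminus V(C)$ as follows: first the vertices of the component containing $C$, in breadth-first order from $C$; then each remaining tree component, starting from an arbitrary root and proceeding in breadth-first order. When a vertex is added in this order, it has at most one neighbour among the vertices already present. If it had two, there would be two vertex-disjoint routes from it back to the earlier part, which would create a second cycle. Therefore \Cref{cor:blockplusleaves} applies and shows that $H$ is deletion-normal.

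There is no real obstacle here. The only points needing care are the acyclicity argument, which justifies the degree-at-most-one ordering, and the observation that a nonempty induced-saturated graph is deletion-saturated.
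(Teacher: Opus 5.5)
Your proposal is correct and matches the paper's argument. The paper likewise notes that a graph with exactly one cycle arises from that cycle by iteratively adding vertices of degree at most one, uses \Cref{thm:odd,thm:maineven} to get that every cycle of length at least four is deletion-normal, and then applies \Cref{cor:blockplusleaves}. You additionally justify two steps the paper leaves implicit: the breadth-first ordering, and the fact that a nonempty induced-saturated graph is deletion-saturated.
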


Another consequence of \Cref{cor:blockplusleaves} is that for every $2$-connected deletion-normal graph $B$, the disjoint union of $B$ and any forest is deletion-normal. This is somewhat curious because deletion-normality remains open for most forests.

\begin{corollary}\label{cor:forestplus}
The disjoint union of a forest and a $2$-connected deletion-normal graph is deletion-normal.
\end{corollary}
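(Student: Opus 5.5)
This follows from \Cref{cor:blockplusleaves}, once we see that a forest can be attached to the $2$-connected graph by repeated leaf additions. Let $H$ be the disjoint union of a forest $F$ and a $2$-connected deletion-normal graph $B$. Our plan is to show that $H$ is obtained from $B$ by iteratively adding vertices of degree at most one; \Cref{cor:blockplusleaves} then gives the result. (We may assume $F$ is non-null; otherwise $H=B$ and there is nothing to prove.)

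We order $V(F)$ so that each vertex has at most one neighbour among the earlier vertices. To do this, work one component $T$ of $F$ at a time: choose a root $\rho$ of $T$ and list the vertices of $T$ in breadth-first order from $\rho$. The root has no earlier neighbour in $T$. Every other vertex $v$ of $T$ has exactly one earlier neighbour, namely its parent. No other neighbour of $v$ can be earlier, because $T$ is a tree, so every other neighbour of $v$ is a child of $v$ and hence appears after $v$. Concatenating these orderings over all components of $F$ gives the desired ordering $v_1,\ldots,v_m$ of $V(F)$.

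Now start from $B$ and add $v_1,\ldots,v_m$ in this order. At step $i$, the vertex $v_i$ is joined to its neighbours among $v_1,\ldots,v_{i-1}$, and there is at most one such neighbour. It has no neighbour in $B$, since $F$ and $B$ are disjoint components of $H$. So every added vertex has degree zero or one at the time it is added. After the last step the resulting graph is exactly $B+F=H$: each edge of $F$ is created precisely when its later end is added, and no other edges appear.

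Since $B$ is $2$-connected and deletion-normal, $H$ satisfies the hypothesis of \Cref{cor:blockplusleaves}, so $H$ is deletion-normal. Nothing here is a real obstacle; the only point needing care is that isolated vertices, which are added with degree zero, are allowed by \Cref{cor:blockplusleaves}, and they are, since it permits degree ``at most one''.
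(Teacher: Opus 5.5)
Your proposal is correct and follows the paper's route: the paper obtains this corollary directly from \Cref{cor:blockplusleaves}, since any forest (placed beside $B$) can be built by iteratively adding vertices of degree at most one. Your breadth-first ordering spells out exactly that step.
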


The proof of \Cref{thm:maincentered} uses the existence of $k$-graphs with arbitrarily large girth and chromatic number, a result first proved by Erd\H{o}s and Hajnal \cite{erdosgirth}. Several other constructions were subsequently found; see, for example, \cite{alongirth} and \cite{lovaszgirth}.

\begin{theorem}[Erd\H{o}s and Hajnal, Corollary 13.4 in \cite{erdosgirth}]\label{thm:hypergirthchi}
For all $c,g,k\in \poi$, there is a $k$-graph of girth larger than $g$ and chromatic number larger than $c$.
\end{theorem}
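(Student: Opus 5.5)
This is the classical Erdős--Hajnal theorem, and I would reprove it by the standard probabilistic deletion method, adapted to $k$-graphs. For $k=1$ the statement is trivial or degenerate: every hyperedge is a singleton, girth conditions are vacuous, and the complete $1$-graph on $c+1$ vertices has chromatic number $c+1$. So assume $k\geq 2$, fix $c,g$, and let $n$ be large. Choose a constant $0<\theta<1/g$. Let $\Gamma$ be the random $k$-graph on $[n]$ in which each $k$-subset is a hyperedge independently with probability $p=n^{1-k+\theta}$. The target is to show that with positive probability two things hold:
\begin{itemize}
\item $\Gamma$ has fewer than $n/2$ ``short cycles'', that is, collections of $j\leq g$ distinct hyperedges whose union has at most $j(k-1)$ vertices;
\item every set of $m=\lceil n/(2c)\rceil$ vertices contains a hyperedge.
\end{itemize}
I would then delete one vertex from each short cycle, removing every hyperedge through that vertex.

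\textbf{Step 1: few short cycles.} For $j=1$ a short cycle would need $k\leq k-1$ vertices, which is impossible, so take $2\leq j\leq g$. I count configurations by first choosing a vertex set $S$ with $|S|\leq j(k-1)$, which gives at most $O(n^{j(k-1)})$ choices. Then I choose $j$ distinct $k$-subsets of $S$, which gives $O_{g,k}(1)$ choices. Each such configuration is present with probability $p^j$. Hence the expected number of short cycles is
$$O\bigl(n^{j(k-1)}\,n^{j(1-k+\theta)}\bigr)=O(n^{j\theta})\leq O(n^{g\theta})=o(n).$$
By Markov's inequality, with probability tending to $1$ there are fewer than $n/2$ short cycles. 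This counting is the step requiring the most care. In particular, the girth is defined via the union size of distinct hyperedges, so I must check that the count of $j$-tuples inside a fixed vertex set is bounded independently of $n$. It is, because $|S|\leq g(k-1)$.

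\textbf{Step 2: no large hyperedge-free sets.} A fixed $m$-set contains $\binom{m}{k}\geq c_0 n^k$ potential hyperedges, where $c_0=c_0(c,k)>0$. So it spans no hyperedge with probability at most
$$(1-p)^{\binom mk}\leq \exp\bigl(-c_0 n^{1+\theta}\bigr).$$
A union bound over at most $2^n$ sets gives total probability $2^n e^{-c_0n^{1+\theta}}\to 0$. So for large $n$, both events hold simultaneously with positive probability, and I fix such a $\Gamma$.

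\textbf{Step 3: conclusion.} Delete one vertex from each short cycle, and with it every hyperedge containing that vertex, to obtain $\Gamma'$ on at least $n/2$ vertices. Every short cycle loses a hyperedge, so $\Gamma'$ has girth larger than $g$; subcollections of hyperedges of $\Gamma'$ are collections in $\Gamma$, so no new short cycles appear. Any set of $m$ vertices of $\Gamma'$ spans a hyperedge of $\Gamma$ lying inside $V(\Gamma')$, and that hyperedge survives because no vertex of it was deleted. So in any partition of $V(\Gamma')$ with no monochromatic hyperedge, every class has fewer than $m$ vertices. Therefore
$$\chi(\Gamma')\geq \frac{n/2}{m-1}>c,$$
since $m-1<n/(2c)$. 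This yields the desired $k$-graph.
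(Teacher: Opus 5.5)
Your argument is correct for $k\geq 2$, and it follows the standard probabilistic deletion (alteration) method. With $p=n^{1-k+\theta}$ and $g\theta<1$, the expected number of collections of $j\leq g$ distinct hyperedges whose union has at most $j(k-1)$ vertices is $O(n^{g\theta})=o(n)$. With high probability every $m$-subset, $m=\lceil n/(2c)\rceil$, spans a hyperedge. Deleting one vertex from the union of each bad collection then leaves an induced sub-$k$-graph on more than $n/2$ vertices with girth larger than $g$. In it, every class of a proper colouring has fewer than $m$ vertices, so the chromatic number exceeds $c$.

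The paper does not prove this statement at all. It imports it from Erd\H{o}s and Hajnal and points to later constructions such as Lov\'{a}sz's, so your proposal is a self-contained substitute for a citation. Your route is the classical non-constructive one. What it buys is brevity, and you work directly with the paper's union-size definition of girth, so no translation to Berge cycles is needed. What it gives up, compared with the constructive proofs, is an explicit hypergraph. The paper never needs one, since it only uses existence (in \Cref{lem:hypgirthdeg} and in the proof of \Cref{thm:10K}).

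One small slip: your $k=1$ aside is wrong as stated. A singleton hyperedge always lies inside the class containing its vertex. So a $1$-graph with at least one hyperedge admits no partition of the required kind, and the complete $1$-graph on $c+1$ vertices does not have chromatic number $c+1$. The paper defines chromatic number only for $k\geq 2$ and only ever applies the theorem with $k\geq 2$. You should therefore simply restrict to $k\geq 2$. Alternatively, note that under a verbatim extension of the definition, a $1$-graph with a hyperedge has no finite chromatic number, which makes that case vacuous.
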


From \Cref{thm:hypergirthchi}, it follows that there are $k$-graphs of arbitrarily large girth and minimum degree. (The proof is easy, but we include it for completeness.)

\begin{lemma}\label{lem:hypgirthdeg}
    For all $d,g,k\in \poi$, there is a $k$-graph of girth larger than $g$ in which every vertex has degree larger than $d$.
\end{lemma}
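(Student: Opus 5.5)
The plan is to take a $k$-graph $\Gamma$ with girth larger than $g$ and chromatic number larger than some $c$, supplied by \Cref{thm:hypergirthchi}, and pass to a subhypergraph that is minimal with respect to having large chromatic number. Such a minimal (critical) $k$-graph has large minimum degree, and deleting hyperedges cannot decrease the girth.

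Concretely, fix $d,g,k$; the case $k=1$ is trivial, so assume $k\geq 2$. Choose $c=d+1$, and apply \Cref{thm:hypergirthchi} to obtain a $k$-graph $\Gamma$ with girth larger than $g$ and chromatic number larger than $c$. Among all subhypergraphs $\Gamma'$ of $\Gamma$ (obtained by deleting vertices together with the hyperedges containing them) with $\chi(\Gamma')>c$, choose one with $|V(\Gamma')|$ minimal.

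The girth of $\Gamma'$ is at least that of $\Gamma$. Indeed, any $g'$ distinct hyperedges of $\Gamma'$ whose union has cardinality at most $g'(k-1)$ are also such hyperedges in $\Gamma$, so $g'>g$.

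It remains to show that every vertex $v$ of $\Gamma'$ lies in more than $d$ hyperedges. Suppose instead that $v$ lies in at most $d$ hyperedges. By minimality, $\Gamma'-v$ has a proper colouring with at most $c$ colours, meaning no hyperedge is monochromatic. Each hyperedge $f$ containing $v$ forbids at most one colour for $v$: if $f\setminus\{v\}$ is monochromatic in some colour, $v$ must avoid that colour, and otherwise $f$ imposes no constraint. This is the one point that uses $k\geq 2$, so that $f\setminus\{v\}$ is nonempty. Hence at most $d<c$ colours are forbidden, so some colour remains for $v$. This extends the colouring to a proper $c$-colouring of $\Gamma'$, contradicting $\chi(\Gamma')>c$.

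Thus $\Gamma'$ has girth larger than $g$ and every vertex of degree larger than $d$. There is no real obstacle here; the only care needed is the $k=1$ edge case and the colour-counting step.
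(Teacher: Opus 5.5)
Your proof is correct and follows essentially the same route as the paper: take the Erd\H{o}s--Hajnal $k$-graph with girth larger than $g$ and chromatic number larger than $d+1$, pass to a critical sub-$k$-graph (whose girth cannot be smaller), and extend a $(d+1)$-colouring of $\Gamma'-v$ to a vertex $v$ of degree at most $d$, since its at most $d$ hyperedges block at most $d$ colours. The only cosmetic difference is that you minimise $|V(\Gamma')|$ over induced sub-$k$-graphs, whereas the paper takes $V(\Gamma')\cup E(\Gamma')$ inclusion-minimal over all sub-$k$-graphs; either choice gives exactly the criticality the argument needs.
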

\begin{proof}
 By \Cref{thm:hypergirthchi}, there exists a $k$-graph $\Gamma$ with girth larger than $g$ and chromatic number larger than $d+1$. Let $\Gamma'$ be a $k$-graph with $V(\Gamma')\subseteq V(\Gamma)$ and $E(\Gamma')\subseteq E(\Gamma)$ such that $\Gamma'$ has chromatic number larger than $d+1$, and subject to this property, $V(\Gamma')\cup E(\Gamma')$ is minimal with respect to inclusion. 
 
 We claim that $\Gamma'$ is the $k$-graph desired in \Cref{lem:hypgirthdeg}.  Note that $\Gamma'$ is $k$-uniform because $\Gamma$ is. Also, every cycle in $\Gamma'$ is also a cycle in $\Gamma$, which in turn implies that $\Gamma'$ has girth larger than $g$ because $\Gamma$ does. It remains to show that every vertex in $\Gamma'$ has degree larger than $d$. Suppose for a contradiction that some vertex $v\in V(\Gamma')$ has degree at most $d$. Let $\mca{E}$ be the set of all hyperedges of $\Gamma'$ that contain $v$; then $|\mca{E}|\leq d$. Let $\Gamma''$ be the $k$-graph with $V(\Gamma'')=V(\Gamma')\setminus \{v\}$ and $E(\Gamma'')=E(\Gamma')\setminus \mca{E}$. Then $\Gamma''$ is a $k$-graph with $V(\Gamma'')\cup E(\Gamma'')$ strictly contained in $V(\Gamma')\cup E(\Gamma')$. From this and the choice of $\Gamma'$, it follows that $\Gamma''$ has chromatic number at most $d+1$. This means there is a partition $(S_1,\ldots,S_{d+1})$ of $V(\Gamma'')$ such that no hyperedge of $\Gamma''$ is entirely contained in one of $S_1,\ldots,S_{d+1}$. Moreover, since $|\mca{E}|\leq d$, it follows that for some $j\in \{1,\ldots,d+1\}$, none of the sets $(f\setminus \{v\}:f\in \mca{E})$ is entirely contained in $S_j$. Let $S'_j=S_j\cup \{v\}$ and let $S'_i=S_i$ for each $i\in \{1,\ldots,d+1\}\setminus \{j\}$. Then no hyperedge in $E(\Gamma'')\cup \mca{E}=E(\Gamma')$ is entirely contained in one of $S'_1,\ldots,S'_{d+1}$. But now $\Gamma'$ has chromatic number at most $d+1$, a contradiction. This completes the proof of \Cref{lem:hypgirthdeg}.
\end{proof}

We also need a somewhat more ``tangible'' certificate of large girth in $k$-graphs. Let $k\geq 2$, let $\Gamma$ be a $k$-graph, and let $m\geq 2$ be an integer. By a \textit{ring in $\Gamma$} we mean an ordered $(m+1)$-tuple $R=(f_1,\ldots,f_{m+1})$ where:
\begin{itemize}
    \item $f_1,\ldots,f_m,f_{m+1}$ are hyperedges of $\Gamma$ such that $f_1=f_{m+1}$ but there are at least two distinct hyperedges among $f_1,\ldots,f_m$; and
    \item there are $m$ pairwise distinct vertices $v_1,\ldots,v_m$ of $\Gamma$ such that for every $i\in \{1,\ldots,m\}$, we have $v_i\in f_i\cap f_{i+1}$.
\end{itemize}
The \textit{length} of $R$ is the number $m$ (which is at least two).

\begin{lemma}\label{lem:ring}
Let $g,k\in \poi$ and let $\Gamma$ be a $k$-graph. Assume that there is a ring of length at most $g$ in $\Gamma$. Then $\Gamma$ has girth at most $g$.
\end{lemma}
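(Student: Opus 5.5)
Given a ring $R=(f_1,\ldots,f_{m+1})$ of length $m\le g$, I would produce $m'\le m$ pairwise distinct hyperedges whose union has at most $m'(k-1)$ vertices. The natural approach is to choose, among all rings of length at most $g$, one whose length $m$ is minimum, and then prove that its hyperedges $f_1,\ldots,f_m$ are pairwise distinct. Granting this, the union of $f_1,\ldots,f_m$ is small for the following reason. The vertices $v_1,\ldots,v_m$ are pairwise distinct, and each $v_i$ lies in both $f_i$ and $f_{i+1}$ (with $f_{m+1}=f_1$). Count incidences: $\sum_i|f_i|=mk$. Each $v_i$ is counted at least twice, since $f_i\neq f_{i+1}$ when the hyperedges are distinct. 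Hence $|f_1\cup\cdots\cup f_m|\le mk-m=m(k-1)$, so the girth is at most $m\le g$.

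The main step is the minimality argument. Suppose $f_i=f_j$ for some $1\le i<j\le m$. Then split the cyclic sequence at this repeat into two shorter closed sequences: $(f_i,f_{i+1},\ldots,f_j)$ with vertices $v_i,\ldots,v_{j-1}$, and $(f_j,\ldots,f_m,f_1,\ldots,f_i)$ with vertices $v_j,\ldots,v_m,v_1,\ldots,v_{i-1}$. Each has length less than $m$ (lengths $j-i$ and $m-j+i$, which sum to $m$), and each uses distinct vertices as inherited from $R$. To keep the case analysis clean, I would first handle consecutive equal hyperedges $f_i=f_{i+1}$. In that case I simply delete one copy, dropping $v_i$; the resulting sequence still has consecutive members sharing the listed vertices, and its length is $m-1$. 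This step needs care when the length would drop below $2$ or when all remaining hyperedges would coincide.

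The obstacle is ensuring that some piece is still a genuine ring, meaning it has length at least $2$ and contains at least two distinct hyperedges. Since $R$ contains two distinct hyperedges, at least one of the two pieces contains two distinct hyperedges. That piece must then have length at least $2$, because a piece of length $1$ consists of a single hyperedge repeated. Such a piece is a shorter ring, contradicting minimality. Consequently a minimum-length ring has all of $f_1,\ldots,f_m$ pairwise distinct, with $m\ge2$. Note that for $m=2$ distinctness gives two hyperedges sharing two distinct vertices $v_1,v_2$, so their union has at most $2k-2$ vertices, which is consistent with the count above.
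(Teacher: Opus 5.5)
Your proof is correct and follows essentially the paper's route: take a ring of minimum length, show its hyperedges $f_1,\ldots,f_m$ are pairwise distinct by extracting a shorter ring at any repetition, and then count incidences to get $|f_1\cup\cdots\cup f_m|\le m(k-1)$. The differences are cosmetic. The paper picks the closest repetition and treats the gap-one and gap-at-least-two cases separately, whereas your split into two arcs (one of which must contain two distinct hyperedges) handles all cases at once, so your separate treatment of consecutive repeats is unnecessary. Also, the incidence count does not actually need distinctness, since each $v_i$ is counted at the two different indices $i$ and $i+1$.
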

\begin{proof}
Let $(f_1,\ldots,f_{m+1})$ be a ring of length $m$ in $\Gamma$ with $m$ as small as possible. Then we have $2\leq m\leq g$. By definition, there are $m$ pairwise distinct vertices $v_1,\ldots,v_m$ of $\Gamma$ such that for every $i\in \{1,\ldots,m\}$, we have $v_i\in f_i\cap f_{i+1}$. Since every hyperedge of $\Gamma$ has cardinality exactly $k$, it follows that:

   \sta{\label{st:unionsmall} We have $\displaystyle\left|\bigcup_{i=1}^m f_i\right|\leq m(k-1)$.}

    Moreover, we claim that:

     \sta{\label{st:distincthyp} The hyperedges $f_1,\ldots,f_m$ are pairwise distinct.}

     Suppose not. Choose $i,\mu\in \{1,\ldots,m-1\}$ with $\mu$ as small as possible such that $f_i=f_{i+\mu}$. Since there are at least two distinct hyperedges among $f_1,\ldots,f_m$, it follows that $m\geq 3$. If $\mu=1$, then $(f_1,\ldots,f_i,f_{i+2},\ldots,f_{m+1})$ is a ring of length $m-1$ in $\Gamma$, a contradiction to the minimality of $m$. So we may assume that $2\leq \mu\leq m-1$. Note that, by the minimality of $\mu$, there are at least two distinct hyperedges among $f_i,\ldots,f_{i+\mu-1}$. But now $(f_i,\ldots,f_{i+\mu})$ is a ring in $\Gamma$ of length $\mu\leq m-1$, again a contradiction to the minimality of $m$. This proves \eqref{st:distincthyp}.
     \medskip

     From \eqref{st:unionsmall} and \eqref{st:distincthyp}, we deduce that $f_1,\ldots,f_m$ are $m$ pairwise distinct hyperedges of $\Gamma$ whose union contains at most $m(k-1)$ vertices. Hence, $\Gamma$ has girth at most $m\leq g$. This completes the proof of \Cref{lem:ring}.
\end{proof}

We use the above tools to prove the following lemma, which is the key step in the proof of \Cref{thm:maincentered}.

\begin{lemma}\label{lem:maincentered}
For every graph $H$ with a governing block $B$, there is a graph $G$ such that:
\begin{enumerate}[{\rm (a)}]
    \item $G$ is $B$-free; and
    \item for every $e\in E(G)$ and every connected induced subgraph $H'$ of $H$ with $V(B)\subseteq V(H')$, $G-e$ has an induced subgraph isomorphic to $H'$.
\end{enumerate}
\end{lemma}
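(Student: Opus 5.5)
The plan is to build $G$ by gluing many copies of the $B$-deletion-saturated graph $W$ (supplied by the definition of a governing block) along a $k$-graph of large girth and large minimum degree. Concretely, let $W$ be $B$-deletion-saturated such that every block in $\mca{B}(H)\setminus\{B\}$ is omnipresent in $W$, and let $k=|V(W)|$. We may assume $k\geq 2$. Using \Cref{lem:hypgirthdeg}, choose a $k$-graph $\Gamma$ of girth larger than $g=|V(H)|+2$ in which every vertex has degree larger than $d=|V(H)|$. For each hyperedge $f$, fix a bijection $V(W)\to f$, and let $G$ be the graph on $V(\Gamma)$ whose edge set is the union of the images of $E(W)$; write $W_f$ for the copy placed on $f$.

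Since $g>2$, $\Gamma$ is linear, so two hyperedges share at most one vertex. Hence every edge of $G$ lies in a unique $W_f$, and $G[f]=W_f$ is an induced copy of $W$.

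The key structural claim, proved via \Cref{lem:ring}, is the following: every $2$-connected induced subgraph of $G$ on at most $|V(H)|$ vertices lies inside a single $f$. More generally, any cycle of $G$ of length at most $g$ that uses edges from at least two copies yields a ring of length at most $g$. To see this, list the hyperedges containing the consecutive edges of the cycle, and take the vertices where the cycle switches hyperedge; after merging repeats, these give the required distinct vertices $v_i\in f_i\cap f_{i+1}$. This contradicts the girth, so every short cycle lives in one copy. Since a $2$-connected graph is a union of cycles through any two of its edges, the whole subgraph lives in one copy. Applying this with the $2$-connected graph $B$ shows that an induced copy of $B$ in $G$ would be an induced copy of $B$ in some $W_f$, which is impossible. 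This gives (a).

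For (b), let $e\in E(G)$, say $e\in E(W_f)$, and let $H'$ be a connected induced subgraph of $H$ with $V(B)\subseteq V(H')$. Then $B$ is a block of $H'$. Every other block $B''$ of $H'$ is contained in a block $B'\neq B$ of $H$, and is an induced subgraph of $B'$. Therefore omnipresence of $B'$ in $W$ restricts to the following property: for every $u\in V(B'')$ and every vertex $x$ of $W$, there is an induced copy of $B''$ in $W$ with $u\mapsto x$. Since $W$ is $B$-deletion-saturated, $W_f-e$ contains an induced copy of $B$. We then grow an embedding of $H'$ block by block along the block tree rooted at $B$. When a block $B''$ attaches at a cut vertex $u$ already mapped to $x$, we pick a hyperedge $f'\ni x$ not used so far; this is possible because $x$ has degree more than $|V(H)|$. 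We then embed $B''$ inside $W_{f'}$ with $u\mapsto x$. Deleting $e$ does not affect the new copies, since $e$ lies only in $W_f$.

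The main obstacle is checking that the final map is an induced embedding into $G-e$: new vertices must avoid old ones, and there must be no extra edges between a new vertex and previously embedded vertices. I would handle this with the ring argument again. Suppose a new vertex $y\in f'\setminus\{x\}$ coincides with, or is adjacent to, an old vertex $z$. The old vertices are joined to $x$ by a path through the previously used hyperedges, and these hyperedges form a tree-like arrangement. Such a coincidence or adjacency would then close a sequence of hyperedges $f',\dots$ back to $f'$ through distinct vertices, which is a ring of length at most $|V(H)|+2$. By \Cref{lem:ring} and the choice of $g$, this is impossible. Inside each single copy $W_{f'}$, inducedness holds by construction, so $G-e$ contains an induced copy of $H'$.
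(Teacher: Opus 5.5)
Your proposal is correct and follows essentially the same route as the paper. The paper places copies of $W$ on the hyperedges of a $k$-graph with large girth and large minimum degree, and uses \Cref{lem:ring} to show that short structures cannot leave a single copy. It then embeds $H'$ by attaching each further block inside a new hyperedge through its cut vertex, with anticompleteness again certified by a ring. The differences in your version are only cosmetic: you phrase the structural claim via short cycles and the fact that any two edges of a $2$-connected graph lie on a common cycle (rather than short paths between two vertices of a hyperedge), you grow the embedding outward from $B$ instead of inducting by deleting a leaf block, and you avoid all previously used hyperedges rather than only those carrying edges of $J$ at the cut vertex.
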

\begin{proof}
By definition, since $B$ is governing, there is a $B$-deletion-saturated graph $W$ such that every block $B'\in \mca{B}(H)\setminus \{B\}$ is omnipresent in $W$. Let $|V(W)|=k\in \poi$ and let $h=|V(H)|$. By \Cref{lem:hypgirthdeg}, there exists a $k$-graph $\Gamma$ of girth larger than $h$ in which every vertex has degree larger than $h$. Also, since $B$ is $2$-connected and non-complete (because $B$ is deletion-normal), it follows that $h=|V(H)|\geq |V(B)|>2$, and so $\Gamma$ has girth larger than $2$. In particular,

\sta{\label{st:atmostone} For all distinct $f,f'\in E(\Gamma)$, we have $|f\cap f'|\leq 1$.}

For every hyperedge $f\in E(\Gamma)$, let $W_f$ be a graph with $V(W_f)=f$ that is isomorphic to $W$, and let $\eta_f:V(W_f)\rightarrow V(W)$ be an isomorphism between $W_f$ and $W$. Let
$$G=\bigcup_{f\in E(\Gamma)}W_f.$$

From \eqref{st:atmostone} and the construction of $G$, it follows that there is a map $\varphi:E(G)\rightarrow E(\Gamma)$ such that for every $e\in E(G)$, $\varphi(e)$ is the only hyperedge of $\Gamma$ that contains both ends of $e$. In particular, $\varphi(e)$ is the only hyperedge of $\Gamma$ for which we have $e\in E(W_{\varphi(e)})$. Moreover, we claim that:

\sta{\label{st:shortcycle} Let $f\in E(\Gamma)$ and let $u,v\in f$ be distinct. Let $P$ be a path of length at most $h$ from $u$ to $v$ which is a subgraph of $G$. Then for every $e\in E(P)$, we have $\varphi(e)=f$.}

Let $P=v_1\dd\cdots\dd v_m$ where $v_1=u$, $v_m=v$, and $2\leq m\leq h$. For each $i\in \{1,\ldots,m-1\}$, let $f_i=\varphi(v_iv_{i+1})$, and let $f_m=f$. We need to show that $f_1=\cdots=f_{m-1}=f_m$. Suppose not. Define $f_{m+1}=\varphi(v_1v_2)$. Then $f_1,\ldots,f_m,f_{m+1}$ are hyperedges of $\Gamma$ such that $f_1=f_{m+1}$ but there are at least two distinct hyperedges among $f_1,\ldots,f_m$, and $v_1,\ldots,v_m$ are $m$ pairwise distinct vertices of $\Gamma$ such that for every $i\in \{1,\ldots,m\}$, we have $v_i\in f_i\cap f_{i+1}$. Therefore, $(f_1,\ldots,f_{m+1})$ is a ring of length $m\leq h$ in $\Gamma$. But now by \Cref{lem:ring}, $\Gamma$ has girth at most $h$, a contradiction. This proves \eqref{st:shortcycle}.
\medskip

Our goal is now to show that $G$ satisfies \Cref{lem:maincentered}. First, we prove that:

\sta{\label{st:wfree}$G$ is $B$-free.}

Suppose not. Let $B_0$ be an induced subgraph of $G$ isomorphic to $B$. Since $B_0$ is $2$-connected, it follows that $E(B_0)\neq \varnothing$. Choose an edge $e\in E(B_0)$ and let $f=\varphi(e)$. Let $M$ be the component of $B_0[V(B_0)\cap f]$ that contains both ends of $e$. Since $W_f$ is isomorphic to $W$, since $B_0$ is isomorphic to $B$, and since $W$ is $B$-free, it follows that $V(B_0)\not\subseteq V(W_f)=f$. Consequently, since $B_0$ is connected, it follows that there is an edge $vw\in E(B_0)$ such that $v\in V(M)$ and $w\in V(G)\setminus f$. Let $f'=\varphi(vw)$. Then $f'\neq f$, and by \eqref{st:atmostone}, we have $f\cap f'=\{v\}$. Since $M$ is connected with at least two vertices, it follows that $v$ has a neighbor $u$ in $M$; in particular, we have $u,v\in f$ distinct. On the other hand, since $B_0$ is $2$-connected, it follows that there is a path $Q$ in $B_0\setminus \{v\}$ from $u$ to $w$. But now $P=u\dd Q\dd w\dd v$ is a path of length at most $|V(B_0)|=|V(B)|\leq h$ between distinct vertices $u,v\in f$ which is a subgraph of $B_0$ (and so of $G$), and $vw$ is an edge of $P$ such that $\varphi(vw)=f'\neq f$, a contradiction to \eqref{st:shortcycle}. This proves \eqref{st:wfree}.
\medskip

In view of \eqref{st:wfree}, it remains to show that for every $e\in E(G)$ and every connected induced subgraph $H'$ of $H$ with $V(B)\subseteq V(H')$, $G-e$ has an induced subgraph isomorphic to $H'$. The proof is by induction on $|V(H')|$ for fixed $e=xy\in E(G)$. Note that $B$ is a block of $H'$. In fact, since every block of $H'$ is an induced subgraph of a block of $H$, it follows that $B$ is a governing block of $H'$.

Let $f_0=\varphi(e)$ and let $G^- = G-e$. Then $x,y\in f_0$, $e\in E(G[f_0])=E(W_{f_0})$, and
$$G^-[f_0]=G[f_0]-e=W_{f_0}-e.$$
Assume that $|V(H')|=|V(B)|$. Then $H'=B$. Since $\eta_{f_0}$ is an isomorphism between $W_{f_0}$ and $W$, it follows that $\eta_{f_0}(x)\eta_{f_0}(y)\in E(W)$ and $W_{f_0}-e$ is isomorphic to $W-\eta_{f_0}(x)\eta_{f_0}(y)$. On the other hand, since $W$ is $B$-deletion-saturated, it follows that $W-\eta_{f_0}(x)\eta_{f_0}(y)$ has an induced subgraph isomorphic to $B$, and so isomorphic to $H'$. Therefore, $G^-[f_0]$ has an induced subgraph isomorphic to $H'$, as desired.

Now assume that $|V(H')|>|V(B)|$. Since $H'$ is connected and $B\in \mca{B}(H')$, it follows that $\mca{B}(H')\setminus \{B\}\neq \varnothing$, and in particular, the block tree of $H'$ has at least two leaves. This means that there is a block $B'\in \mca{B}(H')\setminus \{B\}$ such that $V(B')$ contains exactly one cut vertex of $H'$. Let $u\in V(B')$ be the unique vertex of $B'$ that is a cut vertex of $H'$. Let 
$$H''=H'\setminus (V(B')\setminus \{u\}).$$ Then $H''$ is connected and $|V(H'')|<|V(H')|$. Also, since $\mca{B}(H'')=\mca{B}(H')\setminus \{B'\}$, it follows that $B$ is a block of $H''$ as well. So by the inductive hypothesis, $G^-$ has an induced subgraph $J$ isomorphic to $H''$. Let $\theta:V(H'')\rightarrow V(J)$ be an isomorphism between $H''$ and $J$, and let $v=\theta(u)\in V(J)\subseteq V(G)$. Let $\mca{E}=\{\varphi(vv'):v'\in N_{J}(v)\}$. Then every hyperedge in $\mca{E}$ contains $v$, but we have $|\mca{E}|<|V(H'')|<h$. Since each vertex in $\Gamma$ has degree more than $h$, there is a hyperedge $f\in E(\Gamma)\setminus \mca{E}$ with $v\in f$. We further show that:

\sta{\label{st:antihyp} $f\setminus \{v\}$ and $V(J)\setminus \{v\}$ are anticomplete in $G$.}

Suppose not. Then there exists $u\in f\setminus \{v\}$ such that either $u\in V(J)\setminus \{v\}$ or $u$ has a neighbor in $V(J)\setminus \{v\}$ in $G$. By the definition of $\mca{E}$ and the choice of $f$, we have $N_{J}(v)\cap (f\setminus \{v\})=\varnothing$, and so $u\notin N_{J}(v)$. Therefore, since $J$ is connected, it follows that there is a path $Q$ in $J$ from $v$ to a vertex $w\in V(J)\setminus \{v\}$ such that $uw\in E(G^-)$. In particular, the unique neighbor $v'$ of $v$ in $Q$ belongs to $N_{J}(v)$, and so $\varphi(vv')\neq f$. But now $P=u\dd w\dd Q\dd v$ is a path of length at most $|V(Q)|\leq |V(J)|<h$ between distinct vertices $u,v\in f$ which is a subgraph of $G^-$ (and so of $G$), and $vv'$ is an edge of $P$ such that $\varphi(vv')\neq f$, a contradiction to \eqref{st:shortcycle}. This proves \eqref{st:antihyp}.
\medskip

We can now finish the proof. By \eqref{st:antihyp}, we have $f_0\neq f$. Since $B$ is a governing block of $H'$ and since $B'\in \mca{B}(H')\setminus \{B\}$, it follows that $B'$ is omnipresent in $W$, and so in $W_f$. Recall also that $u\in V(B')$ and $v\in f=V(W_f)$. Thus, there is an injection $\psi:V(B')\rightarrow V(W_f)$ such that $\psi(u)=v$ and $\psi$ is an isomorphism between $B'$ and $B''=W_f[\psi(V(B'))]$. Since $f_0\neq f$, it follows that $B''$ is an induced subgraph of $G^-$. Hence, by \eqref{st:antihyp}, $J\cup B''$ is an induced subgraph of $G^-$ isomorphic to $H'$. This completes the proof of \Cref{lem:maincentered}.
\end{proof}

We are now in a position to prove \Cref{thm:maincentered}:

\begin{proof}[Proof of \Cref{thm:maincentered}]
Let $H$ be a graph with a governing block $B$. By definition, there is a $B$-deletion-saturated graph $W$ such that every block $B'\in \mca{B}(H)\setminus \{B\}$ is omnipresent in $W$. Since $B$ has no isolated vertices, we may assume without loss of generality that $W$ has no isolated vertices either. We begin by showing that:

\sta{\label{st:conn} There is a connected graph $\hat{H}$ such that $H$ is an induced subgraph of $\hat{H}$ and $B$ is a governing block of $\hat{H}$.}

If $H$ is connected, then we are done by choosing $\hat{H}=H$. Assume that $H$ is not connected. Let $\hat{H}$ be a graph obtained from $H$ by adding an extra vertex with exactly one neighbor in each component of $H$. Then $\hat{H}$ is connected and $H$ is an induced subgraph of $\hat{H}$. Moreover, we have $\mca{B}(H)\subseteq \mca{B}(\hat{H})$, and every block of $\hat{H}$ that is not a block of $H$ is isomorphic to $K_2$. Therefore, since $B$ is a governing block of $H$ and since $W$ has no isolated vertices, it follows that $B$ is a governing block of $\hat{H}$ as well. This proves \eqref{st:conn}.
\medskip

Let $\hat{H}$ be as given by \eqref{st:conn}. Then, by \Cref{lem:maincentered}, there is a $B$-free graph $G$ such that for every $e\in E(G)$, $G-e$ has an induced subgraph isomorphic to $\hat{H}$. But now, since $B$ is an induced subgraph of $H$, and $H$ is an induced subgraph of $\hat{H}$, we deduce that $G$ is $H$-deletion-saturated. This completes the proof of \Cref{thm:maincentered}.
\end{proof}

\section{Special pairs of leaves}\label{sec:severed}

In this last section, we prove \Cref{thm:firstmain}\ref{thm:firstmain_d} and \ref{thm:firstmain_e}. For \Cref{thm:firstmain}\ref{thm:firstmain_d}, our method also works for another kind of pair of leaves. (Recall that $\alpha(G)$ denotes the maximum cardinality of a stable set in $G$.)

\begin{theorem}\label{thm:specialleaves}
    Let $H$ be a graph of maximum degree $d\in \poi$, let $x_1,x_2$ be two leaves of $H$ and let $y_1,y_2$, respectively, be the unique neighbors of $x_1,x_2$ in $H$. Assume that either
    \begin{enumerate}[{\rm (a)}]
        \item\label{thm:specialleaves_a} $y_1=y_2$; or
        \item\label{thm:specialleaves_b} $y_1y_2\in E(H)$, that is, $x_1,x_2$ are at distance three in $H$; or
        \item\label{thm:specialleaves_c} $\alpha(N_H(y_1)) + \alpha(N_H(y_2)) \geq d+2$.
    \end{enumerate}
    Then $H$ is deletion-normal.
\end{theorem}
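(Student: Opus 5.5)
The plan is to reduce to the graph $H^+=H+x_1x_2$. Deleting the edge $x_1x_2$ from an induced copy of $H^+$ gives an induced copy of $H$. So it suffices to build an $H$-free graph $G$ in which every edge $e$ is the image of $x_1x_2$ in some induced copy of $H^+$, and in which that copy stays induced in $G-e$. In case \ref{thm:specialleaves_a}, $x_1,x_2,y_1$ span a triangle in $H^+$. In case \ref{thm:specialleaves_b}, $x_1y_1y_2x_2$ is an induced $4$-cycle in $H^+$. In case \ref{thm:specialleaves_c}, $x_1x_2$ is a pendant edge joining the two neighbourhoods $N(y_1)$ and $N(y_2)$. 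In each case $x_1$ and $x_2$ have degree two in $H^+$, and all other degrees are as in $H$, so $H^+$ still has maximum degree $d$.

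For the construction I will reuse the machinery of \Cref{sec:block}. Let $k=|V(H^+)|$. By \Cref{lem:hypgirthdeg}, take a $k$-graph $\Gamma$ of girth larger than $|V(H)|$ and large minimum degree. On each hyperedge $f$ place a copy $W_f$ of $H^+$, and let $G$ be the union. The choice of these copies must ensure that every edge of $G$ lies in some $W_f$ as the image of $x_1x_2$. The simplest way is to make every edge of each $W_f$ play that role: for each edge $ab$ of $H^+$, also require some other hyperedge through $a$ and $b$ carrying a copy of $H^+$ with $ab\mapsto x_1x_2$. Linearity of $\Gamma$ rules this out directly, so I would instead work with a $2$-graph skeleton. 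Let every edge of $G$ come from a copy of $H^+$ whose $x_1x_2$-edge is that edge. Attach the remaining vertices of each copy along a high-girth structure, so that distinct copies meet only in the edge $x_1x_2$ and otherwise look tree-like up to distance $|V(H)|$. Using the ring argument of \Cref{lem:ring} and \eqref{st:shortcycle}, every short path or cycle of $G$ then lives inside a single copy. Because the copy meets the rest of the graph only in $\{x_1,x_2\}$, removing $e$ leaves it induced, which gives deletion-saturation.

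The main obstacle is showing that $G$ is $H$-free, and this is where the three hypotheses are used. Suppose $H_0$ is an induced copy of $H$ in $G$. By large girth, $H_0$ decomposes along the tree-like gluing into pieces, each lying in one copy of $H^+$. Since each copy of $H^+$ has the same number of vertices as $H$ but more edges, $H_0$ cannot lie in a single copy, so it must use at least two. Then some vertex $v$ of $H_0$ is a gluing vertex, an image of $x_1$ or $x_2$, whose $H_0$-neighbours are spread over several copies.

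I would count degrees at such gluing vertices, or at the images of $y_1,y_2$, and show $H_0$ would need a vertex of degree exceeding $d$. For case \ref{thm:specialleaves_c}, the stable-set bound enters as follows. The neighbours of $v$ taken from different copies are pairwise nonadjacent in $G$, so the union of the stable parts drawn from $N(y_1)$ and $N(y_2)$ has size at least $d+2$, which is too large to sit in the neighbourhood of one vertex of $H$. Cases \ref{thm:specialleaves_a} and \ref{thm:specialleaves_b} should follow because the short cycle through $x_1x_2$ (a triangle or an induced $4$-cycle) forces any gluing to create a short cycle across copies, contradicting the girth. If this direct counting fails, my fallback is to choose $\Gamma$ (or the skeleton) so that every vertex of $G$ lies in many copies. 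Then every vertex of $G$ has degree greater than $d$ in a controlled way, and I would argue by minimal counterexample on the number of copies $H_0$ meets.
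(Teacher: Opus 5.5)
There is a genuine gap, and it breaks the construction itself: the graph $G$ you describe is never $H$-free, in any of the three cases.

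Let $W$ be any copy of $H^+$ in $G$, and write $w_v$ for the image of $v\in V(H^+)$. Your saturation argument needs $W$ to be induced in $G$. The edge $w_{x_1}w_{y_1}$ is not the designated edge of $W$, since $y_1\neq x_2$. So by your requirement it is the $x_1x_2$-edge of another copy $W'$, which meets $W$ only in $\{w_{x_1},w_{y_1}\}$. In $W'$ the vertex $w_{y_1}$ plays $x_1$ or $x_2$, so it has a neighbour $z\in V(W')\setminus V(W)$, namely the image of $y_2$ or $y_1$.

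Now use the very property you invoke: short paths between two vertices of one copy stay inside that copy, as in \eqref{st:shortcycle}. Then $z$ has no neighbour $w_u\in V(W)\setminus\{w_{x_1},w_{y_1}\}$, since otherwise $w_{y_1}\dd z\dd w_u$ would be a short path leaving $W$. But $V(W)\setminus\{w_{x_1}\}$ induces $H^+\setminus\{x_1\}=H\setminus\{x_1\}$, and $z$ re-attaches a pendant vertex at the image of $y_1$. Hence $(V(W)\setminus\{w_{x_1}\})\cup\{z\}$ induces a copy of $H$. In case (a), $z$ may also see $w_{x_1}$, but that vertex was discarded.

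So the sparse, tree-like gluing that keeps copies induced is exactly what lets the leaf $x_1$ be relocated. Three consequences follow:
\begin{itemize}
\item The degree count you hope for cannot exist, because the copy just found has maximum degree $d$.
\item The short cycle through $x_1x_2$ is irrelevant, because an induced copy of $H$ need not use that pair.
\item Hypotheses (a)--(c) never genuinely enter the argument.
\end{itemize}
Separately, your claim that a copy meets the rest of the graph only in its $x_1x_2$-edge contradicts your own requirement that every edge of a copy be the designated edge of another copy.

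What is missing is a host that is $H$-free for a global reason rather than a local one. The paper lets $n$ be the minimum number of vertices of a $d$-graph $\Gamma$ with $\lng(\Gamma)\cong H$; one exists by \Cref{lem:existlinegraph}. It then shows that $\lng(K^d_{n-1})$ is $H$-deletion-saturated, and $H$-freeness is immediate from the minimality of $n$.

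An edge $e_1e_2$ of $\lng(K^d_{n-1})$ has a type $\ell=|e_1\cap e_2|\in\{1,\ldots,d-1\}$. By \Cref{thm:ktn-1} it suffices to find, for each $\ell$, a $d$-graph $\Gamma_\ell$ on at most $n+\ell-1$ vertices with $\lng(\Gamma_\ell)\cong H$. In $\Gamma_\ell$ the hyperedges representing $x_1,x_2$ must be disjoint, and each must contain $\ell$ vertices of degree one. Identifying these two $\ell$-sets gives a representation of your $H^+$ on at most $n-1$ vertices, in which the hyperedges of $x_1$ and $x_2$ meet in exactly $\ell$ vertices. So your reduction to $H^+$ is the right idea, but it has to be realised for every edge type of a dense host.

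Hypotheses (a)--(c) are used precisely to build $\Gamma_\ell$. From the hyperedge representing $y_i$, one removes some of the vertices it shares with the hyperedge of $x_i$, so that the latter becomes $\ell$-pendent. These are replaced by fresh vertices, and each hypothesis guarantees that at most $\ell-1$ fresh vertices are needed in total.
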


Instead of \ref{thm:firstmain}\ref{thm:firstmain_e}, we prove more generally that the line graph of nearly \textit{every} graph with two leaves at distance at least three is deletion-normal. Given a graph $H$, a \textit{claw component of $H$} is a component of $H$ that is isomorphic to $K_{1,3}$, and a \textit{triangle component of $H$} is a component of $H$ that is isomorphic to $K_{3}$. A pair $(x_1,x_2)$ of vertices in $H$ is said to be \textit{severed} if:
\begin{itemize}
\item $x_1,x_2$ are leaves of $H$ at distance at least three from each other (that is, $x_1,x_2$ are non-adjacent and the unique neighbors of $x_1,x_2$ in $H$ are distinct); and
    \item none of $x_1,x_2$ belongs to a claw component of $H$.
\end{itemize}
We show that:

\begin{theorem}\label{thm:mainsevered}
   Let $H$ be a graph that has a severed pair. Then $\lng(H)$ is deletion-normal.
\end{theorem}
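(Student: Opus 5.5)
The plan is to realise the deletion-saturated graph as a line graph $G=\lng(\Gamma)$ for a suitable graph $\Gamma$. The guiding observation concerns the edges $e_1=x_1y_1$ and $e_2=x_2y_2$. Since $(x_1,x_2)$ is severed, $e_1,e_2$ are disjoint, so they are non-adjacent vertices of $\lng(H)$. Each of them is simplicial in $\lng(H)$, with neighbourhood the clique of other edges at $y_i$.

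Let $H'$ be obtained from $H$ by identifying $x_1$ and $x_2$ into one vertex $x$; this vertex has degree two, with incident edges $e_1,e_2$. Then $\lng(H')$ is exactly $\lng(H)$ plus the single edge $e_1e_2$, so $\lng(H')-e_1e_2=\lng(H)$. More generally, let $\Gamma$ be any graph and $e,f$ edges of $\Gamma$ meeting at a vertex $v$. If $\Gamma$ contains a (not necessarily induced) subgraph $F\cong H'$ whose vertex $x$ is $v$ and whose two edges at $x$ are $e,f$, then the edges of $F$ induce $\lng(H)$ in $\lng(\Gamma)-ef$. This holds because adjacency in $\lng(\Gamma)$ among edges of $F$ is exactly adjacency in $\lng(F)$. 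So the deletion half of saturation reduces to a purely graph-theoretic covering property: every path $e\dd v\dd f$ of length two in $\Gamma$ is the ``$x$-wedge'' of a copy of $H'$.

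For the $H$-freeness half, I will use Whitney's theorem. An induced copy of $\lng(H)$ in $\lng(\Gamma)$ is $\lng(F)$ for some subgraph $F$ of $\Gamma$, and $F$ is isomorphic to $H$ up to exchanging triangle components with claw components. Since $x_1,x_2$ lie in no claw component, and as leaves lie in no triangle component, every such $F$ still contains two leaves playing the roles of $x_1,x_2$. Hence it suffices to build $\Gamma$ with the covering property above and with no subgraph in the finite family $\mca{F}$ of Whitney-variants of $H$. The feature I will exploit is that every member of $\mca{F}$ has the leaves $x_1,x_2$ \emph{separate}, whereas each copy of $H'$ we plant has them glued.

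To build $\Gamma$, I would imitate the proof of \Cref{lem:maincentered}. Take, by \Cref{lem:hypgirthdeg}, a $k$-graph $\Gamma_0$ of girth larger than $|V(H)|$ and large minimum degree, with $k=|V(H')|$. Place a copy of $H'$ on each hyperedge so that the designated vertex $x$ and the wedge at $x$ are available. The pieces meet pairwise in at most one vertex and every short path stays inside one piece, as in \eqref{st:shortcycle}. Then every subgraph of $\Gamma$ with at most $|V(H)|$ vertices is a tree-like gluing of pieces along single cut vertices. Embedding a member of $\mca{F}$ would therefore force the leaves $x_1,x_2$ either into one piece, which is impossible since a piece is $H'$ and we must check $H'$ has no member of $\mca{F}$ with the required extra structure, or to be split across pieces in a way the high girth and block structure rule out.

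I expect the main obstacle to be twofold. First, the covering property for wedges $e\dd v\dd f$ whose two edges come from \emph{different} pieces is not automatic. Second, one must show that a single copy of $H'$, or a tree of glued copies, cannot contain $H$ as a subgraph; this is false in general, for instance when $H'$ has spare degree elsewhere. My first attempt for the former is to give each vertex $v$ a degree-two role: the gluing is chosen so that any two pieces meet only at vertices that are the $x$-vertex in one piece and a leaf-free, low-degree vertex in the other, and extra pieces are added through $v$ for every pair of incident edges. For the latter, I would use the maximum-degree bookkeeping suggested by the severed hypothesis, and if necessary replace $H'$ by $H'$ with the wedge at $x$ subdivided suitably.
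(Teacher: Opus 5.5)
Your reduction is the right one. Gluing $x_1,x_2$ and requiring every wedge $e\dd v\dd f$ of $\Gamma$ to be the image of the wedge at the glued vertex is exactly the case $k=2$, $\ell=1$ of \Cref{thm:ktn-1}. The gap is the choice of $\Gamma$: the high-girth gluing is never completed, and in fact it fails on both halves.

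\emph{Freeness fails.} Since $\lng(F)$ is an induced subgraph of $\lng(\Gamma)$ for \emph{every} (not necessarily induced) subgraph $F$ of $\Gamma$, freeness is a subgraph-avoidance condition, monotone under adding edges. Your $\Gamma$ has large minimum degree, because each vertex lies in many edge-disjoint pieces. A graph of minimum degree at least $|E(T)|$ contains every tree $T$ as a subgraph, so when $H$ is a tree (the main application) $\lng(\Gamma)$ contains $\lng(H)$. Concretely, for $H=P_5$ your $H'$ is a $4$-cycle. A path $a\dd b\dd c\dd v\dd p$ through two pieces sharing $v$ already gives an induced $P_4=\lng(H)$.

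\emph{Covering fails.} Suppose $y_1,y_2$ lie in the same component of $H$. Then both edges at $x$ lie on a cycle of $H'$ of length at most $|V(H)|-1$. By your analogue of \eqref{st:shortcycle}, every such cycle in $\Gamma$ lies inside one piece. Hence a wedge $e\dd v\dd f$ with $e,f$ in different pieces is never the $x$-wedge of a copy of $H'$. Adding more pieces through $v$ only creates more such wedges.

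The missing idea is a vertex count, which makes $\Gamma=K_{n-1}$ work; this is what the paper does. Let $H^*$ be obtained from $H$ by deleting isolated vertices and replacing each claw component by a triangle, and let $n=|V(H^*)|$. Applying Whitney componentwise, every graph with no isolated vertices whose line graph is isomorphic to $\lng(H)$ has at least $n$ vertices. So $\lng(K_{n-1})$ is $\lng(H)$-free, simply because any subgraph of $K_{n-1}$ has at most $n-1$ vertices. For the deletion half, note that $x_1,x_2$ survive in $H^*$ as leaves at distance at least three. Gluing them gives a simple graph on exactly $n-1$ vertices. It embeds in $K_{n-1}$ with its wedge at $x$ sent onto any prescribed wedge $e\dd v\dd f$, which yields an induced $\lng(H)$ in $\lng(K_{n-1})-ef$. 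Your remark that Whitney-variants keep $x_1,x_2$ apart while planted copies glue them is precisely this saving of one vertex. It has to be used as the bound $|V(F)|\ge n$, not as a structural argument about pieces in a sparse, high-degree construction.
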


For instance, if $H$ is a tree, then $H$ has a severed pair if and only if $H$ is not a star. Also, conveniently, if $H$ is a tree, then $\lng(H)$ is a complete graph if and only if $H$ is a star. Since complete graphs are not deletion-saturated, this deduces \ref{thm:firstmain}\ref{thm:firstmain_e} from \Cref{thm:mainsevered}:

\begin{corollary}
    For a tree $H$, $\lng(H)$ is deletion-normal if and only if $H$ is not a star.
\end{corollary}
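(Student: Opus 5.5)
This corollary has two directions, and both reduce to results stated earlier; the real work sits in \Cref{thm:mainsevered}, which we may assume.

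\emph{If $\lng(H)$ is deletion-normal, then $H$ is not a star.} We argue by contraposition. Suppose $H$ is a star $K_{1,n}$. Every two edges of $H$ share the centre, so $\lng(H)=K_n$ is complete. A complete graph $K_n$ with $n\geq 2$ is not deletion-normal. Indeed, if $G$ is $K_n$-free and $e\in E(G)$, then $G-e$ has fewer edges among every vertex set than $G$ does, so $G-e$ is still $K_n$-free and deleting $e$ creates no copy. The degenerate stars need a separate convention check. For $K_{1,0}$ and $K_{1,1}$, $\lng(H)$ is the null graph or $K_1$. Every graph contains these as induced subgraphs, so no graph is $\lng(H)$-free, and no $\lng(H)$-deletion-saturated graph exists. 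Thus $\lng(H)$ is not deletion-normal whenever $H$ is a star.

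\emph{If $H$ is not a star, then $\lng(H)$ is deletion-normal.} By \Cref{thm:mainsevered}, it suffices to exhibit a severed pair in $H$. Let $H$ be a tree that is not a star. Then $H$ has at least four vertices and diameter at least $3$. Take a longest path $x_1\dd y_1\dd\cdots\dd y_2\dd x_2$ in $H$. By maximality of the path, its ends $x_1,x_2$ are leaves. Since the diameter is at least $3$, they are at distance at least three, so their neighbours $y_1\neq y_2$ are distinct. Finally, $H$ is connected and is not $K_{1,3}$, because $K_{1,3}$ is a star. Hence $H$ has no claw component, and $(x_1,x_2)$ is severed.

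The only step needing care is the bookkeeping for small or degenerate stars in the first direction. Everything else is immediate from the definitions together with \Cref{thm:mainsevered}.
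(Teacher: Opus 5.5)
Your proof is correct and follows essentially the same route as the paper. Stars have complete line graphs, which cannot be deletion-normal because deleting an edge never creates a clique. A non-star tree has diameter at least three, so the ends of a longest path form a severed pair and \Cref{thm:mainsevered} applies. You also treat the degenerate stars $K_{1,0}$ and $K_{1,1}$ explicitly, which the paper glosses over; separately, ``fewer edges among every vertex set'' should read ``no more edges'', though your conclusion is unaffected.
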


The main ingredient in the proof of both Theorems~\ref{thm:specialleaves} and \ref{thm:mainsevered} is \Cref{thm:ktn-1} below. Before proving that result, we need some definitions. Let $\Gamma$ be a $k$-graph for some integer $k\geq 2$, and let $\ell\in \{1,\ldots, k-1\}$. We say that a hyperedge $e$ of $\Gamma$ is \textit{$\ell$-pendent} if there are at least $\ell$ vertices in $e$ whose degree in $\Gamma$ is one.

Let $k,n\in \poi$ with $k\geq 2$. We say that a graph $H$ is \textit{$(k,n)$-admissible} if:
\begin{itemize}
    \item $H$ is not isomorphic to the line graph of any $k$-graph on at most $n-1$ vertices; and
    \item for every $\ell\in \{1,\ldots, k-1\}$, there is a $k$-graph $\Gamma_{\ell}$ on at most $n+\ell-1$ vertices with two disjoint $\ell$-pendent hyperedges such that $H$ is isomorphic to $\lng(\Gamma_{\ell})$.
\end{itemize}

\begin{theorem}\label{thm:ktn-1}
  Let $k,n\in \poi$ with $k\geq 2$ and let $H$ be a $(k,n)$-admissible graph. Then $\lng(K_{n-1}^k)$ is $H$-deletion-saturated.
\end{theorem}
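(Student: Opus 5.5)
Let $L=\lng(K_{n-1}^k)$, where vertices of $L$ are the $k$-subsets of an $(n-1)$-set $V$ and two such subsets are adjacent exactly when they intersect. We check the three conditions in the definition of $H$-deletion-saturated: $E(L)\neq\varnothing$, $L$ is $H$-free, and deleting any edge of $L$ creates an induced copy of $H$.

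\emph{Nonemptiness.} Write $\Gamma_1$ for the $k$-graph supplied by admissibility with $\ell=1$. It has two disjoint hyperedges, so $2k\le |V(\Gamma_1)|\le n$. Hence $n-1\ge 2k-1>k$, so $K_{n-1}^k$ has two distinct hyperedges sharing a vertex, and $L$ has an edge.

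\emph{$H$-freeness.} Suppose $X\subseteq V(L)$ induces a copy of $H$. Then $X$ is a set of $k$-subsets of $V$. Let $\Gamma$ be the $k$-graph with vertex set $\bigcup X\subseteq V$ and hyperedge set $X$. Then $L[X]=\lng(\Gamma)$, so $H$ is the line graph of a $k$-graph on at most $n-1$ vertices. This contradicts the first admissibility condition.

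\emph{Deletion.} Fix an edge $ff'$ of $L$, so $f,f'$ are distinct $k$-sets with $\ell:=|f\cap f'|\in\{1,\ldots,k-1\}$. Take $\Gamma_\ell$ from the second condition: it has at most $n+\ell-1$ vertices and two disjoint $\ell$-pendent hyperedges $e_1,e_2$. Choose sets $P_1\subseteq e_1$ and $P_2\subseteq e_2$ of exactly $\ell$ degree-one vertices each, and fix a bijection $\beta:P_1\to P_2$. Let $\Gamma'$ be obtained from $\Gamma_\ell$ by identifying each $p\in P_1$ with $\beta(p)$.

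\emph{Properties of $\Gamma'$.} Since $e_1\cap e_2=\varnothing$, each hyperedge stays a $k$-set, so $\Gamma'$ is a $k$-graph. It has at most $n-1$ vertices. The identified vertices lie in no hyperedge other than $e_1,e_2$, so no two distinct hyperedges become equal. For hyperedges other than the pair $\{e_1,e_2\}$, intersections do not change: an identified vertex lies only in $e_1$ and $e_2$. The one change is that $e_1,e_2$ were disjoint and now share exactly $\ell$ vertices. Therefore $\lng(\Gamma')$ is $\lng(\Gamma_\ell)$ plus the single edge $e_1e_2$. Equivalently, $\lng(\Gamma')-e_1e_2\cong H$.

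\emph{Embedding.} Embed $V(\Gamma')$ injectively into $V$ (possible since $|V(\Gamma')|\le n-1$) so that $e_1\mapsto f$ and $e_2\mapsto f'$. This works because $|e_1\cap e_2|=\ell=|f\cap f'|$: first map $e_1\cap e_2$ bijectively onto $f\cap f'$, then $e_1\setminus e_2$ onto $f\setminus f'$, then $e_2\setminus e_1$ onto $f'\setminus f$. Finally, map the remaining vertices injectively into $V\setminus(f\cup f')$. There is room, since $|V|=n-1\ge|V(\Gamma')|$ and both $\Gamma'$ and $V$ use exactly $2k-\ell$ vertices on $f\cup f'$.

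\emph{Conclusion.} An injective vertex map preserves whether two sets intersect, so the image of $E(\Gamma')$ induces a copy of $\lng(\Gamma')$ in $L$, with $e_1,e_2$ sent to $f,f'$. Deleting the edge $ff'$ therefore yields an induced copy of $\lng(\Gamma')-e_1e_2\cong H$ in $L-ff'$.

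The only delicate step is the identification. It relies on the chosen vertices being degree one, so that no other adjacency changes and no hyperedges coincide. It also relies on the count giving exactly $n-1$ vertices; both points hold as checked above.
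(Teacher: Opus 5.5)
Your proof is correct and follows the paper's argument. $H$-freeness comes from the first admissibility condition. For an edge $ff'$ with $|f\cap f'|=\ell$, you merge $\ell$ degree-one vertices of the two disjoint $\ell$-pendent hyperedges of $\Gamma_\ell$ and embed the result into $K_{n-1}^k$ with the merged pair sent to $f,f'$; the paper replaces $X_1$ and $X_2$ by a common new $\ell$-set $Y$, which is the same operation. Your explicit check that $E(\lng(K_{n-1}^k))\neq\varnothing$, via $2k\le|V(\Gamma_1)|\le n$, is a small point the paper leaves implicit.
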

\begin{proof}
    Since $H$ is $(k,n)$-admissible, it follows that $H$ is not the line graph of any $k$-graph on at most $n-1$ vertices, and so $\lng(K_{n-1}^k)$ is $H$-free. It remains to show that for every edge $e_1e_2\in E(\lng(K_{n-1}^k))$ (where $e_1,e_2$ are distinct hyperedges of $K_{n-1}^k$), $H$ is isomorphic to an induced subgraph of $\lng(K_{n-1}^k)-e_1e_2$.
    
    Let $|e_1\cap e_2|=\ell$. Then $\ell\in \{1,\ldots, k-1\}$, and since $H$ is $(k,n)$-admissible, there is a $k$-graph $\Gamma_{\ell}$ on at most $n+\ell-1$ vertices with two disjoint $\ell$-pendent hyperedges $f_1,f_2$ such that $H$ is isomorphic to $\lng(\Gamma_{\ell})$. In particular, for each $i\in \{1,2\}$, there exists an $\ell$-subset $X_i$ of $f_i$ such that the vertices in $X_i$ belong to no hyperedge of $\Gamma_{\ell}$ except for $f_i$. 
    
    We construct a $k$-graph $\Gamma'$ as follows. Let $Y$ be a set (of vertices) of cardinality $\ell$ disjoint from $V(\Gamma_{\ell})$. For each $i\in \{1,2\}$, let $f'_i=(f_i\setminus X_i)\cup Y$. Let $\Gamma'$ be the $k$-graph with
    $$V(\Gamma')=(V(\Gamma_{\ell})\setminus (X_1\cup X_2))\cup Y;$$
    $$E(\Gamma')=(E(\Gamma_{\ell})\setminus \{f_1,f_2\})\cup \{f'_1,f'_2\}.$$
    Then we have:
    \begin{itemize}
    \item $|V(\Gamma')|= |V(\Gamma_{\ell})|-\ell\leq n-1$.
    \item $f'_1,f'_2\in E(\Gamma')$ with $f'_1\cap f'_2=Y$; in particular, $|f'_1\cap f'_2|=|Y|=\ell=|e_1\cap e_2|$.
        \item  $\lng(\Gamma_{\ell})$ is isomorphic to $\lng(\Gamma')-f'_1f'_2$; thus, $H$ is isomorphic to $\lng(\Gamma')-f'_1f'_2$.
    \end{itemize}
From the first two bullets, it follows that $\Gamma'$ is isomorphic to a sub-$k$-graph $\Gamma''$ of $K_{n-1}^k$ with $e_1,e_2 \in E(\Gamma'')$, via an isomorphism $\varphi : V(\Gamma') \rightarrow V(\Gamma'')$ such that $\{\varphi(v):v\in f'_i\}=e_i$ for each $i\in \{1,2\}$. In particular, the map $\Phi:E(\Gamma')\to E(\Gamma'')$ with $\Phi(f)=\{\varphi(v):v\in f\}$ is an isomorphism between $\lng(\Gamma')$ and the induced subgraph $\lng(\Gamma'')$ of $\lng(K_{n-1}^k)$ with $\Phi(f'_i)=e_i$ for every $i\in \{1,2\}$. It follows that $\lng(\Gamma')-f'_1f'_2$ is isomorphic to the induced subgraph $\lng(\Gamma'')-e_1e_2$ of $\lng(K_{n-1}^k)-e_1e_2$. But now by the third bullet above, $H=\lng(\Gamma_{\ell})$ is isomorphic to an induced subgraph of $\lng(K_{n-1}^k)-e_1e_2$. This completes the proof of \Cref{thm:ktn-1}.
\end{proof}

We also need the following:

\begin{lemma}\label{lem:existlinegraph}
   Let $H$ be a graph of maximum degree $d\in \poi$. Then there exists a $d$-graph $\Gamma$ such that $H$ is isomorphic to $\lng(\Gamma)$.
\end{lemma}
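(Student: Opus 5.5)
Our plan is to take the vertices of $\Gamma$ to be the edges of $H$ together with some padding vertices, and to let the hyperedges be the closed neighbourhoods of the vertices of $H$. For each $v\in V(H)$, let $\partial v$ be the set of edges of $H$ incident with $v$; then $|\partial v|=\deg_H(v)\leq d$. For each $v$, add a set $P_v$ of $d-\deg_H(v)$ fresh vertices, with the sets $P_v$ ($v\in V(H)$) pairwise disjoint and disjoint from $E(H)$. Define $f_v=\partial v\cup P_v$, so that $|f_v|=d$, and let $\Gamma$ be the $d$-graph with vertex set $E(H)\cup\bigcup_{v}P_v$ and hyperedge set $\{f_v:v\in V(H)\}$. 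The isomorphism will be $v\mapsto f_v$.

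The first step is to check adjacency. For distinct $u,v\in V(H)$, the padding sets are private, so $f_u\cap f_v=\partial u\cap\partial v$. This set is nonempty exactly when $uv\in E(H)$, in which case it is $\{uv\}$. Hence $f_u$ and $f_v$ are adjacent in $\lng(\Gamma)$ if and only if $uv\in E(H)$.

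The second step is to check that $v\mapsto f_v$ is injective, so that $E(\Gamma)$ is genuinely a set of $|V(H)|$ distinct hyperedges. This is the only delicate point. Since $d\geq 1$, each $f_v$ is nonempty. If $\deg_H(v)<d$, then $f_v$ contains the private vertices $P_v\neq\varnothing$, so it differs from every other $f_u$. If $\deg_H(u)=\deg_H(v)=d$ and $f_u=f_v$, then $\partial u=\partial v$. Each edge in this set is incident with both $u$ and $v$, so it equals $uv$, which forces $d=1$ and $u,v$ to form a $K_2$ component. For $d=1$ this is a genuine collision, since two vertices both of degree one in a $K_2$ component have no padding. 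We fix it by giving every vertex one extra private vertex, i.e. $|P_v|=d-\deg_H(v)+1$, when $d=1$. For $d=1$ this yields $2$-graphs, which is harmless there, since $H$ is then a disjoint union of $K_2$'s and isolated vertices and is directly the line graph of a suitable graph, e.g. paths $P_3$ plus disjoint edges.

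Rather than this patch, a cleaner uniform route may exist, for instance adding a private padding vertex only to one endpoint of each $K_2$ component when $d\geq 2$. For $d\geq 2$, the argument above already shows injectivity, since a collision requires $\partial u=\partial v=\{uv\}$, i.e. degree $1<d$, and then padding is present. So the only real obstacle is the degenerate case $d=1$, which we dispose of by hand. We expect the authors either to treat it separately or to note that their later use only needs $d\geq 2$.
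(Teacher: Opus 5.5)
Your construction is the paper's. Your private padding sets $P_v$ play the role of the $d-d_i$ pendant edges the paper attaches to $v_i$ in its auxiliary graph $H'$, and your $f_v$ is the paper's $f_i$, the set of edges of $H'$ at $v_i$. For $d\geq 2$ your argument is correct. It is also slightly more careful than the paper's, which infers the isomorphism from the intersection pattern alone and never checks that $v\mapsto f_v$ is injective.

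The problem is your treatment of $d=1$. Setting $|P_v|=d-\deg_H(v)+1$ makes every hyperedge have size $2$, so you produce a $2$-graph rather than a $d$-graph, and the statement is not proved in that case. No other patch can work either. In a $1$-graph, distinct hyperedges are distinct singletons and hence disjoint, so $\lng(\Gamma)$ is always edgeless; the paper uses exactly this fact for $\lng(K^1_{s+2})$ in the proof of \Cref{thm:Ktts2}. A graph of maximum degree $1$, however, has an edge. So the lemma is false for $d=1$.

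The collision you found on a $K_2$ component is also where the paper's own proof silently breaks: there $f_i=f_j=\{v_iv_j\}$. The right conclusion is therefore not to ``dispose of $d=1$ by hand'' but to add the hypothesis $d\geq 2$. This costs nothing, because the lemma is used only in the proof of \Cref{thm:specialleaves}, where it feeds $k=d$ into \Cref{thm:ktn-1}, and that result requires $k\geq 2$ anyway.

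Your aside about padding only one endpoint of each $K_2$ component is unnecessary. For $d\geq 2$ those endpoints have degree $1<d$ and are already padded, and for $d=1$ it would destroy uniformity.
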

\begin{proof}
    Let $V(H)=\{v_1,\ldots, v_h\}$, where $h\in \poi$. For each $i\in \{1,\ldots, h\}$, let $d_i$ be the degree of $v_i$ in $H$. Let $U_1,\ldots, U_h$ be pairwise disjoint sets (of vertices), disjoint from $V(H)$, such that for each $i\in \{1,\ldots, h\}$, we have $|U_i|=d-d_i\geq 0$. Let $H'$ be the graph with
    $$V(H')=V(H)\cup \left(\bigcup_{i=1}^h U_i\right)$$
    and
    $$E(H')=E(H)\cup \left(\bigcup_{i=1}^h\{uv_i:u\in U_i\}\right).$$
  In other words, $H'$ is obtained from $H$ by adding, for each $i\in \{1,\ldots, h\}$, a set $U_i$ of $d-d_i$ exclusive leaves adjacent to $v_i$. For each $i\in \{1,\ldots, h\}$, let $f_i$ be the set of all edges in $E(H')$ that are incident to $v_i$ in $H'$. It follows that $|f_1|=\cdots=|f_h|=d$, and for all distinct $i,j\in \{1,\ldots, h\}$, if $v_iv_j\in E(H)$, then $f_i\cap f_j=\{v_iv_j\}$, and if $v_iv_j\notin E(H)$, then $f_i\cap f_j=\varnothing$. Let $\Gamma$ be the $d$-graph with $V(\Gamma)=E(H')$ and $E(\Gamma)=\{f_1,\ldots, f_h\}$. Then, for all distinct $i,j\in \{1,\ldots, h\}$, we have $v_iv_j\in E(H)$ if and only if $f_i\cap f_j\neq \varnothing$, which in turn implies that the map $\varphi:V(H)\to E(\Gamma)$ with $\varphi(v_i)=f_i$ is an isomorphism between $H$ and $\lng(\Gamma)$. This completes the proof of \Cref{lem:existlinegraph}.
\end{proof}

Let us now prove \Cref{thm:specialleaves}:

\begin{proof}[Proof of \Cref{thm:specialleaves}]

   By \Cref{lem:existlinegraph}, there is a $d$-graph whose line graph is isomorphic to $H$. Let $n\in \poi$ be minimum such that there exists a $d$-graph $\Gamma$ on $n$ vertices whose line graph is isomorphic to $H$. Let $\varphi:V(H)\to E(\Gamma)$ be an isomorphism between $H$ and $\lng(\Gamma)$. We show that $H$ is $(d,n)$-admissible. This, along with \Cref{thm:ktn-1}, implies that $H$ is deletion-normal.

   To begin with, we deduce from the minimality of $n$ that:

 \sta{\label{st:nohypergraph}There is no $d$-graph on at most $n-1$ vertices whose line graph is isomorphic to $H$.}

Note that for each $i\in \{1,2\}$, we have $\varphi(x_i)\cap \varphi(y_i)\neq\varnothing$, $\varphi(x_i)\setminus \varphi(y_i)\neq\varnothing$, and $\varphi(x_i)\cap e=\varnothing$ for all $e\in E(\Gamma)\setminus \{\varphi(y_i)\}$. For every $i\in \{1,2\}$ and $\ell\in \{1,\ldots, d-1\}$, let
   $$\delta_{i,\ell}=\max\{0,\ell-|\varphi(x_i)\setminus \varphi(y_i)|\}.$$

   We claim that:

\sta{\label{st:deltalimits} For each $i\in \{1,2\}$ and every $\ell\in \{1,\ldots, d-1\}$, we have

$$0\leq \delta_{i,\ell}\leq \min\{\ell,|\varphi(x_i)\cap \varphi(y_i)|\}-1.$$}

   Note that $\delta_{i,\ell}\geq 0$ is immediate from the definition. Since $\ell\geq 1$, we have $0\leq \ell-1$. Since $\varphi(x_i)\cap \varphi(y_i)\neq \varnothing$, it follows that $0\leq |\varphi(x_i)\cap \varphi(y_i)|-1$, and so $\ell-|\varphi(x_i)\setminus \varphi(y_i)|\leq \ell-1$. Since $\ell\leq d-1$, it follows that $\ell-|\varphi(x_i)\setminus \varphi(y_i)|\leq d-1-|\varphi(x_i)\setminus \varphi(y_i)|=|\varphi(x_i)\cap \varphi(y_i)|-1$. From the latter four inequalities, we deduce that
   $$\max\{0,\ell-|\varphi(x_i)\setminus \varphi(y_i)|\}\leq \min\{\ell-1,|\varphi(x_i)\cap \varphi(y_i)|-1\}.$$
   This proves \eqref{st:deltalimits}.

   \sta{\label{st:deltasum} For every $\ell\in \{1,\ldots, d-1\}$, either $\delta_{1,\ell}=\delta_{2,\ell}=0$ or
 $$\delta_{1,\ell}+\delta_{2,\ell}\leq \ell-1+|\varphi(x_1)\cap \varphi(y_1)|+|\varphi(x_2)\cap \varphi(y_2)|-d.$$}

Assume that either $\delta_{1,\ell}$ or $\delta_{2,\ell}$ is nonzero; say $\delta_{1,\ell}\neq 0$. Then $\delta_{1,\ell}=\ell-|\varphi(x_1)\setminus \varphi(y_1)|$. By \eqref{st:deltalimits} and since $\varphi(x_1)\cap \varphi(x_2)=\varnothing$, we have
    \begin{align*}
\delta_{1,\ell}+\delta_{2,\ell}&\leq \ell-|\varphi(x_1)\setminus \varphi(y_1)|+|\varphi(x_2)\cap \varphi(y_2)|-1\\
        &= \ell-(d-|\varphi(x_1)\cap \varphi(y_1)|)+|\varphi(x_2)\cap \varphi(y_2)|-1\\
&= \ell-1+|\varphi(x_1)\cap \varphi(y_1)|+|\varphi(x_2)\cap \varphi(y_2)|-d.
    \end{align*}
 This proves \eqref{st:deltasum}.

\medskip

From \eqref{st:deltalimits}, it follows that for each $i\in \{1,2\}$ and every $\ell\in \{1,\ldots, d-1\}$, there is a $\delta_{i,\ell}$-subset $D_{i,\ell}$ of $\varphi(x_i)\cap \varphi(y_i)$ such that $(\varphi(x_i)\cap \varphi(y_i))\setminus D_{i,\ell}\neq\varnothing$.

\sta{\label{st:dist2leaves} If $y_1=y_2$, then for every $\ell\in \{1,\ldots, d-1\}$,  there is a $d$-graph $\Gamma_{\ell}$ with $|V(\Gamma_{\ell})|< n+\ell$ and two disjoint $\ell$-pendent hyperedges such that $H$ is isomorphic to $\lng(\Gamma_{\ell})$.}

Write $y_1=y_2=y\in V(H)$. We construct the $d$-graph $\Gamma_{\ell}$ as follows. Let $C$ be a set of vertices, disjoint from $V(\Gamma)$, with $|C|=\delta_{1,\ell}+\delta_{2,\ell}$, and let $f=(\varphi(y)\setminus (D_{1,\ell}\cup D_{2,\ell}))\cup C$ (see \Cref{fig:mul19}). Then $|f|=d$. Let $\Gamma_{\ell}$ be the $d$-graph with
    $$V(\Gamma_{\ell})=V(\Gamma)\cup C;$$
    $$E(\Gamma_{\ell})=(E(\Gamma)\setminus \{\varphi(y)\})\cup \{f\}.$$
Now, note that since $\varphi(x_1)\cap \varphi(x_2)=\varnothing$, we have $|\varphi(x_1)\cap \varphi(y)|+|\varphi(x_2)\cap \varphi(y)|\leq |\varphi(y)|=d$, and so $\ell-1+|\varphi(x_1)\cap \varphi(y)|+|\varphi(x_2)\cap \varphi(y)|-d\leq \ell-1$. Therefore, by \eqref{st:deltasum} and since $\ell\geq 1$, it follows that $\delta_{1,\ell}+\delta_{2,\ell}\leq \ell-1$. We deduce that
$$|V(\Gamma_{\ell})|=|V(\Gamma)|+|C|=n+\delta_{1,\ell}+\delta_{2,\ell}\leq n+\ell-1.$$
Moreover, since $E(\Gamma_{\ell})\setminus \{f\}=E(\Gamma)\setminus \{\varphi(y)\}$, for each $i\in \{1,2\}$, we have $\varphi(x_i)\cap e=\varnothing$ for every $e\in E(\Gamma_{\ell})\setminus \{f\}$. Also, we have
$\varphi(x_i)\setminus f=(\varphi(x_i)\setminus \varphi(y))\cup D_{i,\ell}$, and so
 $$|\varphi(x_i)\setminus f|=|\varphi(x_i)\setminus \varphi(y)|+\delta_{i,\ell}
  \geq |\varphi(x_i)\setminus \varphi(y)|+(\ell-|\varphi(x_i)\setminus \varphi(y)|)
  =\ell.$$
Thus, $\varphi(x_1)$ and $\varphi(x_2)$ are two disjoint $\ell$-pendent hyperedges of $\Gamma_{\ell}$. In addition, it is straightforward to check that, since $\varphi$ is an isomorphism between $H$ and $\lng(\Gamma)$, the map $\varphi':V(H)\to E(\Gamma_{\ell})$ with $\varphi'(y)=f$ and $\varphi'(x)=\varphi(x)$ for all $x\in V(H)\setminus \{y\}$ is an isomorphism between $H$ and $\lng(\Gamma_{\ell})$. This proves \eqref{st:dist2leaves}.
\begin{figure}
    \centering
    \includegraphics[width=0.6\linewidth]{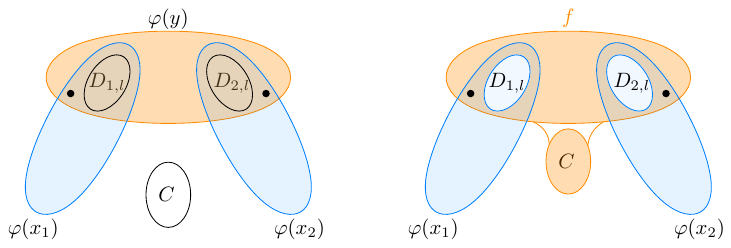}
    \caption{Proof of  \eqref{st:dist2leaves}. The hyperedge $\varphi(y)$ and sets $D_{i,\ell}$ (left) and the new hyperedge $f$ (right).}
    \label{fig:mul19}
\end{figure}

\sta{\label{st:dist3leaves} If $y_1y_2\in E(H)$, then for every $\ell\in \{1,\ldots, d-1\}$, there is a $d$-graph $\Gamma_{\ell}$ with $|V(\Gamma_{\ell})|< n+\ell$ and two disjoint $\ell$-pendent hyperedges such that $H$ is isomorphic to $\lng(\Gamma_{\ell})$.}

Note that $\varphi(y_1)\neq \varphi(y_2)$ but $\varphi(y_1)\cap \varphi(y_2)\neq \varnothing$. Assume, without loss of generality, that $\delta_{1,\ell}\leq \delta_{2,\ell}$. We construct the $d$-graph $\Gamma_{\ell}$ as follows. Let $C_{1,\ell}$ and $C_{2,\ell}$ be disjoint sets of vertices, also disjoint from $V(\Gamma)$, with $|C_{1,\ell}|=\delta_{1,\ell}$ and $|C_{2,\ell}|=\delta_{2,\ell}-\delta_{1,\ell}$. Let $f_1=(\varphi(y_1)\setminus D_{1,\ell})\cup C_{1,\ell}$ and let $f_2=(\varphi(y_2)\setminus D_{2,\ell})\cup C_{1,\ell}\cup C_{2,\ell}$ (see \Cref{fig:mul20}). Then $|f_1|=|f_2|=d$, and $f_1\cap f_2=(\varphi(y_1)\cap \varphi(y_2))\cup C_{1,\ell}$; in particular, we have $C_{2,\ell}\subseteq f_2\setminus f_1$. Let $\Gamma_{\ell}$ be the $d$-graph with
$$V(\Gamma_{\ell})=V(\Gamma)\cup C_{1,\ell}\cup C_{2,\ell};$$
$$E(\Gamma_{\ell})=(E(\Gamma)\setminus \{\varphi(y_1),\varphi(y_2)\})\cup \{f_1,f_2\}.$$
Now, by \eqref{st:deltalimits}, we have $\delta_{2,\ell}\leq \ell-1$, and so
$$|V(\Gamma_{\ell})|=|V(\Gamma)|+|C_{1,\ell}|+|C_{2,\ell}|=n+\delta_{2,\ell}\leq n+\ell-1.$$
Also, since $E(\Gamma_{\ell})\setminus \{f_1,f_2\}=E(\Gamma)\setminus \{\varphi(y_1),\varphi(y_2)\}$, and since $\varphi(x_1)\cap f_2=\varphi(x_2)\cap f_1=\varnothing$, it follows that for each $i\in \{1,2\}$, we have $\varphi(x_i)\cap e=\varnothing$ for every $e\in E(\Gamma_{\ell})\setminus \{f_i\}$. Moreover, note that $\varphi(x_i)\setminus f_i=(\varphi(x_i)\setminus \varphi(y_i))\cup D_{i,\ell}$, and so
 $$|\varphi(x_i)\setminus f_i|=|\varphi(x_i)\setminus \varphi(y_i)|+\delta_{i,\ell}
  \geq |\varphi(x_i)\setminus \varphi(y_i)|+(\ell-|\varphi(x_i)\setminus \varphi(y_i)|)
  =\ell.$$
It follows that $\varphi(x_1)$ and $\varphi(x_2)$ are two disjoint $\ell$-pendent hyperedges of $\Gamma_{\ell}$. In addition, it is straightforward to check that, since $\varphi$ is an isomorphism between $H$ and $\lng(\Gamma)$, the map $\varphi':V(H)\to E(\Gamma_{\ell})$ with $\varphi'(y_i)=f_i$ for each $i\in \{1,2\}$ and $\varphi'(x)=\varphi(x)$ for all $x\in V(H)\setminus \{y_1,y_2\}$ is an isomorphism between $H$ and $\lng(\Gamma_{\ell})$. This proves \eqref{st:dist3leaves}.
\begin{figure}
    \centering
    \includegraphics[width=0.8\linewidth]{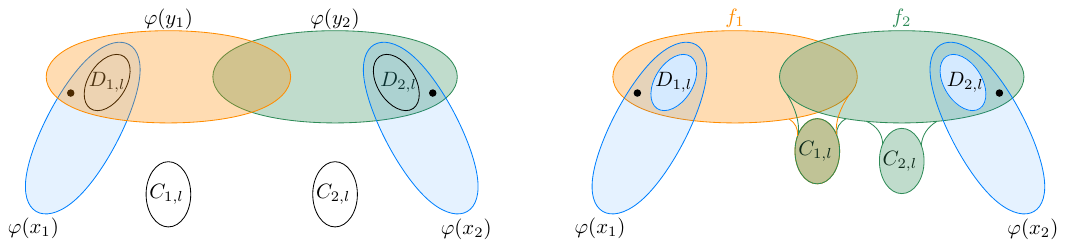}
    \caption{Proof of  \eqref{st:dist3leaves}. The hyperedges $\varphi(y_1)$, $\varphi(y_2)$ and sets $D_{i,\ell}$ (left) and the new hyperedges $f_1$ and $f_2$ (right).}
    \label{fig:mul20}
\end{figure}

\sta{\label{st:alphaleaves} If $\alpha(N_H(y_1))+\alpha(N_H(y_2))\geq d+2$, then for every $\ell\in \{1,\ldots, d-1\}$,  there is a $d$-graph $\Gamma_{\ell}$ with $|V(\Gamma_{\ell})|< n+\ell$ and two disjoint $\ell$-pendent hyperedges such that $H$ is isomorphic to $\lng(\Gamma_{\ell})$.}

We construct the $d$-graph $\Gamma_{\ell}$ as follows. Let $C_{1,\ell}$ and $C_{2,\ell}$ be disjoint sets of vertices, also disjoint from $V(\Gamma)$, with $|C_{1,\ell}|=\delta_{1,\ell}$ and $|C_{2,\ell}|=\delta_{2,\ell}$. Let $f_1=(\varphi(y_1)\setminus D_{1,\ell})\cup C_{1,\ell}$ and let $f_2=(\varphi(y_2)\setminus D_{2,\ell})\cup C_{2,\ell}$. Then $|f_1|=|f_2|=d$, and $f_1\cap f_2=\varphi(y_1)\cap \varphi(y_2)$; in particular, we have $C_{1,\ell}\subseteq f_1\setminus f_2$ and $C_{2,\ell}\subseteq f_2\setminus f_1$. Let $\Gamma_{\ell}$ be the $d$-graph with $$V(\Gamma_{\ell})=V(\Gamma)\cup C_{1,\ell}\cup C_{2,\ell};$$
    $$E(\Gamma_{\ell})=(E(\Gamma)\setminus \{\varphi(y_1),\varphi(y_2)\})\cup \{f_1,f_2\}.$$
Since $\alpha(N_H(y_1))+\alpha(N_H(y_2))\geq d+2$, it follows that
\begin{align*}
|\varphi(x_1)\cap \varphi(y_1)|+|\varphi(x_2)\cap \varphi(y_2)|&=2d-|\varphi(y_1)\setminus \varphi(x_1)|-|\varphi(y_2)\setminus \varphi(x_2)|\\
        &\leq 2d-(\alpha(N_H(y_1))-1)-(\alpha(N_H(y_2))-1)\\
&\leq d.
    \end{align*}
We deduce that
$\ell-1+|\varphi(x_1)\cap \varphi(y_1)|+|\varphi(x_2)\cap \varphi(y_2)|-d\leq \ell-1$.
Thus, by \eqref{st:deltasum} and since $\ell\geq 1$, we have $\delta_{1,\ell}+\delta_{2,\ell}\leq \ell-1$, and so
$$|V(\Gamma_{\ell})|=|V(\Gamma)|+|C_{1,\ell}|+|C_{2,\ell}|=n+\delta_{1,\ell}+\delta_{2,\ell}\leq n+\ell-1.$$
Since $E(\Gamma_{\ell})\setminus \{f_1,f_2\}=E(\Gamma)\setminus \{\varphi(y_1),\varphi(y_2)\}$, and since $\varphi(x_1)\cap f_2=\varphi(x_2)\cap f_1=\varnothing$, it follows that for each $i\in \{1,2\}$, we have $\varphi(x_i)\cap e=\varnothing$ for every $e\in E(\Gamma_{\ell})\setminus \{f_i\}$. Moreover, note that
$\varphi(x_i)\setminus f_i=(\varphi(x_i)\setminus \varphi(y_i))\cup D_{i,\ell}$, and so
 $$|\varphi(x_i)\setminus f_i|=|\varphi(x_i)\setminus \varphi(y_i)|+\delta_{i,\ell}
  \geq |\varphi(x_i)\setminus \varphi(y_i)|+(\ell-|\varphi(x_i)\setminus \varphi(y_i)|)
  =\ell.$$
It follows that $\varphi(x_1)$ and $\varphi(x_2)$ are two disjoint $\ell$-pendent hyperedges of $\Gamma_{\ell}$. Furthermore, it is straightforward to check that, since $\varphi$ is an isomorphism between $H$ and $\lng(\Gamma)$, the map $\varphi':V(H)\to E(\Gamma_{\ell})$ with $\varphi'(y_i)=f_i$ for each $i\in \{1,2\}$ and $\varphi'(x)=\varphi(x)$ for all $x\in V(H)\setminus \{y_1,y_2\}$ is an isomorphism between $H$ and $\lng(\Gamma_{\ell})$.
This proves \eqref{st:alphaleaves}.
\begin{figure}
    \centering
    \includegraphics[width=0.8\linewidth]{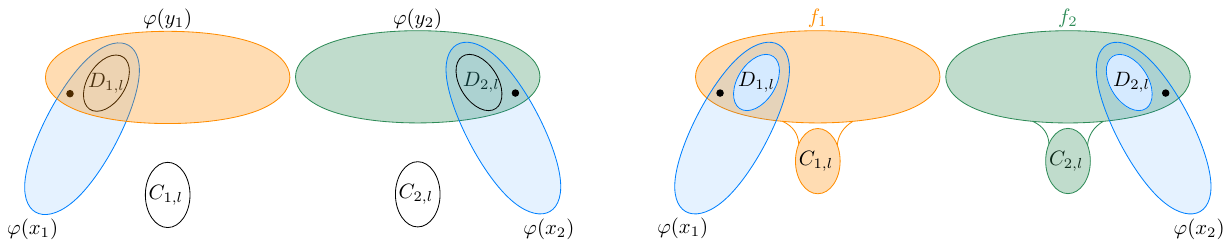}
    \caption{Proof of  \eqref{st:alphaleaves}. The hyperedges $\varphi(y_1)$, $\varphi(y_2)$ and sets $D_{i,\ell}$ (left) and the new hyperedges $f_1$ and $f_2$ (right).}
    \label{fig:mul21}
\end{figure}
\medskip

Now, \Cref{thm:specialleaves} follows from \eqref{st:nohypergraph}, \eqref{st:dist2leaves}, \eqref{st:dist3leaves} and \eqref{st:alphaleaves}.
\end{proof}

For the proof of \Cref{thm:mainsevered}, we use the following well-known result of Whitney \cite{whitneyline}:

\begin{theorem}[Whitney \cite{whitneyline}]\label{thm:whitneyline}
    Let $H_1$ and $H_2$ be connected graphs. Then $\lng(H_1)$ and $\lng(H_2)$ are isomorphic if and only if either $H_1$ and $H_2$ are isomorphic, or one of $H_1,H_2$ is isomorphic to $K_{1,3}$ and the other is isomorphic to $K_3$.
\end{theorem}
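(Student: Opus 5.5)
The direction ``if'' is immediate. An isomorphism $H_1\to H_2$ maps edges sharing an end to edges sharing an end, so it induces an isomorphism $\lng(H_1)\to\lng(H_2)$. Moreover $\lng(K_{1,3})$ and $\lng(K_3)$ are both $K_3$. For ``only if'' I would prove the stronger edge-isomorphism statement. Fix an isomorphism $\sigma:\lng(H_1)\to\lng(H_2)$, that is, a bijection $E(H_1)\to E(H_2)$ preserving ``sharing an end''. The aim is to show that, unless $H_1$ is small, $\sigma$ maps every \emph{star} of $H_1$ (the set of edges at a vertex $v$) onto a star of $H_2$. Once this holds, setting $\tau(v)$ to be the centre of the image star (for $\deg v\geq 2$) gives a vertex map. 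Two edges $uv,vw$ share the star at $v$, so $\sigma$ maps them into a common star. Hence $\tau$ preserves incidence and extends, with care at degree-one vertices, to an isomorphism $H_1\to H_2$.

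The key structural step is to recognise stars inside $\lng(H)$ using only the graph $\lng(H)$.
\begin{enumerate}[(1)]
\item A clique of $\lng(H)$ is a set of pairwise intersecting edges of $H$, so it is contained in a star or forms a triangle of $H$. In particular, every clique of size at least $4$ lies in a star, and every maximal clique of size at least $4$ is a star.
\item For triangles of $\lng(H)$, I would use the parity test. If three edges form a triangle of $H$, then every other edge of $H$ meets $0$ or $2$ of them (an ``even'' triangle). If three edges share a vertex $v$, then any further edge at a non-central endpoint meets exactly one of them (an ``odd'' triangle); such an edge exists unless all three non-central endpoints are leaves or are joined only among themselves.
\item This lets $\sigma$ distinguish the two kinds of triangle. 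Since $\sigma$ preserves adjacency, it preserves the parity of triangles, so star-triangles go to star-triangles whenever both triangles are odd.
\end{enumerate}
Combining (1)–(3), stars of size at least $3$ (and stars of size $2$, which are edges of $\lng(H)$ lying in a unique star) are mapped to stars whenever the relevant triangles are odd.

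The failures of this test occur only when $H_1$ has at most four vertices. In that range $H_1$ is one of $K_3$, $K_{1,3}$, $K_{1,3}+e$, $K_4-e$ or $K_4$, together with paths and other trivial cases. The remaining step is a finite check of these graphs. One finds that equality of line graphs forces $H_1\cong H_2$, except for the pair $\{K_3,K_{1,3}\}$. For instance, $K_4-e$ and $K_{1,3}+e$ have distinct line graphs; I would distinguish them by edge count (5 versus 4 vertices of the line graph), so even when a particular $\sigma$ is not induced, isomorphism of $H_1$ and $H_2$ still holds.

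I expect the main obstacle to be the bookkeeping at degree-one vertices and stars of size $2$. A vertex of degree $2$ gives a star that is just one edge of $\lng(H)$, and one must check that $\tau$ is well defined and injective there. I would handle this by using connectivity: each $K_2$-star is pinned down by the stars of size at least $3$, or by the odd triangles, adjacent to it. Paths and cycles, where no such anchors exist, are treated directly: $\lng(P_k)=P_{k-1}$ and $\lng(C_k)=C_k$, so they are determined by their line graphs.
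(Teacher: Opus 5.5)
The paper gives no proof of this statement; it only cites Whitney. The natural benchmark is therefore the classical odd/even-triangle proof, which is the route you chose.

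\textbf{What works.} Items (1)--(3) are sound. They do show that every star of size at least $3$ in $H_1$ is mapped onto a full star of $H_2$: maximal cliques go to maximal cliques, and an odd maximal triangle is the whole star of a degree-$3$ vertex.

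\textbf{The gap.} The missing step is the one you flag yourself: a star $\{uv,vw\}$ at a vertex $v$ of degree $2$. It is automatic that $\sigma(uv)$ and $\sigma(vw)$ share an end $x$. What must be proved is $\deg_{H_2}(x)=2$. Otherwise $\sigma(S(v))$ is a proper part of the star at $x$, and defining $\tau(v)$ as ``the centre of the image star'', let alone proving $\tau$ bijective, is unjustified. ``Pinning down by adjacent stars of size at least $3$ or odd triangles'' does not supply this. A degree-$2$ vertex in the middle of a long induced path of $H_1$ has no such anchors nearby, and nothing in your sketch rules out further edges at $x$.

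\textbf{The fix.} It uses only your own tools, applied in both directions.
\begin{enumerate}[(i)]
\item Parity of a triangle is an isomorphism invariant, so $\sigma$ is a bijection between the odd triangles of $\lng(H_1)$ and those of $\lng(H_2)$.
\item If neither $H_1$ nor $H_2$ is one of $K_{1,3}$, $K_{1,3}+e$, $K_4-e$, $K_4$, then in both graphs the odd triangles are exactly the triples of edges with a common end.
\item Suppose $x$ had a third edge $g'$. Then $\{\sigma(uv),\sigma(vw),g'\}$ is odd, so $\{uv,vw,\sigma^{-1}(g')\}$ is a triple with a common end. That end must be $v$, since $uv\cap vw=\{v\}$, contradicting $\deg v=2$.
\end{enumerate}
This also handles paths and cycles of length at least $4$ with no special treatment. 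Note that your remark that cycles are determined because $\lng(C_k)=C_k$ is false for $k=3$, which is exactly the exceptional case.

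\textbf{Leaves.} Once stars of size at least $2$ correspond bijectively, the leaves are routine. For a pendant edge $uv$ with $\deg u=1$ and $\deg v\geq 2$, write $\sigma(uv)=\tau(v)y$. If $\deg y\geq 2$, then $y=\tau(w)$ and $uv$ lies in the star at $w$, forcing $w\in\{u,v\}$. This is impossible, since $\deg u=1$ and $y\neq\tau(v)$.

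\textbf{The exceptional case.} It is symmetric: the parity test fails when \emph{either} graph is one of those four $4$-vertex graphs. The partner then ranges over all connected graphs with the same number ($3$ to $6$) of edges, on up to $7$ vertices, not only $4$-vertex graphs. That check does go through. For example, $\lng(K_4)$ is $4$-regular and $K_4$-free, which forces the partner to be cubic with $6$ edges, hence $K_4$. But comparing $K_4-e$ with $K_{1,3}+e$ by edge count, as in your example, is not the check that is needed.
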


\begin{proof}[Proof of \Cref{thm:mainsevered}]
Let $s\geq 0$ be the number of isolated vertices of $H$, let $t\geq 0$ be the number of claw components of $H$ and let $n=|V(H)|-s-t$. Let $H'$ be the graph obtained from $H$ by first removing all isolated vertices of $H$ and then replacing every claw component of $H$ with a triangle component. Then $|V(H')|=n$, and it follows from \Cref{thm:whitneyline} that $H'$ is the unique graph with the minimum number of vertices whose line graph is isomorphic to $\lng(H)$. In particular, there is no graph on at most $n-1$ vertices whose line graph is isomorphic to $\lng(H)$. Also, $|V(H)|\leq n$, and since $H$ has a severed pair, it follows that, $H$ (viewed as a $2$-graph) has two disjoint $1$-pendent hyperedges. Now, $\lng(H)$ is $(2,n)$-admissible, and the result follows from \Cref{thm:ktn-1}.
\end{proof}

\bibliographystyle{abbrv}
\bibliography{ref}
\newpage

\appendix
\section{Graphs on at most six vertices}\label{appendix}

Here, we give a proof of \Cref{thm:six}. Up to isomorphism, there are \textbf{202} non-complete graphs on at most six vertices, all depicted in \Cref{fig:allsix} (see, for example, \cite{mckay}). The first six outcomes of \Cref{thm:firstmain} (or their strengthenings, to be more precise) establish the deletion-normality of many of them. (Some graphs could be handled in many different ways, in which case we have broken the tie arbitrarily.) There remains a total of \textbf{25} ``sporadic'' graphs on at most six vertices, each of which requires new ideas for the proof of their deletion-normality.

\subsection{Complete multipartite graphs}\label{subsec:multi} Among the 202 non-complete graphs on at most six vertices, there are exactly \textbf{23} complete multipartite graphs; see Figure~\ref{fig:K_(s,t,t)_Table}, where these graphs are depicted with the same labeling as in \Cref{fig:allsix}, and with a multipartition of their vertices given. In particular, the only entries which are complete multipartite with at least two parts of maximum cardinality are
\begin{itemize}
    \item 11C (isomorphic to $K_{3,3}$), which is handled by \ref{thm:Ktts2} with 
    $r=1$, $t=3$, and $s=1$.
    
     \item 13H (isomorphic to $K_{2,2,2}$), which is handled by \ref{thm:Ktts0} with $s=2$ and $s'=0$.
      \item 13J (isomorphic to $K_{2,2,1,1}$), which is handled by \ref{thm:Ktts0} with $s=1$ and $s'=2$.
      \item 13K (isomorphic to $K_{2,1,1,1,1}$), which is handled by \ref{thm:Ktts0} with $s=0$ and $s'=4$.
      \item 16H (isomorphic to $K_{2,2,1}$), which is handled by \ref{thm:Ktts0} with $s=s'=1$.
      \item 17I (isomorphic to $K_{2,2}$), which is handled by \ref{thm:Ktts0} with $s=1$ and $s'=0$.
\end{itemize}
The rest are all handled by \ref{thm:compmulti}.

\subsection{Adding vertices of degree zero or one}\label{subsec:0or1}
Let us first prove a lemma:

\begin{lemma}\label{lem:isolated}
Let $G, H$ be graphs. Let $u$ be an isolated vertex of $G$ and let $v$ be an isolated vertex of $H$. Assume that $G\setminus \{u\}$ is $(H\setminus \{v\})$-deletion-saturated. Then $G$ is $H$-deletion-saturated.    
\end{lemma}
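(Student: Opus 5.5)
The plan is to check the three defining conditions of $H$-deletion-saturation for $G$ directly: that $E(G)\neq\varnothing$, that $G$ is $H$-free, and that $G-e$ contains an induced copy of $H$ for every edge $e$. Write $G'=G\setminus\{u\}$ and $H'=H\setminus\{v\}$.

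The first and third conditions are immediate. Since $u$ is isolated, $E(G)=E(G')$, and $E(G')\neq\varnothing$ because $G'$ is $H'$-deletion-saturated. Now fix $e\in E(G)=E(G')$. By hypothesis, $G'-e$ has an induced subgraph $J$ isomorphic to $H'$. The vertex $u$ is not in $V(J)$ and is isolated in $G-e$, so $J$ together with $u$ induces in $G-e$ a copy of $H'+K_1$, which is isomorphic to $H$.

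The main step is $H$-freeness of $G$, and I would argue by contradiction. Suppose $J\subseteq V(G)$ induces a copy of $H$, and let $\psi:V(H)\to J$ be an isomorphism.
\begin{itemize}
\item If $u\notin J$, then $J\setminus\{\psi(v)\}$ is a subset of $V(G')$ inducing a copy of $H'$.
\item If $u\in J$, then $u=\psi(w)$ for some $w\in V(H)$. Because $u$ is isolated in $G$, the vertex $w$ is isolated in $H$. Since $v$ is also isolated in $H$, the graphs $H\setminus\{w\}$ and $H\setminus\{v\}=H'$ are isomorphic. Hence $J\setminus\{u\}\subseteq V(G')$ induces a copy of $H'$.
\end{itemize}
In both cases $G'$ contains an induced copy of $H'$, which contradicts the fact that $G'$ is $H'$-free. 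So $G$ is $H$-free.

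The only step needing care is this second case, namely that removing any isolated vertex of $H$ yields a graph isomorphic to $H'$. With that observation the lemma follows.
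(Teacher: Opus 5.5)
Your proof is correct and takes essentially the same route as the paper. Both arguments get $H$-freeness by turning any induced copy of $H$ in $G$ into an induced copy of $H\setminus\{v\}$ in $G\setminus\{u\}$, using that deleting any isolated vertex of $H$ gives a graph isomorphic to $H\setminus\{v\}$, and both extend a copy of $H\setminus\{v\}$ in $(G\setminus\{u\})-e$ by the isolated vertex $u$. Your explicit case split on whether $u$ lies in the copy, and your check that $E(G)\neq\varnothing$ (which the paper leaves implicit), just organize the same argument slightly more cleanly.
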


\begin{proof}
    First, we show that $G$ is $H$-free. Suppose for a contradiction that $G$ has an induced subgraph $H'$ isomorphic to $H$. Let $f:V(H)\rightarrow V(H')$ be an isomorphism between $H$ and $H'$. Since $G\setminus \{u\}$ is $(H\setminus \{v\})$-free, it follows that $H'$ is not an induced subgraph of $G\setminus \{u\}$. In other words, we have $u\in V(H')$. Since $G\setminus \{u\}$ is $(H\setminus \{v\})$-free, it follows that $H'\setminus \{u\}$ is not isomorphic to $H\setminus \{v\}$, and so $f(v)\neq u$. In particular, $u$ and $f(v)$ are distinct isolated vertices of $H'$. It follows that $H'\setminus \{u\}$ and $H'\setminus \{f(v)\}$ are isomorphic, which in turn implies that $H'\setminus \{u\}$ and $H\setminus \{v\}$ are isomorphic. But then $H'\setminus \{u\}$ is an induced subgraph of $G\setminus \{u\}$ isomorphic to $H\setminus \{v\}$, a contradiction. We deduce that $G$ is $H$-free. Now, let $e\in E(G)$. Then $e\in E(G\setminus \{u\})$. Since $G\setminus \{u\}$ is $(H\setminus \{v\})$-deletion-saturated, it follows that $(G\setminus \{u\})-e$ has an induced subgraph $H''$ isomorphic to $H\setminus \{v\}$. Hence, the induced subgraph of $G-e$ with vertex set $V(H'')\cup \{u\}$ is isomorphic to $H$. This completes the proof of \Cref{lem:isolated}.
\end{proof}

Of the remaining 
$202-23=179$ graphs from Subsection~\ref{subsec:multi}, the \textbf{37} graphs in \Cref{fig:isolated} have isolated vertices, and removing their isolated vertices leaves a non-complete graph. By \Cref{lem:isolated}, we may assume that these graphs are handled by removing their isolated vertices and reducing to smaller graphs. Similarly, the \textbf{32} graphs in \Cref{fig:block} each have a unique non-complete block $B$ such that $B$ is not the whole graph and all other blocks are isomorphic to $K_2$. It follows that $B$ is governing (note that a $B$-deletion-saturated graph need not have isolated vertices). By \ref{thm:maincentered}, we may assume that these graphs are also handled by reducing to $B$, which is a smaller graph.

\subsection{Near-line-graphs and severed pairs}\label{subsec:nearline}
We say that a graph $H$ is a biline graph \textit{biline graph} if there exists a bipartite graph $F$ such that $H=\lng(F)$. There are well-known characterizations of both line graphs and biline graphs in terms of their forbidden induced subgraphs.

\begin{theorem}[Beineke \cite{BEINEKE1970129}]\label{thm:lineobst}
A graph $G$ is a line graph if and only if $G$ has no induced subgraph isomorphic to any of the nine graphs in \Cref{fig:lineobst}.
\end{theorem}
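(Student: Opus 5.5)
This is Beineke's characterization; the paper cites it as a classical result, so in context no new proof is needed. If one were to be given, I would prove the two directions separately.

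The necessity direction is routine. Being a line graph is closed under taking induced subgraphs: if $G=\lng(F)$ and $X\subseteq V(G)=E(F)$, then $G[X]=\lng(F')$, where $F'$ is the subgraph of $F$ with edge set $X$. So it suffices to check that none of the nine graphs in \Cref{fig:lineobst} is a line graph. For the claw $K_{1,3}$ this is immediate: the centre edge would have three pairwise disjoint neighbouring edges, but these would have to share its two ends. For the other eight graphs, I would try to reconstruct a root graph $F$ and reach a contradiction. The tool is the standard fact that in a line graph every triangle arises either from a star or from a triangle of $F$. Pushing this through shows that some edge is forced into three cliques of the Krausz cover, or that some vertex lies in three cliques.

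The sufficiency direction is the real content. I would use Krausz's criterion: $G$ is a line graph if and only if its edges can be partitioned into cliques such that every vertex lies in at most two of them. Assume $G$ is connected and contains none of the nine graphs. A clique containing an edge $uv$ should be determined by the common neighbourhood of $u$ and $v$. For each triangle, call it \emph{odd} if some vertex is adjacent to an odd number of its vertices, and \emph{even} otherwise.

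Following van Rooij and Wilf, I would show that claw-freeness together with the absence of the remaining obstructions forces two things. First, two odd triangles sharing an edge span a $K_4$. Second, the maximal cliques that are not even triangles cover the edges in the required manner. The first follows from the diamond-type obstructions.

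Small cases, where $G$ has at most six vertices or contains a $K_4$ minus an edge near $K_5$, would need to be handled separately, using the remaining forbidden graphs.

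The main obstacle is this case analysis in the sufficiency direction. One must verify that each vertex lies in at most two of the chosen cliques, and each violation must be matched to one of the nine obstructions. I would organise it by the structure of the neighbourhood of a vertex $v$. Claw-freeness gives $\alpha(N(v))\le 2$, and the goal is to show that $N(v)$ is the union of two cliques. Each failure to be cobipartite in this way yields a $C_5$ or an odd antihole in $N(v)$, or one of the wheel-like obstructions.
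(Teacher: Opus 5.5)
The paper gives no proof of this theorem. It cites Beineke and uses the result only to certify that particular small graphs are not line graphs, so your opening remark matches the paper, and your necessity direction is fine. The sufficiency sketch, however, has a genuine gap, and its one concrete claim is false as stated. The maximal cliques that are not even triangles do not in general form a Krausz partition. In the octahedron $\lng(K_4)$, and in the wheel $W_4=\lng(K_4-e)$ (a $4$-cycle plus a vertex complete to it), every triangle is even and there is no $K_4$, so your family is empty. Even after adding the uncovered edges as $K_2$'s, some vertex then lies in four cliques. In the diamond, the line graph of a triangle with a pendant edge, such a vertex lies in three. These graphs are line graphs, and their Krausz partitions must use even triangles. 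So your remark that small cases are handled ``using the remaining forbidden graphs'' does not apply to them: they need explicit partitions, and the substance of the proof is showing that no other failures occur.

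This also shows that your plan for the main step aims at the wrong target. Once claw-freeness and the odd-triangle condition hold, van Rooij--Wilf already says $G$ is a line graph. A vertex lying in three chosen cliques is therefore a defect of the clique choice, not a trace of one of the nine obstructions. Nor can it be detected by testing whether $G[N(v)]$ is a union of two cliques. The hub of $W_4$ has neighbourhood $C_4$, which is such a union, yet it lies in four chosen cliques, because whether a triangle at $v$ is even depends on vertices outside $N[v]$.

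What is missing is the actual argument. Suppose $v$ lies in chosen cliques $C_1,C_2,C_3$. Claw-freeness gives adjacent $x_1\in C_1\setminus\{v\}$ and $x_2\in C_2\setminus\{v\}$. Then $vx_1x_2$ must be an even triangle that is a maximal clique: otherwise a maximal clique containing $\{v,x_1,x_2\}$ would be chosen, and it would share $vx_1$ with $C_1$ and $vx_2$ with $C_2$. You must then show, using connectivity, claw-freeness and the odd-triangle condition, that this forces the component to be the diamond, $W_4$ or the octahedron. Without that step, or a different route such as a minimal-counterexample argument based on Whitney's uniqueness of roots, the sufficiency direction is not established.
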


\begin{theorem}[See Peterson \cite{PETERSON2003223}]\label{thm:bilineobst}
A graph $G$ is a biline graph if and only if $G$ is claw-free, diamond-free, and odd-hole-free.
\end{theorem}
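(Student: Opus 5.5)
The statement has two directions. We prove the easy one (biline implies the three forbidden structures are absent) directly from the structure of $F$. For the converse we use \Cref{thm:lineobst} to write $G$ as a line graph, and then modify the root graph until it is bipartite.

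\emph{Necessity.} Let $G=\lng(F)$ with $F$ bipartite.
\begin{itemize}
\item $G$ is claw-free: a claw $K_{1,3}$ would need one edge of $F$ meeting three pairwise disjoint edges, but an edge has only two ends.
\item $G$ is diamond-free: a triangle in $\lng(F)$ comes from three edges of $F$ that either form a triangle or share a common vertex. Since $F$ has no triangles, every triangle of $G$ is a star at some vertex of $F$. A diamond consists of two triangles sharing two vertices, i.e.\ two stars sharing two edges. Two distinct edges determine their common vertex uniquely, so both stars have the same center. Then all four edges of $F$ share that center, so they induce $K_4$ rather than a diamond.
\item $G$ has no odd hole: let $e_1\dd\cdots\dd e_m\dd e_1$ be an induced cycle in $\lng(F)$ with $m\geq 4$. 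The consecutive edges $e_i,e_{i+1}$ meet at vertices $v_i$. These $v_i$ are pairwise distinct, since otherwise three of the $e_i$ would share a vertex and create a chord. Hence the $v_i$ form a cycle of length $m$ in $F$, so $m$ is even because $F$ is bipartite.
\end{itemize}

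\emph{Sufficiency, step 1: $G$ is a line graph.} Assume $G$ is claw-free, diamond-free and odd-hole-free. We check that every one of the nine graphs in \Cref{fig:lineobst} is either the claw or contains an induced diamond. This is a finite inspection of the figure, and we expect it to be the main (if routine) obstacle. It is a known feature of Beineke's list that every obstruction other than the claw contains two triangles sharing an edge with the two non-shared vertices non-adjacent. By \Cref{thm:lineobst}, $G=\lng(F)$ for some graph $F$, which we may take without isolated vertices.

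\emph{Sufficiency, step 2: remove triangles from $F$.} Suppose a triangle $abc$ of $F$ lies in a component of $F$ with a further edge, say $ad$ with $d\notin\{b,c\}$. Then $ab,bc,ca,ad$ induce a diamond in $\lng(F)$: $ad$ is adjacent to $ab$ and $ca$ but not to $bc$. This contradicts diamond-freeness. So every triangle of $F$ is an entire component. We replace each such component by $K_{1,3}$, which by \Cref{thm:whitneyline} does not change the line graph.

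\emph{Sufficiency, step 3: $F$ is bipartite.} Suppose the resulting $F$ is not bipartite. Take a shortest odd cycle $C$ in $F$. By minimality $C$ is chordless, and by step 2 it has length at least $5$. Two edges of $C$ share a vertex only if they are consecutive on $C$. Hence $E(C)$ induces a hole of odd length in $\lng(F)=G$, contradicting odd-hole-freeness. Therefore $F$ is bipartite, and $G$ is a biline graph.
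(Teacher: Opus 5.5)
Your proof is correct. The paper gives no proof of this statement. It quotes the characterization from the literature (via Peterson) and uses it in the appendix only to certify that certain small graphs are not biline.

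Your argument therefore takes a different route: it derives the characterization from Beineke's theorem (\Cref{thm:lineobst}, which the paper also quotes) by elementary means.
\begin{itemize}
\item Necessity: in a bipartite root graph, every triangle of $\lng(F)$ is a star, and every induced cycle of $\lng(F)$ of length at least four comes from a cycle of $F$ of the same length.
\item Sufficiency: you obtain a root graph, use diamond-freeness to show that its triangles are whole components and swap them for claws, and then turn a shortest odd cycle into an odd hole.
\end{itemize}
This makes the appendix depend on one external characterization instead of two, at the cost of about a page of argument.

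The one step you assert rather than prove is Step~1: that every graph in Beineke's list other than the claw contains an induced diamond. The claim is true, and you can avoid inspecting the figure altogether. Suppose $G$ is claw-free and diamond-free.
\begin{itemize}
\item For each vertex $v$, the graph $G[N_G(v)]$ has no induced $P_3$ and no stable set of size three. So it is a disjoint union of at most two cliques, and $v$ lies in at most two maximal cliques.
\item Two distinct maximal cliques $K,K'$ share at most one vertex. Otherwise, take two vertices of $K\cap K'$, a vertex $x\in K\setminus K'$, and a non-neighbour $y\in K'$ of $x$ (which exists by maximality of $K'$); these four vertices induce a diamond.
\end{itemize}
Hence the maximal cliques partition $E(G)$, with every vertex in at most two of them. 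You can then build a root graph $F$ directly. Its vertices are the maximal cliques of $G$, together with one new private vertex for each vertex of $G$ lying in only one maximal clique. Each $v\in V(G)$ becomes the edge joining the two maximal cliques containing it, or joining its unique maximal clique to its private vertex. This also proves the ``known feature'' of Beineke's list as a corollary.

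Two minor points. First, Whitney's theorem is not needed in Step~2. The isomorphism $\lng(K_3)\cong K_3\cong\lng(K_{1,3})$ can be checked directly, and the line graph of a disjoint union is the disjoint union of the line graphs. Second, in Step~2 you should say explicitly that, by connectivity of the component, the extra edge can be chosen with exactly one end in the triangle.
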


(Recall that the \textit{claw} is $K_{1,3}$, the \textit{diamond} is the graph obtained from $K_4$ by removing an edge, and an \textit{odd hole} is an induced cycle of odd length at least five. See \Cref{fig:bilineobst}.)

\begin{figure}[t!]
\centering
\includegraphics[scale=0.5]{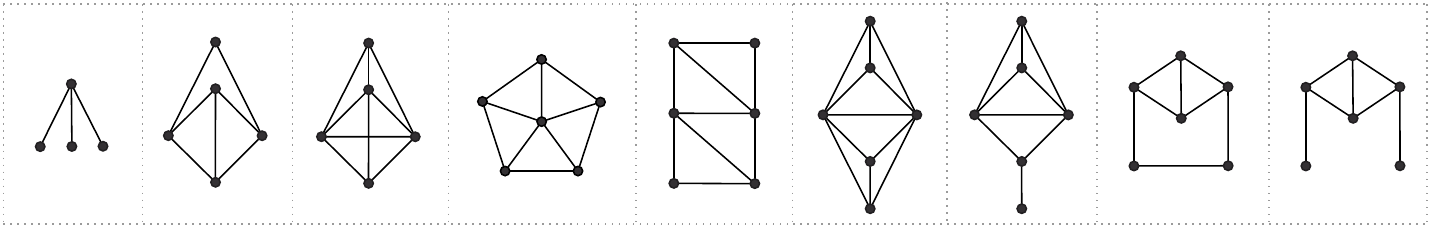}
\caption{Induced subgraph obstructions to line graphs.}
\label{fig:lineobst}
\end{figure}

\begin{figure}[t!]
\centering
\includegraphics[scale=0.5]{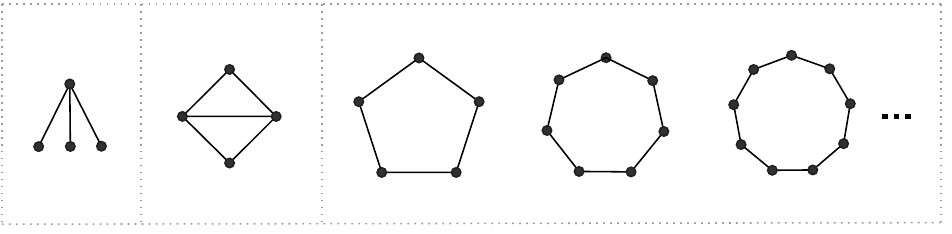}
\caption{Induced subgraph obstructions to biline graphs.}
\label{fig:bilineobst}
\end{figure}

Here, we are also interested in graphs that are ``an edge away'' from being a (bi)line graph. We say that a graph $G$ is a \textit{line\lm graph} if $G$ is not a line graph, but there exists $e\in E(\ol{G})$ such that $G+e$ is a line graph. We also say that $G$ is a \textit{line\lp graph} if $G$ is not a line graph, but there exists $e\in E(G)$ such that $G-e$ is a line graph. Intuitively, line\lm graphs are graphs that are ``an edge less than'' a line graph, and line\lp graphs are graphs that are ``an edge more than'' a line graph. 

Similarly, we say that $G$ is a \textit{biline\lm graph} if $G$ is not a biline graph, but there exists $e\in E(\ol{G})$ such that $G+e$ is a biline graph, and that $G$ is a \textit{biline\lp graph} if $G$ is not a biline graph, but there exists $e\in E(G)$ such that $G-e$ is a biline graph. Again, this roughly means that biline\lm graphs are graphs that are ``an edge less than'' a biline graph, and biline\lp graphs are graphs that are ``an edge more than'' a biline graph. 

We need the following lemma:

\begin{lemma}\label{lem:linegraphs}  Let $H$ be a graph on $h\in \poi$ vertices. Then the following hold:

\begin{enumerate}[{\rm (a)}]
    \item \label{lem:linegraphs_a}   If $H$ is a line\lm graph, then $\lng(K_{2h})$ is $H$-deletion-saturated.
     \item \label{lem:linegraphs_b}   If $\ol{H}$ is a line\lp graph, then $\ol{\lng(K_{2h})}$ is $H$-deletion-saturated.
      \item \label{lem:linegraphs_c}   If $H$ is a biline\lm graph, then $\lng(K_{h,h})$ is $H$-deletion-saturated.
     \item \label{lem:linegraphs_d}   If $\ol{H}$ is a biline\lp graph, then $\ol{\lng(K_{h,h})}$ is $H$-deletion-saturated.
\end{enumerate}
\end{lemma}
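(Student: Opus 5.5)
Each of the four parts reduces to one observation. In parts (a) and (c) we deal with a graph $L$ that is the line graph of some host graph $F$: for (a), $F=K_{2h}$; for (c), $F=K_{h,h}$. The plan is to use two closure properties. First, every induced subgraph of a line graph is a line graph: an induced subgraph of $\lng(K_{2h})$ on a set $S$ of edges is $\lng(K_{2h}[S])$, viewing $S$ as a subgraph. Second, every induced subgraph of a biline graph is biline. By \Cref{thm:bilineobst}, this is immediate since claw-, diamond- and odd-hole-freeness are hereditary; alternatively, use edge subsets of a bipartite graph.

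\emph{$H$-freeness.} Since $H$ is not a (bi)line graph, $L$ has no induced copy of $H$. Also $E(L)\neq\varnothing$ because $h\geq 2$; a line\lm graph is non-complete, so $h\geq 2$.

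\emph{Saturation.} Fix an edge $ab$ of $L$, where $a,b$ are edges of $F$ sharing a vertex $w$. By hypothesis there is a nonedge $xy$ of $H$ with $H+xy$ a (bi)line graph, say $H+xy=\lng(F_0)$ with $F_0$ a graph (bipartite in case (c)). Then $x,y$ are edges of $F_0$ sharing a vertex $z$. Build $F_1$ from $F_0$ by splitting $z$ into two vertices $z_x,z_y$. Here $z_x$ is incident only to $x$, and $z_y$ is incident to all other edges at $z$. Then $\lng(F_1)=H$.

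We do not do this directly, however, since we must embed into $F$ with the pair $\{x,y\}$ landing on $\{a,b\}$. Instead, embed $F_0$ into $F$ so that $x\mapsto a$ and $y\mapsto b$. Then $\lng(F_0)=H+xy$ is an induced subgraph of $L$ in which $xy$ maps to $ab$. Deleting $ab$ deletes exactly that edge, yielding an induced copy of $H$ in $L-ab$.

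The embedding exists because $F_0$ has at most $2h$ vertices after discarding isolated vertices (it has $h$ edges). In $K_{2h}$ any such graph embeds, and we may prescribe $z\mapsto w$ and the other ends of $x,y$ to the other ends of $a,b$. In the bipartite case, $F_0$ has at most $2h$ non-isolated vertices, so each side has at most $h$ vertices once a bipartition is chosen. We choose the bipartition with $z$ on the same side as $w$; since $z$ and $w$ are each adjacent to both other ends, the sides are consistent. Then $F_0$ embeds into $K_{h,h}$ with the prescribed images. The main point to check is this embedding with prescribed images, especially the side-size bound in (c). That bound holds because a bipartite graph with $h$ edges and no isolated vertices has at most $h$ vertices per side when each component is a tree; in general it has at most $h$ vertices on each side, since every vertex on a side is incident to a distinct edge.

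Parts (b) and (d) are complementation of (a) and (c). The graph $G$ is $H$-deletion-saturated iff $\ol G$ is $\ol H$-addition-saturated, where $\ol H$ has $h$ vertices and is line\lp (resp.\ biline\lp). So it suffices to show that $L=\lng(K_{2h})$ (resp.\ $\lng(K_{h,h})$) is $\ol H$-addition-saturated. Freeness follows as before. For a nonedge $ab$ of $L$, with $a,b$ disjoint edges of $F$, take $e=xy\in E(\ol H)$ with $\ol H-xy=\lng(F_0)$. Here $x,y$ are disjoint edges of $F_0$. Embed $F_0$ into $F$ with $x\mapsto a$ and $y\mapsto b$. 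This is possible by the same vertex counting: at most $2h$ non-isolated vertices, and in the bipartite case each side has size at most $h$, with the orientation of $a,b$ chosen compatibly by swapping the ends of $x$ or $y$ as needed. Adding $ab$ to $L$ then restores exactly $xy$, giving an induced $\ol H$. Finally, $E(\ol L)\neq\varnothing$ since a line\lp graph is non-edgeless, so $h\ge 4$.
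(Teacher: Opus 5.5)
Your proposal is correct and follows the paper's proof. You embed the preimage graph $F_0$ of $H+xy$ (respectively $\ol{H}-xy$) into $K_{2h}$ or $K_{h,h}$ with $x,y$ sent to $a,b$, justify this by the vertex count (at most $2h$ non-isolated vertices, at most $h$ per side in the bipartite case), and handle (b) and (d) by complementation, exactly as the paper does. The one blemish is your discarded aside that splitting $z$ gives $\lng(F_1)=H$: this is false whenever $z$ has degree at least three in $F_0$, since the split also removes the adjacencies between $x$ and the other edges at $z$. Because you never use it, the argument is unaffected.
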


\begin{proof}
 First, we prove \ref{lem:linegraphs}\ref{lem:linegraphs_a}. Since $H$ is not a line graph, it follows that $\lng(K_{2h})$ is $H$-free. It remains to show that for every edge $ab\in E(\lng(K_{2h}))$ (where $a,b\in E(K_{2h})$), there is an induced subgraph of $\lng(K_{2h})-ab$ that is isomorphic to $H$. Let $a=uv$ and $b=uw$ where $u,v,w\in V(K_{2h})$ are distinct. Since $H$ is a line\lm graph, there exists $xy\in E(\ol{H})$ for which $H+xy$ is a line graph. Choose a graph $G'$ with no isolated vertex such that $\lng(G')=H+xy$. Since $x,y$ are adjacent in $H+xy$, it follows that there are distinct vertices $u',v',w'\in V(G')$ for which $x=u'v'\in E(G')$ and $y=u'w'\in E(G')$. Since $|E(G')|=|V(H+xy)|=|V(H)|=h$ and $G'$ has no isolated vertex, it follows that $G'$ has at most $2h$ vertices. Therefore, there is a subgraph $G$ of $K_{2h}$ isomorphic to $G'$ with $a=uv\in E(G)$ and $b=uw\in E(G)$, as well as an isomorphism $f:V(G)\rightarrow V(G')$ between $G$ and $G'$ such that $f(u)=u', f(v)=v'$, and $f(w)=w'$. Define the map $\varphi:E(G)\rightarrow E(G')$ as follows: for every edge $q_1q_2\in E(G)$, let $\varphi(q_1q_2)=f(q_1)f(q_2)\in E(G')$. Then $\varphi$ is an isomorphism between $\lng(G)$ and $\lng(G')$ where $\varphi(a)=x$ and $\varphi(b)=y$. Moreover, note that $\lng(G)$ is the induced subgraph of $\lng(K_{2h})$ with vertex set $E(G)$. Hence, $\lng(G)-ab$ is an induced subgraph of $\lng(K_{2h})-ab$ isomorphic to $\lng(G')-xy=H$. This proves \ref{lem:linegraphs}\ref{lem:linegraphs_a}.

  Second, we prove \ref{lem:linegraphs}\ref{lem:linegraphs_b}. Since $\ol{H}$ is not a line graph, it follows that $\lng(K_{2h})$ is $\ol{H}$-free, and so $\ol{\lng(K_{2h})}$ is $H$-free. It remains to show that for every edge $ab\in E(\ol{\lng(K_{2h})})$ (where $a,b\in E(K_{2h})$), there is an induced subgraph of $\ol{\lng(K_{2h})}-ab$ that is isomorphic to $H$. Let $a=uv$, and $b=wz$ where $u,v,w,z\in V(K_{2h})$ are distinct. Since $\ol{H}$ is a line\lp graph, there exists $xy\in E(\ol{H})$ for which $\ol{H}-xy$ is a line graph. Choose a graph $G'$ with no isolated vertex such that $\lng(G')=\ol{H}-xy$. Since $x,y$ are not adjacent in $\ol{H}-xy$, it follows that there are distinct vertices $u', v', w', z' \in V(G')$ such that $x=u'v'\in E(G')$ and $y=w'z'\in E(G')$. Since $|E(G')|=|V(\ol{H}-xy)|=|V(H)|=h$ and $G'$ has no isolated vertex, it follows that $G'$ has at most $2h$ vertices. Therefore, there is a subgraph $G$ of $K_{2h}$ isomorphic to $G'$ with $a=uv\in E(G)$ and $b=wz\in E(G)$, as well as an isomorphism $f:V(G)\rightarrow V(G')$ such that $f(u)=u', f(v)=v', f(w)=w'$, and $f(z)=z'$. Define the map $\varphi:E(G)\rightarrow E(G')$ as follows: for every edge $q_1q_2\in E(G)$, let $\varphi(q_1q_2)=f(q_1)f(q_2)\in E(G')$. Then $\varphi$ is an isomorphism between $\lng(G)$ and $\lng(G')$ where $\varphi(a)=x$ and $\varphi(b)=y$. Moreover, note that $\lng(G)$ is the induced subgraph of $\lng(K_{2h})$ with vertex set $E(G)$. Hence, $\lng(G)+ab$ is an induced subgraph of $\lng(K_{2h})+ab$ isomorphic to $\lng(G')+xy=\ol{H}$, which in turn implies that $\ol{\lng(G)+ab}=\ol{\lng(G)}-ab$ is an induced subgraph of $\ol{\lng(K_{2h})+ab}=\ol{\lng(K_{2h})}-ab$ isomorphic to $\ol{\lng(G')+xy}=H$. This proves \ref{lem:linegraphs}\ref{lem:linegraphs_b}.

   Third, we prove \ref{lem:linegraphs}\ref{lem:linegraphs_c}. Since $H$ is not the line graph of a bipartite graph, it follows that $\lng(K_{h,h})$ is $H$-free. It remains to show that for every edge $ab\in E(\lng(K_{h,h}))$ (where $a,b\in E(K_{h,h})$), there is an induced subgraph of $\lng(K_{h,h})-ab$ that is isomorphic to $H$. Let $a=uv$ and $b=uw$ where $u,v,w\in V(K_{h,h})$ are distinct. Since $H$ is a biline\lm graph, there exists $xy\in E(\ol{H})$ for which $H+xy$ is the line graph of a bipartite graph. Choose a bipartite graph $G'$ with no isolated vertex such that $\lng(G')=H+xy$. Since $x,y$ are adjacent in $H+xy$, it follows that there are distinct vertices $u',v',w'\in V(G')$ for which $x=u'v'\in E(G')$ and $y=u'w'\in E(G')$. Since $|E(G')|=|V(H+xy)|=|V(H)|=h$ and $G'$ has no isolated vertex, it follows that $G'$ admits a bipartition with at most $h$ vertices on either side. Therefore, there is a subgraph $G$ of $K_{h,h}$ isomorphic to $G'$ with $a=uv\in E(G)$ and $b=uw\in E(G)$, as well as an isomorphism $f:V(G)\rightarrow V(G')$ between $G$ and $G'$ such that $f(u)=u', f(v)=v'$, and $f(w)=w'$. Define the map $\varphi:E(G)\rightarrow E(G')$ as follows: for every edge $q_1q_2\in E(G)$, let $\varphi(q_1q_2)=f(q_1)f(q_2)\in E(G')$. Then $\varphi$ is an isomorphism between $\lng(G)$ and $\lng(G')$ where $\varphi(a)=x$ and $\varphi(b)=y$. Moreover, note that $\lng(G)$ is the induced subgraph of $\lng(K_{h,h})$ with vertex set $E(G)$. Hence, $\lng(G)-ab$ is an induced subgraph of $\lng(K_{h,h})-ab$ isomorphic to $\lng(G')-xy=H$. This proves \ref{lem:linegraphs}\ref{lem:linegraphs_c}.

 Finally, we prove \ref{lem:linegraphs}\ref{lem:linegraphs_d}. Since $\ol{H}$ is not the line graph of a bipartite graph, it follows that $\lng(K_{h,h})$ is $\ol{H}$-free, and so $\ol{\lng(K_{h,h})}$ is $H$-free. It remains to show that for every edge $ab\in E(\ol{\lng(K_{h,h})})$ (where $a,b\in E(K_{h,h})$), there is an induced subgraph of $\ol{\lng(K_{h,h})}-ab$ that is isomorphic to $H$. Let $a=uv$, and $b=wz$ where $u,v,w,z\in V(K_{h,h})$ are distinct. Since $\ol{H}$ is a biline\lp graph, there exists $xy\in E(\ol{H})$ for which $\ol{H}-xy$ is a line graph. Choose a bipartite graph $G'$ with no isolated vertex such that $\lng(G')=\ol{H}-xy$. Since $x,y$ are not adjacent in $\ol{H}-xy$, it follows that there are distinct vertices $u', v', w', z' \in V(G')$ such that $x=u'v'\in E(G')$ and $y=w'z'\in E(G')$. Since $|E(G')|=|V(\ol{H}-xy)|=|V(H)|=h$ and $G'$ has no isolated vertex, it follows that $G'$ admits a bipartition with at most $h$ vertices on either side. Therefore, there is a subgraph $G$ of $K_{h,h}$ isomorphic to $G'$ with $a=uv\in E(G)$ and $b=wz\in E(G)$, as well as an isomorphism $f:V(G)\rightarrow V(G')$ such that $f(u)=u', f(v)=v', f(w)=w'$, and $f(z)=z'$. Define the map $\varphi:E(G)\rightarrow E(G')$ as follows: for every edge $q_1q_2\in E(G)$, let $\varphi(q_1q_2)=f(q_1)f(q_2)\in E(G')$. Then $\varphi$ is an isomorphism between $\lng(G)$ and $\lng(G')$ where $\varphi(a)=x$ and $\varphi(b)=y$. Moreover, note that $\lng(G)$ is the induced subgraph of $\lng(K_{h,h})$ with vertex set $E(G)$. Hence, $\lng(G)+ab$ is an induced subgraph of $\lng(K_{h,h})+ab$ isomorphic to $\lng(G')+xy=\ol{H}$, which in turn implies that $\ol{\lng(G)+ab}=\ol{\lng(G)}-ab$ is an induced subgraph of $\ol{\lng(K_{h,h})+ab}=\ol{\lng(K_{h,h})}-ab$ isomorphic to $\ol{\lng(G')+xy}=H$. This completes the proof of \ref{lem:linegraphs}\ref{lem:linegraphs_d}.
\end{proof}

Now, of the 
$179-37-32=110$ graphs remaining from Subsections~\ref{subsec:multi} and \ref{subsec:0or1},

\begin{itemize}
    \item The \textbf{34} graphs in Figures~\ref{fig:lineminus1} and \ref{fig:lineminus2} are line\lm graphs, and handled by \Cref{lem:linegraphs}\ref{lem:linegraphs_a}.
    \item  The \textbf{8} graphs in \Cref{fig:lineplus} are complements of line\lp graphs, and so handled by \Cref{lem:linegraphs}\ref{lem:linegraphs_b}.
    \item The \textbf{2} graphs in \Cref{fig:bilineminus} are biline\lm graphs, and so handled by \Cref{lem:linegraphs}\ref{lem:linegraphs_c}.
    
    \item  The \textbf{3} graphs in \Cref{fig:bilineplus} are complements of biline\lp graphs, and so handled by \Cref{lem:linegraphs}\ref{lem:linegraphs_d}.
\end{itemize}

Moreover, the \textbf{38} graphs in \Cref{fig:severed} are line graphs of graphs with a severed pairs, and so by \Cref{thm:mainsevered}, they are all deletion-normal.

\subsection{Sporadic cases}\label{subsec:sporadicsix}

There are now $110-34-8-2-3-38=\textbf{25}$ graphs remaining from Subsection~\ref{subsec:multi}, \ref{subsec:0or1} and \ref{subsec:nearline} that need special treatment. These graphs are entries
$$\text{\rm 3B,\, 3F,\, 5E,\, 5F,\, 5H,\, 7E,\, 9F,\, 9L,\, 10E,\, 10K,\, 11B,\, 12B,\, 12D}$$
$$\text{\rm 12F,\, 12H,\, 12K,\, 13B,\, 13C,\, 13F,\, 13G,\, 13I,\, 15I,\, 15L,\, 16A,\, 16F}$$
in \Cref{fig:allsix}. We handle these 25 graphs in three groups:
\begin{itemize}
    \item \textbf{Group 1.} The \textbf{12} ``easy'' cases; namely:
    $$\text{\rm 3F,\, 5F,\, 9F,\, 11B,\, 12B,\, 12D,\, 13C,\, 13F,\, 13G,\, 13I,\, 15L,\, 16F.}$$
     \item \textbf{Group 2.} Then \textbf{5} cases that use the icosahedron; namely:
    $$\text{\rm 7E,\, 9L,\, 12F,\, 12K,\, 16A.}$$
      \item \textbf{Group 3.} The \textbf{8} ``hard'' cases; namely:
    $$\text{\rm 3B,\, 5E,\, 5H,\, 10E,\, 10K,\, 12H,\, 13B,\, 15I.}$$
\end{itemize}

First, we handle the graphs in \textbf{Group 1}:

\begin{theorem}\label{thm:13K&9F} Entries {\rm 3F}, {\rm 9F}, {\rm 12B} and {\rm 12D} in \Cref{fig:allsix} are deletion-normal.
\end{theorem}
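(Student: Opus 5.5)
The plan is to exhibit, for each of the four entries 3F, 9F, 12B and 12D, an explicit host graph $G$ and check the two defining properties of $H$-deletion-saturation directly. These entries are grouped together because I expect one uniform kind of construction to cover all of them. Each entry evades the earlier reductions: it is not complete multipartite, has no isolated vertex, has no governing block with all other blocks $K_2$, and is not (the complement of) a line\lm/line\lp or biline\lm/biline\lp graph, nor a line graph with a severed pair. So I will look for hosts outside those families. I would proceed in the following order.

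\emph{Step 1: choose the host.} I would take $G$ edge-transitive, so that the deletion condition only has to be checked for a single edge. My first candidates are:
\begin{itemize}
\item line graphs $\lng(K_{n-1}^k)$ of complete $k$-graphs with $k=3$ (rather than $k=2$), in the spirit of \Cref{thm:ktn-1};
\item their complements, the Kneser-type graphs $\ol{\lng(K_n^r)}$ used in \Cref{thm:Ktts1};
\item small highly symmetric graphs such as $C_5$, the icosahedron, its complement, or $\lng(K_{t,t})$.
\end{itemize}
For a $\lng(K^k_{n-1})$ host, the edge orbits are indexed by $\ell=|e_1\cap e_2|\in\{1,\ldots,k-1\}$. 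In that case I would simply verify $(k,n)$-admissibility, which \Cref{thm:ktn-1} then converts into deletion-saturation. Concretely, for each $\ell$ I must realise $H$ as the line graph of a $k$-graph on at most $n+\ell-1$ vertices with two disjoint $\ell$-pendent hyperedges, while no realisation of $H$ uses at most $n-1$ vertices. The minimal $n$ is found by hand from the clique structure of $H$: each maximal clique of $H$ must either be a star at a vertex of the $k$-graph or be small.

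\emph{Step 2: the deletion property.} For the chosen (orbit representative) edge $ab$, I write down explicitly a vertex set of $G-ab$ inducing $H$, with $a,b$ playing the roles of a nonadjacent pair $x,y$ of $H$. This is the step where $H+xy$ should ``look like'' a local piece of $G$. So the pair $xy$ should be chosen so that $H+xy$ embeds in $G$ with $x,y\mapsto a,b$, exactly as in \Cref{lem:linegraphs}.

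\emph{Step 3: $H$-freeness, the main obstacle.} I would certify $H$-freeness through a parameter that $H$ violates but every induced subgraph of $G$ satisfies. Candidate parameters are:
\begin{itemize}
\item the maximum clique size together with an Erd\H os--Ko--Rado type bound (\Cref{thm:EKR}, \Cref{thm:EKRproduct}), showing that the required configuration of cliques forces too many ground vertices;
\item a counting argument on the union of the hyperedges representing $H$, as in the proof of \Cref{thm:Ktts1};
\item for the small hosts, the local structure, e.g.\ that every neighbourhood in the icosahedron is a $C_5$.
\end{itemize}
If a single family fails for some entry, I would instead take a disjoint union of a saturated piece with isolated vertices or cliques, as in the proof of \Cref{thm:Ktts0}, treating components separately. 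I expect the hardest part to be finding, for each entry, a host where the freeness argument is genuinely structural rather than a finite case check. I would first try the minimal-ground-set argument of \Cref{thm:ktn-1}, since it makes freeness automatic once $n$ is chosen minimal.
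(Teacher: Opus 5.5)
Your proposal is a search plan, not a proof. It never identifies the four graphs, never commits to a host for any of them, and never checks either freeness or the deletion property, so nothing is actually established.

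The route you say you would try first also provably fails for three of the entries. Suppose $H$ is $(k,n)$-admissible, and take $\ell=k-1$: then $\Gamma_{k-1}$ has two disjoint $(k-1)$-pendent hyperedges. Every hyperedge meeting such a hyperedge must pass through its at most one non-pendent vertex, so its neighbours pairwise intersect. Hence it is a simplicial vertex of $\lng(\Gamma_{k-1})\cong H$, and the two such vertices are nonadjacent. Now 9F is $K_{3,3}$ minus an edge, and 12B and 12D are the complements of $K_3+P_3$ and of $K_2$ plus the paw (a triangle with a pendant vertex). None of these three graphs has any simplicial vertex, so \Cref{thm:ktn-1} cannot apply to them for any $k$.

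Your claim that all four entries evade the earlier reductions is also inaccurate. Entry 3F is $K_3+P_3=\lng(K_3+P_4)$, and the two ends of the $P_4$ form a severed pair, so \Cref{thm:mainsevered} already covers it.

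The missing idea is elementary: each entry, or its complement, is one edge away from a small edge-transitive graph on the same number of vertices.

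For 3F and 9F: 3F is $2K_3$ minus an edge and 9F is $K_{3,3}$ minus an edge. By edge-transitivity, deleting any edge of $2K_3$ (resp.\ $K_{3,3}$) yields $H$ itself. $H$-freeness is automatic, because the host has exactly $|V(H)|$ vertices but more edges than $H$.

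For 12B and 12D: use the host $K_{3,3,3}=\ol{3K_3}$ and argue in the complement. Every induced subgraph of $3K_3$ is a disjoint union of cliques, so $3K_3$ has no induced $P_3$. Hence it contains neither $K_3+P_3$ nor $K_2$ plus the paw. Now let $ab$ join triangles $A\ni a$ and $B\ni b$, and pick $a'\in A\setminus\{a\}$. In $3K_3+ab$, the set $\{a',a,b\}$ induces a $P_3$ and $A\cup\{b\}$ induces a paw, while the third triangle supplies an anticomplete $K_3$ or $K_2$. So $3K_3$ is addition-saturated for both complements, i.e.\ $K_{3,3,3}$ is deletion-saturated for 12B and 12D.
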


\begin{figure}[t!]
    \centering   \includegraphics[scale=0.7]{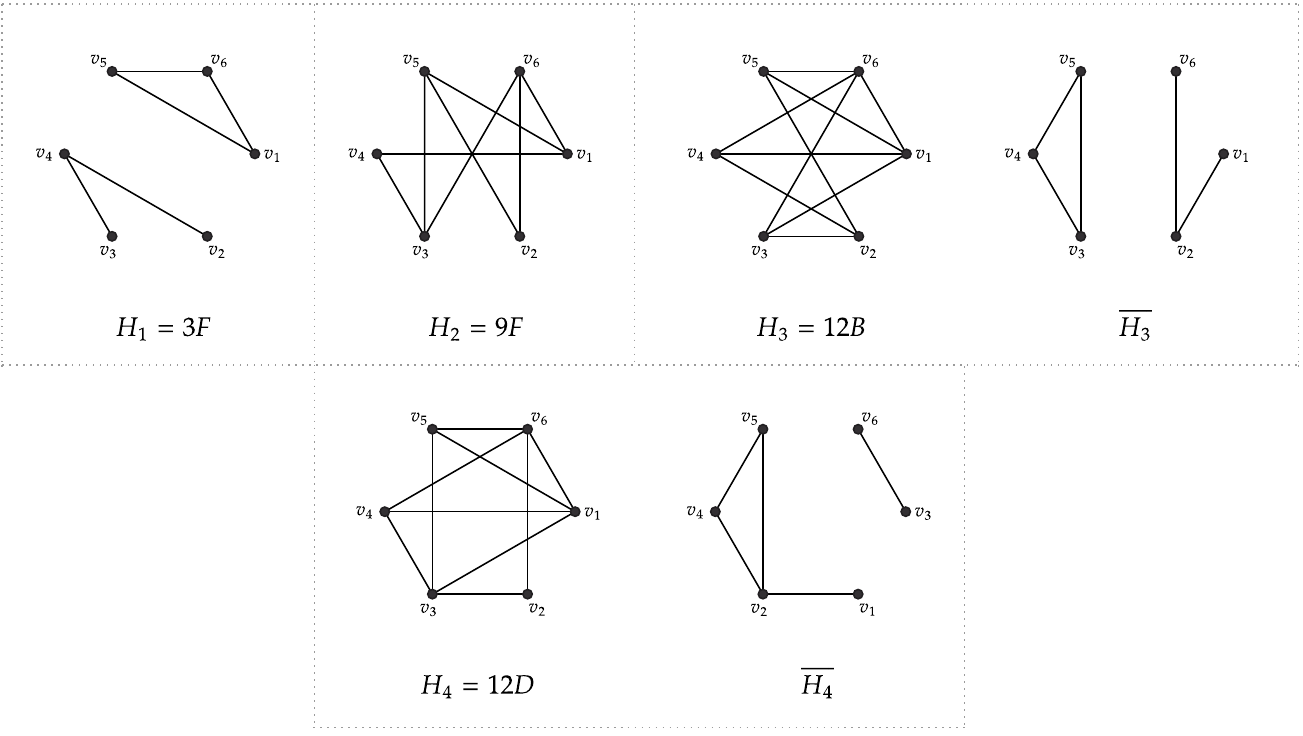}
    \caption{Proof of \Cref{thm:13K&9F}. Graphs $H_1,H_2,H_3, H_4$, along with the complement of $H_3$ and $H_4$.}
    \label{fig:3F9F12B12D}
\end{figure}

\begin{proof}
    Let $H_1$ be entry 3F in \Cref{fig:allsix}, let $H_2$ be entry 9F in \Cref{fig:allsix}, let $H_3$ be entry 12B in \Cref{fig:allsix}, and let $H_4$ be entry 12D in \Cref{fig:allsix}.

    Note that $H_1$ is isomorphic to the graph obtained from $2K_3$ by removing an edge. Since $2K_3$ is edge-transitive, it follows that $2K_{3}$ itself is $H_1$-deletion-saturated. 

   Note that $H_2$ is isomorphic to the graph obtained from $K_{3,3}$ by removing an edge. Since $K_{3,3}$ is edge-transitive, it follows that $K_{3,3}$ itself is $H_2$-deletion-saturated.

   Note also that $\ol{H_3}$ is isomorphic to $K_3+P_3$ and $\ol{H_4}$ is the disjoint union of a two-vertex path and a graph obtained from a triangle by adding a leaf. It is straightforward to observe that $3K_3$ is both $\ol{H_3}$-addition-saturated and $\ol{H_4}$-addition-saturated. Hence, $\ol{3K_3}=K_{3,3,3}$ is both $H_3$-deletion-saturated and $H_4$-deletion-saturated. This completes the proof of \Cref{thm:13K&9F}.
\end{proof}

\begin{theorem}\label{thm:cycles}
    Entries {\rm 5F}, {\rm 11B} and {\rm 15L} in \Cref{fig:allsix} are deletion-normal.
\end{theorem}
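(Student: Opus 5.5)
The plan is to treat each of the three entries in the same way as \Cref{thm:13K&9F}: exhibit an explicit, highly symmetric host graph built from cycles. I do not have the pictures of entries 5F, 11B and 15L in front of me, so I cannot confirm which case each entry falls into. Judging from the title, I expect each of them, or its complement, to be obtained from a cycle or a disjoint union of cycles by deleting, or adding, a single edge.

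For each entry $H$, I will first look for a graph $C$ with the following three properties:
\begin{itemize}
\item $C$ is a cycle $C_k$, or a disjoint union of cycles, or the complement of such a graph;
\item $C$ is edge-transitive;
\item $H$ is isomorphic to $C-e$ for some (equivalently, every) $e\in E(C)$.
\end{itemize}
Then $C$ is a candidate for an $H$-deletion-saturated graph. Deleting any edge of $C$ yields a copy of $H$ on all of $V(C)$. Moreover, $C$ is $H$-free because $|V(C)|=|V(H)|$ and $|E(C)|=|E(H)|+1$.

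For a disjoint union of cycles of different lengths, edge-transitivity fails. In that case the uniform statement is replaced by a check over each orbit of edges. For each orbit, I will verify that deleting an edge of that orbit still produces $H$. If it does not, I will instead take several disjoint copies of suitable cycles so that every edge deletion leaves a copy of $H$ as an induced subgraph, not necessarily a spanning one.

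If instead it is $\overline{H}$ that has the form ``cycles plus one edge'', I will dualize, as was done for 12B and 12D. I will show that some union of cycles $D$ (say $D=mC_k$ for suitable $m$) is $\overline{H}$-addition-saturated. Then $\overline{D}$ is $H$-deletion-saturated. There are two parts to check.
\begin{itemize}
\item \textbf{Freeness.} Every induced subgraph of $D$ has maximum degree at most $2$, with every component a path or a cycle, so $D$ has no induced copy of $\overline{H}$, which has either a vertex of degree $3$ or a chorded cycle.
\item \textbf{Saturation.} For a non-edge $xy$ of $D$ there are two subcases. If $x,y$ lie in the same cycle, adding $xy$ creates a chord. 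If they lie in different cycles, adding $xy$ joins the two cycles. In each subcase I will pick the required induced copy of $\overline{H}$ by hand, using the spare cycles to supply any isolated parts of $\overline{H}$.
\end{itemize}

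The main obstacle is the saturation step in the non-edge-transitive situations. When the chord $xy$ joins two vertices at small distance on a cycle, or joins two different cycles, the newly created induced subgraph must match $\overline{H}$ exactly. This may force a specific choice of cycle length $k$, for instance $k$ large enough that every chord position yields the needed induced cycle together with a pendant path. My first attempt will be to take $k=|V(H)|$ and $m=2$. I will then adjust $k$ and $m$ if some chord position fails to produce $\overline{H}$ as an induced subgraph.
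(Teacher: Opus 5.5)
There is a genuine gap, and it starts with your guess about what the entries are. Entry 5F is $C_6$ itself, 11B is $\ol{C_6}$ (the triangular prism), and 15L is $C_5$ itself. None of them is a cycle or a union of cycles with one edge added or removed, and for these graphs every mechanism in your plan fails.

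\textbf{Primary mechanism.} You need an edge-transitive $C$ with $C-e\cong H$. Here $C$ would have to be $C_6$ plus a chord, $C_5$ plus a chord, or the prism plus an edge (that is, $\ol{P_6}$). In each case the added edge is the unique edge joining two vertices of maximum degree, so $C$ is not edge-transitive.

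\textbf{Fallback.} Using disjoint unions of cycles as deletion hosts cannot work when $H$ is a cycle. Deleting an edge from a union of cycles only produces paths and leaves the other cycles untouched. It never creates a new induced cycle.

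\textbf{Dual route.} Let $D$ be a union of cycles. Adding an edge between two components creates no cycle at all. Adding a chord of span $a$ to a component $C_k$ creates exactly two induced cycles, of lengths $a+1$ and $k-a+1$. Suppose one of them must have length $\ell\in\{5,6\}$ for every $a\in\{2,\ldots,\lfloor k/2\rfloor\}$. The case $a=2$ forces $k=\ell+1$, and then $a=3$ fails. So every component would have to be a triangle, and every added edge would join two components.
\begin{itemize}
\item Consequently no union of cycles is $C_5$- or $C_6$-addition-saturated, and those are exactly what you need for 15L and 11B.
\item For 5F, $\ol{H}$ is the $3$-regular prism, while $D+e$ has only two vertices of degree $3$, so $D+e$ cannot contain it.
\item Your freeness argument for the dual route also breaks. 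It assumes $\ol{H}$ has a degree-$3$ vertex or a chorded cycle, but for 15L and 11B, $\ol{H}$ is a chordless cycle.
\end{itemize}

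The missing ingredient is that deletion-normality of these short cycles is not a one-edge-perturbation phenomenon. It needs genuinely non-trivial host graphs, which the paper cites rather than builds:
\begin{itemize}
\item $C_5$ is normal by \Cref{thm:odd}: the graph $\lng(K_{3,3})$ is $C_5$-induced-saturated, hence $C_5$-deletion-saturated.
\item $C_6$ is normal by the known constructions for cycles of length at most $10$, and deletion-normal by \Cref{thm:maineven}.
\item $\ol{C_6}$ is deletion-normal because the complement of any $C_6$-addition-saturated graph is $\ol{C_6}$-deletion-saturated, and such a graph exists by \Cref{thm:Tennenhouse}.
\end{itemize}
A correct proof should invoke these results rather than try to build hosts from cycles and their complements.
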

\begin{figure}[t!]
    \centering   \includegraphics[scale=0.85]{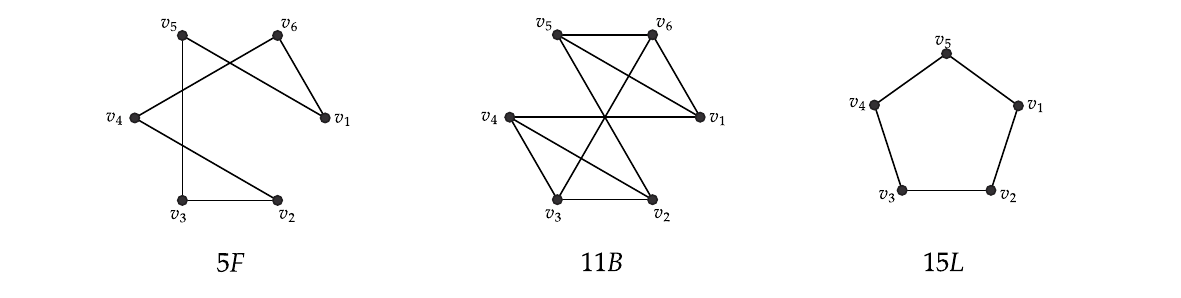}
    \caption{Proof of \Cref{thm:cycles}. The $6$-cycle, the complement of the $6$-cycle, and the $5$-cycles.}
    \label{Cycles6&5&4}
\end{figure}

\begin{proof}
    Note that entry 5F in \Cref{fig:allsix} is a $6$-cycle, entry 11B is the complement of a $6$-cycle, and entry 15L is a $5$-cycle. Recall that all cycles of length at most $10$ (except for the $3$-cycle) are normal (and thus both deletion-normal and addition-normal). This completes the proof of \Cref{thm:cycles}.
\end{proof}

\begin{theorem}\label{thm:13CFGI} Entries {\rm 13C}, {\rm 13F}, {\rm 13G}, {\rm 13I}, and {\rm 16F} in \Cref{fig:allsix} are deletion-normal.
\end{theorem}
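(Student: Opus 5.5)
I do not know exactly which graphs entries 13C, 13F, 13G, 13I and 16F are, because \Cref{fig:allsix} is not reproduced here. The plan below follows the pattern of the neighbouring ``easy'' results (\Cref{thm:13K&9F}) and is organised so that each graph needs only a short check once it is identified.

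I would exhibit, for each of the five graphs $H$, a small edge-transitive host graph $W$. The first step is to read $H$ and $\ol{H}$ off the figure. Since these are grouped together and sit near the multipartite entries 13H--13K and 16H, I expect each $H$ (or its complement) to be ``one edge away'' from a highly symmetric graph. Typical candidates are a complete multipartite graph with one edge removed, or a disjoint union of cliques with one edge removed or added. The guiding example is $H_1$ in \Cref{thm:13K&9F}, where $2K_3$ is $H_1$-deletion-saturated because $H_1=2K_3-e$.

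The second step applies one of two edge-transitivity arguments to each graph.
\begin{enumerate}[(i)]
\item If $H=W-e$ for an edge-transitive graph $W$ with $|V(W)|=|V(H)|$, then $W$ is $H$-free, since $|E(W)|>|E(H)|$ and the vertex counts agree. Also $W-e'\cong H$ for every edge $e'$, so $W$ is $H$-deletion-saturated. I expect this to cover graphs such as $K_{2,2,2}-e$ and $K_{3,3}$-type variants, via $K_{2,2,2}$ and $K_{2,2,1,1}$, when those are edge-transitive.
\item If instead $\ol{H}=M+e$, where $M$ is a small union of cliques, then a disjoint union of triangles or $K_4$'s should be $\ol{H}$-addition-saturated. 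One checks by hand that the host is $\ol{H}$-free and that adding an edge inside the host's clique structure creates $\ol{H}$. Taking complements then gives a complete multipartite $H$-deletion-saturated graph, as was done with $K_{3,3,3}$ for 12B and 12D.
\end{enumerate}

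The third step, for any graph that fits neither pattern, falls back on \Cref{thm:ktn-1} or \Cref{lem:linegraphs} with a hand-chosen host: $\lng(K_5)$, the Petersen graph $\ol{\lng(K_5)}$, or $\lng(K_{3,3})$. To justify such a host, I would first verify $H$-freeness by a counting or structural obstruction, namely the number of vertices, the clique number and the independence number. I would then use vertex- and edge-transitivity to reduce saturation to checking a single edge.

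The main obstacle is the freeness check in the non-edge-transitive cases. When the host is larger than $H$, we must rule out every induced copy of $H$, not just copies of the expected size. I would try to do this by bounding $\alpha$ and $\omega$ of the host against those of $H$, for instance by noting that each part of $H$ forces a stable set or a clique of a given size.
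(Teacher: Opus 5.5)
\textbf{There is a genuine gap: as written, this is not a proof of the statement.} The theorem is about five specific graphs, and you never identify them. Everything substantive is therefore deferred to a case analysis you do not carry out.

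\textbf{The missing step.} The complements of entries 13C, 13F, 13G, 13I, 16F are, respectively,
\[
P_4+P_2,\quad P_4+2P_1,\quad P_3+P_2+P_1,\quad P_3+3P_1,\quad P_3+2P_1 .
\]
Each is a disjoint union of copies of $K_2$ and $K_1$ plus one edge joining two components. So your pattern (ii) applies uniformly, and it is exactly the paper's argument, with the single host $5P_2$:
\begin{enumerate}[(a)]
\item $5P_2$ is $\overline{H_i}$-free, because it has maximum degree one while every $\overline{H_i}$ has a vertex of degree two.
\item Every non-edge of $5P_2$ joins two different matching edges. Hence $5P_2+e\cong P_4+3P_2$ for every non-edge $e$, and this graph visibly contains all five $\overline{H_i}$ as induced subgraphs.
\end{enumerate}
Complementing, $K_{2,2,2,2,2}$ is $H_i$-deletion-saturated for every $i$.

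To turn your (ii) into an argument, you also have to quantify the host; ``a disjoint union of triangles or $K_4$'s'' cannot be checked. Suppose $\overline{H}=M+e$ with $M$ a disjoint union of cliques. Then $t$ disjoint copies of $K_s$ work as soon as $s$ is at least the largest component of $M$ and $t$ is at least the number of components of $M$. For $P_3+3P_1$ this forces $t\geq 5$: the $P_3$ uses two cliques and each isolated vertex needs its own.

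\textbf{Your other two branches are misdirected.}
\begin{enumerate}[(a)]
\item Pattern (i) handles only 13C, where indeed $\overline{K_{2,2,2}-e}\cong P_4+P_2$. For the other four graphs, adding a single edge yields $K_{2,2,1,1}$, $K_6-e$, $K_5-e$, or $\overline{P_3+3P_1}$. None of these is edge-transitive. For example, $K_{2,2,1,1}$ has edges joining vertices of degrees $4$--$4$, $4$--$5$ and $5$--$5$. So your proposed use of $K_{2,2,1,1}$ would fail.
\item The ``main obstacle'' you anticipate never arises. The host in (ii) is $P_3$-free, so freeness is immediate. No appeal to \Cref{thm:ktn-1}, \Cref{lem:linegraphs}, or larger hosts such as line graphs of complete graphs or the Petersen graph is needed.
\end{enumerate}
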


\begin{proof}
Let $H_1,H_2,H_3,H_4,$ and $H_5$, respectively, be the graphs at entries 13C, 13F, 13G, 13I, and 16F in \Cref{fig:allsix}. Then,
\begin{itemize}
    \item $\ol{H_1}$ is isomorphic to $P_4+P_2$;
     \item $\ol{H_2}$ is isomorphic to $P_4+2P_1$;
      \item $\ol{H_3}$ is isomorphic to $P_3+P_2+P_1$;
       \item $\ol{H_4}$ is isomorphic to $P_3+3P_1$;
        \item $\ol{H_5}$ is isomorphic to $P_3+2P_1$;
\end{itemize}

Our goal is to show that for each $i\in \{1,\cdots,5\}$, the graph $5P_2$ is $\ol{H_i}$-addition-normal. Note, in particular, that $5P_2$ is $\ol{H_i}$-free because $\ol{H_i}$ has at least one vertex of degree two whereas $5P_2$ is $1$-regular. Also, for every edge $e\in E(\ol{5P_2})$, the graph $5P_2+e$ is isomorphic to $P_4+3P_2$, which is easily seen to contain an induced subgraph isomorphic to $\ol{H_i}$ for every $i\in \{1,\ldots, 5\}$. This completes the proof of \Cref{thm:13CFGI}.   
\end{proof}
\begin{figure}[t!]
    \centering   \includegraphics[width=\linewidth]{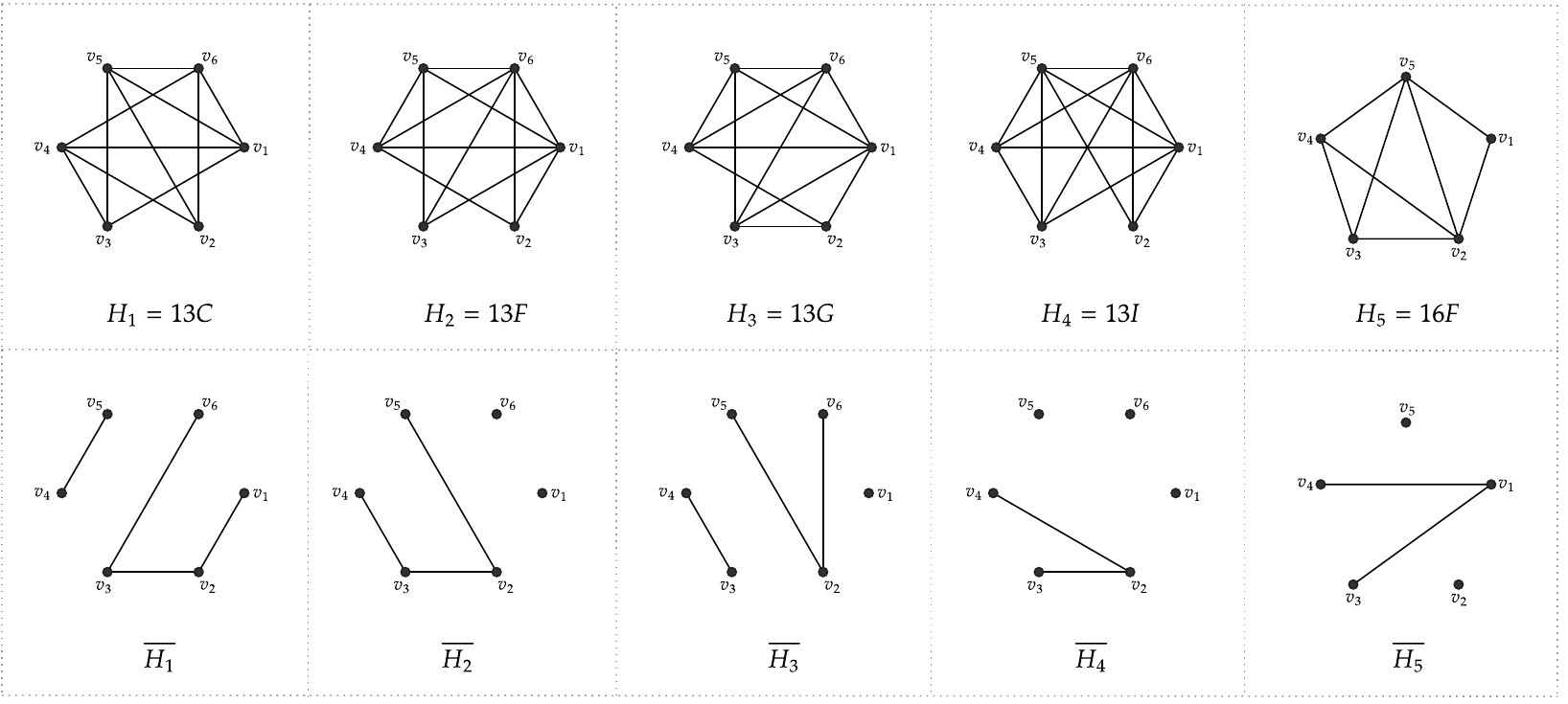}
    \caption{Proof of \Cref{thm:13CFGI}. From left to right, the graphs $H_1,\ldots, H_5$ (top) and their complements (bottom).}
    \label{fig:13CFGI16F}
\end{figure}

Next, we handle the 5 graphs in \textbf{Group 2}:

\begin{figure}[t!]
    \centering   \includegraphics[width=\linewidth]{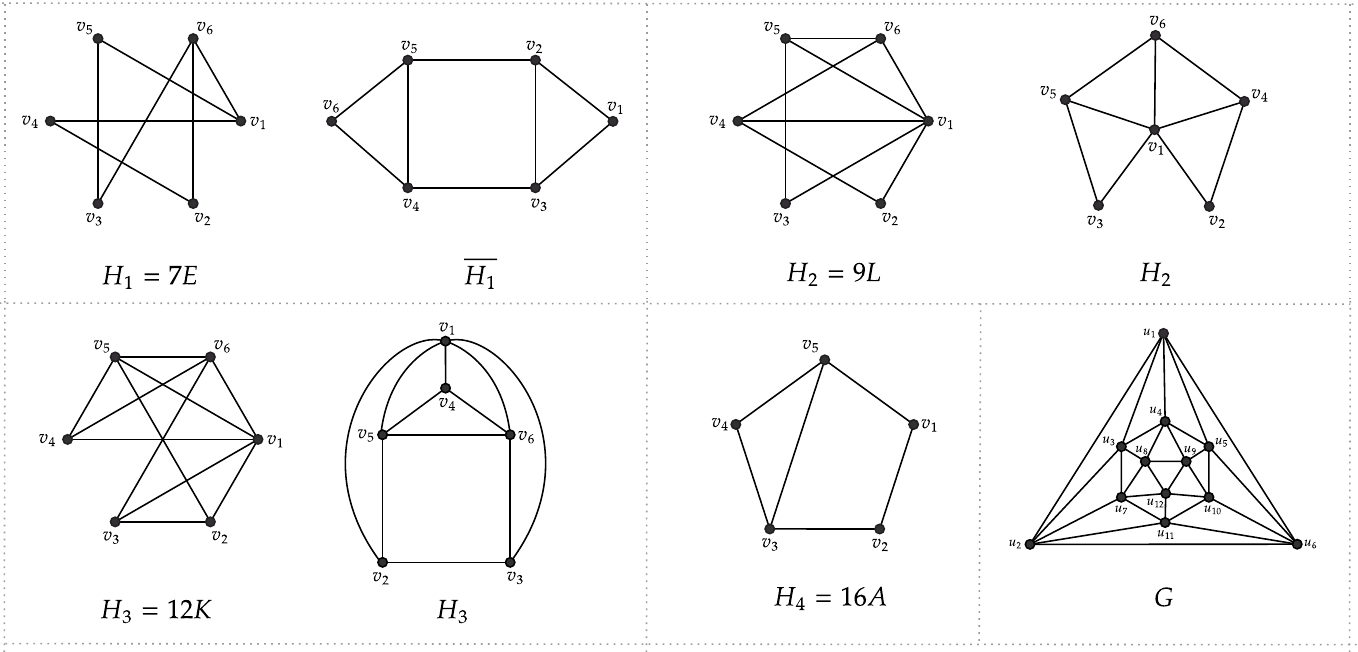}
    \caption{Proof of \Cref{thm:9L}: The graph $H_1$ and its complement, two drawings of $H_2$, two drawings of $H_3$, the graph $H_4$, and the graph $G$.}
    \label{fig:7E9L12K16A}
\end{figure}

\begin{theorem}\label{thm:9L}Entries {\rm 7E}, {\rm 9L}, {\rm 12K} and {\rm 16A} in \Cref{fig:allsix} are all deletion-normal. 
\end{theorem}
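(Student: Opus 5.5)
This is Group 2, the cases that use the icosahedron. I cannot see the pictures of entries 7E, 9L, 12K and 16A, so the plan is generic, guided by the figure caption. The caption mentions $H_1$ and its complement, two drawings each of $H_2$ and $H_3$, the graph $H_4$, and a single graph $G$.

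Let $I$ denote the icosahedron, a $5$-regular graph on $12$ vertices. Recall the facts already used in the proofs of \Cref{thm:Ktts0} and \Cref{thm:maincentered}:
\begin{itemize}
\item $I$ is $C_4$-induced-saturated \cite{behrens};
\item $I$ is vertex- and edge-transitive;
\item the neighbourhood of every vertex induces a $C_5$, and so does its set of non-neighbours other than the antipode;
\item for an edge $uv$, the common neighbours $a,b$ of $u$ and $v$ are nonadjacent;
\item $I$ has diameter $3$.
\end{itemize}

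\textbf{Strategy.} For each $H_i$, exhibit a candidate graph (either $I$, $\ol{I}$, or one fixed graph $G$ built from copies of $I$) and verify two things. First, $H_i$-freeness, which I would derive from the local structure of $I$ (every neighbourhood is a $C_5$) rather than by brute force. Second, the saturation condition for one edge only, which suffices by edge-transitivity.

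\textbf{Expected shape of the graphs.}
\begin{itemize}
\item Some $H_i$ should be of the form ``$C_4$ plus something'' that is not covered by \Cref{thm:maincentered}. For example, $C_4$ with a pendant structure whose blocks are not omnipresent in $I$, or $C_4$ with a pendant triangle sharing a vertex. Deleting the edge $uv$ of $I$ creates the induced $4$-cycle $u\dd a\dd v\dd b\dd u$, and I would extend it by explicit vertices of $I$. Since $\Gamma$-tricks are unavailable here, the extension has to be checked directly.
\item Other $H_i$ are probably handled through complements: $\ol{H_1}$ would be shown to be $\ol{I}$-addition-saturated. Adding a non-edge of $\ol{I}$ is deleting an edge of $I$, so this amounts to finding an induced copy of $H_1$ in $I-uv$.
\item For $H_4$, the separate graph $G$ is likely a disjoint union of copies of $I$ with isolated vertices, or copies of $I$ joined completely, mimicking \Cref{thm:Ktts0}. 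This would handle an $H_4$ that is a $C_4$ or diamond-type block combined with extra parts. Freeness would follow from a pigeonhole argument over components, as in \Cref{thm:Ktts0}. For saturation, one edge-deletion inside a copy of $I$ produces a $C_4$, and the remaining pieces are supplied by the fixed edges and vertices of the other copies.
\end{itemize}

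\textbf{Steps in order.}
\begin{enumerate}
\item Identify each $H_i$ (or $\ol{H_i}$) as a small modification of $C_4$ or $2K_2$.
\item Choose the host graph: $I$, $\ol{I}$, or $G$.
\item Prove freeness. Any induced copy of $H_i$ contains an induced $C_4$ (or the corresponding structure in the complement), which $I$ does not have; so it suffices to check that this structure cannot arise.
\item Fix one edge $uv$, label the twelve vertices of $I$ concretely (the antipode of $u$, the $5$-cycle $N(u)$, and so on), and list the vertex set of an induced $H_i$ in $I-uv$.
\end{enumerate}

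\textbf{Main obstacle.} The main difficulty is freeness for those $H_i$ that do not contain $C_4$ as an induced subgraph. For these the argument has to use finer structure: every edge lies in exactly two triangles, neighbourhoods are $5$-cycles, and $\alpha(I)=3$. I would first try to show that any induced copy would force an induced $C_4$ or a stable set of size $4$ in $I$, both of which are impossible.
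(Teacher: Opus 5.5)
\textbf{There is a genuine gap.} What you have is a template, not a proof. It never identifies the four graphs, never assigns a host to each, and never carries out a freeness or saturation check. More importantly, the one concrete mechanism you commit to provably fails for two of the four entries. That mechanism is: delete a single edge $uv$ of the icosahedron $I$ (one edge suffices by edge-transitivity) and extend the induced cycle $u\dd a\dd v\dd b\dd u$ inside $I$.

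Since $I$ is $C_4$-free, every induced $C_4$ of $I-uv$ contains $u,v$ as opposite vertices. So $u\dd a\dd v\dd b\dd u$ is the \emph{unique} induced $C_4$ of $I-uv$, and no vertex other than $a,b$ is adjacent to both $u$ and $v$.
\begin{itemize}
\item Entry 7E is the $6$-cycle with a long chord: two $4$-cycles sharing an edge, whose complement is the triangular prism minus one rung. It has two induced $C_4$'s, so it never occurs in $I-uv$.
\item Entry 12K is the house $\ol{P_5}$ plus a dominating vertex. It needs a vertex complete to an induced $C_4$, which $I-uv$ does not have.
\end{itemize}

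Your complement route does not rescue 7E. As your own sentence shows, ``$\ol{I}$ is $\ol{H_1}$-addition-saturated'' is literally the statement ``$I$ is $H_1$-deletion-saturated''. What is needed is the other complementation. The paper takes $\ol{I}$ as host, i.e.\ shows that $I$ is $\ol{H_1}$-addition-saturated. This requires checking both orbits of non-edges of $I$ (a pair at distance two and an antipodal pair), not one edge.

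For 12K, the paper uses the cone over $I$, i.e.\ $I$ plus a vertex complete to $V(I)$. It is still $C_4$-free because $C_4$ has no dominating vertex. Again there are two edge orbits to check: edges of $I$, and edges at the apex.

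Your guesses for the other two entries are also off, though these two do fit your scheme. Entry 9L is the fan $K_1\vee P_5$, which contains no $C_4$. Its freeness in $I$ needs neither a $C_4$ nor a stable $4$-set. The apex of the fan would have to be a vertex of $I$ whose entire neighbourhood is the other five vertices, and that neighbourhood induces $C_5$, not $P_5$. Deleting a rim edge of the wheel on $\{u\}\cup N(u)$ produces the fan.

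Entry 16A is the house $\ol{P_5}$; it does not call for a separate graph built from copies of $I$. The paper quotes the result of Bonamy et al.\ that $\ol{I}$ is $P_5$-induced-saturated. Your extension step in fact works directly here with host $I$: take as roof the common neighbour of $u$ and $a$ other than $v$, which is nonadjacent to both $v$ and $b$.
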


\begin{proof}
Let $H_1$ be entry 7E in \Cref{fig:allsix}, let $H_2$ be entry 9L, let $H_3$ be entry 12K, and let $H_4$ be entry 16A; see \Cref{fig:7E9L12K16A}. Let $G$ be an isomorphic copy of the icosahedron with $V(G)=\{u_1,\ldots, u_{12}\}$ and the adjacency as depicted in \Cref{fig:7E9L12K16A}, and let $G^+$ be the graph obtained from $G$ by adding a new vertex $u_{13}$ adjacent to all vertices in $V(G)$. First, we show that $G$ is $\ol{H_1}$-addition-saturated (which would in turn imply that $\ol{G}$ is $H_1$-deletion-saturated). Note that $G$ is $\ol{H_1}$-free because the icosahedron, and thus $G$, is $C_4$-free whereas $\ol{H_1}[v_2,v_3,v_4,v_5]$ is a $4$-cycle. It remains to show that for every edge $e\in E(\ol{G})$, the graph $G+e$ is not $\ol{H_1}$-free. There are now two cases up to symmetry:
\begin{itemize}
    \item $e=u_1u_{11}$. Then, for $S=\{u_1,u_3,u_6,u_7,u_8,u_{11}\}$, it is readily seen that the map $f:V(H_1)\to S$ with 
$$f(v_1)=u_6,\quad f(v_2)=u_1,\quad f(v_3)=u_{11},\quad f(v_4)=u_7,\quad f(v_5)=u_3,\quad f(v_6)=u_8$$
is an isomorphism between $\ol{H_1}$ and $G[S]+e$.
 \item $e=u_1u_{12}$. Then, for $S=\{u_1,u_3,u_4,u_7,u_{11},u_{12}\}$, it is readily seen that the map $f:V(H_1)\to S$ with 
$$f(v_1)=u_4,\quad f(v_2)=u_1,\quad f(v_3)=u_{3},\quad f(v_4)=u_7,\quad f(v_5)=u_{12},\quad f(v_6)=u_{11}$$
is an isomorphism between $\ol{H_1}$ and $G[S]+e$.
\end{itemize}

Second, we show that $G$ is $H_2$-deletion-saturated. Note that $G$ is $H_2$-free because the neighborhood of every vertex in the icosahedron, and so in $G$, induces a $5$-cycle, whereas $H_2[N_{H_2}(v_1)]$ is a five-vertex path. Since the icosahedron, and thus $G$, is edge-transitive, it remains to show that for some edge $e\in E(G)$, the graph $G-e$ is not $H_2$-free. We pick $e=u_2u_3$. Then, for $S=\{u_1,\ldots, u_6\}$, it is readily seen that the map $f:V(H_2)\to S$ with 
$$f(v_i)=u_i\quad \text{for all }i\in\{1,\ldots, 6\}$$
is an isomorphism between $H_2$ and $G[S]-e$.

 Third, we show that $G^+$ is $H_3$-deletion-saturated. Since $G$ is $C_4$-free, and since $C_4$ has no dominating vertex, it follows that $G^+$ is $C_4$-free, too. This implies that $G^+$ is $H_3$-free because $H_3[\{v_2,v_3,v_5,v_6\}]$ is a $4$-cycle. It remains to show that for every $e\in E(G^+)$, the graph $G^+-e$ is not $H_3$-free. Since the icosahedron, and thus $G$, is edge-transitive, there are now two cases up to symmetry:
 \begin{itemize}
      \item $e=u_1u_{3}$. Then, for $S=\{u_1,u_2,u_3,u_4,u_5,u_{13}\}$, it is readily seen that the map $f:V(H_3)\to S$ with 
$$f(v_1)=u_{13},\quad f(v_2)=u_3,\quad f(v_3)=u_{2},\quad f(v_4)=u_5,\quad f(v_5)=u_4,\quad f(v_6)=u_1$$
is an isomorphism between $H_3$ and $G[S]-e$.
  \item $e=u_1u_{13}$. Then, for $S=\{u_1,u_3,u_4,u_5,u_8,u_{13}\}$, it is readily seen that the map $f:V(H_3)\to S$ with 
$$f(v_1)=u_{4},\quad f(v_2)=u_1,\quad f(v_3)=u_{5},\quad f(v_4)=u_8,\quad f(v_5)=u_3,\quad f(v_6)=u_{13}$$
is an isomorphism between $H_3$ and $G[S]-e$.
 \end{itemize}
 Finally, note that $\ol{H_4}$ is isomorphic to $P_5$. By \Cref{thm:P5}, $\ol{G}$ is  $\ol{H_4}$-induced-saturated, and thus $G$ is $H_4$-induced-saturated. This completes the proof of \Cref{thm:9L}.
\end{proof}

\begin{theorem}\label{thm:12F}  Entry ${\rm 12F}$ in \Cref{fig:allsix} is deletion-normal.
\end{theorem}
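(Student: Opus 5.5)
Entry 12F is one of the five Group~2 graphs, so the plan is to realise it with the icosahedron, as in \Cref{thm:9L}. Let $H$ denote entry 12F. First I will read off its structure from \Cref{fig:allsix} and look at both $H$ and $\ol{H}$. The aim is to find a small witness that separates $H$ from the icosahedron $G$ (or $\ol{G}$, or $G^+=G$ plus a dominating vertex). For $G$ the witness would be an induced $C_4$, an induced $P_5$ in a vertex neighbourhood, or a vertex whose neighbourhood is not a $5$-cycle.

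\textbf{Choosing the host.} Since 12F is placed in Group~2 and is not handled like 12K by $G^+$, I expect one of the following:
\begin{itemize}
\item $\ol{H}$ contains an induced $C_4$ and $\ol{H}$ becomes a subgraph of $G+e$ for every non-edge $e$. Then $G$ is $\ol{H}$-addition-saturated, as for 7E.
\item $H$ has a vertex whose neighbourhood is ``a $5$-cycle minus an edge'', as for 9L. Then $G$ itself is $H$-deletion-saturated.
\end{itemize}
If neither works, I will try a modification: the icosahedron with a dominating vertex added (as for 12K), the icosahedron with an isolated vertex, or the complement of one of these. I will choose whichever host $X$ makes $H$-freeness follow from a single local obstruction.

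\textbf{Proving $H$-freeness.} This should be a one-line argument. For example: $G$ is $C_4$-free, a dominating vertex creates no induced $C_4$, and $H$ (respectively $\ol{H}$) contains an induced $C_4$. Alternatively: every vertex neighbourhood in $G$ is an induced $C_5$, while $H$ has a vertex of degree five whose neighbourhood is not a $C_5$.

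\textbf{Saturation.} Using the symmetry of the icosahedron, I will reduce to a few representative edges (or non-edges).
\begin{itemize}
\item For the icosahedron, all edges are equivalent.
\item Non-edges fall into two orbits: distance two (such as $u_1u_{11}$) and distance three (such as $u_1u_{12}$).
\item If a dominating vertex $u_{13}$ is added, the edges split into two orbits, $u_1u_3$ and $u_1u_{13}$.
\end{itemize}
For each representative I will write down an explicit $6$-set $S\subseteq\{u_1,\dots,u_{13}\}$ and a bijection $f$ from $V(H)$ to $S$, in the style of the previous proof, and check that it is an isomorphism onto the perturbed induced subgraph.

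\textbf{Main obstacle.} The hard step is finding the right host and the explicit embeddings. If the host is wrong, some orbit admits no embedding, and I will then switch to the complement or the coned variant. I will search for $S$ by starting from the perturbed pair and extending it along the triangulated structure of the icosahedron (each neighbourhood is a pentagon), matching the local structure of $H$ around the edge being changed.
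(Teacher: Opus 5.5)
Your proposal is a search plan rather than a proof. It never pins down $H$ or commits to a host, and none of the hosts you list can work. So the step you flag as the ``main obstacle'' is in fact the entire content of the proof.

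Entry 12F is the join of $\ol{K_2}$ and $2K_2$: two nonadjacent vertices $v_1,v_2$, both complete to a set $\{a,b,c,d\}$ that induces $2K_2$ with edges $ab,cd$. Thus $\ol{H}\cong C_4+K_2$. Also $H$ has no vertex of degree five, so your ``neighbourhood is not a $C_5$'' obstruction is unavailable.

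Suppose the host $X$ is $C_4$-free (the icosahedron, or the icosahedron with a dominating or an isolated vertex added). Then every induced $C_4$ of $X-e$ contains both ends of $e$ as a diagonal pair. Now $H$ contains the induced $4$-cycles $v_1\dd a\dd v_2\dd c\dd v_1$ and $v_1\dd b\dd v_2\dd d\dd v_1$, which meet only in $\{v_1,v_2\}$. Hence the ends of $e$ must play the roles of $v_1,v_2$, and their common neighbourhood must contain an induced $2K_2$. But two adjacent vertices of the icosahedron have only two common neighbours, or three after coning.

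The complemented hosts fail as well. For instance, the complement of the icosahedron $G$ is $H$-deletion-saturated if and only if $G$ is $(C_4+K_2)$-addition-saturated. After adding an antipodal non-edge $xy$, the new $C_4$ must use the edge $xy$, and $N_G[x]\cup N_G[y]=V(G)$ leaves no room for an anticomplete $K_2$. The complements of the coned and isolated-vertex variants fail for the same reason.

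The missing idea is to blow up the icosahedron rather than cone or complement it. Replace each vertex $i$ by two adjacent twins $u^1_i,u^2_i$, and make $u^k_i$ adjacent to $u^l_j$ whenever $ij$ is an edge of the icosahedron. This doubles every common neighbourhood.
\begin{itemize}
\item For a nonadjacent pair $u^k_i,u^l_j$, the vertices $i,j$ are at distance two or three. The common neighbourhood is the doubling of an edge or of the empty set, i.e.\ a $4$-vertex clique or empty. It contains no induced $2K_2$, so the blow-up is $H$-free.
\item For an edge $u^k_iu^l_j$ with $i\neq j$, the two common neighbours $s,t$ of $i,j$ in the icosahedron are nonadjacent. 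So $\{u^1_s,u^2_s,u^1_t,u^2_t\}$ induces $2K_2$ inside the common neighbourhood.
\item For a twin edge $u^1_iu^2_i$, do the same with two nonadjacent vertices $s,t$ of the pentagon $N(i)$.
\end{itemize}
Deleting the edge then exposes an induced copy of $H$. This is the paper's construction. Without it, or some other host in which every edge but no non-edge has an induced $2K_2$ in its common neighbourhood, your plan cannot be completed.
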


\begin{proof}
  Let $H$ be entry 12F in \Cref{fig:allsix}. Construct a graph $G$ as follows. Let $G_1$ and $G_2$ be two disjoint and isomorphic copies of the icosahedron, such that $V(G_1)=\{u^1_i:i\in \{1,\ldots, 12\}\}$, $ V(G_2)=\{u^2_i:i\in \{1,\ldots, 12\}\}$, and $f: u^1_i\mapsto u^2_i$ is an isomorphism between $G_1$ and $G_2$. Let $G$ be the graph with
 $$V(G)=\{u^1_i,u^2_i: i\in\{1,\ldots, 12\}\}$$ 
 $$E(G)=E(G_1)\cup E(G_2)\cup \{u^1_iu^2_i: i\in\{1,\ldots, 12\}\}\cup \{u^1_iu^2_j: i,j\in\{1,\ldots, 12\}, \, u^1_iu^1_j\in E(G_1)\}.$$
 See \Cref{fig:12F}.

Our goal is to prove that $G$ is $H$-deletion-saturated. 
 
 \sta{\label{st:12ffree} $G$ is $H$-free.}
 
 Suppose for a contradiction that $G$ has an induced subgraph $H'$ isomorphic to $H$. Let $f:V(H)\rightarrow V(H')$ be an isomorphism between $H$ and $H'$ such that $f(v_1)=u^k_i$ and $f(v_2)=u^l_j$ for some $k,l\in\{1,2\}$ and $i,j\in \{1,\ldots, 12\}$. Since $v_1v_2\notin E(H)$, the vertices $u^k_i$ and $u^l_j$ are non-adjacent in $G$.  Let $I=N_G(u^k_i)\cap N_G(u^l_j)$. Since $H[N_H(v_1)
 \cap N_H(v_2)]$ is isomorphic to $2K_2$, it follows that $G[I]$ has an induced subgraph isomorphic to $2K_2$. On the other hand, since $u^k_i$ and $u^l_j$ are non-adjacent in $G$, by the construction of $G$, we have $I=(N_{G_1}(u^1_i)\cap N_{G_1}(u^1_j))\cup (N_{G_2}(u^2_i)\cap N_{G_2}(u^2_j))$. Since $G_1$ and $G_2$ are both isomorphic to the icosahedron, it follows that either $N_{G_1}(u^1_i)\cap N_{G_1}(u^1_j)=N_{G_2}(u^2_i)\cap N_{G_2}(u^2_j)=\varnothing$, or both $N_{G_1}(u^1_i)\cap N_{G_1}(u^1_j)$ and $N_{G_2}(u^2_i)\cap N_{G_2}(u^2_j)$ are $2$-vertex cliques in $G$. Thus, by the construction of $G$, either $I=\varnothing$ or $I$ is a $4$-vertex clique in $G$. But then $G[I]$ is $2K_2$-free, a contradiction. This proves \eqref{st:12ffree}.
\medskip

\sta{\label{st:12fcritical} For every $e\in E(G)$, the graph $G-e$ has an induced subgraph isomorphic to $\ol{H}$.}

Let $e=u^k_iu^l_j\in E(G)$, where $k,l\in\{1,2\}$ and $i,j\in \{1,\ldots, 12\}$. Once again, let $I=N_G(u^k_i)\cap N_G(u^l_j)$. By the choice of $H$, it suffices to show that there exists $S\subseteq I$ such that $G[S]$ is isomorphic to $2K_2$ (because then the subgraph of $G-e$ induced by $S\cup \{u^k_i,u^l_j\}$ is isomorphic to $H$). By the construction of $G$, there are now two cases:
\begin{itemize}
    \item $i=j$ and $k\neq l$. So we have $e=u^1_iu^2_i$. Since $G_1$ is isomorphic to the icosahedron, $G_1[N_{G_1}(u^1_i)]$ is $5$-cycle, and therefore we may choose two non-adjacent vertices $u^1_s,u^1_t\in N_{G_1}(u^1_i)$ where $s,t\in \{1,\ldots, 12\}\setminus \{i\}$. But now $S=\{u^1_s,u^2_s,u^1_t,u^2_t\}\subseteq I$, and $E(G[S])=\{u^1_su^2_s,u^1_tu^2_t\}$; that is, $G[S]$ is isomorphic to $2K_2$.
    
    \item $i\neq j$ and $u^1_iu^1_j \in E(G_1)$. Since $G_1$ is isomorphic to the icosahedron, it follows that $N_{G_1}(u^1_i)\cap N_{G_1}(u^1_j)$ is a stable set on two vertices; say $N_{G_1}(u^1_i)\cap N_{G_1}(u^1_j)=\{u^1_s,u^1_t\}$. But now $S=\{u^1_s,u^2_s,u^1_t,u^2_t\}\subseteq I$, and $E(G[S])=\{u^1_su^2_s,u^1_tu^2_t\}$; that is, $G[S]$ is isomorphic to $2K_2$.
\end{itemize}
This proves \eqref{st:12fcritical}.
\medskip

The result is now immediate from \eqref{st:12ffree} and \eqref{st:12fcritical}. This completes the proof of \Cref{thm:12F}.
\end{proof}
\begin{figure}[t!]
    \centering   \includegraphics[scale=0.7]{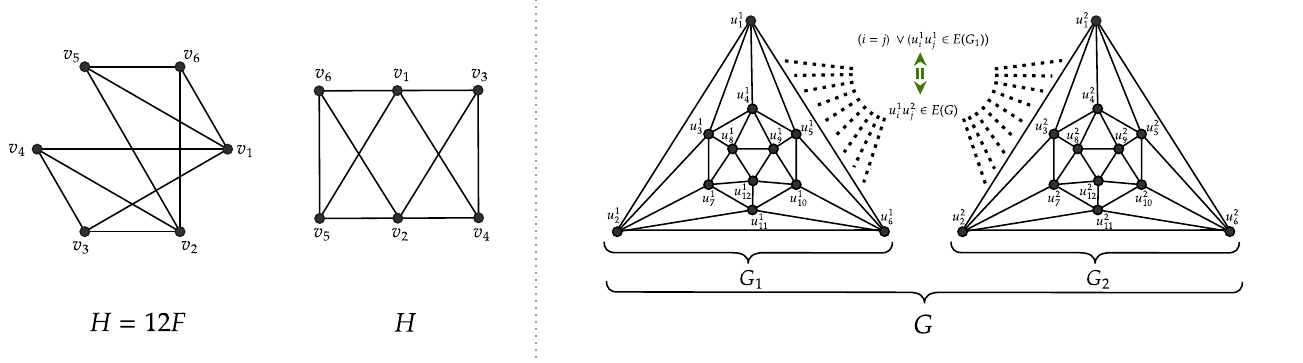}
    \caption{Proof of \Cref{thm:12F}: Two drawings of the graph $H$ (left) and the graph $G$ (right).}
    \label{fig:12F}
\end{figure}

At long last, we are now left with the 8 graphs in \textbf{Group 3}, for which the proof of deletion-normality requires completely different methods.

\begin{theorem}\label{thm:3B} Entry {\rm 3B} in \Cref{fig:allsix} is deletion-normal.
\end{theorem}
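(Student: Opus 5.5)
We do not know from the text which graph entry 3B in \Cref{fig:allsix} is. We only know it lies in Group 3, so none of the earlier tools applies directly. It has no isolated vertex whose removal leaves a non-complete graph. It is not complete multipartite. It has no governing block of the kind used in \Cref{subsec:0or1}. It is neither a line\lm graph, nor the complement of a line\lp graph, nor of the biline variants, nor the line graph of a graph with a severed pair. The plan is therefore a direct construction, tailored to the structure of $H$ (entry 3B), of a graph $G$ that is $H$-free and becomes non-$H$-free after deleting any edge. Equivalently, we may construct a graph that is $\ol{H}$-addition-saturated, whichever of $H$ and $\ol{H}$ is sparser.

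First, identify a local forbidden structure. Pick a small induced subgraph $F$ of $H$, such as a $C_4$, a claw or a diamond, or a configuration in $N_H(v)$ or in a common neighbourhood $N_H(v_1)\cap N_H(v_2)$. The aim is that forbidding $F$ (or forbidding it in common neighbourhoods, as in \Cref{thm:12F}) forces $H$-freeness. Then take as host a highly symmetric graph whose local structure avoids $F$ but has many near-copies. Candidates, in the order I would try them, are:
\begin{itemize}
\item the icosahedron or its blow-ups, since neighbourhoods are $5$-cycles and common neighbourhoods are small cliques;
\item $\lng(K_{m}^k)$ or Kneser-type graphs, whose line-graph structure gives flexibility;
\item a hypergraph amalgamation as in \Cref{lem:maincentered}, if $H$ has a cut vertex.
\end{itemize}
Choose $G$ to be edge-transitive, or to have few edge orbits. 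Then the saturation check reduces to exhibiting, for one or two representative edges $e$, an explicit $6$-set $S$ with $G[S]-e\cong H$.

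Second, prove $H$-freeness. Suppose an induced copy of $H$ exists. Map a specific pair or vertex of $H$ into $G$, and use the local description of $G$ (neighbourhoods, common neighbourhoods, or the fact that $\varphi$-paths stay inside one hyperedge by \Cref{lem:ring}-type girth arguments) to derive a contradiction. Third, verify deletion: for each edge orbit, list the explicit embedding.

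The main obstacle is finding the right host graph. Since 3B resisted all general methods, local obstructions in $H$ are presumably not "rigid" enough for a vertex-transitive small host. I would first try to compose a known saturated graph, such as the icosahedron, which is $C_4$-induced-saturated, with a product or join gadget like the one in \Cref{thm:12F}. If that fails, I would use a girth-based amalgamation of copies of a small deletion-critical piece $W$ along a high-girth hypergraph from \Cref{lem:hypgirthdeg}. In that case, $H$-freeness would follow from showing that every induced copy of $H$ must lie within a single copy of $W$.
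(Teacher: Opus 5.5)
\textbf{There is a genuine gap.} What you have written is a search strategy, not a proof. You never determine $H$, you never fix a host graph $G$, and none of your three steps (local obstruction, $H$-freeness, explicit embeddings after deletion) is carried out. So nothing about entry 3B is established, and the missing content is the whole theorem.

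Entry 3B is the double star. In the paper's labelling, $v_1$ is adjacent to $v_4,v_5,v_6$, $v_6$ is adjacent to $v_2,v_3$, and there are no other edges. Hence $\{v_2,v_3,v_4,v_5\}$ is a stable set in the neighbourhood of the central edge $v_1v_6$.

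The paper takes $G=C_4\square C_4$ (the $4$-cube), which is edge-transitive. For every edge $uu'$ of $G$, the set $(N_G(u)\cup N_G(u'))\setminus\{u,u'\}$ induces $3K_2$. This set has no stable set of size four, so it cannot hold the four leaves of an induced copy of $H$ centred on $uu'$, and $G$ is $H$-free. After deleting $u_{1,1}u_{1,2}$, the set $\{u_{1,1},u_{1,2},u_{1,3},u_{1,4},u_{2,3},u_{4,4}\}$ induces $H$, and edge-transitivity finishes the proof.

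Two of your concrete suggestions fail outright:
\begin{itemize}
\item The icosahedron is $H$-free, since its stability number is $3$, but it is not $H$-deletion-saturated. For every edge $uu'$, the neighbours of $u$ nonadjacent to $u'$ form an adjacent pair, and likewise for $u'$. One deletion can split at most one of these pairs: deleting an edge at $u$ only shrinks the pair at $u$ and enlarges the set on the side of $u'$. Hence the icosahedron minus any edge is still $H$-free.
\item The amalgamation of \Cref{lem:maincentered} needs a governing block. Every block of $H$ is $K_2$, which is complete and hence not deletion-normal, so this route cannot start.
\end{itemize}

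Your premise that 3B escapes all the general results is also not right. The vertices $v_2,v_3$ are leaves with a common neighbour, so \Cref{thm:specialleaves}(a) applies.
\begin{itemize}
\item Here $d=3$, and every $3$-graph whose line graph is $H$ has at least $13$ vertices.
\item In a minimal such $3$-graph, the hyperedges representing $v_2$ and $v_3$ are disjoint and each has two vertices of degree one.
\item Hence $H$ is $(3,13)$-admissible, and $\lng(K^3_{12})$ is $H$-deletion-saturated by \Cref{thm:ktn-1}.
\end{itemize}
So your second candidate family does contain a valid host. But you only get it after identifying $H$ and doing this admissibility computation, which the proposal does not do.
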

\begin{figure}[t!]
    \centering   \includegraphics[scale=0.7]{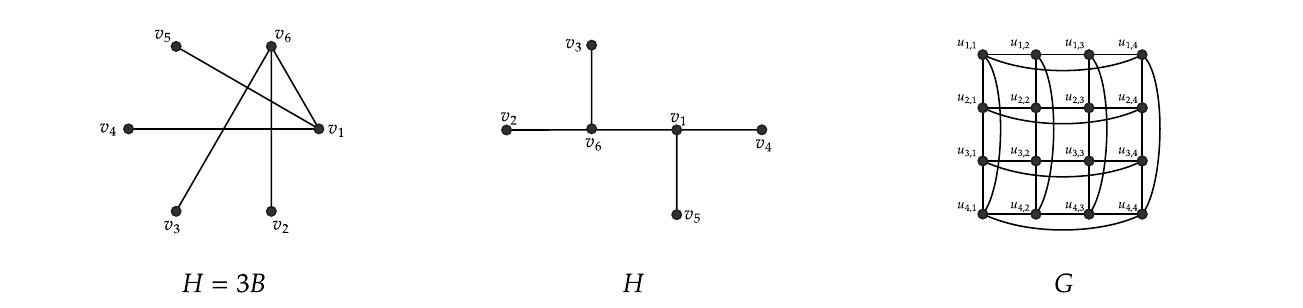}
    \caption{Proof of \Cref{thm:3B}. From left to right: Two drawings of the graph $H$, and the graph $G$.}
    \label{fig:3B}
\end{figure}

\begin{proof}
Let $H$ be entry 3B in \Cref{fig:allsix}. Let $G$ be the graph whose vertex set is $V(G)=\{u_{i,j}: 1\leq i,j\leq 4\}$ and whose edge set is 
$$E(G)=\left(\bigcup_{i=1}^4\{u_{i,1}u_{i,2},u_{i,2}u_{i,3},u_{i,3}u_{i,4},u_{i,4}u_{i,1}\}\right)\cup\left(\bigcup_{j=1}^4\{u_{1,j}u_{2,j},u_{2,j}u_{3,j},u_{3,j}u_{4,j},u_{4,j}u_{1,j}\}\right).$$
See \Cref{fig:3B}. In particular, $G$ is the Cartesian product of two $4$-cycles. 

We prove that $G$ is $H$-deletion-saturated. First, we show that $G$ is $H$-free. Suppose not.  According to the labeling given in \Cref{fig:3B}, we have $(N_H(v_1)\cup N_H(v_6))\setminus \{v_1,v_6\}=\{v_2,v_3,v_4,v_5\}$, which is a four-vertex stable set in $H$. Therefore, since $G$ has an induced subgraph isomorphic to $H$, it follows that there is an edge $uu'\in E(G)$ for which $G[(N_G(u)\cup N_G(u'))\setminus \{u,u'\}]$ contains a four-vertex stable set. However, since $G$ is edge-transitive, it is straightforward to check that for every edge $uu'\in E(G)$, the graph $G[(N_G(u)\cup N_G(u'))\setminus \{u,u'\}]$ is indeed isomorphic to $3K_2$, which does not contain a four-vertex stable set, a contradiction. We deduce that $G$ is $H$-free.
\medskip

It remains to show that for every $e\in E(G)$, the graph $G-e$ has an induced subgraph isomorphic to $H$. Since $G$ is edge-transitive, it suffices to prove the latter statement for only one edge $e\in E(G)$. Let $e=u_{1,1}u_{1,2}$. Let $S=\{u_{1,1},u_{1,2},u_{1,3},u_{1,4},u_{2,3},u_{4,4}\}$. Then one can verify that the map $f:V(H)\rightarrow S$ with
$$f(v_1)=u_{1,3},\quad  f(v_2)=u_{1,1},\quad  f(v_3)=u_{4,4},\quad  f(v_4)=u_{1,2},\quad  f(v_5)=u_{2,3}, \quad f(v_6)=u_{1,4}$$
is an isomorphism between $H$ and $G[S]-e$. This completes the proof of \Cref{thm:3B}.
\end{proof}

\begin{theorem}\label{thm:5E} Entry {\rm 5E} in \Cref{fig:allsix} is deletion-normal.
\end{theorem}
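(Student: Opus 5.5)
We do not know from the text which six-vertex graph entry 5E is; the figure is not reproduced. So the plan follows the template the paper uses for the other Group 3 cases, in particular \Cref{thm:3B}. We exhibit an explicit, highly symmetric host graph $G$, preferably edge-transitive or with few edge orbits. We then verify two things.
\begin{enumerate}[(i)]
\item $G$ is $H$-free, via a local invariant of $H$ that $G$ cannot supply.
\item For a representative edge $e$ of each edge orbit of $G$, there is an explicit six-set $S$ and a bijection $f:V(H)\to S$ that is an isomorphism between $H$ and $G[S]-e$.
\end{enumerate}

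Since 5E was not handled by the earlier reductions, we know the following. It has no isolated vertex and no pendant structure covered by \Cref{thm:maincentered} or \Cref{thm:specialleaves}. It is neither line$^-$ nor biline$^-$. Its complement is neither line$^+$ nor biline$^+$. The natural first candidates for $G$ are therefore Cayley graphs on small groups: the Cartesian products $C_4\square C_4$ or $C_3\square C_3$, the icosahedron or its complement, a Kneser or Johnson graph $\ol{\lng(K_n^r)}$ / $\lng(K_n^r)$, or circulants. We choose among these by matching local structure.

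The first step is to identify a ``witness'' substructure in $H$. This is a pair of adjacent vertices $v,v'$ (or a nonadjacent pair, using a common-neighbourhood argument as in \Cref{thm:12F}) whose joint neighbourhood induces something specific, such as a large stable set, an induced $2K_2$, or an induced $P_3$. The host $G$ is then chosen so that for every edge $uu'$, the set $(N_G(u)\cup N_G(u'))\setminus\{u,u'\}$ (respectively $N_G(u)\cap N_G(u')$) never contains that substructure. This gives $H$-freeness by edge-transitivity with a single local check. The same local structure should nevertheless become available once an edge is deleted: removing $uu'$ turns $u,u'$ into a nonadjacent pair, and their neighbourhoods in $G$ (a $4$-cycle-product or icosahedral structure) should then realize $H$.

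The second step is the deletion check. Using the symmetry of $G$, we reduce to one edge per orbit and write down $S$ and $f$ explicitly, exactly as in \Cref{thm:3B} and \Cref{thm:9L}. If the natural $G$ has two edge orbits, as for a cone over the icosahedron in \Cref{thm:9L}, we handle each orbit separately.

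The main obstacle is the first step: finding a $G$ that is simultaneously $H$-free and dense enough that every deletion creates $H$. If no obvious Cayley graph works, the first fallback is to pass to complements, i.e.\ to look for an $\ol{H}$-addition-saturated graph. The second fallback is a blow-up or product of the icosahedron, as in \Cref{thm:12F}, tuned so that common neighbourhoods of nonadjacent pairs are cliques while those of adjacent pairs contain the required pattern.
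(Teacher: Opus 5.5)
Your proposal is a search strategy rather than a proof. It never determines $H$, never fixes a host graph $G$, and never checks either $H$-freeness or the deletion property, so nothing is actually established. Two ingredients are missing. First, entry 5E is $2K_3$. Second, you need a host that works; the paper uses $G=\lng(P)$, where $P$ is the Petersen graph.

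The verification needs only that $P$ is triangle-free, cubic, and that any two nonadjacent vertices have exactly one common neighbour.
\begin{enumerate}[(i)]
\item Three pairwise adjacent vertices of $\lng(P)$ are three pairwise intersecting edges of $P$. Since $P$ is triangle-free and cubic, these must be the full star at some vertex.
\item Hence two anticomplete triangles in $\lng(P)$ would be the stars at two vertices $c_1,c_2$ with $N[c_1]\cap N[c_2]=\varnothing$. This is impossible because $P$ has diameter two, so $\lng(P)$ is $2K_3$-free.
\item An edge $e$ of $\lng(P)$ corresponds to a path $x_1\dd x_2\dd x_3$ in $P$. The stars at $x_1$ and at $x_3$ are triangles of $\lng(P)$.
\item Since $x_1,x_3$ are nonadjacent with unique common neighbour $x_2$, the only edge of $\lng(P)$ between these two triangles is $e$. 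So $\lng(P)-e$ contains an induced $2K_3$. This argument applies to every edge directly, so no symmetry is needed.
\end{enumerate}

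Choosing the host is therefore the whole content of the proof, not a routine detail. None of the candidates you list would have worked for $H=2K_3$:
\begin{itemize}
\item $C_4\square C_4$ is triangle-free, so deleting an edge can never create a triangle.
\item The icosahedron contains two antipodal faces, and these are anticomplete.
\item In $C_3\square C_3=\lng(K_{3,3})$, after deleting a row edge, the only triangles through its ends are two columns. These are still joined by two further edges.
\item The complement of the icosahedron is $2K_3$-free. But deletion-saturation would require an induced $K_{3,3}$ in the icosahedron plus one edge. Such a $K_{3,3}$ would contain an induced $C_4$ avoiding the added edge, contradicting that the icosahedron is $C_4$-free.
\end{itemize}
So the proposal leaves open exactly the step that carries the proof.
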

\begin{figure}[t!]
    \centering   \includegraphics[scale=0.7]{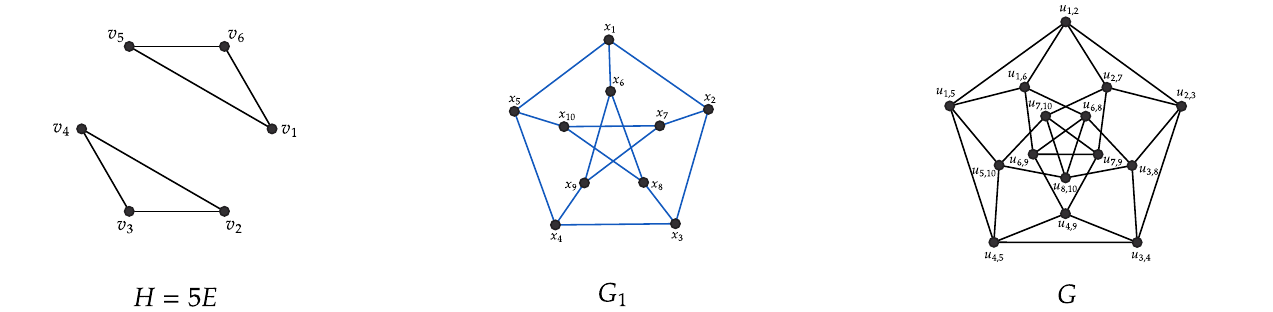}
    \caption{Proof of \Cref{thm:5E}. From left to right: Graphs $H, G_1$ and $G$.}
    \label{fig:5E}
\end{figure}

\begin{proof}
Let $H$ be entry 5E in \Cref{fig:allsix}. Then $H$ is isomorphic to $2K_3$. Let $G_1$ be the Petersen graph with $V(G_1)=\{x_1,\ldots, x_{10}\}$ and the adjacency as depicted in \Cref{fig:5E}, and let $G=\lng(G_1)$. For each edge $e=x_ix_j\in E(G_1)$, $i,j\in \{1,\ldots, 10\}$, we label the vertex of $G$ corresponding to $e$ with $u_{i,j}$; see \Cref{fig:5E}.

Our goal is to show that $G$ is $2K_3$-deletion-saturated. Note that the only graphs of which the line graph is isomorphic to $2K_3$ are $2K_3, K_3+K_{1,3}$ and $2K_{1,3}$. Since $G_1$ is triangle-free and $3$-regular, and every two non-adjacent vertices in $G_1$ have exactly one common neighbor, $G_1$ has no subgraph isomorphic to $H'$ for any $H'\in \{2K_3, K_3+K_{1,3},2K_{1,3}\}$, and thus $G$ is $2K_3$-free.

It remains to show that for every $e\in E(G)$, the graph $G-e$ has an induced subgraph isomorphic to $2K_3$. By symmetry, we may assume that $e=u_{1,2}u_{2,3}\in E(G)$. Let $T_1=\{u_{1,2},u_{1,5},u_{1,6}\}$ and let $T_2=\{u_{2,3},u_{3,4},u_{3,8}\}$. Then $T_1$ and $T_2$ are disjoint triangles in $G$, and $e$ is the only edge of $G$ with an end in $T_1$ and an end in $T_2$. Hence $G[T_1\cup T_2]-e$ is isomorphic to $2K_3$. This completes the proof of \Cref{thm:5E}. 
\end{proof}

\begin{theorem}\label{thm:5H}Entry {\rm 5H} in \Cref{fig:allsix} is deletion-normal.
\end{theorem}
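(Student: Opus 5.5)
Entry 5H is only available through the figure, so its structure must first be pinned down from the counts in the appendix. It is one of the 25 sporadic graphs left after removing complete multipartite graphs, graphs with isolated vertices, graphs with a single non-complete block plus pendant edges, near-line-graphs, and line graphs with severed pairs. So $H$ is a six-vertex graph, not a line graph, with no two leaves at distance at most three. It is in the same row as 5E ($2K_3$) and 5F ($C_6$), which suggests it has six edges and is sparse. The most plausible reading is a graph built from a triangle and further short cycles or pendant structure, for instance a triangle with a pendant path $P_3$ attached, or the "bull-like" graph obtained from $C_5$ by adding a pendant vertex.

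Once the graph is identified, I would follow the template of \Cref{thm:5E} and \Cref{thm:3B}. I would pick a highly symmetric, edge-transitive host $G$ and verify two things. First, $G$ is $H$-free, shown by a local obstruction: some small configuration forced by $H$ that never occurs in $G$. Second, for a single edge $e$, which suffices by edge-transitivity, $G-e$ contains an explicit induced copy of $H$. The natural hosts are $\lng$ of the Petersen graph, the Cartesian product $C_4\square C_4$, the icosahedron, and their complements.

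\begin{itemize}
\item \emph{$H$ contains a triangle.} Here $\lng$ of a cubic triangle-free graph of large girth is attractive. Its triangles are exactly the vertex stars, two triangles meet in at most one vertex, and every edge lies in exactly one triangle. An induced copy of $H$ in $G-e$ is then obtained by deleting an edge inside a triangle, which turns that triangle into an induced $P_3$, and extending along the girth structure.
\item \emph{$H$ is triangle-free with a $5$-cycle or pendant paths.} Here the Petersen graph itself, whose girth is $5$ and which is distance-transitive, is the candidate. $H$-freeness would follow from girth and degree counting, and deleting an edge creates the long induced paths or cycles needed.
\end{itemize}

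The main obstacle is the $H$-freeness verification. It requires identifying a forbidden local pattern of $H$ that the host provably lacks, while keeping enough structure that $G-e$ still contains $H$. I would first try $\lng(\text{Petersen})$. If that fails, I would take $\lng(\Gamma)$ for a cubic graph $\Gamma$ of girth at least $7$, in the spirit of \Cref{thm:ktn-1}, using Whitney's theorem (\Cref{thm:whitneyline}) to list the preimages of $H$ and of $H+e$ and checking which of them can appear in $\Gamma$.
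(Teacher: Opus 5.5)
There is a genuine gap: what you describe is a search strategy, not a proof. You never determine $H$, never fix a host $G$, and never carry out either the $H$-freeness check or the edge-deletion check. Your guess about $H$ is also wrong. Entry 5H is the graph with a dominating vertex $v_1$ whose neighborhood induces $2K_2+K_1$, i.e.\ a bowtie with a pendant vertex attached at its center. It has seven edges and contains a claw.

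With this $H$, every concrete candidate host you name fails:
\begin{itemize}
\item \emph{Degree obstruction.} $H$ has a vertex of degree five, so $G-e$ must have one too. This rules out the Petersen graph, $C_4\square C_4$, $\lng(\text{Petersen})$, and $\lng(\Gamma)$ for every cubic $\Gamma$.
\item \emph{Line graphs.} No line graph can work. In $\lng(\Gamma)-e$ the neighborhood of every vertex is still covered by two cliques of $\lng(\Gamma)$, at most one of which has lost an edge. Since $2K_2+K_1$ contains no induced $P_3$ and no triangle, each of these meets an induced $2K_2+K_1$ in at most two vertices. So the five-vertex neighborhood of $v_1$ never fits, and your fallback via Whitney's theorem is doomed too. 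The complement of the Petersen graph is $\lng(K_5)$, so it falls under this as well.
\item \emph{Icosahedron.} Every vertex of degree five in $G-e$ has neighborhood $C_5$ or $P_5$, neither of which is $2K_2+K_1$. So $G-e$ stays $H$-free.
\item \emph{Complement of the icosahedron.} It is $H$-free, since its neighborhoods are $C_5+K_1$. But deleting an edge between two antipodal vertices creates no copy of $H$. Both ends of $e$ would have to lie in the neighborhood of the dominating vertex, and antipodal vertices have no common neighbor in the complement.
\end{itemize}

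The missing idea is to exploit the dominating vertex. $G$ contains $H$ if and only if some $G[N_G(c)]$ contains an induced $2K_2+K_1$. Likewise, $G-e$ contains $H$ as soon as $e$ lies inside some $N_G(c)$ and deleting it creates an induced $2K_2+K_1$ there.

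The paper's host is the octahedron $\lng(K_4)$ together with, for each of its eight triangles, a new vertex adjacent to exactly the three vertices of that triangle; the new vertices are pairwise non-adjacent. The new vertices have degree three, so they cannot play the role of $v_1$. Each octahedron vertex has neighborhood a $4$-cycle with a triangle erected on each edge, and this graph is $(2K_2+K_1)$-free; this gives $H$-freeness. There are two edge orbits: octahedron edges and edges to the new vertices. For a representative $e$ of each, the paper exhibits a six-vertex set $S$ with $G[S]-e\cong H$, centered at an octahedron vertex whose neighborhood contains $e$.
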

\begin{figure}[t!]
    \centering   \includegraphics[scale=0.7]{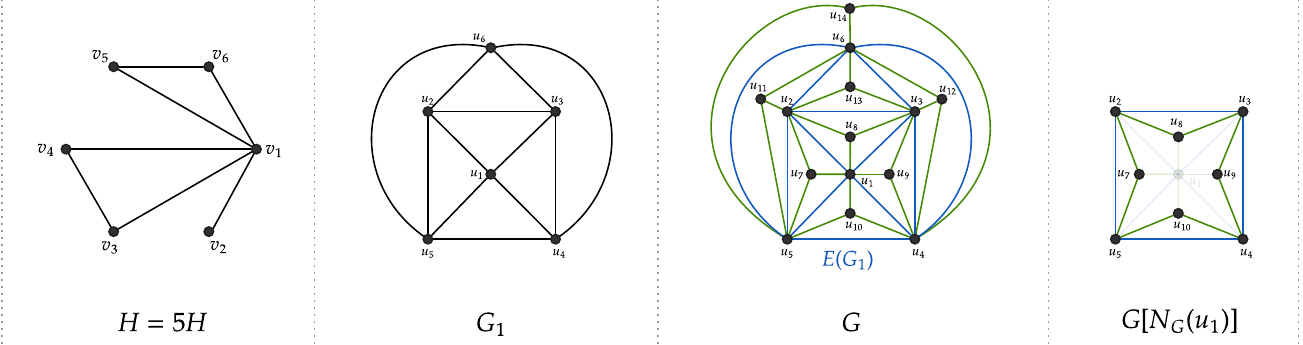}
    \caption{Proof of \Cref{thm:5H}. From left to right: Graphs $H$, $G_1$ and $G$, and the subgraph of $G$ induced by the neighborhood of $u_1$.}
    \label{fig:5H}
\end{figure}
\begin{proof}
Let $H$ be entry 5H in \Cref{fig:allsix}. Construct a graph $G$ as follows. Let $G_1$ be an isomorphic copy of $\lng(K_4)$ with $V(G_1)=\{u_1,\ldots, u_6\}$ and the adjacency as depicted in \Cref{fig:5H}. Note that $G_1$ has exactly eight triangles. Let $G$ be the graph obtained from $G_1$ by adding eight pairwise adjacent new vertices $\{u_7,\ldots, u_{14}\}$ -- one for each triangle of $G_1$ -- and making each vertex in $\{u_7,\ldots, u_{14}\}$ adjacent to the three vertices in its corresponding triangle of $G_1$, and to no other vertex in $G$, with the adjacency as depicted in \Cref{fig:5H}. (Note, in particular, that $\{u_7,\ldots, u_{14}\}$ remains a stable set in $G$.) 
It is straightforward to observe that the natural action of the automorphism group of $G$ on $V(G)$ has exactly two orbits, namely $V(G_1)=\{u_1,\ldots, u_6\}$ and $V(G)\setminus V(G_1)=\{u_7,\ldots, u_{14}\}$. Similarly, the natural action of the automorphism group of $G$ on $E(G)$ has exactly two orbits, namely $E(G_1)$ and $E(G)\setminus E(G_1)$. 

Our goal is to show that $G$ is $H$-deletion-saturated. First, we show that $G$ is $H$-free. Suppose for a contradiction that $G$ has an induced subgraph $H'$ isomorphic to $H$ with an isomorphism $f:V(H)\to V(H')\subseteq V(G)$. Let $u=f(v_1)$. Since $N_H(v_1)$ is isomorphic to $2K_2+K_1$, it follows that $G[N_G(u)]$ has an induced subgraph isomorphic to $2K_2+K_1$. In particular, since the neighborhood of every vertex $u_i$ for $i\in \{7,\ldots, 14\}$ is a triangle in $G$, it follows that $u=u_i$ for some $i\in \{1,\ldots, 6\}$. By symmetry, we may assume that $u=u_1$. But then 
$$G[N_G(u)]= G[N_G(u_1)]=G[\{u_2,u_3,u_4,u_5,u_7,u_8,u_9,u_{10}\}]$$
is the graph on the right in \Cref{fig:5H}, which is easily observed to be $(2K_2+K_1)$-free, a contradiction. We deduce that $G$ is $H$-free.
\medskip

It remains to show that for every $e\in E(G)$, the graph $G-e$ has an induced subgraph isomorphic to $H$. There are now two cases:
\begin{itemize}
    \item $e\in E(G_1)$. We may assume, by symmetry, that $e=u_4u_5$. Let 
    $$S=\{u_1,u_4,u_5,u_7,u_8,u_9\}.$$ Then one can check that the map $f:V(H)\rightarrow S$ with
$$f(v_1)=u_{1},\quad  f(v_2)=u_{8},\quad  f(v_3)=u_{9},\quad  f(v_4)=u_{4},\quad  f(v_5)=u_{5}, \quad f(v_6)=u_{7}$$
is an isomorphism between $H$ and $G[S]-e$.
\item  $e\in E(G)\setminus E(G_1)$. By symmetry, we may assume that $e=u_2u_7$. Let 
$$S=\{u_1,u_2,u_4,u_7,u_8, u_9\}.$$ Then one can check that the map $f:V(H)\rightarrow S$ with
$$f(v_1)=u_{1},\quad  f(v_2)=u_{7},\quad  f(v_3)=u_{2},\quad  f(v_4)=u_{8},\quad  f(v_5)=u_{9}, \quad f(v_6)=u_{4}$$
is an isomorphism between $H$ and $G[S]-e$.
\end{itemize}
This completes the proof of \Cref{thm:5H}.
\end{proof}

\begin{theorem}\label{thm:10E} Entry {\rm 10E} in \Cref{fig:allsix} is deletion-normal.
\end{theorem}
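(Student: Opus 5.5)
We do not know from the text which graph entry 10E is; the figure is not reproduced here. So the plan has to be the template used for the other Group 3 entries, made concrete once $H$ is fixed.

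Following \Cref{thm:3B,thm:5E,thm:5H}, we would exhibit an explicit, highly symmetric graph $G$ and show that it is $H$-deletion-saturated. The first step is to locate a \emph{local obstruction} in $H$: a vertex $v_1$ (or a non-edge, or an edge $v_1v_6$) whose neighbourhood, common neighbourhood, or joint neighbourhood induces a small graph $F$. Examples of such $F$ are $2K_2+K_1$, $2K_2$, or a stable set of size four. We then choose a vertex- and edge-transitive host $G$ in which no vertex (respectively, no edge or non-edge) has $F$ induced in the corresponding neighbourhood. Candidate hosts come from the family already exploited: the icosahedron and graphs built from it, such as the cone $G^+$ and the doubled icosahedron from \Cref{thm:12F}; line graphs of cubic graphs of girth five, such as $\lng(\text{Petersen})$; Cartesian products of cycles, such as $C_4\square C_4$ or $C_5\square C_5$; and Kneser or complement-of-line graphs.

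The second step is to prove $H$-freeness from the obstruction. Any induced copy of $H$ would place $F$ inside the relevant neighbourhood in $G$. By transitivity, it suffices to compute that neighbourhood once, or once per orbit. We then check directly that it is $F$-free, exactly as was done for $N_G(u_1)$ in \Cref{thm:5H}.

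The third step is saturation. For each orbit of $E(G)$ under $\mathrm{Aut}(G)$, fix a representative edge $e$. Exhibit a set $S$ of $|V(H)|$ vertices together with an explicit bijection $f:V(H)\to S$, and verify that $G[S]-e\cong H$ by listing adjacencies. If $G$ is edge-transitive, one check suffices. Otherwise, as in \Cref{thm:5H}, we do one check per edge orbit. If it is more natural to work with $\ol{H}$, say because $\ol{H}$ has the cleaner local structure, we would instead show that some graph is $\ol{H}$-addition-saturated and then take complements.

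The main obstacle is the choice of $G$. It must be rich enough that deleting \emph{every} edge creates $H$, which forces the removed edge to play the role of a non-edge of $H$ lying in a dense local configuration. Yet it must be sparse enough locally to exclude $F$. I would first try the host whose local structure mimics $H$ minus an edge: take an edge $ab$ of the smallest graph $H+ab$ in which the pair $a,b$ has the prescribed common neighbourhood, then close up under symmetry. For instance, glue copies of $H+ab$ along a high-girth or vertex-transitive scaffold, in the spirit of \Cref{lem:maincentered} or of the triangle-cone construction in \Cref{thm:5H}. After that, the remaining work is the finite verification of the $F$-freeness of neighbourhoods, which is routine given the symmetry.
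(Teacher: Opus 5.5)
\textbf{This is a search strategy, not a proof.} You never identify $H$, never commit to a host graph, and never carry out the freeness check or the saturation check. So nothing in the proposal establishes that entry 10E is deletion-normal. For these sporadic entries the construction and its verification \emph{are} the proof, and that is exactly what is missing.

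\textbf{What the paper does.} The complement $\ol{H}$ is the $4$-cycle $v_2\dd v_4\dd v_3\dd v_5\dd v_2$ with a leaf $v_1$ at $v_2$ and a leaf $v_6$ at the opposite vertex $v_3$. The paper takes your complement route. It builds $G$ from a Petersen graph $G_1$ on $\{u^1_1,\ldots,u^1_{10}\}$ and the complement $G_2$ of the same Petersen graph on $\{u^2_1,\ldots,u^2_{10}\}$, joined by the perfect matching $\{u^1_iu^2_i\}$, and shows that $G$ is $\ol{H}$-addition-saturated.

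Freeness is not a single-neighbourhood check. It runs as follows.
\begin{enumerate}
\item An induced $C_4$ cannot lie in $G_1$, since the Petersen graph has girth five.
\item It cannot cross the matching. It would contain two matching edges $u^1_iu^2_i$, $u^1_ju^2_j$ and exactly one of $u^1_iu^1_j$, $u^2_iu^2_j$, giving only three edges. Hence it lies in $G_2$.
\item Two leaves in $G_1$ would be the matched partners of opposite cycle vertices, and hence adjacent in $G_1$.
\item A leaf in $G_2$ would form a stable triple in $G_2$ with the two cycle neighbours of its attachment vertex, i.e.\ a triangle in the Petersen graph.
\end{enumerate}
Saturation then needs a separate explicit embedding for each of the three kinds of non-edge: inside $G_1$, between $G_1$ and $G_2$, and inside $G_2$.

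\textbf{Your heuristics point away from this host.} It is neither vertex- nor edge-transitive. The first candidates you list need not work, so the final verification is not ``routine given the symmetry''. For instance, the icosahedron fails in both roles.
\begin{itemize}
\item Directly, as an $H$-deletion host: $H$ contains a $K_4$ on $\{v_1,v_4,v_5,v_6\}$, and the icosahedron is $K_4$-free, so deleting an edge never creates $H$.
\item As an $\ol{H}$-addition host: it is $\ol{H}$-free because it is $C_4$-free. But adding an edge $xy$ between two antipodal vertices creates no induced $\ol{H}$. Any induced $4$-cycle must use $xy$, and every other vertex is adjacent to $x$ or to $y$. So a leaf can only hang from $x$ or $y$, which are consecutive rather than opposite on the cycle.
\end{itemize}
A proof of this theorem has to supply a host that survives checks like these and verify it explicitly.
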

\begin{figure}[t!]
    \centering   \includegraphics[scale=0.7]{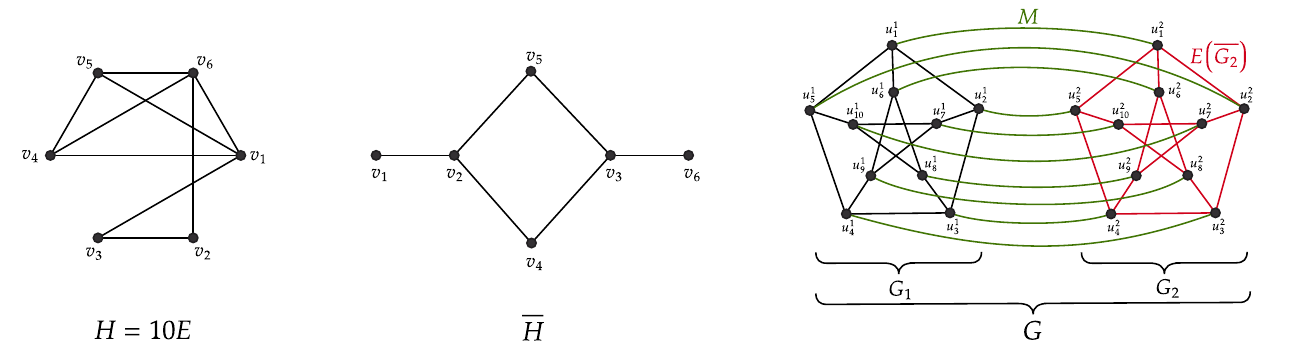}
    \caption{Proof of \Cref{thm:10E}. From left to right: The graph $H$ and its complement, and the graph $G$ with a depiction of $E(G_1)$, $E(\ol{G_2})$, and $M$.}
    \label{fig:10E}
\end{figure}

\begin{proof}
Let $H$ be entry 10E in \Cref{fig:10E}. Construct a graph $G$ as follows. Let $G_1$ and $G_2$ be disjoint graphs such that $G_1$ is isomorphic to the Petersen graph with $V(G_1)=\{u^1_1, \dots, u^1_{10}\}$ and $E(G_1)$ as shown in \Cref{fig:10E}, and $\ol{G_2}$ is isomorphic to the Petersen graph with $V(G_2)= \{u^2_1, \dots, u^2_{10}\}$ and $E(\ol{G_2})$ as shown in \Cref{fig:10E}. Let $G$ be the graph obtained from the (disjoint) union of $G_1$ and $G_2$ by adding the matching $M=\{u^1_iu^2_i : i\in \{1,\ldots, 10\}\}$. See \Cref{fig:10E}.

To prove \Cref{thm:10E}, it suffices to show that $G$ is $\ol{H}$-addition-saturated.

\sta{\label{st:10efree}$G$ is $\ol{H}$-free.}

Suppose for a contradiction that $G$ has an induced subgraph $H'$ isomorphic to $\ol{H}$ where $V(H')=\{v'_1,\ldots, v'_6\}\subseteq V(G)$, and $f:v_i\mapsto v'_i$ is an isomorphism between $\ol{H}$ and $H'$. Let $I'=\{v'_2,v'_3,v'_4,v'_5\}$. Then $G[I']$ is isomorphic to $C_4$. Assume that $I'\cap V(G_1)\neq \varnothing$. Since $G_1$ is $C_4$-free, it follows that $I'\cap V(G_2)\neq \varnothing$. Since, in the graph $G$, every vertex in $V(G_1)$ has exactly one neighbor in $V(G_2)$ (and vice versa), it follows that $|I'\cap V(G_1)|=|I'\cap V(G_2)|=2$ and $|E(G[I'])\cap M|=2$. In particular, we have $I'\cap V(G_1)=\{u^1_i,u^1_j\}$ and $I'\cap V(G_2)=\{u^2_i,u^2_j\}$
for some $i,j\in \{1,\ldots, 10\}$. But then exactly one of $u^1_iu^1_j$ and $u^2_iu^2_j$ is an edge in $G$, and so $|E(G[I'])|=3$, a contradiction. 

We deduce that $I'\cap V(G_1)=\varnothing$. Thus, we may write 
$$v'_2=u^2_{i_1},\quad  v'_4=u^2_{i_2},\quad  v'_3=u^2_{i_3},\quad  v'_5=u^2_{i_4}$$
for some distinct $i_1,i_2,i_3,i_4\in \{1,\ldots, 10\}$. In particular, we have
$$E(G[I'])=E(G_2[I'])=\{u^2_{i_1}u^2_{i_{2}}, u^2_{i_2}u^2_{i_{3}}, u^2_{i_3}u^2_{i_{4}}, u^2_{i_4}u^2_{i_1}\}.$$ 
There are now two cases:
\begin{itemize}
    \item $\{v'_1,v'_6\}\subseteq V(G_1)$. Then $v'_1=u^1_{i_1}$ and $v'_6=u^1_{i_3}$. But now since $u^2_{i_1}u^2_{i_3}\notin E(G_2)$, it follows that $v'_1v'_6=u^1_{i_1}u^1_{i_3}\in E(G_1)\subseteq E(G)$, which in turn implies that $v'_1v'_6\in E(H')$, a contradiction. 
    \item $\{v'_1,v'_6\}\cap V(G_2)\neq \varnothing$. By symmetry, we may assume that $v'_1\in V(G_2)$. It follows that $\{v'_1,u^2_{i_2},u^2_{i_4}\}$ is a stable set of cardinality three in $G_2$. But then $\ol{G_2}$, and thus the Petersen graph, contains a triangle, a contradiction.
\end{itemize}
This proves \eqref{st:10efree}.

\sta{\label{st:10ecritical}For every $e\in E(\ol{G})$, the graph $G+e$ has an induced subgraph isomorphic to $\ol{H}$.}

It suffices to show that there exists $S\subseteq V(G)$ containing both endpoints of $e$ such that $G[S]+e$ is isomorphic to $\ol{H}$. There are now three cases:

\begin{itemize}
    \item $e=u^1_{i_1}u^1_{i_2}$ for some $i_1,i_2\in \{1,\ldots, 10\}$. 
Then, it is readily observed that, since $G_1$ is isomorphic to the Petersen graph, there is a path $u^1_{i_1}\dd u^1_{i_2}\dd u^1_{i_3}\dd u^1_{i_4}$ of length three in $G_1$ from $u^1_{i_1}$ to $u^1_{i_2}$. Thus, $C=u^1_{i_1}\dd u^1_{i_2}\dd u^1_{i_3}\dd u^1_{i_4}\dd u^1_{i_1}$ is an induced $4$-cycle in $G+e$. Let $N_{G_1}(u^1_{i_3})\setminus\{u^1_{i_2},u^1_{i_4}\}=\{u^1_{i_5}\}$, where $i_5\in \{1,\ldots, 10\}\setminus \{i_1, i_2, i_3, i_4\}$. Since the Petersen graph, and so $G_1$, has girth five, it follows that $u^1_{i_5}$ is anticomplete to $\{u^1_{i_1},u^1_{i_2},u^1_{i_4}\}$ in $G_1$. Let $S=\{u^1_{i_1}, u^1_{i_2}, u^1_{i_3}, u^1_{i_4}, u^1_{i_5}, u^2_{i_1}\}$. Then, it is easy to check that the map $f:V(\ol{H})\rightarrow S$ with 
$$f(v_1)=u^2_{i_1},\quad  f(v_2)=u^1_{i_1},\quad  f(v_3)=u^1_{i_3},\quad  f(v_4)=u^1_{i_2},\quad  f(v_5)=u^1_{i_4},\quad f(v_6)=u^1_{i_5}$$
is an isomorphism between $\ol{H}$ and $G[S]+e$.

\item $e=u^1_{i_1}u^2_{i_2}$ for some $i_1,i_2\in \{1,\ldots, 10\}$. Since $G_1$ has girth five, every two vertices in $G_1$ have at most one common neighbor. Thus, since $u^1_{i_2}$ has degree three in $G_1$, we may choose a vertex $u^1_{i_3}\in N_{G_1}(u^1_{i_2})\setminus N_{G_1}[u^1_{i_1}]$ where $i_3\in \{1,\ldots, 10\}\setminus \{i_1,i_2\}$. Also, it is easy to check that in $G_1$ (as in the Petersen graph), for every edge $e'\in E(G_1)$, there is a (unique) induced subgraph $M_{e'}$ of $G_1$ isomorphic to $3K_2$ where $e'\in E(M_{e'})$.  In particular, there are $i_4,i_5\in \{1,\ldots, 10\}\setminus \{i_1,i_2,i_3\}$ for which $u^1_{i_4}u^1_{i_5}\in E(M_{u^1_{i_2}u^1_{i_3}})$. It follows that $C=u^2_{i_2}\dd u^2_{i_4}\dd u^2_{i_3}\dd u^2_{i_5}\dd u^2_{i_2}$ is an induced $4$-cycle in $G_2$ (and so in $G$). Let $S=\{u^1_{i_1}, u^2_{i_2}, u^2_{i_3}, u^2_{i_4}, u^2_{i_5}, u^1_{i_3}\}$. Then, it is easy to check that the map $f:V(\ol{H})\rightarrow S$ with
$$f(v_1)=u^1_{i_1},\quad  f(v_2)=u^2_{i_2},\quad  f(v_3)=u^2_{i_3},\quad  f(v_4)=u^2_{i_4},\quad  f(v_5)=u^2_{i_5},\quad f(v_6)=u^1_{i_3}$$
is an isomorphism between $\ol{H}$ and $G[S]+e$.

\item $e=u^2_{i_1}u^2_{i_2}$ for some $i_1,i_2\in \{1,\ldots, 10\}$. Then we have $u^1_{i_1}u^1_{i_2}\in E(G_1)$, and so $C=u^1_{i_1}\dd u^2_{i_1}\dd u^2_{i_2}\dd u^1_{i_2}\dd u^1_{i_1}$ is an induced $4$-cycle in $G+e$. Let $N_{G_1}(u^1_{i_1})=\{u^1_{i_2}, u^1_{i_3}, u^1_{i_4}\}$ where $i_3,i_4\in \{1,\ldots, 10\}\setminus \{i_1,i_2\}$. Since $N_{G_1}(u^1_{i_1})$ is a stable set in $G_1$, it follows that $u^2_{i_2}u^2_{i_4}\in E(G_2)$.  Let $S=\{u^1_{i_1}, u^1_{i_2}, u^1_{i_3}, u^2_{i_1}, u^2_{i_2}, u^2_{i_4}\}$. Then, it is easy to check that the map $f:V(\ol{H})\rightarrow S$ with 
$$f(v_1)=u^1_{i_3},\quad  f(v_2)=u^1_{i_1},\quad  f(v_3)=u^2_{i_2},\quad  f(v_4)=u^2_{i_1},\quad  f(v_5)=u^1_{i_2},\quad f(v_6)=u^2_{i_4}$$
is an isomorphism between $\ol{H}$ and $G[S]+e$.
\end{itemize}
 This proves \eqref{st:10ecritical}.
\medskip

Now, the result follows from \eqref{st:10efree} and \eqref{st:10ecritical}. This completes the proof of \Cref{thm:10E}.

\end{proof}

\begin{theorem}\label{thm:10K} Entry {\rm 10K} in \Cref{fig:allsix} is deletion-normal.
\end{theorem}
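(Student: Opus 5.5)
We do not know from the text which graph entry 10K is; the figure is not shown. So the plan follows the template of the other Group 3 sporadic cases. Let $H$ be entry 10K. We will exhibit an explicit, highly symmetric host graph $G$, or its complement, and verify two things: that $G$ is $H$-free, and that for each edge orbit of $\mathrm{Aut}(G)$ some explicit $6$-set $S$ makes $G[S]-e$ isomorphic to $H$.

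\textbf{Step 1: choose a local certificate for $H$.} We first locate a small local structure that $H$ has and that can be forbidden. Candidates are:
\begin{itemize}
\item an edge $v_iv_j$ whose combined neighbourhood $(N_H(v_i)\cup N_H(v_j))\setminus\{v_i,v_j\}$ contains a prescribed stable set or induced matching, as in \Cref{thm:3B};
\item a vertex whose neighbourhood contains a prescribed induced subgraph, as in \Cref{thm:5H};
\item an induced $C_4$ in $H$ or $\ol{H}$, as in \Cref{thm:10E,thm:9L}.
\end{itemize}
We then choose $G$ so that this certificate is provably absent. If $\ol{H}$ contains an induced $C_4$, we will instead prove that $\ol{H}$ is addition-normal. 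The first host to try is a $C_4$-free strongly regular or edge-transitive graph: the icosahedron, the Petersen graph, its line graph, or $C_4\square C_4$. For addition we would also try a two-layer construction like the one in \Cref{thm:10E}, gluing the Petersen graph to its complement along a perfect matching.

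\textbf{Step 2: freeness.} Suppose an induced copy $H'$ of $H$ (or of $\ol{H}$) exists. Case-split on how the certificate (for instance the induced $C_4$) meets the layers of $G$. Use girth five or $C_4$-freeness of each layer, together with the fact that the matching gives every vertex exactly one neighbour across layers, to pin down the position of the $C_4$. Then show that the two remaining vertices of $H$ force a triangle in the Petersen graph, or an edge forbidden by complementation.

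\textbf{Step 3: saturation.} Using edge-transitivity, or the small number of edge orbits, we reduce to one or three representative edges $e$. For each we exhibit $S$ explicitly, typically the four vertices of a new $C_4$ created by deleting or adding $e$, plus two vertices chosen from neighbourhoods or from a $3K_2$ containing a given edge. We then check the isomorphism.

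\textbf{Main obstacle.} The hard part is Step 1, finding a host $G$ whose obstruction is sharp: strong enough to exclude $H$, yet destroyed by every single edge change. Concretely, Step 2 fails whenever $G$ is so dense that $H$ already embeds, and Step 3 fails when some edge orbit offers no suitable $S$. If the icosahedron and Petersen-based candidates fail, the fallback is a computer-guided search among small vertex- and edge-transitive graphs for $G$, followed by a hand verification in the style above.
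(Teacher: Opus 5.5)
Your proposal does not yet prove anything. No host graph is fixed, and neither the freeness check nor the saturation check is carried out, so the whole content of the theorem (the existence of a suitable $G$) is deferred to a search. Worse, every candidate you name fails for this $H$.

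Entry 10K is the graph whose complement $\ol{H}$ is a triangle $abc$ with a pendant path $a\dd p_1\dd p_2\dd p_3$. So $\ol{H}$ has no induced $C_4$, while $H$ contains a triangle ($b,p_1,p_3$) and induced $C_4$'s on two different vertex sets, namely $\{b,c,p_2,p_3\}$ and $\{b,c,p_1,p_2\}$. The candidates then fail as follows.
\begin{itemize}
\item The Petersen graph is triangle-free, so deleting an edge never creates $H$. Adding a non-edge $uv$ with common neighbour $w$ creates the single triangle $\{u,v,w\}$. Its closed neighbourhood has eight vertices, and the remaining two are non-adjacent (in the Kneser model with $u=12$, $v=13$, $w=45$ they are $14,15$). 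So there is no room for $p_2p_3$, and the complement of the Petersen graph fails too.
\item The icosahedron already contains $\ol{H}$ as an induced subgraph: take a triangle $\{v,b,c\}$, the neighbour of $v$ opposite $bc$ in the link of $v$, one of its neighbours in the far pentagon, and the antipode of $v$. Hence its complement is not $H$-free.
\item Deleting an edge $xy$ from the icosahedron creates exactly one induced $C_4$, namely $x\dd s\dd y\dd t\dd x$ with $s,t$ the two common neighbours of $x,y$. Since $H$ has two, this cannot create $H$.
\item $C_4\square C_4$ is triangle-free of diameter four, so it fails in both directions.
\end{itemize}
The common obstruction is that in small dense symmetric hosts, the neighbourhood of the newly created triangle swallows nearly everything. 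That leaves no room for an induced pendant path of length three.

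The missing idea is to get that room from chromatic number rather than from symmetry. The paper takes a triangle-free graph $G$ with $\chi(G)>5$ (from \Cref{thm:hypergirthchi}) that is edge-maximal with this property, and shows it is $\ol{H}$-addition-saturated.
\begin{itemize}
\item $G$ is $\ol{H}$-free because it is triangle-free.
\item For a non-edge $e$, every triangle of $G^+=G+e$ contains $e$. Fix one such triangle $T$, and let $N$ be the set of vertices with a neighbour in $T$. Then $\chi(G^+[N])\le 3$, so $G^+\setminus N$ has a non-bipartite component $C_1$. Since $G$ is connected, $C_1$ is attached to some $x_0\in N\setminus T$.
\item Removing the stable set $N_{G^+}(x_0)\cap V(C_1)$ leaves a component $C_2$ with an edge, attached to some $x_1$ in that stable set. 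Repeating once more inside $C_2$ gives an induced path $x_0\dd x_1\dd x_2\dd x_3$ in which only $x_0$ has neighbours in $T$.
\item Since $G$ is triangle-free, $x_0$ has one or two neighbours in $T$. Accordingly, either $T\cup\{x_0,x_1,x_2\}$ or $\{y,y',x_0,x_1,x_2,x_3\}$ induces $\ol{H}$, where $y,y'$ are the two neighbours.
\end{itemize}
Without this argument, or some other explicit and verified host, your plan leaves the statement unproved.
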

\begin{figure}[t!]
    \centering   \includegraphics[scale=0.7]{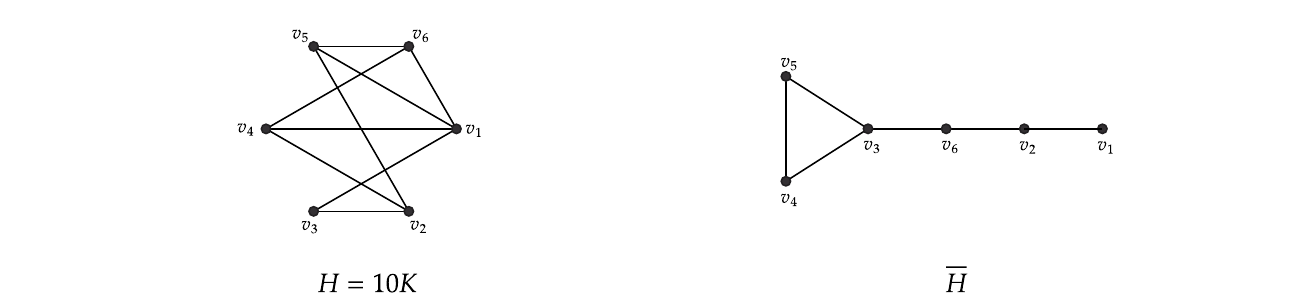}
    \caption{Proof of \Cref{thm:10K}. The graph $H$ (left) and its complement (right).}
    \label{fig:10K}
\end{figure}

\begin{proof}
Let $H$ be entry 10K in \Cref{fig:allsix}. We will show that $\ol{H}$ is addition-normal (see \Cref{fig:10K}). 

By \Cref{thm:hypergirthchi} applied to $c=5$, $g=3$ and $k=2$, there exists a triangle-free graph of chromatic number larger than $5$. Let $G$ be a triangle-free graph with $\chi(G)>5$ and with $E(G)$ maximal with respect to inclusion. It follows in particular that $G$ is connected. 

Our goal is to show that $G$ is $\ol{H}$-addition-saturated\footnote{We remark that the idea in the proof of \Cref{thm:10K} can be adapted to prove the addition-normality of any graph that is obtained from a triangle and path on two or more vertices by identifying an end of the path with a vertex of the triangle.}. In fact, $G$ is $\ol{H}$-free because $G$ is triangle-free whereas $\ol{H}$ is not. It remains to show that for every $e\in E(\ol{G})$, the graph $G^+=G+e$ has an induced subgraph isomorphic to $\ol{H}$.

By the maximality of $E(G)$, there is a triangle $T$ in $G^+$ containing both ends of $e$, and in fact every triangle in $G^+$ contains both ends of $e$. Let $N$ be the set of all vertices of $G^+$ with at least one neighbor in $T$ in $G^+$. Then $T\subseteq N$, and since $G$ is triangle-free, we have $\chi(G^+[N])\leq 3$. Thus, $\chi(G^+\setminus N)\geq \chi(G)-3>2$.

Let $C_1$ be the component of $G^+\setminus N$ with maximum chromatic number. Then $\chi(C_1)>2$, and since $G$ is connected, some vertex in $V(C_1)$ has a neighbor $x_0\in N\setminus T$ in $G^+$. Let $N_0=N_{G^+}(x_0)\cap V(C_1)$. Then $N_0$ is a stable set in $C_1$, and so $\chi(C_1\setminus N_0)\geq \chi(C_1)-1>1$.

Let $C_2$ be the component of $C_1\setminus N_0$ with maximum chromatic number. Then $\chi(C_2)>1$, and since $C_1$ is connected, some vertex in $V(C_2)$ has a neighbor $x_1\in N_0$ in $C_1$. Let $N_1=N_{G^+}(x_1)\cap V(C_2)$. Then $N_1$ is a stable set in $C_2$, and so $\chi(C_2\setminus N_1)\geq \chi(C_2)-1>0$. In particular, $V(C_2)\setminus N_1\neq \varnothing$, and since $C_2$ is connected, it follows that some vertex $x_3\in V(C_2)\setminus N_1$ has a neighbor $x_2\in N_1$ in $C_2$.

We deduce that $x_0\dd x_1\dd x_2\dd x_3$ is a four-vertex path in $G^+$ and $x_0$ is the only vertex of this path with a neighbor in $T$ in $G^+$. If $x_0$ has exactly one neighbor in $T$, then $G^+[T\cup \{x_0,x_1,x_2\}]$ is isomorphic to $\ol{H}$. If $x_0$ has two distinct neighbors $y,y'\in T$, then $G^+[\{y,y',x_0,x_1,x_2,x_3\}]$ is isomorphic to $\ol{H}$. This completes the proof of \Cref{thm:10K}.
\end{proof}

\begin{theorem}\label{thm:12H} Entry ${\rm 12H}$ in \Cref{fig:allsix} is deletion-normal.
\end{theorem}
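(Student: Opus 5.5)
Since I cannot see the picture of entry 12H, this plan is conditional on what the drawing shows. The intended route is to work with the complement and show that $\ol{H}$ is addition-normal. This is how Theorem~\ref{thm:10K} and several entries of Group~1 were handled. The first step is to read off $\ol{H}$ from \Cref{fig:allsix} and find a small rigid piece $F$ of $\ol{H}$ that must appear whenever an edge is added. The preferred choice is a triangle, since the graphs remaining in Group~3 are exactly those not covered by the line-graph and block methods. Failing that, I would take a vertex whose neighbourhood in $\ol{H}$ has a specific shape.

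If $\ol{H}$ contains a triangle $T_0$ and $\ol{H}\setminus T_0$ attaches to $T_0$ in a tree-like way, I will mimic the proof of Theorem~\ref{thm:10K}. I apply \Cref{thm:hypergirthchi} with $k=2$, $g=3$ and $c$ large, say $c=3|V(H)|$. I then take a triangle-free $G$ with $\chi(G)>c$ and $E(G)$ maximal subject to being triangle-free. Then $G$ is $\ol{H}$-free, and $G+e$ contains a triangle $T$ through $e$, with every triangle of $G+e$ containing both ends of $e$.

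To build the rest of $\ol{H}$ I use chromatic peeling. Since neighbourhoods in $G$ are stable, removing the neighbourhood $N$ of $T$ costs at most $3$ colours. I then repeatedly pass to a component of maximum chromatic number and strip the neighbourhood of one chosen vertex, which costs one colour each time. This produces induced paths leaving $T$ whose only contact with $T$ is at the first vertex. I then branch on how many neighbours that first vertex has in $T$, exactly as in Theorem~\ref{thm:10K}, to match the attachment pattern in $\ol{H}$. If $\ol{H}$ has two pendant structures, I will run the peeling twice inside disjoint high-chromatic components, using the bound $\chi>c$ to keep enough colour budget.

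If $\ol{H}$ is instead triangle-free, so that its rigid part is a $C_4$ or a neighbourhood pattern, I switch to an explicit symmetric host. The candidates, in order, are the icosahedron, the Petersen graph or its line graph, and a Cartesian product of cycles, in the spirit of Theorems~\ref{thm:3B}, \ref{thm:5E} and \ref{thm:10E}. The argument then has three steps:
\begin{enumerate}[(i)]
\item identify a local invariant of $H$, such as $N_H(v)$ or a common neighbourhood $N_H(u)\cap N_H(v)$, that the host forbids;
\item verify freeness by computing that invariant for the host, where edge- or vertex-transitivity reduces this to one or two cases;
\item for each orbit of edges of the host (or each orbit of non-edges, if working with $\ol{H}$), write down an explicit $6$-vertex set $S$ and a map showing that $G[S]-e$ is isomorphic to $H$.
\end{enumerate}

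The main obstacle is choosing the right host so that freeness and saturation hold simultaneously. Freeness wants a restrictive local structure, while saturation needs every edge orbit to create $H$. I would first test the icosahedron and the Petersen line graph by hand against the invariant from step~(i). Only if both fail would I fall back on the maximal-$F$-free, large-chromatic construction above.
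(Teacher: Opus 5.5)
\textbf{This is a search strategy, not a proof.} You never determine $H$, never commit to a host graph, and never verify $H$-freeness or saturation. That verification is the entire content of the statement.

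Entry 12H is the graph with two adjacent vertices that are complete to two disjoint edges, so $\ol{H}=C_4+2K_1$. This has several consequences for your plan:
\begin{itemize}
\item Your preferred branch (a triangle in $\ol{H}$, then chromatic peeling as in Theorem~\ref{thm:10K}) does not apply, because $\ol{H}$ is triangle-free.
\item Your fallback gives no argument either. The peeling in Theorem~\ref{thm:10K} uses that neighbourhoods in a triangle-free graph are stable, and $C_4$-freeness gives nothing of the kind.
\item The first symmetric hosts on your list fail in both orientations. The icosahedron is $K_4$-free and the Petersen graph is triangle-free, so neither can host $H$, which contains $K_4$. Used as hosts for $\ol{H}$, every induced $C_4$ created by adding a non-edge leaves at most one vertex outside its closed neighbourhood, so $C_4+2K_1$ never appears.
\end{itemize}
The step you yourself call the main obstacle, finding a host that is both free and saturated, is exactly what is missing.

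\textbf{What the paper does.} It uses the Schl\"afli graph and its $4$-ultrahomogeneity. An induced copy of $H$ would give a $4$-clique $\{x_1,x_2,y_1,y_2\}$ together with two vertices anticomplete to $\{x_1,x_2\}$ and complete to $\{y_1,y_2\}$. By symmetry the clique can be fixed, and a direct computation shows that only one such vertex exists. Arc-transitivity then reduces saturation to a single edge, handled by one explicit $6$-set.

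\textbf{A candidate on your list that does work.} Once $\ol{H}$ is known, the line graph of the Petersen graph $P$ gives a simpler host.
\begin{itemize}
\item $\lng(P)$ is $C_4$-free, since $P$ has girth five; hence it is $\ol{H}$-free.
\item Take disjoint edges $ab,cd$ of $P$. At most one edge of $P$ joins $\{a,b\}$ to $\{c,d\}$; label so that it is $bc$ if it exists.
\item Then $a$ and $d$ are non-adjacent, and their unique common neighbour $z$ is not in $\{b,c\}$.
\item The edges $ab,az,zd,dc$ of $P$ induce a $C_4$ once the vertices $ab$ and $cd$ of $\lng(P)$ are joined.
\item The five vertices of $P$ outside $\{a,b,c,d,z\}$ induce a tree or a $5$-cycle. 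Either way they contain two disjoint edges, which completes an induced $C_4+2K_1$.
\end{itemize}
Hence $\lng(P)$ is $\ol{H}$-addition-saturated, and $\ol{\lng(P)}$ is $H$-deletion-saturated.

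None of this identification or checking appears in your proposal, so as it stands it does not prove the statement.
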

\begin{proof}
    Let $H$ be entry 12H in \Cref{fig:allsix} (see also \Cref{fig:schlafli}). Let $A=\{1,2,3\}\times \{1,2,3\}$. Let $G$ be the graph with 
    $V(G)=\{u^1_{i,j}, u^2_{i,j},u^3_{i,j}: (i,j)\in A\}$
    and such that:
    \begin{itemize}
        \item For each $k\in \{1,2,3\}$ and all distinct $(i,j),(i',j')\in A$, we have $u^k_{i,j}u^k_{i',j'}\in E(G)$ if and only if $i=i'$ or $j=j'$.
        \item For every $(k,l)\in \{(1,2),(2,3),(3,1)\}$ and all distinct  $(i,j),(i',j')\in A$, we have $u^k_{i,j}u^l_{i',j'}\in E(G)$ if and only if $j\neq i'$.
    \end{itemize}
    See \Cref{fig:schlafli}. 
    
    This $16$-regular $27$-vertex graph, known as the \textit{Schl\"afli graph}, is both vertex-transitive and edge-transitive, and much more. For an integer $k\in \poi$, we say that a graph $F$ is \textit{$k$-ultrahomogeneous} if for every two subsets $X_1,X_2$ of $V(F)$ with $|X_1|=|X_2|\leq k$ such that $F[X_1]$ and $F[X_2]$ are isomorphic, and every isomorphism $\phi:X_1\to X_2$ between $F[X_1]$ and $F[X_2]$, there is an automorphism $f$ of $F$ such that $f|_{X_1}=\phi$. For instance, $F$ is $1$-ultrahomogeneous if and only if $F$ is vertex-transitive, and $2$-ultrahomogeneous if and only if $F$ is arc-transitive. It is known \cite{cameron, devillers} that the Schl\"afli graph is $4$-ultrahomogeneous.\footnote{In fact, something quite curious is true here: every graph that is $5$-ultrahomogeneous is also $k$-ultrahomogeneous for all $k\in \poi$, and the Schl\"afli graph and its complement are the \textit{only} graphs that are $4$-ultrahomogeneous, but not $5$-ultrahomogeneous! See \cite{cameron,devillers}.}

 Our goal is now to prove that the Schl\"afli graph $G$ is $H$-deletion-saturated. To see that $G$ is $H$-free, suppose for a contradiction that $G$ has an induced subgraph isomorphic to $H$. It follows in particular that there is a $4$-vertex clique $\{x_1,x_2,y_1,y_2\}$ in $G$ and a $2$-vertex set $Z$ in $G$ disjoint from $\{x_1,x_2,y_1,y_2\}$ such that $Z$ is anticomplete to $\{x_1,x_2\}$ and complete to $\{y_1,y_2\}$ in $G$. Since $G$ is $4$-ultrahomogeneous, we may assume due to symmetry that 
   $$x_1=u^{1}_{1,1},\quad x_2=u^{1}_{2,1},\quad y_1=u^{2}_{2,1},\quad y_2=u^{2}_{2,2}.$$
   But then by the definition of $G$, the set of all vertices that are anticomplete to $\{x_1,x_2\}=\{u^{1}_{1,1},u^{1}_{2,1}\}$ in $G$ is $\{u^1_{3,2},u^1_{3,3},u^2_{1,1},u^2_{1,2},u^2_{1,3}\}$, and exactly one of these vertices, namely $u^1_{3,3}$, is complete to $\{y_1,y_2\}=\{u^{2}_{2,1},u^{2}_{2,2}\}$ in $G$, as well. This is a contradiction to $|Z|=2$.
   
   It remains to show that for every edge $e\in E(G)$, the graph $G-e$ has an induced subgraph isomorphic to $H$. Again, since $G$ is also $4$-ultrahomogeneous (and in particular arc-transitive), we may assume by symmetry that $e=u^1_{1,1}u^1_{1,3}$. Let 
   $$S=\{u^{1}_{1,1},u^{1}_{2,1},u^{1}_{1,3},u^{1}_{3,3},u^2_{2,1},u^2_{2,2}\}.$$
  Using the definition of $G$, it is straightforward to check that the map $f:V(H)\rightarrow S$ with
   $$f(v_1)=u^2_{2,1},\quad f(v_2)=u^1_{1,1},\quad f(v_3)=u^1_{2,1},\quad f(v_4)=u^1_{1,3},\quad f(v_5)=u^1_{3,3},\quad f(v_6)=u^2_{2,2}$$
   is an isomorphism between $H$ and $G[S]-e$. This completes the proof of \Cref{thm:12H}.
\end{proof}
\begin{figure}[t!]
    \centering   \includegraphics[scale=0.7]{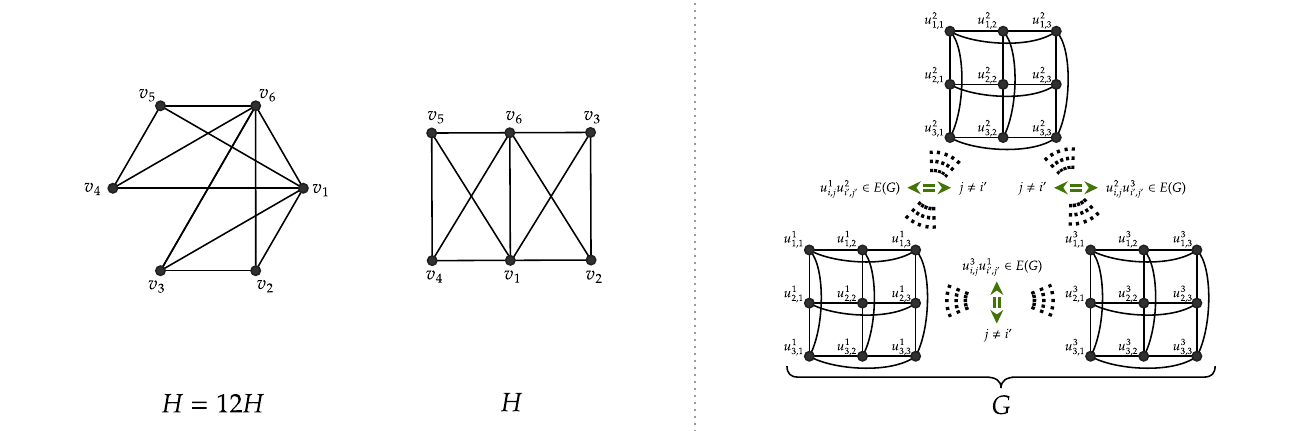}
    \caption{Proof of \Cref{thm:12H}. The graph $H$ (left) and the Schl\"afli graph $G$ (right).}
    \label{fig:schlafli}
\end{figure}

\begin{theorem}\label{thm:13B} Entry ${\rm 13B}$ in \Cref{fig:allsix} is deletion-normal.
\end{theorem}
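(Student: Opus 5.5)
I do not know from the text which six-vertex graph entry 13B is, since Figure~\ref{fig:allsix} is not reproduced here. So this plan follows the template used for the other Group~3 entries, and the construction must be fixed once $H$ is pinned down. The recipe is: find a small local configuration forced by $H$ that a highly symmetric host graph avoids. Then check that deleting one representative edge (one per edge orbit) creates that configuration inside an induced copy of $H$.

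\textbf{Step 1: choose the side.} I first decide whether to work with $H$ or with $\ol{H}$. Entry 13B lies in the same row as 13C, 13F, 13G, 13I, whose complements are sparse ($P_4+P_2$ and so on). So I expect $\ol{H}$ to be a sparse six-vertex graph: a forest, or something with a single short cycle. In that case I would show that some graph $G$ is $\ol{H}$-addition-saturated, which gives that $\ol{G}$ is $H$-deletion-saturated.

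\textbf{Step 2: the obstruction and the host.} I identify a substructure of $\ol{H}$ that is easy to forbid. Typical choices are a triangle, a $C_4$, or a vertex of degree at least three. I then pick a host that is maximal subject to excluding this substructure but is rich in everything else. Examples are a triangle-free graph of large chromatic number with edge set maximal (as in Theorem~\ref{thm:10K}), the icosahedron, the Petersen graph, or $kP_2$ (as in Theorem~\ref{thm:13CFGI}).

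\textbf{Step 3: $H$-freeness and saturation.} Freeness should be immediate from the forbidden substructure. For saturation, take any added edge $e$. By maximality, $e$ lies in a copy of the forbidden substructure. I then grow the rest of $\ol{H}$ around that copy, either by the chromatic-number peeling argument of Theorem~\ref{thm:10K} (repeatedly pass to the component of maximum chromatic number outside the neighbourhood and extract an induced path) or by direct case checks using symmetry.

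\textbf{Fallback and main obstacle.} If $\ol{H}$ does not fit this pattern, I would instead take a vertex- and edge-transitive host such as the Schl\"afli graph, the Cartesian product $C_4\square C_4$, or $\lng$ of the Petersen graph. I would find a local invariant that separates them, for example the structure induced on the common neighbourhood of an edge or of a nonadjacent pair. Showing the host is $H$-free amounts to showing no copy of that local pattern exists, and by transitivity saturation reduces to exhibiting one explicit six-vertex set for a single edge. The main obstacle is choosing the host. It must avoid $H$ yet contain $H$ minus each edge in every edge orbit, and that needs some trial among the standard symmetric graphs. Once the host is fixed, the remaining verification is a finite check.
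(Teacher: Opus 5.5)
Your proposal does not contain a proof: it never fixes a host graph, and for this statement the construction and its verification are the whole content. You guessed correctly that $\ol{H}$ is sparse (entry 13B has $\ol{H}\cong 2P_3$), and that one should look for a $2P_3$-addition-saturated graph. However, the recipe of Steps 2--3 cannot work for this $\ol{H}$.

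First, $2P_3$ has no triangle, no $C_4$ and no vertex of degree three, so forbidding any of these does not give $2P_3$-freeness. More fundamentally, no proper induced subgraph of $2P_3$ can be excluded at all. Suppose $G$ is $2P_3$-free and $G+xy$ has an induced $2P_3$. Then $xy$ is an edge of that copy, so $G$ contains induced copies of $2P_3\setminus\{x\}$ and $2P_3\setminus\{y\}$. Every edge of $2P_3$ joins an end of a path to its middle vertex, so these are $P_3+P_2$ and $P_3+2P_1$. Between them they contain every proper induced subgraph of $2P_3$. So the mechanism ``maximal subject to excluding a small piece, hence every added edge completes that piece'' is unavailable, and $2P_3$-freeness of the host has to be proved by a global argument.

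None of the concrete hosts you mention works either:
\begin{itemize}
\item Adding an edge to $kP_2$ gives $P_4+(k-2)P_2$, which has no two anticomplete induced $P_3$'s.
\item A triangle-free graph of large chromatic number already contains $2P_3$. The closed neighbourhood of an induced $P_3$ is a union of three stable sets, so you can delete it and still find a second $P_3$.
\item The Petersen graph and the icosahedron are $2P_3$-free, but they stay so after adding an edge between two vertices at distance two (respectively, two antipodal vertices). Every induced $P_3$ through the new edge dominates all but at most two vertices.
\item $C_4\square C_4$ and $\lng$ of the Petersen graph are $K_4$-free, whereas $H$ contains $K_4$ (the four ends of the two paths). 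Their complements contain $H$.
\item In the complement of the Schl\"afli graph, every induced $P_3$ through an added edge leaves only an induced $3K_2$ outside its closed neighbourhood. So deleting an edge of the Schl\"afli graph never creates $H$.
\end{itemize}

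The missing idea is an ad hoc host that is not vertex-transitive, so a search among standard symmetric graphs would not find it. The paper takes a $6$-cycle on $u^1_1,\ldots,u^1_6$ and a copy of $\ol{C_6}$ on $u^2_1,\ldots,u^2_6$, joined by the perfect matching $\{u^1_iu^2_i\}$. It proves $2P_3$-freeness by showing that, for every induced $P_3$, the vertices outside its closed neighbourhood induce a $P_3$-free graph:
\begin{itemize}
\item $2P_2$ when the $P_3$ lies in the $6$-cycle;
\item $\ol{P_3}$ or a stable set when the $P_3$ uses a matching edge;
\item and $\ol{C_6}$ is itself $2P_3$-free.
\end{itemize}
It proves saturation by exhibiting two anticomplete induced $P_3$'s for each non-edge up to symmetry (six cases).
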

\begin{figure}[t!]
    \centering   \includegraphics[scale=0.7]{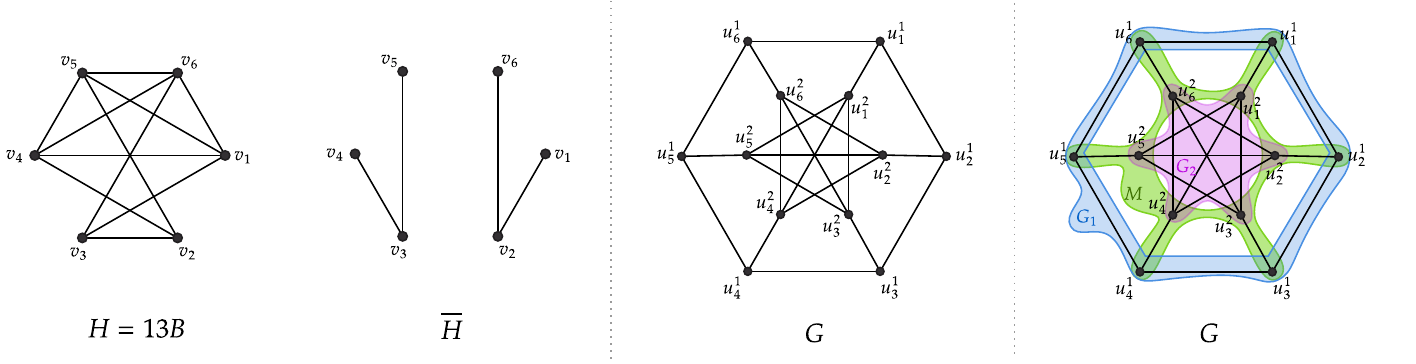}
    \caption{Proof of \Cref{thm:13B}. From left to right: The graph $H$ and its complement, the graph $G$ and its construction.}
    \label{fig:13B}
\end{figure}

\begin{proof}
Let $H$ be entry 13B in \Cref{fig:allsix}. Construct a graph $G$ as follows. Let $G_1$ and $G_2$ be disjoint graphs such that $G_1$ is a $6$-cycle with 
$$V(G_1)=\{u^1_1, \dots, u^1_6\}\quad  \text{and}\quad  E(G_1)=\{u^1_1u^1_{2},u^1_2u^1_{3},u^1_3u^1_{4},u^1_4u^1_{5},u^1_5u^1_{6},u^1_6u^1_1\}$$ 
and $\ol{G_2}$ is a $6$-cycle with $$V(\ol{G_2})= \{u^2_1, \dots, u^2_6\} \quad  \text{and}\quad  E(\ol{G_2})=\{u^2_1u^2_{2},u^2_2u^2_{3},u^2_3u^2_{4},u^2_4u^2_{5},u^2_5u^2_{6},u^2_6u^2_1\}.$$
Let $G$ be the graph obtained from the (disjoint) union of $G_1$ and $G_2$ by adding the matching $M=\{u^1_1u^2_1,\dots, u^1_6u^2_6\}$. See \Cref{fig:13B}.

Since $\ol{H}$ is isomorphic to $2P_3$, in order to show that $H$ is deletion-normal, it suffices to show that $G$ is $2P_3$-addition-saturated.

\sta{\label{st:13bfree}$G$ is $2P_3$-free.}

Suppose for a contradiction that there are two anticomplete induced copies $P^1$ and $P^2$ of $P_3$ in $G$. Let $I_1=V(G)\setminus N[V(P^1)]$ and let $I_2=V(G)\setminus N[V(P^2)]$. Then $P^1$ is an induced copy of $P_3$ in $G[I_2]$ and $P^2$ is an induced copy of $P_3$ in $G[I_1]$. If $V(P^i) \subseteq V(G_1)$ for some $i\in \{1,2\}$, then $G[I_i]$ is isomorphic to $2P_2$, and so $G[I_i]$ is $P_3$-free, a contradiction. It follows that $V(P^i)\cap V(G_2)\neq \varnothing$ for every $i\in \{1,2\}$. Moreover, since $G_2$, which is the complement of a $6$-cycle, is clearly $2P_3$-free, it follows that for some $i\in \{1,2\}$, we have $V(P^i)\cap V(G_1)\neq \varnothing$. Thus, without loss of generality, we may assume that $P^1$ has vertices from both $G_1$ and $G_2$. In particular, we have $E(P^1)\cap M\neq \varnothing$; say $u^1_1u^2_1\in E(P^1)$.  There are now three cases up to symmetry:
    \begin{itemize}
        \item $P^1=u^1_2\dd u^1_1\dd u^2_1$. Then $I_1=\{u^1_4, u^1_5, u^2_6\}$, and $G[I_1]$ is isomorphic to $\ol{P_3}$.
        \item $P^1=u^1_1\dd u^2_1\dd u^2_3$. Then $I_1=\{u^1_4, u^1_5, u^2_2\}$, and $G[I_1]$ is isomorphic to $\ol{P_3}$.
        \item $P^1=u^1_1\dd u^2_1\dd u^2_4$. Then $I_1=\{u^1_3, u^1_5\}$ is a stable set.
    \end{itemize}
    But in all three cases, $G[I_1]$ is $P_3$-free, a contradiction. This proves \eqref{st:13bfree}.

\sta{\label{st:13bcritical} For every $e\in E(\ol{G})$, the graph $G+e$ has an induced subgraph isomorphic to $2P_3$.}

We need to prove that there are two anticomplete induced copies $P^1$ and $P^2$ of $P_3$ in $G+e$. First, assume that $ e\in E(\ol{G_1})$. Then there are two cases up to symmetry:
    \begin{itemize}
        \item  $e=u^1_1u^1_3$. In this case, $P^1=u^1_6\dd u^1_1\dd u^1_3$ and $P^2=u^2_4\dd u^2_2\dd u^2_5$ work.
        \item $e=u^1_1u^1_4$. In this case, $P^1=u^1_1\dd u^1_4\dd u^1_5$ and $P^2=u^2_2\dd u^2_6\dd u^2_3$ work.
    \end{itemize}
    Next, assume that $ e\in E(\ol{G_2})$. Then, by symmetry, we may assume that $e=u^2_1u^2_6$, in which case $P^1=u^1_2\dd u^1_3\dd u^1_4$ and $P^2=u^2_5\dd u^2_1\dd u^2_6$ work. Finally, assume that $e=u^1_iu^2_j$ for some $i,j\in \{1,\ldots, 6\}$ with $i\neq j$. Then there are three cases up to symmetry:
    \begin{itemize}
        \item $e=u^1_1u^2_2$.  In this case, $P^1=u^1_6\dd u^1_1\dd u^2_2$ and $P^2=u^1_4\dd u^1_3\dd u^2_3$ work.
        \item $e=u^1_1u^2_3$. In this case, $P^1=u^1_6\dd u^1_1\dd u^2_3$ and $P^2=u^1_4\dd u^2_4\dd u^2_2$ work.
        \item $e=u^1_1u^2_4$. In this case, $P^1=u^1_6\dd u^1_1\dd u^2_4$ and $P^2=u^1_3\dd u^2_3\dd u^2_5$ work.
    \end{itemize}
    This proves \eqref{st:13bcritical}.
    \medskip

The result now follows from \eqref{st:13bfree} and \eqref{st:13bcritical}. This completes the proof of \Cref{thm:13B}.
\end{proof}

\begin{theorem}\label{thm:15I} Entry {\rm 15I} in \Cref{fig:allsix} is deletion-normal.
\end{theorem}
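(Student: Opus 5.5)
Entry 15I is only available as a picture in \Cref{fig:allsix}, so I cannot read off its structure directly. My plan follows the template used for the other Group~3 graphs (\Cref{thm:3B,thm:5H,thm:12H}). Let $H$ be entry 15I. I pass to whichever of $H$ and $\ol{H}$ is sparser; by complementation it suffices to show $\ol{H}$ is addition-normal, as in \Cref{thm:10E,thm:13B}. I then look for a local induced configuration that certifies freeness. A typical example is a pair of adjacent (or nonadjacent) vertices whose common or joint neighbourhood must contain a specific small induced subgraph such as $2K_2$, $\ol{P_3}$ or a $4$-vertex stable set. \Cref{thm:12F,thm:3B} were handled exactly this way.

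For the host graph $G$, I would try a highly symmetric candidate first. The options are the icosahedron or its complement, the Petersen graph, $\lng(K_n)$, $\lng(K_{n,n})$, the Schl\"afli graph, or a Cartesian or matching-joined product of two small vertex- and edge-transitive graphs. The aim is for $G$ to have only one or two edge orbits, which makes the saturation part a finite check. I would pick $G$ so that it has the property \emph{every edge $uv$ of $G$ has a local neighbourhood structure that is "one edge too rich" for $H$}. In that case $G$ is $H$-free by the local certificate, while deleting $uv$ exposes the required configuration.

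The proof then has two steps. For $H$-freeness, I fix a distinguished vertex or edge of a hypothetical induced copy of $H$. I use transitivity, or $4$-ultrahomogeneity as in \Cref{thm:12H}, to pin it to a canonical position in $G$, and then show that the forced neighbourhood pattern cannot occur. For saturation, I reduce by edge-transitivity to one edge per edge orbit. For each such edge $e$ I exhibit an explicit $6$-set $S$ and a bijection $V(H)\to S$ that is an isomorphism onto $G[S]-e$, exactly as in the earlier entries.

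The main obstacle is choosing $G$. If $H$ is close to a line graph or a biline graph, one of the earlier tools would already apply, and the fact that 15I is listed as sporadic means none does. So I would first test $G\in\{\lng(\text{Petersen}),\ \text{icosahedron},\ \ol{\text{icosahedron}},\ C_4\square C_4\}$ against the local certificate. If none works, I would fall back on a maximality construction as in \Cref{thm:10K}: take a maximal $F$-free graph of large chromatic number, where $F$ is a small forbidden subgraph of $\ol{H}$ (for instance a triangle), and grow the remaining part of $\ol{H}$ away from the forced copy of $F$ by repeatedly passing to high-chromatic components.
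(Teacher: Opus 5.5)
Your proposal is a search plan, not a proof. It never identifies $H$, never fixes a host graph $G$, and so verifies neither $H$-freeness nor saturation for anything. The missing starting point is that $\ol{H}\cong K_{2,3}$, i.e.\ $H\cong K_3+K_2$. The whole content of the theorem is to exhibit a concrete $K_{2,3}$-addition-saturated graph.

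Moreover, the candidates you would test first all fail here:
\begin{itemize}
\item Any $K_{2,3}$-addition-saturated $G$ must contain an induced $C_4$. If the new edge $xy$ plays the role of $p_1q_1$ in a $K_{2,3}$ with parts $\{p_1,p_2\}$ and $\{q_1,q_2,q_3\}$, then $p_1,q_2,p_2,q_3$ already induce a $C_4$ in $G$. This excludes the icosahedron as a host for $\ol{H}$ (equivalently, its complement as a host for $H$).
\item The icosahedron itself contains a face and the antipodal face, which are anticomplete, hence an induced $K_3+K_2$.
\item The line graph of the Petersen graph contains an induced $K_3+K_2$: take the three edges at a vertex $v$, plus two consecutive edges of the $6$-cycle induced on the vertices at distance two from $v$.
\item $C_4\square C_4$ is triangle-free, so deleting an edge never creates the triangle of $H$. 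As a host for $K_{2,3}$ it also fails, for non-edges whose ends are at distance two or four.
\end{itemize}

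Your fallback via \Cref{thm:10K} does not apply either. That argument needs $\ol{H}$ to contain a triangle, so that edge-maximality within the monotone class of triangle-free graphs forces a triangle through every added edge. Here $K_{2,3}$ is bipartite, its only complete induced subgraphs are $K_1$ and $K_2$, and induced $K_{2,3}$-freeness is not monotone. So there is nothing to maximize against.

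The missing idea is a host in which every two nonadjacent vertices have exactly two common neighbours. The paper uses a $16$-vertex graph on $\{u^t_{a,b}\}$: four copies of $C_4$ indexed by $t$, with $u^t_{a,b}$ adjacent to $u^r_{a,d}$ whenever $t\neq r$ and $b\neq d$. This is the Clebsch graph, which is $C_4\square C_4$ with the antipodal perfect matching added, so your $C_4\square C_4$ candidate is one matching short. An induced $K_{2,3}$ would require two nonadjacent vertices with three common neighbours, so $K_{2,3}$-freeness follows directly from the common-neighbour count, checked over three symmetry classes of nonadjacent pairs. Saturation is then verified by exhibiting, for each of the same three classes of non-edges $xy$, an explicit induced $K_{2,3}$ through $xy$ in $G+xy$.
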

\begin{figure}[t!]
    \centering   \includegraphics[scale=0.7]{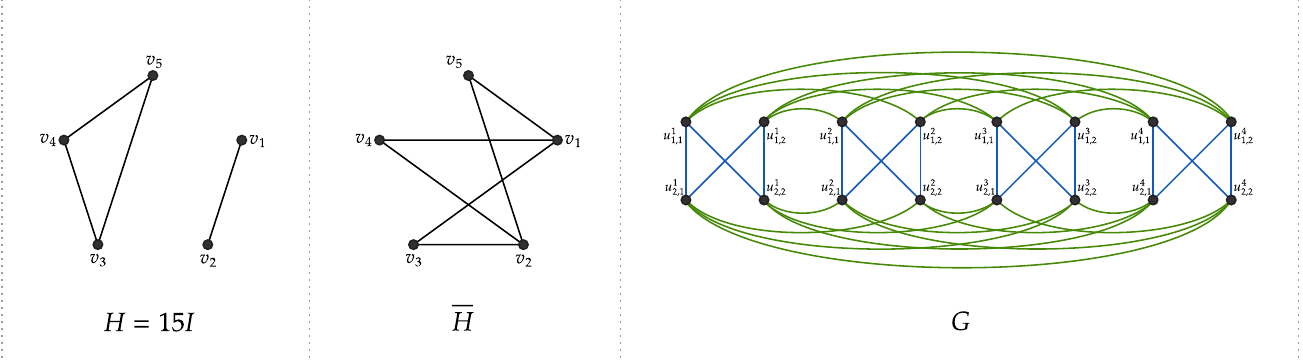}
    \caption{Proof of \Cref{thm:15I}. From left to right: The graph $H$ and its complement, and the graph $G$.}
    \label{fig:15I}
\end{figure}

\begin{proof}
Let $H$ be entry 15I in \Cref{fig:allsix}. Let $G$ be a graph with $$V(G)=\left\{u^t_{a,b}: a,b\in\{1,2\},t\in \{1,2,3,4\}\right\}$$
and $E(G)=E_1\cup E_2$, where $$E_1=\left\{u^t_{a,b}u^t_{c,d}: a,b,c,d\in\{1,2\},\, t\in \{1,2,3,4\},\,  a\neq c\right\}$$
$$E_2=\left\{u^t_{a,b}u^r_{a,d}: a,b,d\in\{1,2\},\, r,t\in \{1,2,3,4\},\, t\neq r,\, b\neq d \right\}.$$
See \Cref{fig:15I}.

Since $\ol{H}$ is isomorphic to $K_{2,3}$, it suffices to show that $G$ is $K_{2,3}$-addition-saturated. First, we show that $G$ is $K_{2,3}$-free. Suppose not. Then there are two non-adjacent vertices $x,y\in V(G)$ such that there is a stable set $Z\subseteq N_G(x)\cap N_G(y)$ with $|Z|=3$. There are now three cases up to symmetry:
\begin{itemize}
    \item $x=u^1_{1,1}$ and $y=u^1_{1,2}$. Then $N_G(x)\cap N_G(y)=\{u^1_{2,1},u^1_{2,2}\}$.
    
    \item $x=u^1_{1,1}$ and $y=u^2_{1,1}$. Then $N_G(x)\cap N_G(y)=\{u^3_{1,2},u^4_{1,2}\}$.
    
    \item $x=u^1_{1,1}$ and $y=u^2_{2,1}$. Then $N_G(x)\cap N_G(y)=\{u^1_{2,2},u^2_{1,2}\}$.
\end{itemize}
In particular, $|N_G(x)\cap N_G(y)|=2$, a contradiction. We deduce that $G$ is $K_{2,3}$-free.
\medskip

It remains to show that for every edge $e=xy\in E(\ol{G})$, the graph $G+e$ has an induced subgraph isomorphic to $K_{2,3}$. Again, there are now three cases up to symmetry:
\begin{itemize}
    \item $x=u^1_{1,1}$ and $y=u^1_{1,2}$. Then $Z_1=\{u^1_{1,2},u^3_{1,1}\}$ and $Z_2=\{u^1_{1,1},u^2_{1,1},u^4_{1,1}\}$ are two stable sets in $G+e$ such that every vertex in $Z_1$ is adjacent in $G+e$ to every vertex in $Z_2$.
    
    \item $x=u^1_{1,1}$ and $y=u^2_{1,1}$. Then $Z_1=\{u^1_{2,2},u^2_{1,1}\}$ and $Z_2=\{u^1_{1,1},u^1_{1,2},u^2_{2,1}\}$ are two stable sets in $G+e$ such that every vertex in $Z_1$ is adjacent in $G+e$ to every vertex in $Z_2$.
    
    \item $x=u^1_{1,1}$ and $y=u^2_{2,1}$. Then $Z_1=\{u^1_{1,1},u^2_{1,1}\}$ and $Z_2=\{u^2_{2,1},u^3_{1,2},u^4_{1,2}\}$ are two stable sets in $G+e$ such that every vertex in $Z_1$ is adjacent in $G+e$ to every vertex in $Z_2$.
\end{itemize}
This completes the proof of \Cref{thm:15I}.
\end{proof}

\begin{figure}[t!]
    \centering   \includegraphics[scale=0.28]{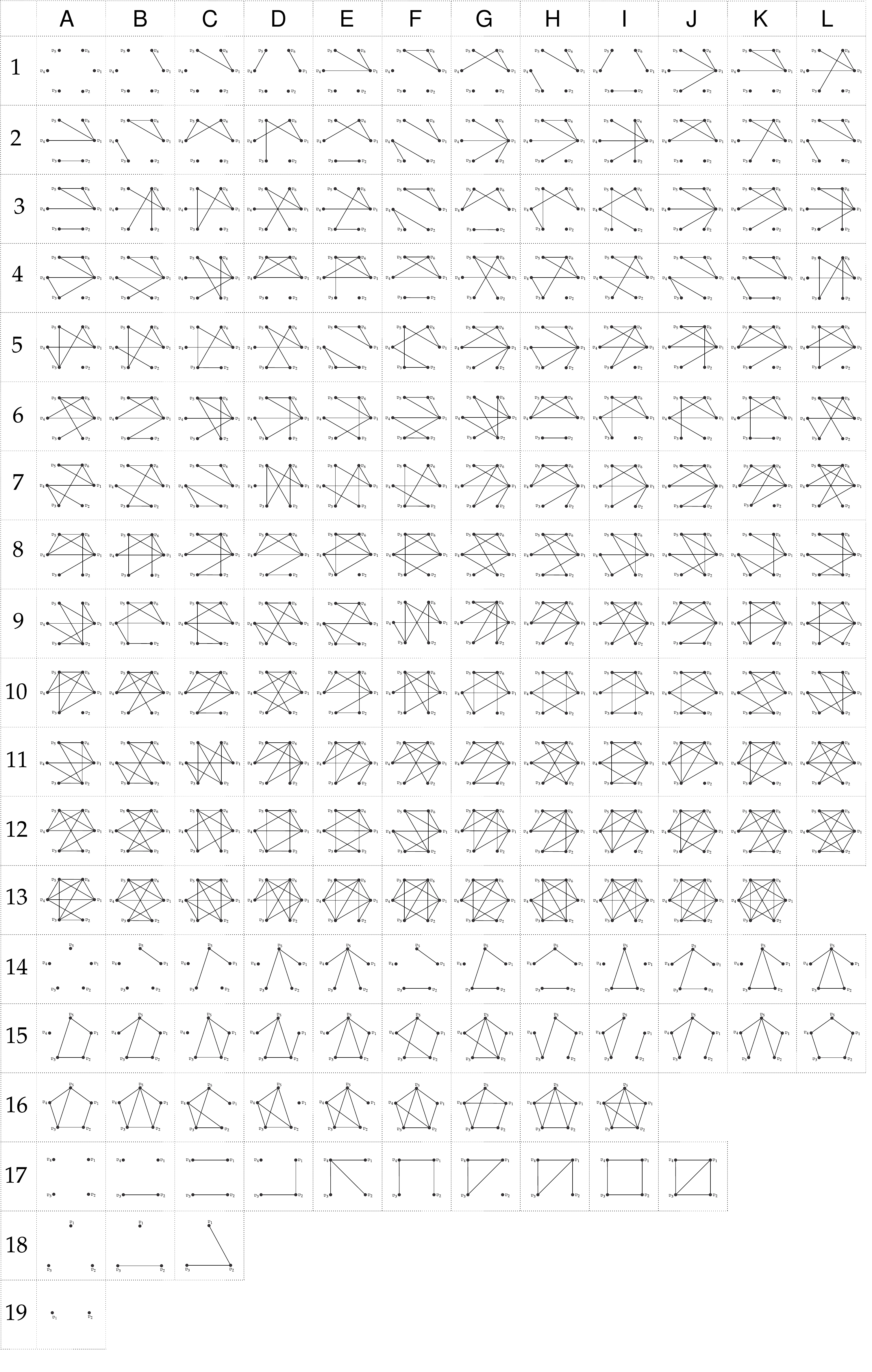}
    \caption{All pairwise non-isomorphic non-complete graphs on at most six vertices.}
    \label{fig:allsix}
\end{figure}
\newpage

\begin{figure}[t!]
    \centering   \includegraphics[width=\linewidth]{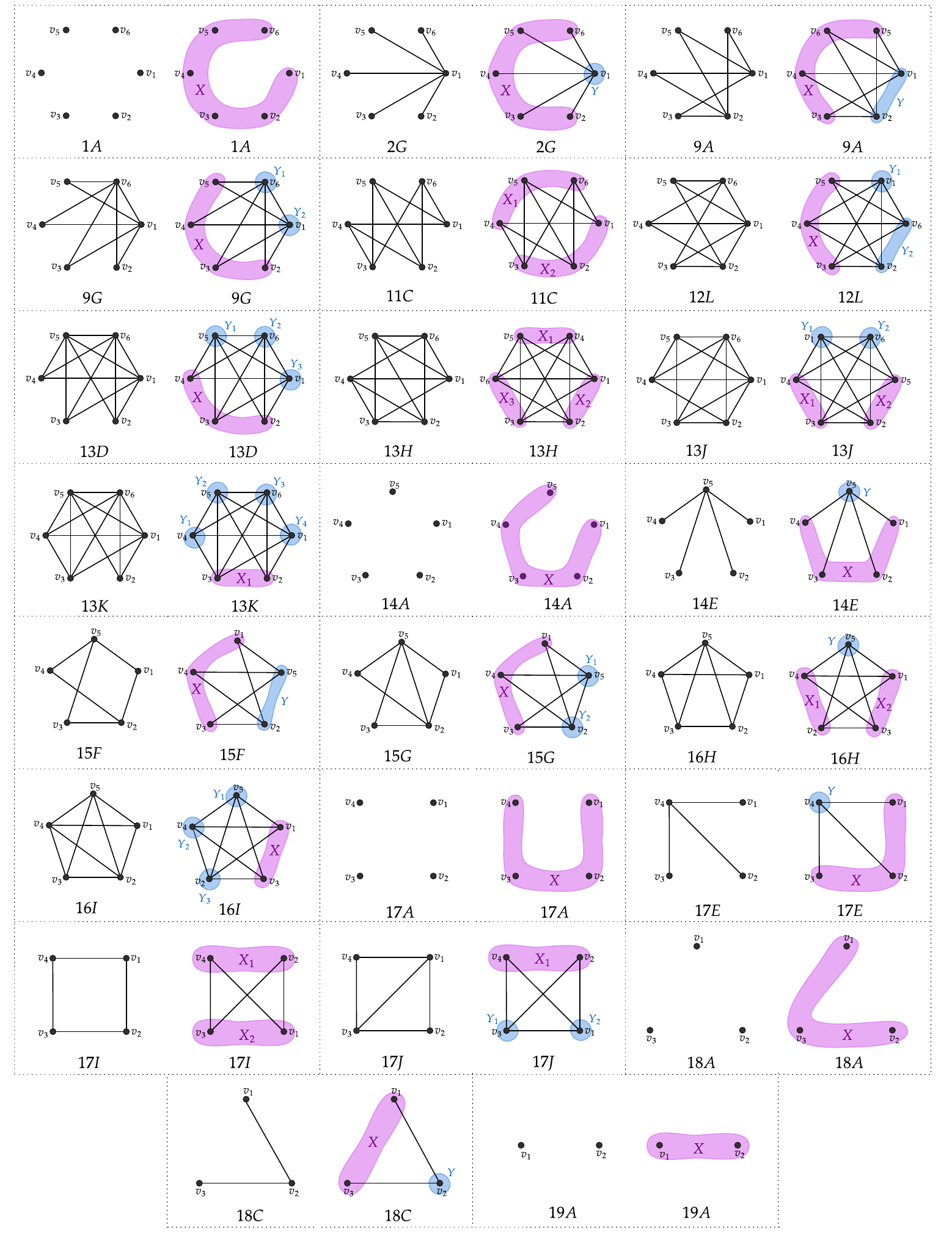}
    \caption{All non-complete complete multipartite graphs on at most six vertices.}
    \label{fig:K_(s,t,t)_Table}
\end{figure}
\newpage
\begin{figure}[t!]
    \centering   \includegraphics[width=\linewidth]{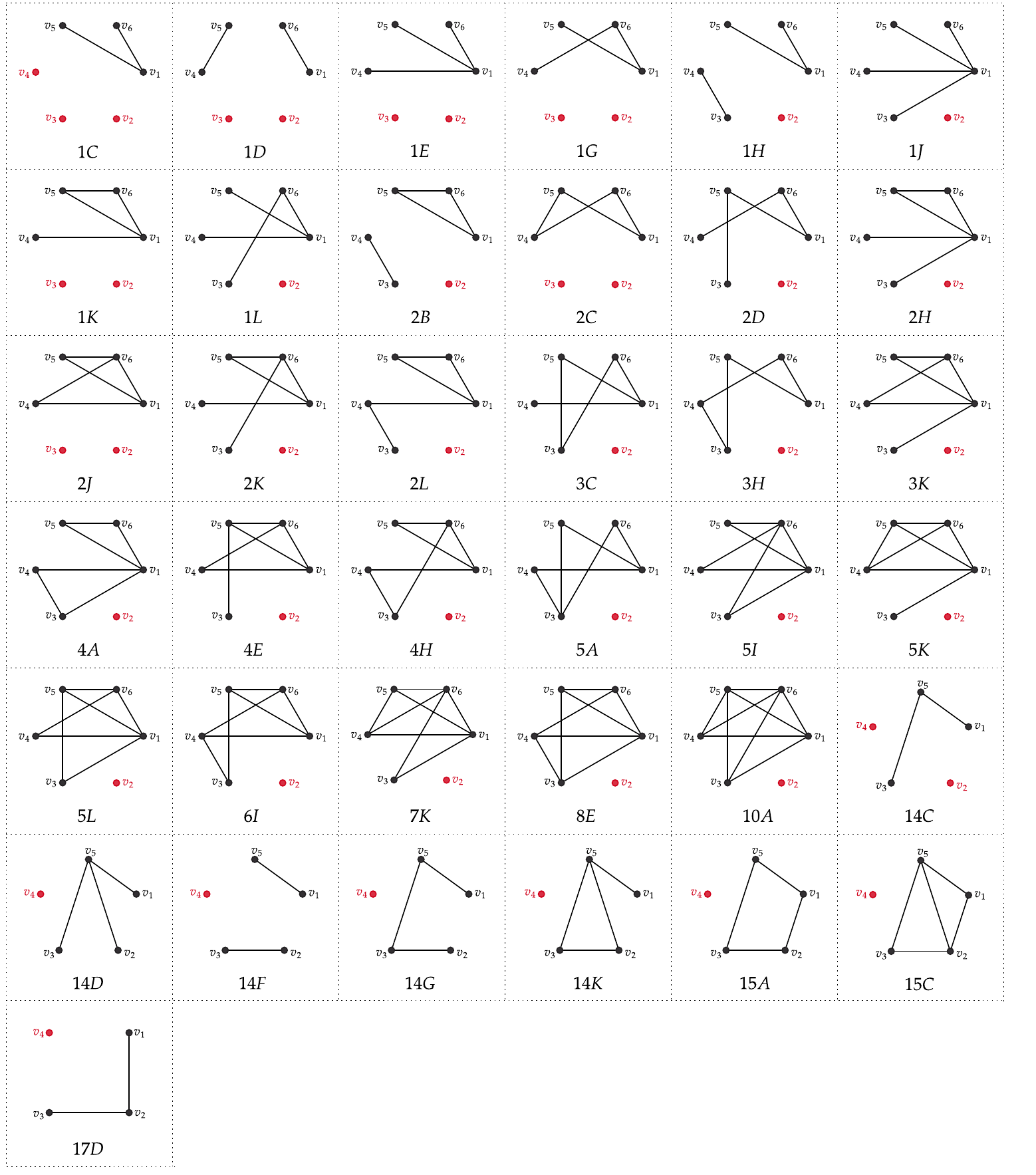}
    \caption{These graphs have at least one isolated vertex, and removing all isolated vertices from each graph leaves a non-complete graph.}
    \label{fig:isolated}
\end{figure}
\newpage

\begin{figure}[t!]
    \centering   \includegraphics[width=\linewidth]{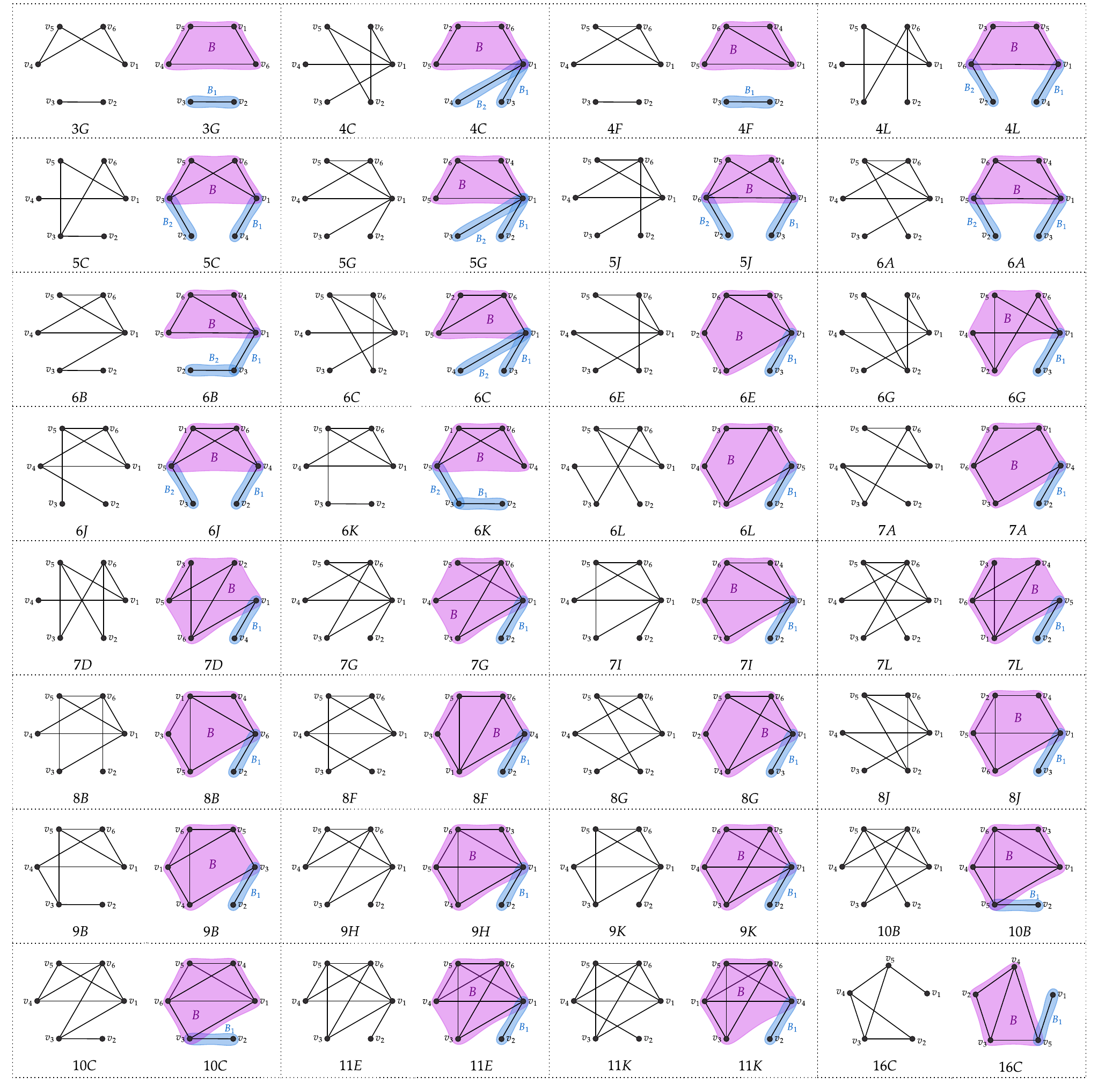}
    \caption{These graphs each have a governing block $B$, and at least one other block, and all blocks other than $B$ are isomorphic to $K_2$.}
    \label{fig:block}
\end{figure}

\begin{figure}[t!]
    \centering   \includegraphics[scale=0.55]{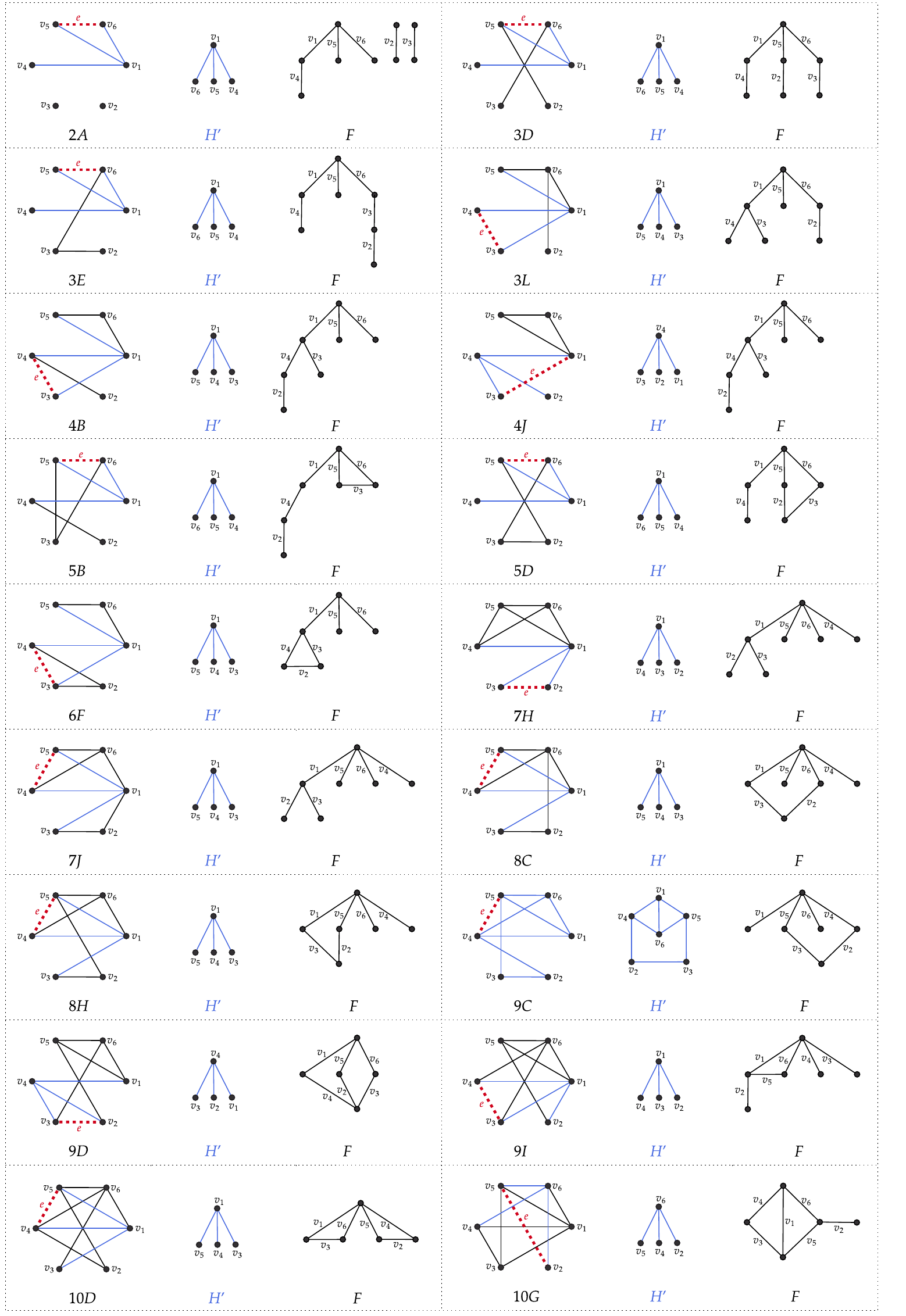}
    \caption{These graphs are all line\lm graphs. In each entry, we have a drawing of the graph $H$ itself with a specified non-edge $e$, a drawing of an induced subgraph $H'$ of $H$ that by \Cref{thm:lineobst} prevents $H$ from being a line graph, and then a drawing of the graph $F$ whose line graph is $H+e$.}
    \label{fig:lineminus1}
\end{figure}

\begin{figure}[t!]
    \centering   \includegraphics[scale=0.6]{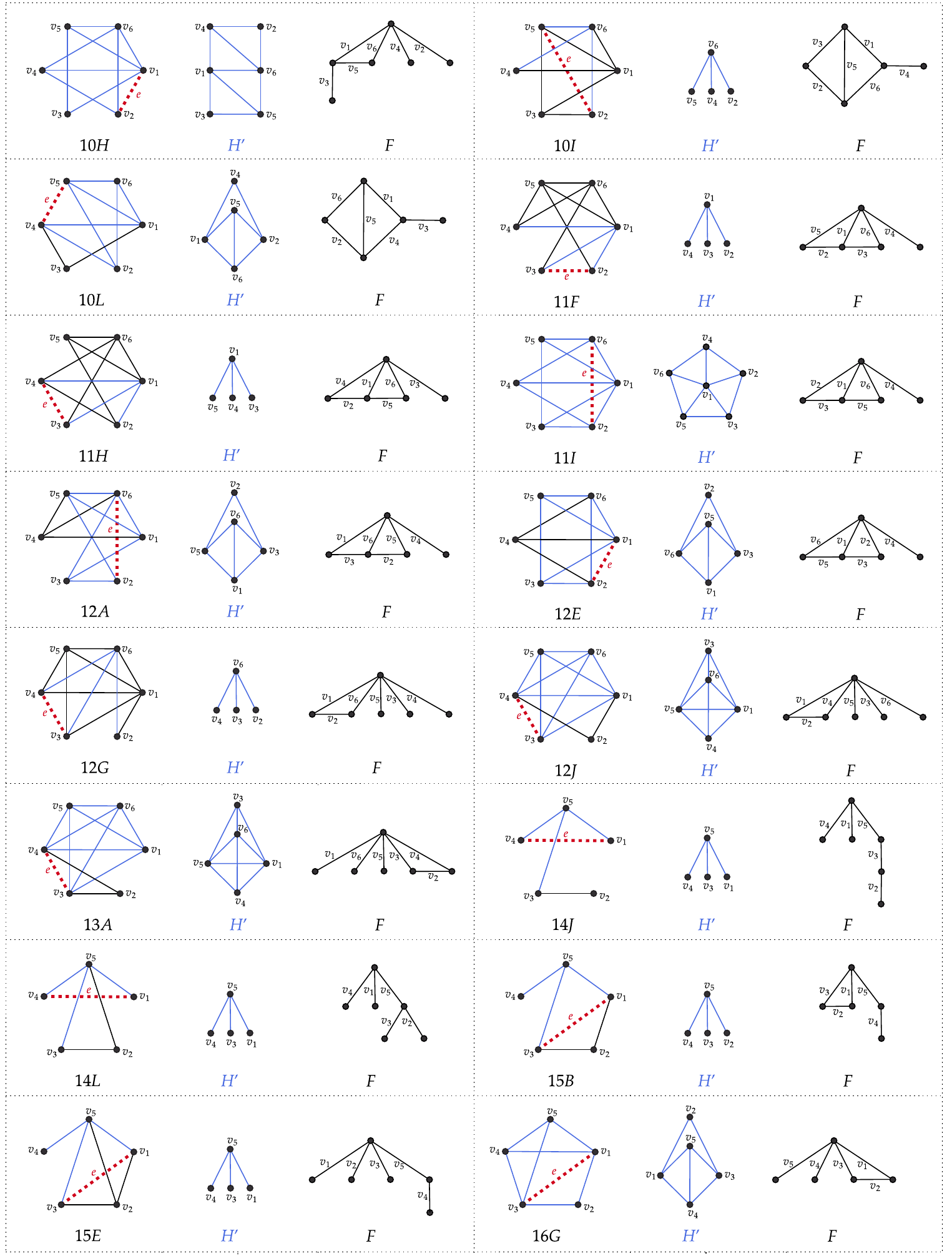}
    \caption{These graphs are all line\lm graphs. In each entry, we have a drawing of the graph $H$ itself with a specified non-edge $e$, a drawing of an induced subgraph $H'$ of $H$ that by \Cref{thm:lineobst} prevents $H$ from being a line graph, and then a drawing of the graph $F$ whose line graph is $H+e$.}
    \label{fig:lineminus2}
\end{figure}

\begin{figure}[t!]
    \centering   \includegraphics[scale=0.6]{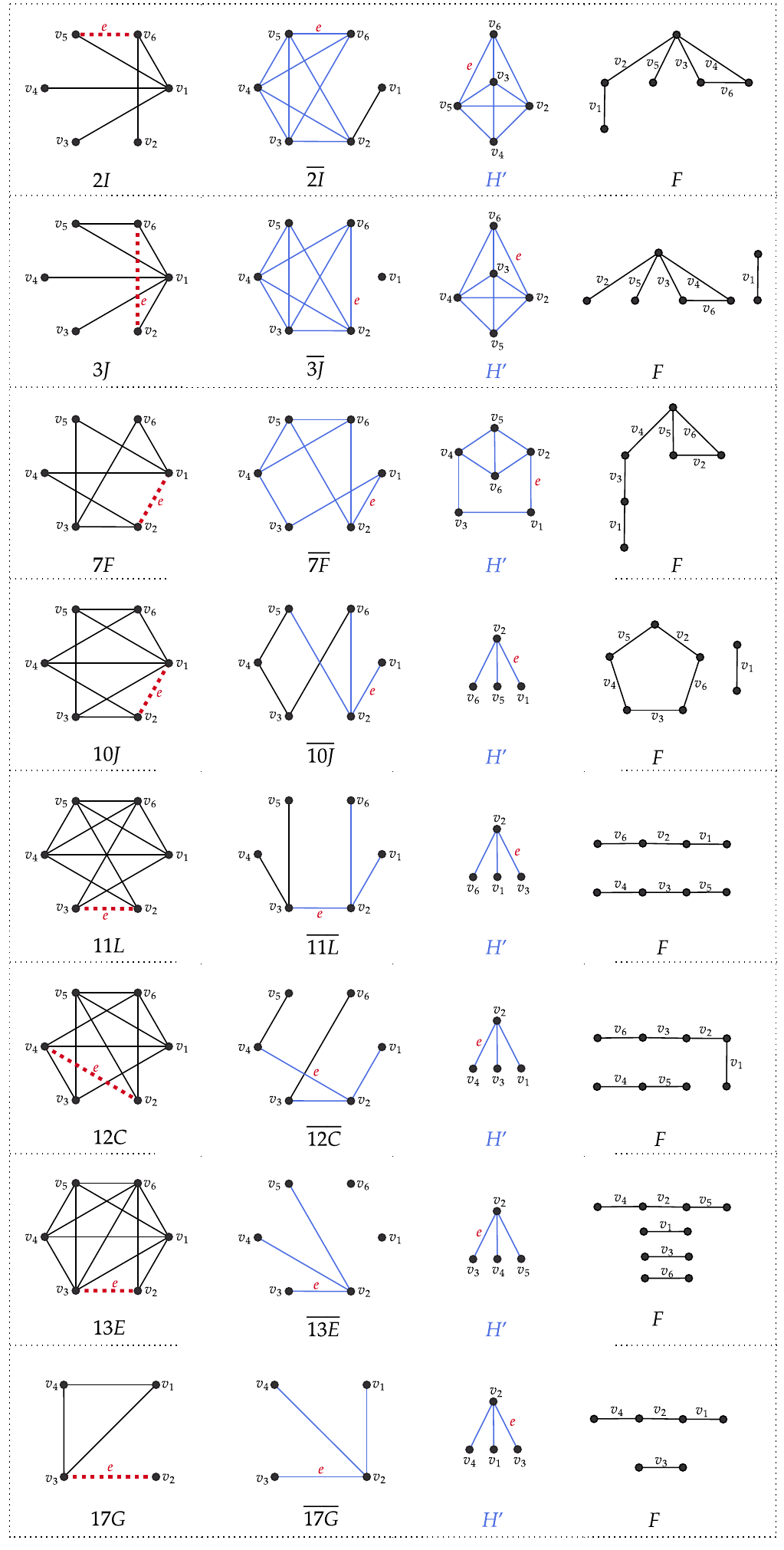}
    \caption{These graphs are all complements of line\lp graphs. In each entry, we have a drawing of the graph $H$ itself, a drawing of $\ol{H}$ with a specified edge $e$ of $\ol{H}$, a drawing of an induced subgraph $H'$ of $\ol{H}$ that by \Cref{thm:lineobst} prevents $\ol{H}$ from being a line graph, and then a drawing of the graph $F$ whose line graph is $\ol{H}-e$.}
    \label{fig:lineplus}
\end{figure}

\begin{figure}[t!]
    \centering   \includegraphics[width=\linewidth]{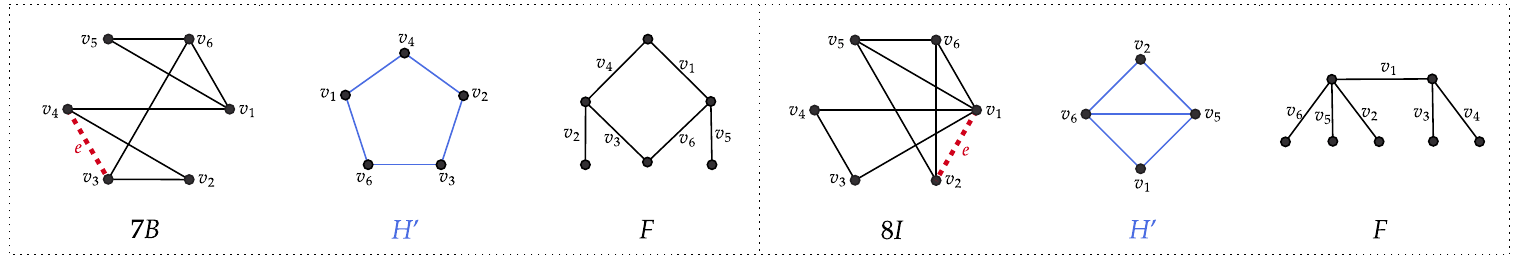}
    \caption{These graphs are all biline\lm graphs. In each entry, we have a drawing of the graph $H$ itself with a specified non-edge $e$, a drawing of an induced subgraph $H'$ of $H$ that by \Cref{thm:bilineobst} prevents $H$ from being a biline graph, and then a drawing of the bipartite graph $F$ whose line graph is $H+e$.}
    \label{fig:bilineminus}
\end{figure}
\medskip
\medskip
\medskip

\begin{figure}[t!]
    \centering   \includegraphics[width=\linewidth]{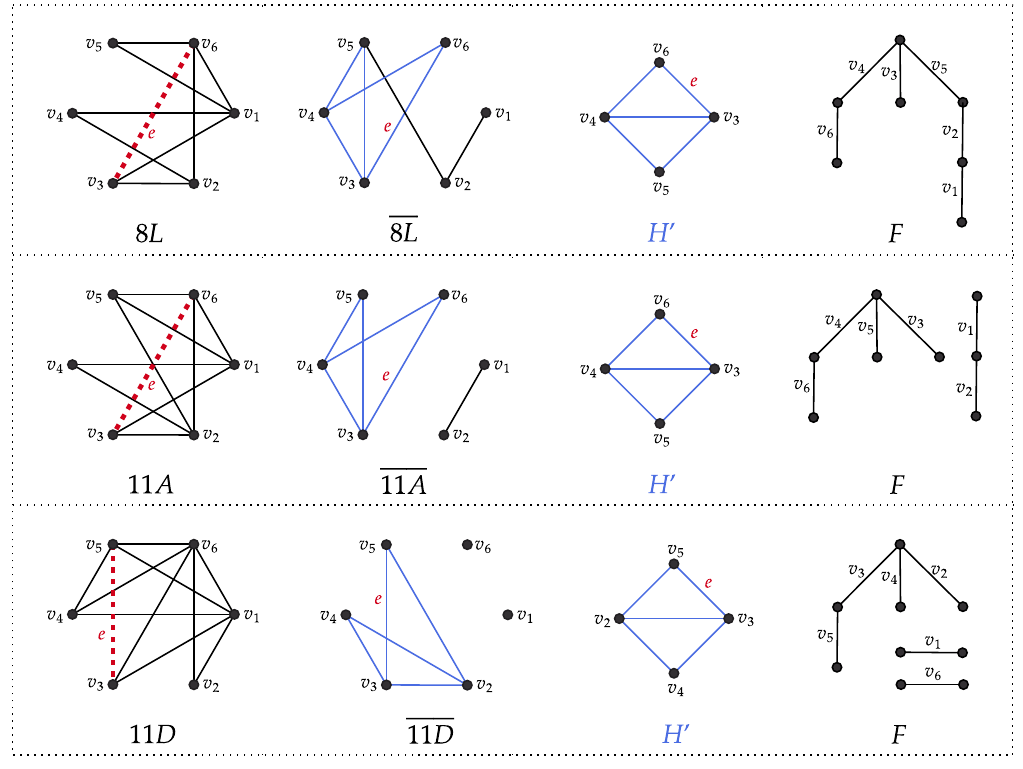}
    \caption{These graphs are all complements of biline\lp graphs. In each entry, we have a drawing of the graph $H$ itself, a drawing of $\ol{H}$ with a specified edge $e$ of $\ol{H}$, a drawing of an induced subgraph $H'$ of $\ol{H}$ that by \Cref{thm:bilineobst} prevents $\ol{H}$ from being a biline graph, and then a drawing of the bipartite graph $F$ whose line graph is $\ol{H}-e$.}
    \label{fig:bilineplus}
\end{figure}

\begin{figure}[t!]
    \centering   \includegraphics[scale=0.4]{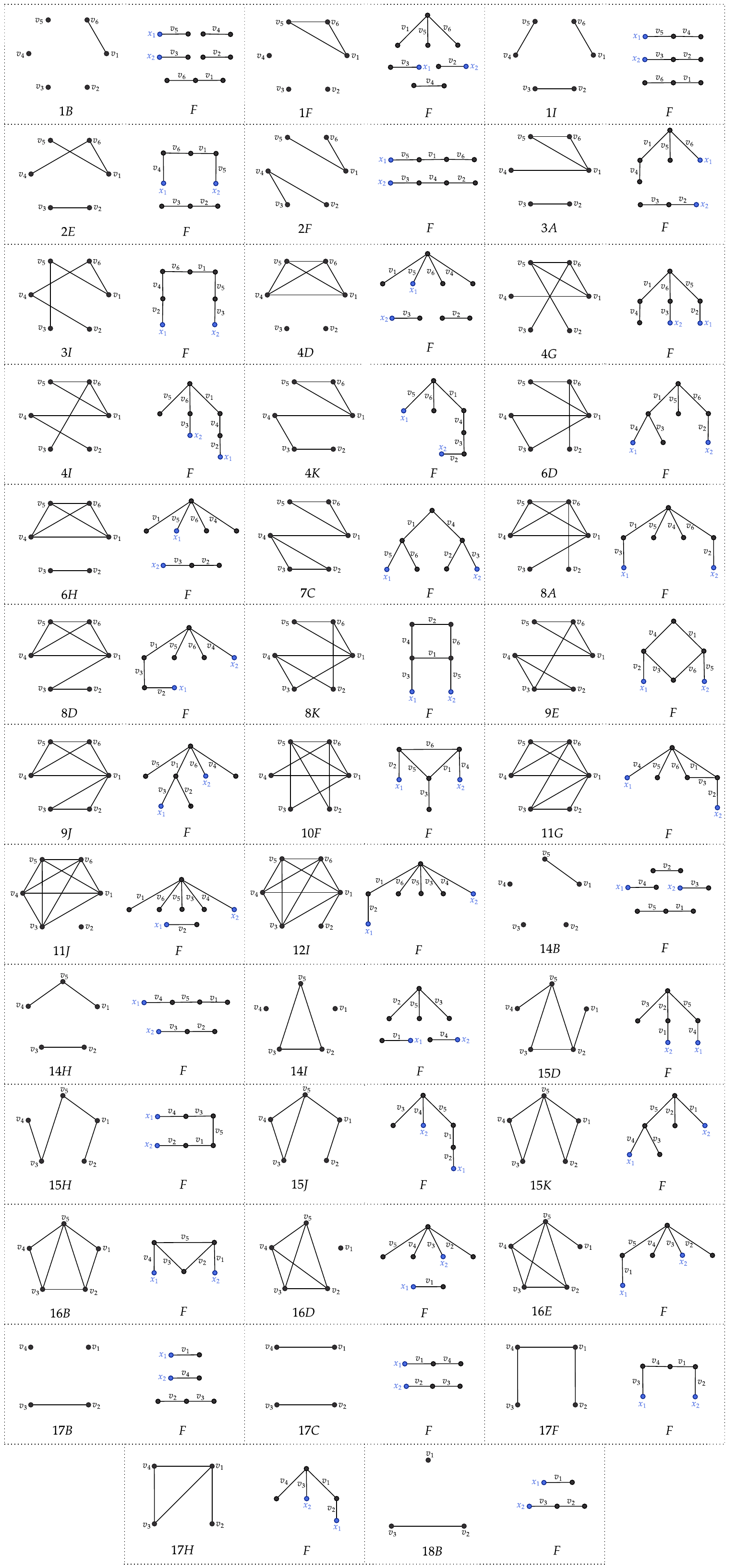}
    \caption{Every graph here is the line graph of a graph $F$ with a severed pair. In each entry, we have a drawing of the graph $H$ itself, and then a drawing of the graph $F$ with a severed pair $(x_1,x_2)$ such that $L(F)=H$.}
    \label{fig:severed}
\end{figure}

\end{document}